\documentclass [12pt]{amsart}
\usepackage[utf8]{inputenc}
\pdfoutput=1

\usepackage{amsmath,amssymb,amsthm,amsfonts}
\usepackage{mathtools}%

\usepackage[main=english,french]{babel}%
\usepackage{comment}%
\usepackage[bookmarksdepth=4]{hyperref}%
\usepackage{bbm}%
\usepackage{mathrsfs}%

\usepackage{tikz,graphicx,color}
\usepackage{tikz-cd}%
\usepackage{tikz-3dplot}%
\usetikzlibrary{calc}%
\usetikzlibrary{arrows}%
\usetikzlibrary{shapes}%
\usetikzlibrary{patterns}%
\usetikzlibrary{positioning}%
\usetikzlibrary{arrows.meta}
\usetikzlibrary{knots}
\usetikzlibrary{decorations.markings}
\usepackage{epstopdf}%
\pdfpageattr{/Group <</S /Transparency /I true /CS /DeviceRGB>>}

\usepackage[arrow]{xy}%
\usepackage{diagbox}%
\usepackage{subfig}%
\usepackage{arcs}%
\usepackage{xcolor}%
\usepackage{tabu}
\usepackage{booktabs}%

\usepackage[margin=1in]{geometry}

\usepackage{soul}
\usepackage{accents}
\usepackage{enumerate}
\usepackage{letltxmacro}
\usepackage{thmtools,etoolbox}

\usepackage{nameref,hyperref}
\usepackage[capitalize]{cleveref}

\usepackage{tikz-cd}
\usepackage{stmaryrd}

\usepackage{xspace}

\usepackage{stmaryrd}
\usepackage{bm}

\newtheorem{theorem}{Theorem}[section]

\newtheorem{lemma}[theorem]{Lemma}
\newtheorem{proposition}[theorem]{Proposition}
\newtheorem{corollary}[theorem]{Corollary}
\newtheorem{conjecture}[theorem]{Conjecture}
\theoremstyle{definition}
\newtheorem{remark}[theorem]{Remark}
\theoremstyle{definition}
\newtheorem{definition}[theorem]{Definition}

\theoremstyle{definition}
\newtheorem{problem}[theorem]{Problem}
\theoremstyle{definition}
\newtheorem{example}[theorem]{Example}
\theoremstyle{definition}

\def\<{{\langle}}
\def\>{{\rangle}}

\def\la{{\lambda}}

\def\rk{{\operatorname{rk}}}
\def\cork{ \operatorname{cork}}

\def\Racc{R^\circ}
\def\Rich_#1^#2{\Racc_{#1,#2}}
\def\Richcl_#1^#2{R_{#1,#2}}

\def\bw{{\mathbf{w}}}
\def\bv{{\mathbf{v}}}

\def\bx{{\mathbf{x}}}

\def\xrasim{\xrightarrow{\sim}}

\def\w{{\mathbf{w}}}

\def\Spec{\operatorname{Spec}}

\def\HHH{H\!\!H\!\!H}
\def\HHHC{\HHH_\C}

\def\pt{{\rm pt}}

\def\F{{\mathbb{F}}}
\def\Q{{\mathbb{Q}}}

\def\tP{{\tilde P}}

\def\tP{{\tilde P}}

\def\Pio{\Pi^\circ}

\def\DA{{\Dscr_1}}

\def\Boundkn{\BND(k,n)}

\numberwithin{equation}{section}

\def\FLY{HOMFLY\xspace}

\def\Richafftp_#1^#2{{\Rcal_{#1,#2}^{>0}}}

\def\Richaff_#1^#2{\accentset{\circ}{\mathcal{R}}_{#1,#2}}

\def\Dyck{\operatorname{Dyck}}
\def\Dyckxx(#1,#2){\Dyck_{#1,#2}}

\def\ncyc{\operatorname{ncyc}}
\def\ncyc{c}

\def\fkn{{f_{k,n}}}
\def\Pit_#1{\Xcal^\circ_{#1}}

\def\k{\mathbbm{k}}

\def\top{{\operatorname{top}}}

\def\Povar_#1{\accentset{\circ}{\Pi}_{#1}}
\def\Povarcl_#1{\Pi_{#1}}
\def\RPovar_#1{\accentset{\circ}{\Pi}^\R_{#1}}
\def\RPovarcl_#1{\Pi^\R_{#1}}
\def\Povtp_#1{\Pi_{#1}^{>0}}
\def\Povtnn_#1{\Pi_{#1}^{\geq0}}

\def\Boundkn{\operatorname{Bound}(k,n)}

\def\Gr{\operatorname{Gr}}

\def\KLR_#1^#2{R_{#1,#2}(q)}

\crefname{figure}{Figure}{Figures}
\crefname{conjecture}{Conjecture}{Conjectures}

\addtotheorempostheadhook[theorem]{\crefalias{theoremlisti}{theorem}}
\addtotheorempostheadhook[lemma]{\crefalias{theoremlisti}{lemma}}
\addtotheorempostheadhook[proposition]{\crefalias{theoremlisti}{proposition}}
\addtotheorempostheadhook[corollary]{\crefalias{theoremlisti}{corollary}}

\def\Acal{\mathcal{A}}\def\Fcal{\mathcal{F}}\def\Rcal{\mathcal{R}}\def\Ucal{\mathcal{U}}\def\Xcal{\mathcal{X}}\def\Zcal{\mathcal{Z}}

\def\abf{\mathbf{a}}

\def\C{\mathbb{C}}
\def\R{\mathbb{R}}

\def\Z{\mathbb{Z}}
\def\Q{\mathbb{Q}}

\def\<{{\langle}}
\def\>{{\rangle}}

\def\la{{\lambda}}

\def\rk{{\mathrm{rank}}}
\def\cork{ \operatorname{corank}}

\def\Gr{\operatorname{Gr}}

\def\n{{\mathfrak n}}

\def\Z{{\mathbb Z}}
\def\R{{\mathbb R}}
\def\Gr{{\rm Gr}}

\newcounter{todobackgr}[section]

\newcounter{todofigure}[section]

\def\X{\accentset{\circ}{X}}

\makeatletter
\DeclareRobustCommand{\cev}[1]{%
  \mathpalette\do@cev{#1}%
}
\newcommand{\do@cev}[2]{%
  \fix@cev{#1}{+}%
  \reflectbox{$\m@th#1\vec{\reflectbox{$\fix@cev{#1}{-}\m@th#1#2\fix@cev{#1}{+}$}}$}%
  \fix@cev{#1}{-}%
}
\newcommand{\fix@cev}[2]{%
  \ifx#1\displaystyle
    \mkern#23mu
  \else
    \ifx#1\textstyle
      \mkern#23mu
    \else
      \ifx#1\scriptstyle
        \mkern#22mu
      \else
        \mkern#22mu
      \fi
    \fi
  \fi
}

\makeatother

\def\Rich_#1^#2{\vec R^\circ_{#1,#2}}
\def\LRich_#1^#2{\cev R^\circ_{#1,#2}}
\def\RichL_#1^#2{\LRich_{#1}^{#2}}
\def\Rtp_#1^#2{\vec R_{#1,#2}^{>0}}
\def\LRtp_#1^#2{\cev R_{#1,#2}^{>0}}

\def\Rtnn_#1^#2{\vec R_{#1,#2}^{\geq0}}
\def\LRtnn_#1^#2{\cev R_{#1,#2}^{\geq0}}

\def\PR_#1^#2{\accentset{\circ}{\Pi}_{#1,#2}}%
\def\PRtp_#1^#2{\Pi_{#1,#2}^{>0}}%
\def\PRtnn_#1^#2{\Pi_{#1,#2}^{\geq0}}%
\def\PRcl_#1^#2{\Pi_{#1,#2}}%
\def\PRR_#1^#2{\accentset{\circ}{\Pi}_{#1,#2}^\R}%
\def\PRRcl_#1^#2{\Pi_{#1,#2}^\R}%

\def\bw{{\mathbf{w}}}
\def\bv{{\mathbf{v}}}

\def\Fcal{\mathcal{F}}

\def\bx{{\bm{x}}}

\def\hjmap{\kappa}
\def\hjmp_#1{\hjmap_{#1}}

\def\Uom_#1{U^{\diamond,-}_{#1}}

\def\xrasim{\xrightarrow{\sim}}

\def\Richaff_#1^#2{\accentset{\circ}{\mathcal{R}}_{#1}^{#2}}

\def\Povar_#1{\accentset{\circ}{\Pi}_{#1}}
\def\Povarcl_#1{\Pi_{#1}}
\def\RPovar_#1{\accentset{\circ}{\Pi}^\R_{#1}}
\def\RPovarcl_#1{\Pi^\R_{#1}}
\def\Povtp_#1^#2{\Pi_{#1,#2}^{>0}}
\def\Povtnn_#1{\Pi_{#1}^{\geq0}}

\def\Boundkn{\operatorname{Bound}(k,\n)}

\def\Star_#1{\operatorname{Star}_{#1}}
\def\Startnn_#1{\operatorname{Star}^{\geq0}_{#1}}

\def\pB{\partial B}

\def\Link{\operatorname{Lk}}
\def\Lkx_#1{\Link_{#1}}
\def\Lkxx_#1^#2{\accentset{\circ}{\Link}_{#1}^{#2}}

\def\Lktxx_#1^#2{\Link^{>0}_{#1,#2}}
\def\Starxx_#1^#2{\operatorname{Star}_{#1,#2}}
\def\Startxx_#1^#2{\operatorname{Star}^{\geq0}_{#1,#2}}

\def\sctnn_#1{\sc^{\geq0}_{#1}}

\def\sctp_#1^#2{\sc^{>0}_{#1,#2}}

\def\eps{\varepsilon}
\def\Seps_#1{S_{#1}}

\def\Lktpe_#1^#2{\Link^{>0}_{#1,#2}}
\def\Lktnne_#1{\Link^{\geq0}_{#1}}

\def\Lktp_#1^#2{\Link^{>0}_{#1,#2}}
\def\Lktnn_#1{\Link^{\geq0}_{#1}}

\def\sc{Z}

\def\sco_#1^#2{\accentset{\circ}{\sc}_{#1,#2}}

\def\sccl_#1^#2{\sc_{#1}^{#2}}

\def\Y{\mathcal{Y}}
\def\Yo_#1{\accentset{\circ}{\Y}_{#1}}
\def\Ycl_#1{\Y_{#1}}

\def\Ytp_#1{\Y_{#1}^{>0}}

\def\strg(#1){\normg{#1}}

\def\normg#1{\|#1\|}

\def\Spec{\operatorname{Spec}}
\def\int{{\operatorname{init}}}

\def\DOM^#1_#2{\Delta^{#1 \omega_i}_{#2 \omega_i}}
\def\DOMr^#1_#2{\Delta^{#1 \omega_r}_{#2 \omega_r}}
\def\DOMir^#1_#2{\Delta^{#1 \omega_{i_r}}_{#2 \omega_{i_r}}}

\def\lw{\line{w}}

\def\sv{s^{\mathbf{v}}}

\def\RLtp_#1^#2{\cev R_{#1,#2}^{>0}}
\def\Rsf_#1^#2{\Rich_{#1}^{#2}(K)}
\def\LRsf_#1^#2{\LRich_{#1}^{#2}(K)}

\def\pre{{\,\operatorname{pre}}}
\def\tpre_#1^#2{\vec\twistop^\pre_{#1,#2}}
\def\Ltpre_#1^#2{\cev\twistop^\pre_{#1,#2}}
\def\tpreL_#1^#2{\Ltpre_{#1}^{#2}}
\def\twistop{\tau}
\def\twist_#1^#2{\vec\twistop_{#1,#2}}
\def\twistL_#1^#2{\cev\twistop_{#1,#2}}

\def\Sn{S_\n}

\def\sv_#1{s^{\bv}_{#1}}

\makeatletter
\newcommand{\xMapsto}[2][]{\ext@arrow 0599{\Mapstofill@}{#1}{#2}}
\def\Mapstofill@{\arrowfill@{\Mapstochar\Relbar}\Relbar\Rightarrow}
\makeatother

\makeatletter
\@namedef{subjclassname@2020}{%
  \textup{2020} Mathematics Subject Classification}
\makeatother

\def\Qkn{Q_{k,\n}}

\def\Skn{S_\n^{(k)}}

\def\perm{\operatorname{perm}}
\def\Lperm{L^{\perm}}
\def\tor{\operatorname{tor}}
\def\Ltor{L^{\tor}}
\def\grid{\operatorname{grid}}
\def\Lgrid{L^{\grid}}
\def\Lgridt{\tilde L^{\grid}}
\def\rich{\operatorname{Rich}}
\def\Lrich{L^{\rich}}
\def\plab{\operatorname{plab}}
\def\Lplab{L^{\plab}}
\def\cox{\operatorname{Cox}}
\def\Lcox{L^{\cox}}

\def\linksection#1{\subsubsection{#1}}
\def\fp{\pi}

\def\pig{\pi_G}

\def\arg{\operatorname{arg}}

\def\QG{Q_G}
\def\Qice{\widetilde{Q}}
\def\QGice{\Qice_G}

\def\BQ{B(Q)}
\def\BQice{\widetilde{B}(\Qice)}
\def\Vice{\widetilde{V}}

\def\AQG{\Acal(\QG)}
\def\AQGice{\Acal(\QGice)}
\def\RQ(#1){R(Q;#1)}
\def\RQX#1#2{R(#1;#2)}
\def\RQice(#1){R(\Qice;#1)}
\def\RQG(#1){R(\QG;#1)}
\def\F{\mathbb{F}}

\def\Pio{\Pi^\circ}
\def\HOMP{P}
\def\unkn{%
\begin{tikzpicture}[baseline=(ZUZU.base),scale=0.2]\coordinate(ZUZU) at (0,-0.6);
\draw[line width=0.7pt] (0,0) circle (1cm);
  \end{tikzpicture}
}

\def\top{{\operatorname{top}}}
\def\Ptop_#1(#2){P^\top(#1;#2)}

\def\fbar{\bar f}
\def\k{k}
\def\taukn{\tau_{k,\n}}
\def\T{\mathbb{T}}%

\def\Cproj{\overline{C}}

\def\fro{\operatorname{fro}}
\def\mut{\operatorname{mut}}
\def\Vfro{V_{\fro}}
\def\Vmut{V_{\mut}}
\def\Fcal{{\mathcal{F}}}

\def\Qicefr[#1]{\Qice[#1]}
\def\Bice{\widetilde{B}}
\def\bice{\tilde{b}}
\def\BiceX(#1){\widetilde{B}(#1)}

\def\BX(#1){B(#1)}
\def\X{\Xcal}
\def\XQice{\X(\Qice)}
\def\XX(#1){\X(#1)}
\def\XBQice{\X(\BQice)}
\def\AX(#1){\Acal(#1)}
\def\AQice{\Acal(\Qice)}
\def\ABQice{\Acal(\BQice)}

\def\chiQ{\chi_Q}

\def\Gm{\F^\ast}
\def\Ga{\F}

\def\Aut{\operatorname{Aut}}
\def\QGice{\Qice_G}
\def\QX(#1){Q_{#1}}
\def\disjun{\sqcup}

\def\pmn{\pm I}
\def\DRW{(\pmn)^m} %

\def\Gbw{G(\bw)}
\def\GX(#1){G(#1)}

\def\rev{\operatorname{rev}}

\def\figref#1(#2){Figure~\hyperref[#1]{\ref*{#1}(#2)}}

\def\conn{\operatorname{c}}
\def\ncyc{\conn}

\def\n{N}
\def\qq{q^{\frac12}}
\def\qqi{q^{-\frac12}}

\def\disk{\mathbf{D}}

\def\Lrichp{L}%

\def\projtx#1{\overline{#1}}

\def\Louise{locally acyclic\xspace}
\def\Louiseleaf{recurrent leaf\xspace} %

\def\degtop{\deg^{\top}}

\def\pmtI{\pm\tI}
\def\linkurl#1{\textup{\texttt{\href{http://katlas.org/wiki/#1}{#1}}}}
\def\comma{} %
\def\bmref#1{\hyperlink{#1}{\normalfont(B\ref*{#1})}}

\def\abmref#1{\hyperref[#1]{\normalfont(\Btilde\ref*{#1})}}

\def\hopflink{
\scalebox{0.3}{%
\begin{tikzpicture}[baseline=(ZUZU.base)]\coordinate(ZUZU) at (0,-0.8);
\draw[line width=3.5pt, white,
    decoration={markings,mark=at position 0 with {\arrow[scale=1,black]{stealth'}}},
    decoration={markings,mark=at position 1 with {\arrow[scale=1,black]{stealth'}}},
    postaction={decorate}] (-0.99,-0.15)--(-1,-1)--(1,-1)--(0.99,0.15);
\draw[line width=3.5pt, white,
    decoration={markings,mark=at position 0.01 with {\arrow[scale=1,black]{stealth'}}},
    decoration={markings,mark=at position 0.99 with {\arrow[scale=1,black]{stealth'}}},
    postaction={decorate}] (1.99,-0.15)--(2,-1)--(0,-1)--(0.01,0.15);
\begin{knot}[
    clip width=6,
    flip crossing={1},
    ]
    \strand [line width=3.5pt] (0,0) circle (1.0cm);
    \strand [line width=3.5pt] (1,0) circle (1.0cm);
\end{knot}
\end{tikzpicture} 
}
}

\def\pikn{\pi_{k,\n}}
\def\fkn{f_{k,\n}}

\def\pA{^{(1)}}
\def\pB{^{(2)}}
\def\pC{^{(3)}}

\def\red{-}
\def\Qred{Q^{\red}}
\def\QredG{Q^{\red}_G}
\def\BQredG{B(\QredG)}

\def\E{H}%
\def\Tc{{T,c}}

\def\mysq{\begin{tikzpicture}
\node[draw=black,line width=1pt, rectangle,scale=0.5,fill=white] (A) at (0,0) {};
\end{tikzpicture}}

\def\HHH{H\!\!H\!\!H}
\def\HHHC{\HHH_\C}
\def\PGL{\operatorname{PGL}}

\def\mirr#1{#1^\ast}
\begin{document}

\title{Plabic links, quivers, and skein relations}

\author{Pavel Galashin}
\address{Department of Mathematics, University of California, Los Angeles, 520 Portola Plaza,
Los Angeles, CA 90025, USA}
\email{\href{mailto:galashin@math.ucla.edu}{galashin@math.ucla.edu}}

\author{Thomas Lam}
\address{Department of Mathematics, University of Michigan, 2074 East Hall, 530 Church Street, Ann Arbor, MI 48109-1043, USA}
\email{\href{mailto:tfylam@umich.edu}{tfylam@umich.edu}}
\thanks{P.G.\ was supported by an Alfred P. Sloan Research Fellowship and by the National Science Foundation under Grants No.~DMS-1954121 and No.~DMS-2046915. T.L.\ was supported by Grants No.~DMS-1464693 and No.~DMS-1953852 from the National Science Foundation.}

\subjclass[2020]{ 
  Primary:
  13F60. %
  Secondary:
  57K14, %
  14M15, %
  05E99. %
}

\keywords{Plabic graph, quiver, link isotopy, Coxeter link, point count, HOMFLY polynomial, skein relation.
}

\date{\today}

\begin{abstract}
We study relations between cluster algebra invariants and link invariants.

First, we show that several constructions of \emph{positroid links} (permutation links, Richardson links, grid diagram links, plabic graph links)  give rise to isotopic links. For a subclass of permutations arising from concave curves, we also provide isotopies with the corresponding Coxeter links.

Second, we associate a \emph{point count polynomial} to an arbitrary \Louise quiver. We conjecture an equality between the top $a$-degree coefficient of the HOMFLY polynomial of a plabic graph link and the point count polynomial of its planar dual quiver. We prove this conjecture for leaf recurrent plabic graphs, which includes reduced plabic graphs and plabic fences as special cases.
\end{abstract}

\numberwithin{equation}{section}

\maketitle

\section{Introduction}
In recent years, intriguing connections between knot theory and the theory of cluster algebras have been developed.  Shende--Treumann--Williams--Zaslow~\cite{STWZ} found cluster structures on certain moduli spaces of sheaves microsupported on a Legendrian link.  Fomin--Pylyavskyy--Shushtin--Thurston~\cite{FPST} studied the relation between quiver mutation and morsifications of algebraic links.  Casals--Gorsky--Gorsky--Simental~\cite{CGGS} and Mellit~\cite{Mellit_Cell} studied braid varieties associated to positive braid words; cluster structures on these spaces are the subject of ongoing works \cite{CGGLSS,GLSBS1,GLSBS2}. Other connections can be found e.g. in~\cite{HiIn,Muller_skein,LeeSch,BMS,CW}.

The goal of this work is to compare knot invariants and cluster algebra invariants.  The starting point is our earlier work~\cite{GL_qtcat} on \emph{open positroid varieties} $\Povar_\fp$ \cite{KLS}, which are certain distinguished subvarieties of the Grassmannian equipped with a cluster structure \cite{GL_cluster} (see also~\cite{Scott,MSLA,Lec,SSBW}).  In \cite{GL_qtcat}, we associated a \emph{positroid link} $L_\fp$ to each open positroid variety $\Povar_\fp$, and constructed an isomorphism between the cohomology of $\Povar_\fp$ and the top $a$-degree part of the Khovanov--Rozansky triply-graded link homology~\cite{KR1,KR2,KhoSoe} of $L_\fp$.   In subsequent work~\cite{GL_cat_combin}, we further developed this connection by defining \emph{positroid Catalan numbers} as certain Euler characteristics of open positroid varieties.  This invariant was studied for a subclass of permutations arising from concave curves, using a Dyck path recursion that did not require explicit mention of the cluster structure on $\Povar_\fp$, or of the positroid link $L_\fp$.

The initial motivation of this work is to explain our positroid recursion in the broader setting of \emph{plabic (i.e., planar bicolored) graphs}, making an explicit connection to recursions for quivers and for links.  Plabic graphs were introduced by Postnikov~\cite{Pos} who also speculated on the relation with cluster algebras. 
 To an arbitrary plabic graph $G$ one can associate a \emph{plabic graph link} $\Lplab_G$ introduced in~\cite{FPST,STWZ}.
 On the other hand, the set of equivalence classes of \emph{reduced} plabic graphs is in bijection with open positroid varieties, to which we associated positroid links in~\cite{GL_qtcat}.

In the first part of this work, we establish (\cref{thm:isotopy}) that the various links associated to $\Povar_\fp$ are isotopic, including the positroid link $L_\fp$ and the corresponding plabic graph link $\Lplab_G$.  In the second part of this work, we study the relation between the point count function of a quiver $Q$ and the HOMFLY polynomial $\HOMP(L;a,z)$ of a link.  We introduce a class of \emph{simple} plabic graphs. Our main conjecture (\cref{conj:main}) states that for a simple plabic graph $G$, the point count of the planar dual quiver $Q_G$ is equal to the top $a$-degree part of the HOMFLY polynomial of $\Lplab_G$.  We show (\cref{thm:FLY=pcnt_leaf_rec}) that this conjecture holds for the class of \emph{leaf recurrent} plabic graphs, that includes reduced plabic graphs and plabic fences. 
We interpret and generalize the locally acyclic recursion for positroids~\cite{MSLA} in terms of the \FLY skein relation.

\subsection*{Acknowledgments}
The second author thanks David Speyer for the previous collaborations \cite{LS,LS2}, which have influenced this work.  The authors thank Eugene Gorsky for explanations related to \cite{CGGS}.  %
 
\section{Main results}
Let $\fp\in\Sn$ be a permutation. For simplicity, we assume that $\fp$ has no fixed points, i.e., $\fp(i)\neq i$ for all $i\in[\n]:=\{1,2,\dots,\n\}$. 

\begin{figure}
\setlength{\tabcolsep}{5pt}
\begin{tabular}{cccc}
\includegraphics[width=0.13\textwidth]{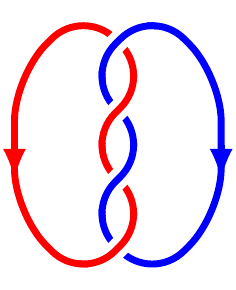} &
\includegraphics[width=0.27\textwidth]{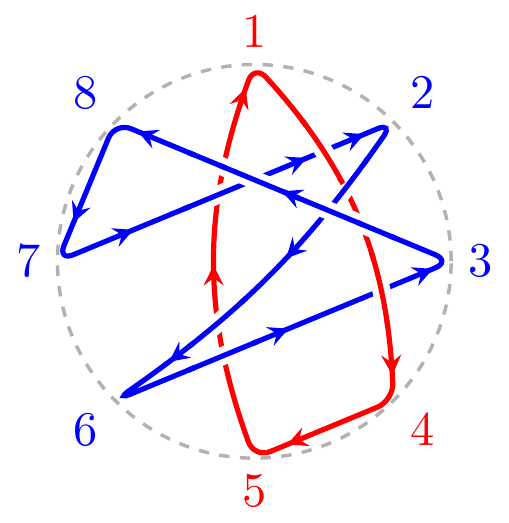} &
\includegraphics[width=0.24\textwidth]{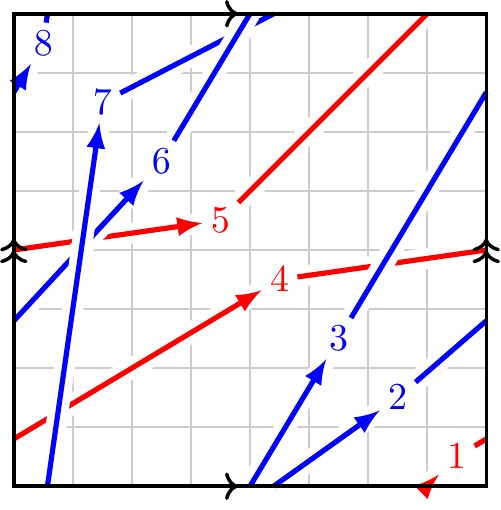}&
\hspace{0.05in}\includegraphics[width=0.25\textwidth]{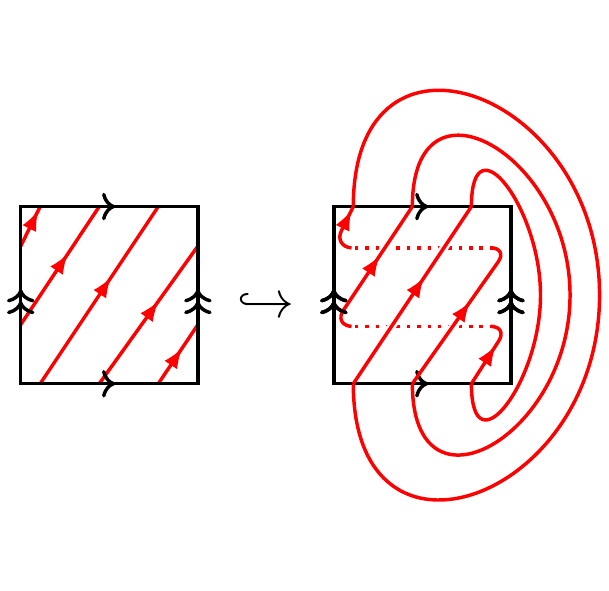}\\
  (a) \linkurl{L4a1} & (b) $\Lperm_\pi$ & (c) $\Ltor_\pi$ & (d) \hspace{0.05in}$\T\hookrightarrow\R^3$
\end{tabular}
  \caption{\label{fig:L4a1} The links \linkurl{L4a1}$\cong \Lperm_\pi\cong\Ltor_\pi$ for $\pi$ given by~\eqref{eq:intro:fp}.}
\end{figure}

In~\cite{GL_qtcat}, we described a way to associate a \emph{positroid link} $L_\fp$ to such $\fp$. The number of components of $L_\fp$ is given by the number $\ncyc(\fp)$ of cycles of $\fp$. Our running example will be
\begin{equation}\label{eq:intro:fp}
 \fp=\left(\!\!\text{
\begin{tabular}{cccccccc}
\textcolor{red}{$1$} & \textcolor{blue}{$2$} & \textcolor{blue}{$3$} & \textcolor{red}{$4$} & \textcolor{red}{$5$} & \textcolor{blue}{$6$} & \textcolor{blue}{$7$} & \textcolor{blue}{$8$}\\
\textcolor{red}{$4$} & \textcolor{blue}{$6$} & \textcolor{blue}{$8$} & \textcolor{red}{$5$} & \textcolor{red}{$1$} & \textcolor{blue}{$3$} & \textcolor{blue}{$2$} & \textcolor{blue}{$7$}
\end{tabular}
}\!\!\right),
\end{equation}
written in two-line notation, with different colors representing different cycles of $\fp$. We have $\n=8$ and $\ncyc(\fp)=2$. The associated positroid link $L_\fp$ is shown in \figref{fig:L4a1}(a); it appears under the name \linkurl{L4a1} in the Thistlethwaite Link Table~\cite{KAT}. 

\subsection{Positroid links} \label{sec:intro:positroid_links}

Our first result relates several different representations of the link $L_\fp$, confirming the conjectures of~\cite{GL_qtcat,GL_cat_combin}. We start by giving a brief overview of these descriptions; see \cref{sec:positr-comb,sec:positr-link-isot} for full details.

\linksection{Permutation link} \label{sec:intro:perm_links}
Draw $\n$ points labeled $1,2,\dots,\n$ on the circle in clockwise order. Draw a line segment connecting $i$ to $\fp(i)$, for each $i\in[\n]:=\{1,2,\dots,\n\}$. If two line segments $[i, \fp(i)]$, $[j,\fp(j)]$ cross for some $i,j\in[\n]$ such that $\fp(i)<\fp(j)$, then the segment $[j,\fp(j)]$ is drawn above $[i, \fp(i)]$; see \figref{fig:L4a1}(b). 
 The union of these segments gives rise to a link diagram. We refer to the resulting link as the \emph{permutation link} of $\fp$, denoted $\Lperm_\fp$. Orienting each line segment from $i$ to $\fp(i)$ induces an orientation on $\Lperm_\fp$.

\linksection{Toric permutation link} Let $\T:=\R^2/\Z^2$ be the torus. Consider its fundamental domain $[0,1]^2$ which we view as a square subdivided into $\n^2$ boxes of size $\frac1\n\times \frac1\n$. We denote by $b_{i,j}$, $1\leq i,j\leq \n$, the box whose upper right corner has coordinates $\frac1\n(i,j)$. For each $i\in[\n]$, place a dot $d_i$ in box $b_{i,\n+1-i}$. Draw an arrow (in $\T$) in the northeast direction from $d_i$ to $d_{\fp(i)}$ for each $i\in[\n]$. When two such arrows cross, the arrow with the higher slope is drawn above the arrow with the lower slope. We obtain a planar diagram of an oriented link $\Ltor_\fp$ drawn on the surface of $\T$; see \figref{fig:L4a1}(c). Embedding $\T$ inside $\R^3$ in a standard way (\figref{fig:L4a1}(d)), we may view $\Ltor_\fp$ as a link in $\R^3$. 

\begin{remark}
Viewing $\Ltor_\fp$ as drawn on the surface of $\T$ (resp., in a solid torus $S^1\times [0,1]$) contains more information---it gives rise to a certain elliptic Hall algebra element~\cite{ScVa,BuSc} (resp., to a symmetric 
function~\cite{Turaev}) with deep connections to Khovanov--Rozansky homology. We aim to pursue this direction in an upcoming paper.
\end{remark}

\begin{figure}
  \includegraphics[width=1.0\textwidth]{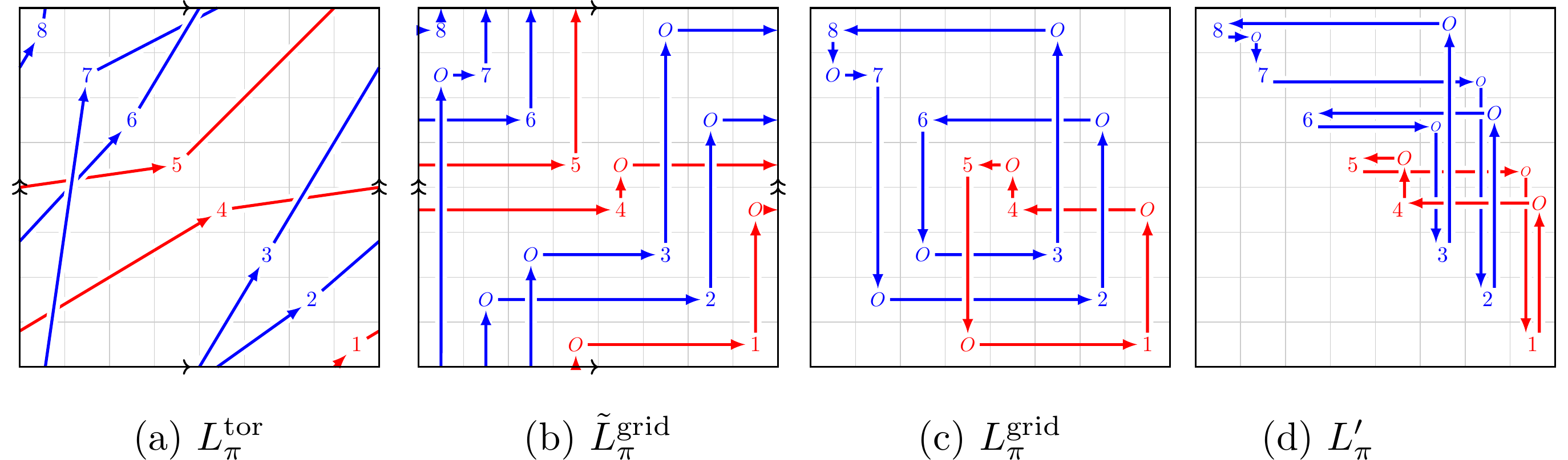}
 \caption{\label{fig:grid_iso} Toric grid links; see \cref{rmk:grid} and \cref{sec:perm-to-toric}.}
\end{figure}

\begin{remark}\label{rmk:grid}
We may also view $\Ltor_\fp$ as coming from a \emph{grid diagram} (see, e.g.,~\cite{OSS}). The grid diagram of $\fp$ is obtained by placing an $X$ in box $b_{i,\n+1-i}$ and an $O$ in box $b_{i,\n+1-\fp(i)}$ for each $i\in[\n]$. We then connect the $X$'s and the $O$'s by horizontal and vertical segments, always drawing the vertical segments above the horizontal ones. More precisely, to a given grid diagram, we can associate two links. First, a \emph{toric grid link} $\Lgridt_\pi$, drawn on the surface of $\T$, is obtained by drawing a horizontal arrow from $O$ to $X$ pointing right in each row, and a vertical arrow from $X$ to $O$ pointing up in each column; see \figref{fig:grid_iso}(b). (In \cref{fig:grid_iso}, for each $i\in[\n]$, we place an $i$ instead of an $X$ in box $b_{i,\n+1-i}$.) Second, a \emph{planar grid link} $\Lgrid_\pi$, whose planar diagram is drawn inside $[0,1]^2$, is obtained by drawing a horizontal arrow from $O$ to $X$ in each row, and a vertical arrow from $X$ to $O$ in each column, but in this case the arrows are not allowed to cross the boundaries of $[0,1]^2$; see \figref{fig:grid_iso}(c). As explained in~\cite[Section~3.2]{OSS}, these two links are isotopic to each other. It is easy to see that the toric grid link $\Lgridt_\pi$ is also isotopic to $\Ltor_\fp$.
\end{remark}

\begin{figure}
  \includegraphics[width=1.0\textwidth]{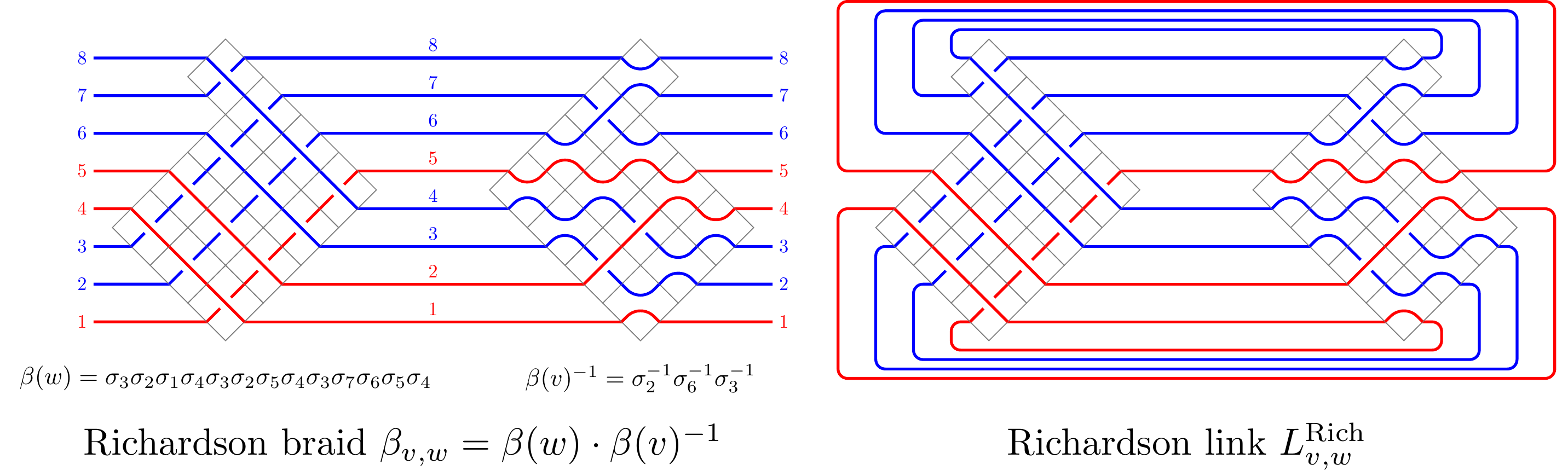}
  \caption{\label{fig:Lrich} A Richardson braid (left) and a Richardson link (right) defined in \cref{sec:intro:Rich_link}.}
\end{figure}

\linksection{Richardson link}\label{sec:intro:Rich_link} A permutation $w\in\Sn$ is called \emph{$k$-Grassmannian} if $w(1)<\cdots<w(\n-k)$ and $w(\n-k+1)<\cdots<w(\n)$. We denote by $\Skn$ the set of $k$-Grassmannian permutations. The permutation $\fp$ can be uniquely decomposed as the product $\fp=wv^{-1}$ for some $1\leq k\leq \n-1$ and some pair $(v,w)\in \Sn\times\Skn$ such that $v\leq w$ in the Bruhat order on~$\Sn$. For the permutation $\fp$ given by~\eqref{eq:intro:fp}, we get that $k=4$ and\footnote{Our convention for multiplying permutations is right-to-left, i.e., $\fp(j)=w(v^{-1}(j))$ for all $j\in[\n]$.}
\begin{equation*}%
v=\left(\!\!\text{
\begin{tabular}{cccccccc}
\textcolor{red}{$1$} & \textcolor{red}{$2$} & \textcolor{blue}{$3$} & \textcolor{blue}{$4$} & \textcolor{red}{$5$} & \textcolor{blue}{$6$} & \textcolor{blue}{$7$} & \textcolor{blue}{$8$}\\
\textcolor{red}{$1$} & \textcolor{red}{$4$} & \textcolor{blue}{$2$} & \textcolor{blue}{$3$} & \textcolor{red}{$5$} & \textcolor{blue}{$7$} & \textcolor{blue}{$6$} & \textcolor{blue}{$8$}
\end{tabular}
}\!\!\right)
,\quad
w=\left(\!\!\text{
\begin{tabular}{cccccccc}
\textcolor{red}{$1$} & \textcolor{red}{$2$} & \textcolor{blue}{$3$} & \textcolor{blue}{$4$} & \textcolor{red}{$5$} & \textcolor{blue}{$6$} & \textcolor{blue}{$7$} & \textcolor{blue}{$8$}\\
\textcolor{red}{$4$} & \textcolor{red}{$5$} & \textcolor{blue}{$6$} & \textcolor{blue}{$8$} & \textcolor{red}{$1$} & \textcolor{blue}{$2$} & \textcolor{blue}{$3$} & \textcolor{blue}{$7$}
\end{tabular}
}\!\!\right)
.
\end{equation*}
Here, the colors represent the cycles of $\fp=wv^{-1}$. 
 Let $\beta(v),\beta(w)$ be the positive braid lifts of $v,w$ to the braid group of $\Sn$. Consider the \emph{Richardson braid} $\beta_{v,w}:=\beta(w)\cdot \beta(v)^{-1}$ shown in \figref{fig:Lrich}(left). Taking the \emph{braid closure} of $\beta_{v,w}$, we obtain the \emph{Richardson link} $\Lrich_{v,w}=\Lrich_\fp$ shown in \figref{fig:Lrich}(right). We orient the strands of $\beta_{v,w}$ from right to left; cf. \figref{fig:Lrich-to-L'}(left).
\begin{remark}\label{rmk:Rich_Young}
It is well known that $k$-Grassmannian permutations are in bijection with Young diagrams that fit inside a $k\times (\n-k)$ rectangle. Thus, in \figref{fig:Lrich}(left), we arrange the crossings in $\beta(w)$ into a (rotated) Young diagram. The condition that $v\leq w$ in the Bruhat order implies that a reduced word for $v$ is a subword of a reduced word for $w$, and thus we represent $\beta(v)$ in \figref{fig:Lrich}(left) by replacing some of the crossings in $\beta(w)$ with ``elbows.''
\end{remark}

 The above recipe was used in~\cite{GL_qtcat} more generally for pairs $v\leq w$ where $w$ is not necessarily $k$-Grassmannian. 

\begin{figure}
\includegraphics[width=1.0\textwidth]{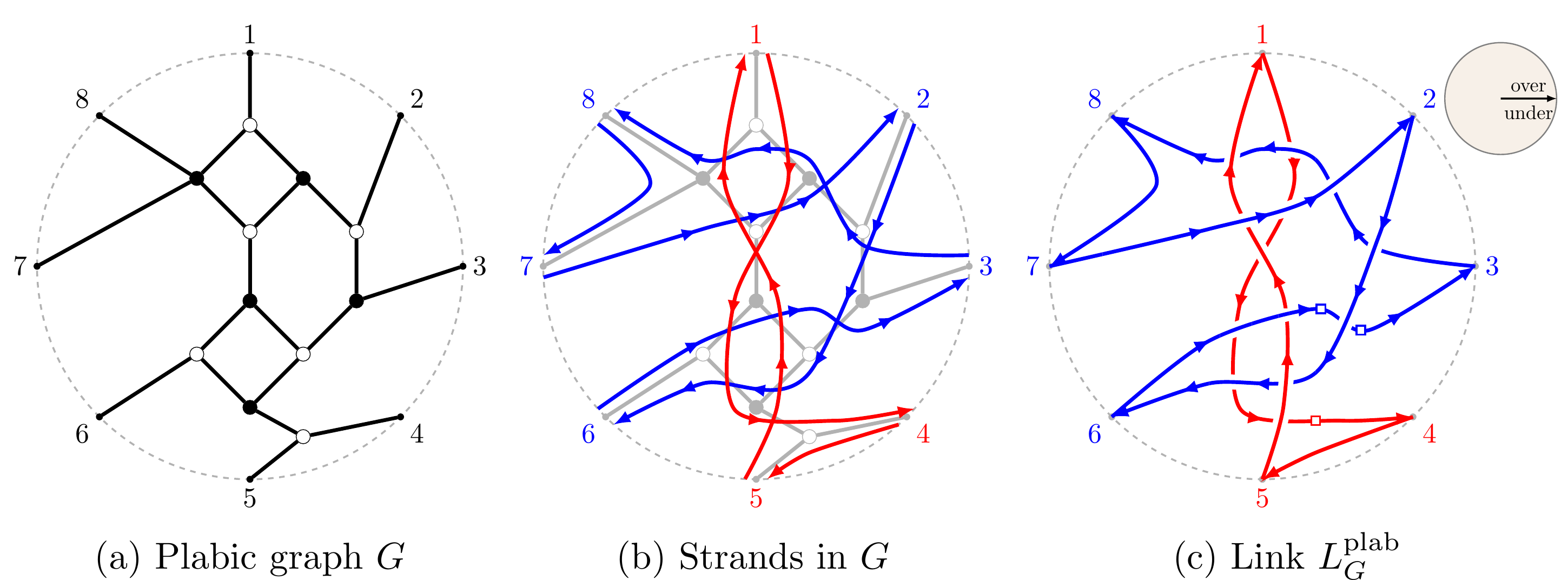}
  \caption{\label{fig:Lplab} Converting a plabic graph $G$ into the plabic graph link $\Lplab_G$.}
\end{figure}

\linksection{Plabic link} \label{sec:intro:plabic_link}
A \emph{plabic graph} is a non-empty planar graph $G$ embedded in a disk with vertices colored black and white; see~\cite{Pos}.  
 A \emph{strand} in $G$ is a path that makes a sharp right turn at each black vertex and a sharp left turn at each white vertex. We assume that the graph $G$ has $\n$ boundary vertices, all of which have degree $1$ and are labeled $1,2,\dots,\n$ in clockwise order. An example of a plabic graph $G$ is shown in \figref{fig:Lplab}(a), and the strands in $G$ are shown in \figref{fig:Lplab}(b).

The following description of the \emph{plabic graph link} $\Lplab_G$, or \emph{plabic link} for short, can be deduced from~\cite{ACampo,STWZ,FPST} by breaking the symmetry following~\cite{Hirasawa}. (See \cref{sec:plabic_links_properties} for a more invariant description.) The planar diagram of $\Lplab_G$ will consist of the union of the strands of $G$. To specify the overcrossings, let $p$ be an intersection point of two strands $S_1,S_2$ in $G$.  The tangent vector to $S_1$ (resp. $S_2$) at $p$ can be considered a vector in the complex plane, and we let $\arg(S_1,p) \in[0,2\pi)$ (resp. $\arg(S_2,p)\in[0,2\pi)$) denote the argument of this vector.
We assume that $0<\arg(S_1,p)\neq\arg(S_2,p)<2\pi$. Then $S_1$ is drawn above $S_2$ if $\arg(S_2,p)>\arg(S_1,p)$, otherwise $S_1$ is drawn below $S_2$. 

\begin{figure}
\includegraphics[width=0.7\textwidth]{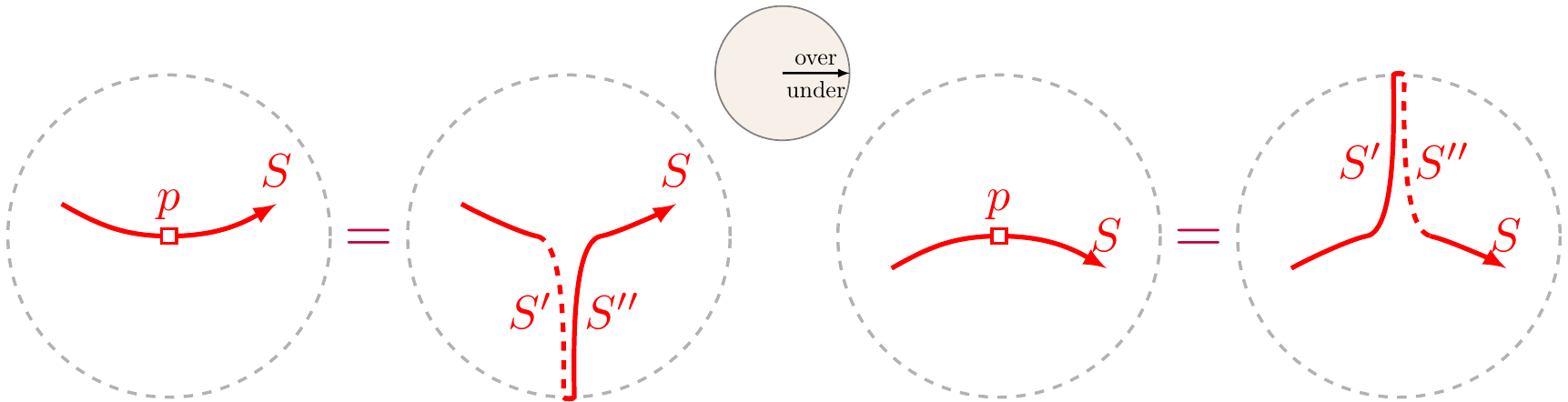}
  \caption{\label{fig:over_under} For every point $p$ on a strand $S$ satisfying $\arg(S,p)=0$, we insert a strand segment going to the boundary and back; see \cref{sec:intro:plabic_link}.}
\end{figure}

A technical extra step is required to finalize the construction of $\Lplab_G$; see \cref{fig:over_under}. Consider a point $p$ on a strand $S$ such that $\arg(S,p)=0$, i.e., such that $S$ is directed to the right at $p$. We add an extra segment $S'$ to $S$ traveling from $p$ to the boundary of the disk followed by a segment $S''$ traveling from the boundary back to $p$. In the neighborhood of $p$, $S$ looks like a plot of a function. If this function is ``concave up'' at $p$ (i.e., has positive second derivative), then $S'$ (resp., $S''$) is drawn below (resp., above) all other strands. If $S$ is ``concave down'' at $p$ then $S'$ (resp., $S''$) is drawn above (resp., below) all other strands. 

Finally, each boundary vertex of $G$ is an endpoint of exactly two strands (one outgoing and one incoming). We join these endpoints together and obtain a planar diagram of a link denoted $\Lplab_G$. See \figref{fig:Lplab}(c), where the points $p$ such that $\arg(S,p)=0$ are marked by the symbol $\mysq$.

 The strands in $G$ that start and end at the boundary vertices give rise to the \emph{strand permutation} $\pig$ of $G$. In this subsection, we will assume that $G$ is \emph{reduced}, i.e., that $G$ has the minimal possible number of faces among all graphs with a given strand permutation. Importantly, in \cref{sec:intro:quivers-point-count}, we will drop this assumption. Postnikov~\cite{Pos} showed that for any $\pi\in \Sn$, there exists a reduced plabic graph $G$ with strand permutation $\pig=\pi$. For example, for $\pi$ given by~\eqref{eq:intro:fp}, one can take $G$ to be the graph in \figref{fig:Lplab}(a). Up to isotopy, the resulting link $\Lplab_G$ depends only on $\pi$ and is denoted $\Lplab_\pi$.

\begin{figure}
\includegraphics[width=1.0\textwidth]{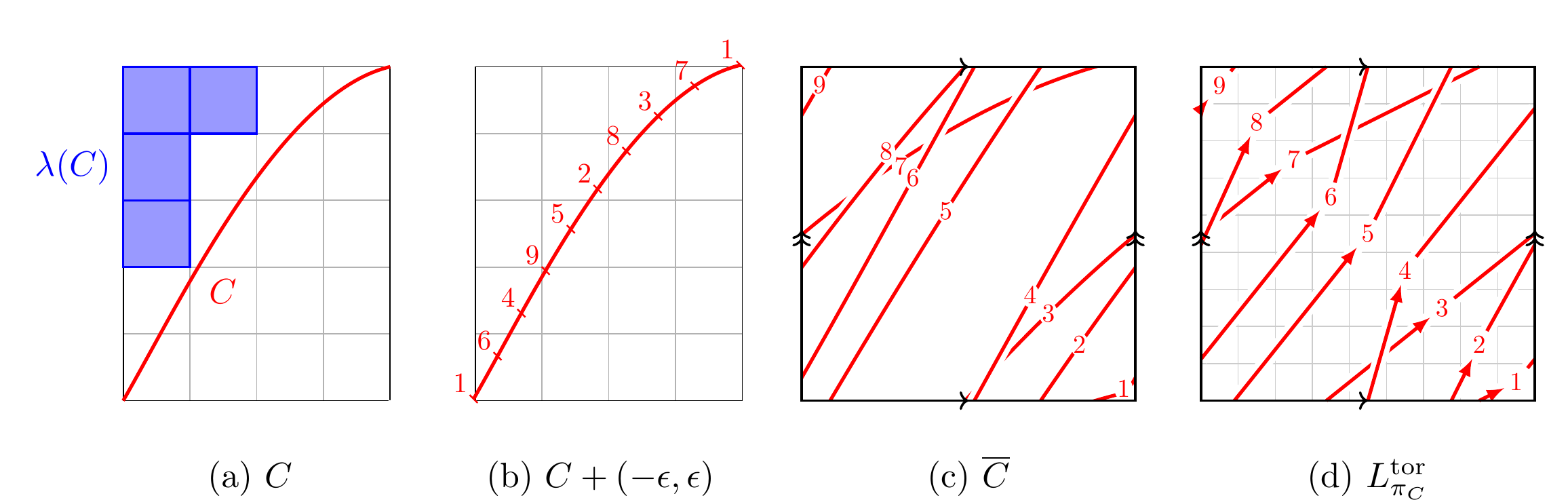}
  \caption{\label{fig:concave} Converting a concave curve $C$ into the link $\Ltor_{\pi_C}$ for some $\pi_C\in\Sn$; see \cref{ex:intro:Cox}.}
\end{figure}

\linksection{Coxeter link} \label{sec:intro:cox_link}
In~\cite[Section~6]{GL_cat_combin}, we studied \emph{concave permutations}, defined as follows. Choose a generic concave curve $C$ inside a $k\times (\n-k)$ rectangle connecting $(0,0)$ to $(k,\n-k)$ as in \figref{fig:concave}(a). We will shift it by the vector $(-\eps,\eps)$ for some small $\eps>0$ as in \figref{fig:concave}(b). Under the natural projection $\R^2\to\T=\R^2/\Z^2$, the curve $C+(-\eps,\eps)$ projects to a curve $\Cproj$ in the unit square $[0,1]^2$. For each self-intersection of $\Cproj$, we draw the segment of the higher slope above the segment of the lower slope; see \figref{fig:concave}(c). The resulting link diagram is isotopic to $\Ltor_\pi$ for a permutation $\pi=\pi_C\in\Sn$ which can be explicitly read off from $\Cproj$ as follows. Label the intersection points of $\Cproj$ with the $y=1-x$ diagonal of $[0,1]^2$ by $d_1,d_2,\dots,d_\n$, proceeding in the northwest direction. Thus, $d_1=(1-\eps,\eps)$ is the projection of the starting and ending points of $C+(-\eps,\eps)$. In \figref{fig:concave}(c), we represent each point $d_i$ by~$i$. Since $C$ was generic, we assume that the points $d_1,d_2,\dots, d_\n$ are pairwise distinct. The permutation $\pi_C$ is defined so that each segment of $C$ connects (in the northeast direction) $d_i$ to $d_{\pi(i)}$ for some $i\in[\n]$. See \figref{fig:concave}(d). We refer to permutations that can be obtained in this way as \emph{concave permutations}. They form a subclass of the class of \emph{repetition-free permutations} introduced in~\cite{GL_cat_combin}.

Let $\la_C=(\la_1,\la_2,\dots,\la_k)$ be the Young diagram inside the $k\times (\n-k)$ rectangle consisting of all unit boxes strictly above $C$, shown in \figref{fig:concave}(a). 
 Consider a sequence $\abf(C)=(a_2,\dots,a_k)$ given by $a_i:=\la_{i-1}-\la_i$ for $2\leq i\leq k$. The \emph{Coxeter link} $\Lcox_C$ is the closure of the braid
\begin{equation}\label{eq:beta_abf_dfn}
  \beta(\abf(C)):=\ell_2^{a_2}\cdots \ell_k^{a_k}\cdot\sigma_1\cdots \sigma_{k-1},
\end{equation}
where $\sigma_1,\dots,\sigma_{k-1}$ are the standard braid group generators of the braid group of $S_k$, and $\ell_i=\sigma_{i-1}\cdots \sigma_1\cdot \sigma_1\cdots \sigma_{i-1}$ are the Jucys--Murphy elements. 

\begin{figure}
\includegraphics[width=0.85\textwidth]{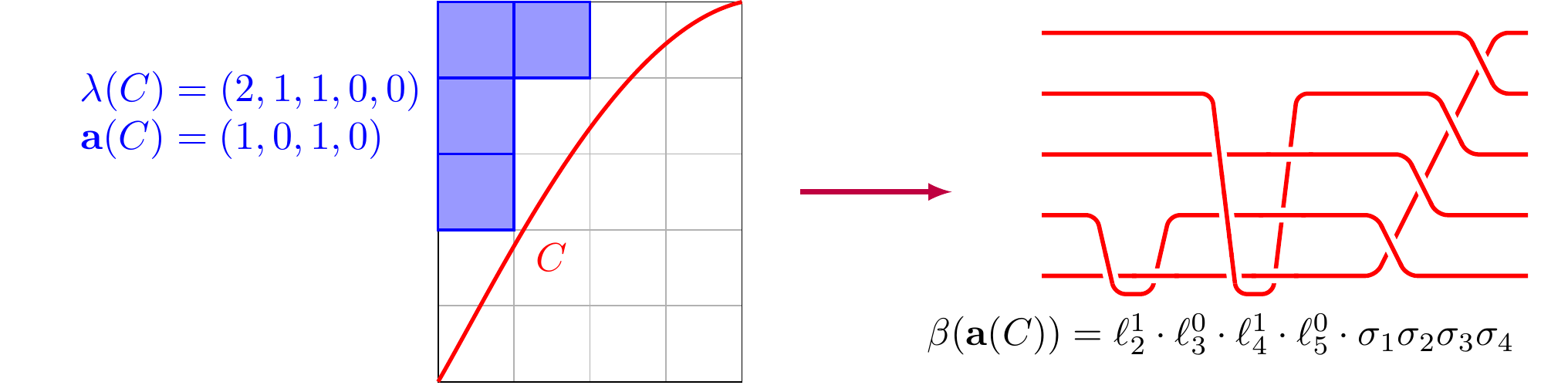}
  \caption{\label{fig:concave_2} Converting a concave curve $C$ into the Coxeter braid $\beta(\abf(C))$.}
\end{figure}
\begin{example}\label{ex:intro:Cox}
Let $C$ be the curve shown in \cref{fig:concave,fig:concave_2}. We have $k=5$ and $\n=9$. The permutation $\pi_C=\left(\!\!\text{
\begin{tabular}{ccccccccc}
$1$ & $2$ & $3$ & $4$ & $5$ & $6$ & $7$ & $8$ & $9$\\
$6$ & $8$ & $7$ & $9$ & $2$ & $4$ & $1$ & $3$ & $5$
\end{tabular}
}\!\!\right)$ can be read off from the labels\footnote{The labels in \figref{fig:concave}(b) are chosen in such a way that when we project $C$ to the unit square, they become ordered along the diagonal $y=1-x$; see \figref{fig:concave}(c).} in \figref{fig:concave}(b): in cycle notation, $\pi_C$ is given by $\pi_C=(1\,6\,4\,9\,5\,2\,8\,3\,7\,1)$. We have $\la_C=(2,1,1,0,0)$, $\abf(C)=(1,0,1,0)$,  and $\beta(\abf(C))=\ell_2\ell_4\sigma_1\sigma_2\sigma_3\sigma_4$; see also~\cite[Figure~13]{GL_cat_combin}.
\end{example}
 Coxeter links and their Khovanov--Rozansky homology have previously been studied in relation to flag Hilbert schemes and generalized shuffle conjectures~\cite{ObRo,GN,GNR,BHMPS}; see also~\cite[Section~7.2]{GL_cat_combin}.

The following is our first main result.
\begin{theorem}\label{thm:isotopy}
  Let $\fp\in\Sn$ be a permutation without fixed points. Then the links $\Lperm_\fp$, $\Ltor_\fp$, $\Lrich_\fp$, and $\Lplab_\fp$ are all isotopic. If $\fp=\fp_C$ for some concave curve $C$ then each of these links is also isotopic to $\Lcox_C$.
\end{theorem}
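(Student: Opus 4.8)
The plan is to prove that the four positroid links $\Lperm_\fp$, $\Ltor_\fp$, $\Lrich_\fp$, $\Lplab_\fp$ are isotopic through a chain of pairwise isotopies, using the toric/grid picture as a convenient pivot, and then to treat the Coxeter link $\Lcox_C$ separately by braid-group manipulation. Throughout I am free to choose convenient diagrams: the excerpt already records that $\Lplab_G$ depends up to isotopy only on the strand permutation $\pig=\fp$, and that $\Ltor_\fp$ is isotopic to the planar and toric grid links of \cref{rmk:grid}.

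First I would identify $\Lperm_\fp$ with $\Ltor_\fp$. Both diagrams record the same chord data --- arcs joining $i$ to $\fp(i)$ together with an over/under rule --- so I would produce an explicit ambient isotopy by cutting the torus $\T$ along the anti-diagonal circle carrying the dots $d_i$: the anti-diagonal unrolls to the boundary circle of the disk, the northeast arrows become the straight chords, and one checks that the rule ``higher slope lies above'' translates into ``larger value $\fp(j)$ lies above.'' This is routine geometric bookkeeping. Next, for $\Ltor_\fp\cong\Lrich_\fp$, I would use the grid-diagram presentation: the vertical-over-horizontal convention lets the planar grid link be read off as a braid closure, and the Grassmannian factorization $\fp=wv^{-1}$ is exactly the combinatorics that sorts the grid's crossings into $\beta(w)\beta(v)^{-1}$. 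Concretely, the $k$-Grassmannian $w$ contributes the rotated Young-diagram block of crossings of \cref{rmk:Rich_Young}, and the elbows replacing some of these crossings to form $\beta(v)^{-1}$ correspond to the empty cells of the staircase; matching the staircase with the Young diagram of $w$ completes this step.

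For $\Lrich_\fp\cong\Lplab_\fp$ I would exploit the freedom in choosing $G$, taking the reduced plabic graph adapted to the factorization $\fp=wv^{-1}$ (e.g.\ the one read off a reduced wiring diagram), so that its strands --- drawn with the $\arg$-based crossing rule and joined at the boundary vertices --- literally trace out the Richardson braid and its closure. The one genuinely diagrammatic subtlety is that the technical segments $S',S''$ inserted at points with $\arg(S,p)=0$ (\cref{fig:over_under}) form isotopy-trivial detours, removable by Reidemeister-I--type moves, so they leave the link type unchanged.

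Finally, for a concave curve $C$ with $\fp=\fp_C$, the toric diagram is read directly off the projected curve $\Cproj$, so it suffices to prove $\Ltor_{\fp_C}\cong\Lcox_C$. Here I would convert $\Cproj$ into a braid and then reduce it, using braid relations together with Markov moves (conjugation and stabilization), to the normal form $\beta(\abf(C))=\ell_2^{a_2}\cdots\ell_k^{a_k}\cdot\sigma_1\cdots\sigma_{k-1}$: the Jucys--Murphy factors $\ell_i^{a_i}$ collect the $a_i=\la_{i-1}-\la_i$ boxes between consecutive rows of $\la_C$, while $\sigma_1\cdots\sigma_{k-1}$ arises from closing up the staircase, and concavity is precisely what forces the crossings to occur in the order realizing this factorization. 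I expect this last step to be the main obstacle: unlike the earlier steps, which are ambient deformations and careful bookkeeping of crossing conventions, it requires a genuine braid-group computation to reach the prescribed Jucys--Murphy normal form.
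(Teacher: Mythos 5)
Your outline follows the same broad strategy as the paper---a chain of pairwise isotopies pivoting through the toric/grid picture, with the Coxeter link handled last---but as written it has two genuine gaps, one of which you acknowledge yourself.

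The main gap is the step $\Ltor_{\fp_C}\cong\Lcox_C$. You propose to convert $\Cproj$ into a braid and then reach the normal form $\beta(\abf(C))=\ell_2^{a_2}\cdots\ell_k^{a_k}\cdot\sigma_1\cdots\sigma_{k-1}$ by ``braid relations together with Markov moves,'' and you explicitly flag this as the main obstacle requiring a computation you do not carry out. That computation is precisely the content of the proof, so deferring it leaves the theorem unproved for concave permutations. The paper's resolution avoids Markov stabilization entirely: it first proves a rigidity lemma (\cref{cor:monotone_curves}) showing that the link of a monotone curve depends only on the set of lattice points below its graph, which lets one replace the concave curve by a piecewise-linear curve through the points $\bigl(\la_{k-r}+\tfrac rk,\,r\bigr)$; the resulting diagram is then manifestly the closure of a $k$-strand braid $\beta_h$ in the solid torus, and one matches $\beta_h$ with $\beta(\abf(C))$ strand-by-strand by checking that the $i$-th strand wraps around the union of the previous ones exactly $a_i=\la_{i-1}-\la_i$ times. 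No stabilization is needed because both braids already live on $k$ strands. If you want to complete your argument, this wrapping-number comparison (or something equivalent) is the missing idea.

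The secondary gap is your direct identification $\Ltor_\fp\cong\Lrich_\fp$ via the grid diagram. You assert that the Grassmannian factorization $\fp=wv^{-1}$ ``sorts the grid's crossings into $\beta(w)\beta(v)^{-1}$,'' with elbows corresponding to ``empty cells of the staircase,'' but the empty cells you are invoking belong to the Le-diagram of $(v,w)$, not to the grid diagram, and you give no argument translating one into the other. The paper sidesteps this by never comparing $\Ltor$ and $\Lrich$ directly: it proves $\Lrich_\fp\cong\Lplab_\fp$ by folding $\beta(v)^{-1}$ onto $\beta(w)$ and matching the result against the plabic graph built from the Le-diagram (where the correspondence between undotted boxes and the crossings of $\beta(v)^{-1}$ is built into the construction), then $\Lplab_\fp\cong\Lperm_\fp$ by straightening the strands of the divide, and finally $\Lperm_\fp\cong\LTo^{}{}$\,---\,sorry, $\Lperm_\fp\cong\Ltor_\fp$ by reflecting the lower-triangular half of the grid diagram across the main diagonal. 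Your reordered chain is legitimate in principle, but the step you have made load-bearing is exactly the one where the combinatorial bookkeeping is least transparent. One further small point: the detours $S',S''$ of \cref{fig:over_under} are not removed by Reidemeister I moves; they cancel because one arc travels entirely above and the other entirely below the intervening strands, i.e.\ by a sequence of Reidemeister II moves (a ``finger move''), which is how the paper contracts them column by column.
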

\noindent We refer to any one of the above links as the \emph{positroid link} of $\fp$.

\begin{remark}
Recently, isotopies between the Richardson link $\Lrich_\pi$ and some other closely related links (namely, closures of juggling braids, cyclic rank matrix braids, and Le-diagram braids)  were independently constructed in~\cite{CGGS}. Isotopies between cyclic rank matrix links and plabic graph links have also been observed in~\cite{STWZ}.
\end{remark}

\subsection{Quiver point count and the HOMFLY polynomial}\label{sec:intro:quivers-point-count}
In this subsection, we consider not necessarily reduced plabic graphs $G$. Throughout the paper, we assume that the interior faces of $G$ are simply connected, i.e., that no interior face of $G$ contains another connected component of $G$ inside of it. For simplicity, we also assume that each plabic graph $G$ is \emph{trivalent}, i.e., that each interior vertex of $G$ has degree $3$. (Recall that the boundary vertices of $G$ are always required to have degree $1$.)

Let $G$ be a (trivalent) plabic graph. The planar dual of $G$ is naturally a directed graph denoted $\QG$. Explicitly, place a vertex of $\QG$ inside each \emph{interior face} of $G$ (i.e., a face not adjacent to the boundary of the disk). For every edge $e$ of $G$ whose endpoints are of different color and such that the two faces $F,F'$ adjacent to $e$ are both interior, $\QG$ contains an arrow between $F$ and $F'$. (These two faces $F,F'$ may or may not be equal.) The direction of the arrow is chosen so that the white endpoint of $e$ is on the left as one moves along this arrow. See \cref{fig:simple}.

\begin{figure}
  \includegraphics[width=1.0\textwidth]{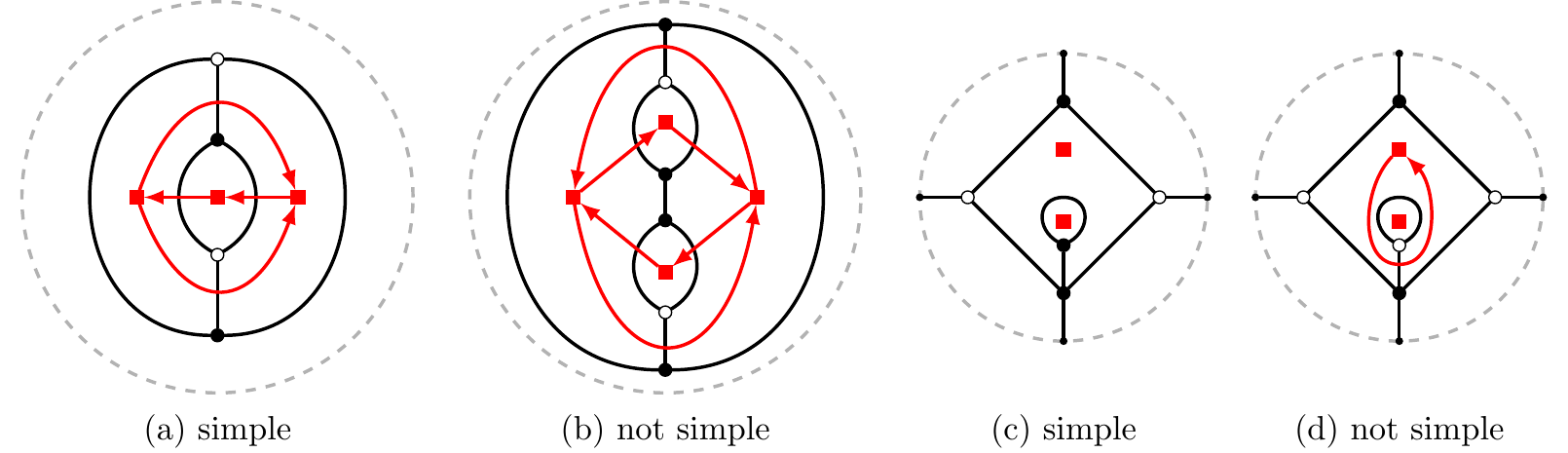}
  \caption{\label{fig:simple} Examples of simple and non-simple plabic graphs $G$ (black) and their planar duals $\QG$ (red).}
\end{figure}

 The following definition is crucial for our analysis.
\begin{definition}
A plabic graph $G$ is called \emph{simple} if the directed graph $\QG$ is a \emph{quiver}, i.e., contains no directed cycles of length $1$ and $2$.
\end{definition}
\noindent For example, the plabic graph $G$ in \figref{fig:simple}(b) is not simple since $\QG$ contains a pair of opposite arrows. The graph $G$ in \figref{fig:simple}(d) is not simple since $\QG$ contains a loop arrow. The graphs in \figref{fig:simple}(a,c) are simple.

To every \emph{\Louise}\footnote{We call a quiver \Louise if it satisfies the ``Louise condition" of~\cite{MSLA,LS}.  This is stronger than the notion of local acyclicity used in \cite{Muller}.} quiver $Q$ (see~\cref{sec:quiver}), we associate a rational function $\RQ(q)$ called the \emph{point count rational function}. It can be computed via an explicit recurrence relation: see~\eqref{eq:RQ_recurrence} and~\eqref{eq:RQ_skein}. The function $\RQ(q)$ has the following interpretation in terms of \emph{cluster algebras}~\cite{FZ}; see~\cref{sec:quiver} for the relevant definitions. 
Let $Q$ be a quiver with $n$ vertices. Consider an ice quiver $\Qice$ with $m$ frozen vertices and mutable part $Q$, and assume that the rows of the $(n+m)\times n$ exchange matrix $\BQice$ of $\Qice$ span $\Z^n$ over $\Z$. Then the cluster algebra $\AQice$ gives rise to a \emph{cluster variety} $\XQice$, and its point count over a finite field $\F_q$ with $q$ elements (for $q$ a prime power) is given by
\begin{equation*}%
  \#\XQice(\F_q)=(q-1)^m\cdot \RQ(q);
\end{equation*}
see \cref{prop:Q_pcnt}.

 A typical example of this phenomenon occurs when $G$ is a reduced plabic graph: then, by~\cite{GL_cluster}, 
  $\AQGice$ is the coordinate ring of the associated \emph{open positroid variety} $\Pio_G$ inside the Grassmannian~\cite{Pos,KLS}. In particular, $(q-1)^{\n-\conn(G)}\RQG(q)$ counts the number of points in $\Pio_G(\F_q)$, where $\n$ is the number of boundary vertices of $G$ and $\conn(G)$ is the number of connected components of $G$.%

Given an (oriented) link $L$, one can define a Laurent polynomial $\HOMP(L)=\HOMP(L;a,z)$ called the \emph{\FLY polynomial}~\cite{HOMFLY,PT} of $L$. It is defined by the skein relation
\begin{equation}\label{eq:HOMFLY_dfn}
  a\HOMP(L_+) - a^{-1} \HOMP(L_-)=z\HOMP(L_0)\quad \text{and}\quad \HOMP(\unkn)=1.
\end{equation}
Here, $\unkn$ denotes the unknot and  $L_+$, $L_-$, $L_0$ are any three links whose planar diagrams locally differ as  follows.
\def\lw{3pt}
\def\lww{10pt}
\def\drcirc{
\draw[line width=0.3pt, dashed] (0,0) circle (1cm);
}
\def\scl{0.4}

\def\DA{30}
\begin{center}
\begin{tabular}{ccc}
\scalebox{\scl}{
\begin{tikzpicture}
\drcirc
\draw[line width=\lw,->,>=latex] (-90+\DA:1)--(90+\DA:1);
\draw[line width=\lww,white] (-90-\DA:1)--(90-\DA:1);
\draw[line width=\lw,->,>=latex] (-90-\DA:1)--(90-\DA:1);
\end{tikzpicture}
} 

&

\scalebox{\scl}{ 
\begin{tikzpicture}
\drcirc
\draw[line width=\lw,->,>=latex] (-90-\DA:1)--(90-\DA:1);
\draw[line width=\lww,white] (-90+\DA:1)--(90+\DA:1);
\draw[line width=\lw,->,>=latex] (-90+\DA:1)--(90+\DA:1);
\end{tikzpicture} 
} 
& 
 
\def\rc{10} 
\scalebox{\scl}{ 
\begin{tikzpicture}
\drcirc
\draw[line width=\lw,->,>=latex,rounded corners=\rc] (-90-\DA:1)--(0,0)--(90+\DA:1);
\draw[line width=\lw,->,>=latex,rounded corners=\rc] (-90+\DA:1)--(0,0)--(90-\DA:1);
\end{tikzpicture} 
} 
\\ 
$L_+$  &  $L_-$  &  $L_0$

\end{tabular}
\end{center}

\begin{figure}
  \includegraphics[width=1.0\textwidth]{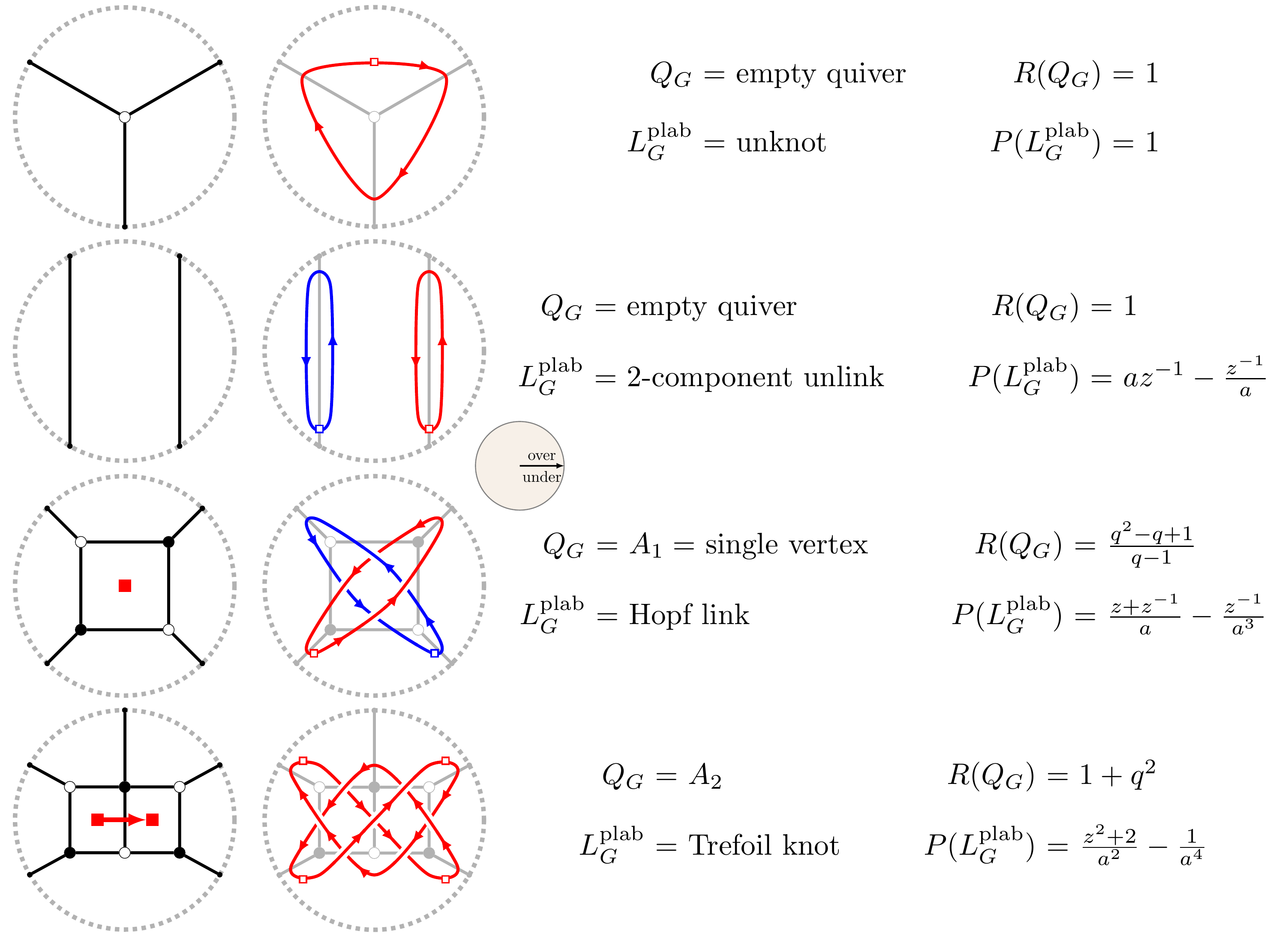}
  \caption{\label{fig:intro_ex} Small plabic graph quivers $\QG$, links $\Lplab_G$, point counts $\RQG(q)$, and \FLY polynomials $\HOMP(\Lplab;a,z)$ illustrating \cref{conj:main}.}
\end{figure}

For a (not necessarily reduced) plabic graph $G$ we let $\Lplab_G$ be the corresponding plabic graph link, defined as in \cref{sec:intro:positroid_links}. Following~\cite{GL_qtcat}, we let  $\Ptop_L(q)$ be obtained from the top $a$-degree term of $\HOMP(L;a,z)$ by substituting $a:=q^{-\frac12}$ and $z:=q^{\frac12}-q^{-\frac12}$. The following conjecture generalizes~\cite[Theorem~1.11]{GL_qtcat} (see also~\cite{STZ,STWZ}).
\begin{conjecture} \label{conj:main}
Let $G$ be a simple plabic graph with $\conn(G)$ connected components.  Then we have
\begin{equation}\label{eq:FLY=pcnt}
  \RQG(q)= (q-1)^{\conn(G)-1}\Ptop_{\Lplab_G}(q).
\end{equation}
\end{conjecture}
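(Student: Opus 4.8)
The plan is to prove \cref{conj:main} by induction on the number of crossings of $\Lplab_G$, matching a skein-type recursion for the point count $\RQG(q)$ with the \FLY skein relation~\eqref{eq:HOMFLY_dfn}. The key structural input is the recurrence~\eqref{eq:RQ_skein}, which expresses $\RQ(q)$ for a \Louise quiver in a form deliberately parallel to~\eqref{eq:HOMFLY_dfn}: processing a single arrow of $Q$ relates the point count of $Q$ to the point counts of two smaller quivers, exactly as resolving a single crossing of a link relates $\HOMP(L_+)$, $\HOMP(L_-)$, and $\HOMP(L_0)$. The proof therefore reduces to building a correspondence under which these two recursions become identical term by term, and then checking a common base case.

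First I would fix the local dictionary between the two sides. A crossing of $\Lplab_G$ occurs where two strands of $G$ meet, and by the construction of $\QG$ such a meeting is recorded by a bicolored edge $e$ of $G$ separating two interior faces, i.e.\ by an arrow of $\QG$. I would show that the three links $L_+$, $L_-$, $L_0$ obtained by resolving this crossing correspond, respectively, to the three quivers appearing in~\eqref{eq:RQ_skein} when the arrow dual to $e$ is processed: the oriented smoothing $L_0$ matches deleting that arrow and merging its two endpoint faces, while the crossing change $L_+\leftrightarrow L_-$ accounts for the remaining terms. Passing through the normalization $a=q^{-\frac12}$, $z=q^{\frac12}-q^{-\frac12}$ that defines $\Ptop_{\Lplab_G}(q)$, the \FLY skein relation~\eqref{eq:HOMFLY_dfn} becomes precisely~\eqref{eq:RQ_skein}, up to the powers of $(q-1)$ that bookkeep the change in $\conn(G)$.

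Next I would establish the base case. When $\QG$ has no arrows, every interior face of $G$ is isolated, the associated cluster variety is a torus, and so $\RQG(q)$ is a power of $(q-1)$ by \cref{prop:Q_pcnt}; meanwhile $\Lplab_G$ is an unlink whose \FLY polynomial, after the substitution defining $\Ptop$, produces the matching power of $(q-1)$. The exponent $\conn(G)-1$ in~\eqref{eq:FLY=pcnt} records the difference between these two normalizations, and the two computations agree.

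The hard part will be guaranteeing that the recursion closes up for an \emph{arbitrary} simple plabic graph. Two conditions must be maintained simultaneously along the induction: on the quiver side, the \Louise condition must be preserved so that~\eqref{eq:RQ_skein} continues to apply, and on the link side, both the smoothed diagram $L_0$ and the crossing-changed diagram must again be realizable as plabic graph links of simple plabic graphs. For \emph{leaf recurrent} graphs one can always locate a distinguished edge (at a recurrent leaf) where both requirements hold and the local picture simplifies, which is exactly the mechanism behind \cref{thm:FLY=pcnt_leaf_rec}, and which already covers reduced plabic graphs and plabic fences. Extending the argument past this class---producing, for every simple plabic graph, an edge along which the geometric resolution and the quiver operation remain compatible and the induction terminates within the relevant classes---is the central obstacle, and is the reason the general statement remains a conjecture.
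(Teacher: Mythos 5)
Your overall strategy coincides with the paper's: \cref{conj:main} is genuinely open, and the paper establishes it only for leaf recurrent plabic graphs (\cref{thm:FLY=pcnt_leaf_rec}) by matching the \FLY skein relation \eqref{eq:HOMFLY_dfn}, applied at the crossing inside a boundary leaf face, with the point-count recurrence \eqref{eq:RQ_skein} for a \Louiseleaf. Your closing paragraph correctly identifies why the argument does not close up in general: at an arbitrary crossing neither resolution need stay within the class of simple plabic graphs, nor need the quiver side stay \Louise.

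Two of your intermediate claims are wrong as stated, though. First, the base case. If $\QG$ is isolated but nonempty, the cluster variety is \emph{not} a torus and $\Lplab_G$ is \emph{not} an unlink: a single isolated vertex already gives $R(A_1;q)=\frac{q^2-q+1}{q-1}$ by \eqref{eq:A1}, which is not a power of $(q-1)$, and the corresponding link is the Hopf link. The correct base case is that each connected component of $G$ with isolated dual quiver yields a connected sum of Hopf links, so that $\Ptop_{\Lplab_G}(q)=\bigl(\frac{q^2-q+1}{q-1}\bigr)^n$ matches $R(\QG;q)$; proving that the link really is such a connected sum is the content of \cref{prop:plabic_isolated} and requires a genuine combinatorial argument about loop edges and the tree-like arrangement of interior faces, not a one-line observation. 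Second, your local dictionary is too optimistic. The recurrence \eqref{eq:RQ_skein} removes \emph{vertices} $i$ and $\{i,j\}$ from $Q$, and it is only available when $i$ is a \Louiseleaf; correspondingly, the skein move is applied only at a \emph{boundary} leaf face, where one side of the edge $e$ is a boundary face, so that deleting $e$ deletes the quiver vertex $F$ (and deleting $u,v$ deletes $F$ and $F'$) rather than merging two interior faces. At a generic edge the oriented smoothing merges two interior faces, which changes $\QG$ in a way not governed by \eqref{eq:RQ_skein}, and the crossing-changed graph need not be simple. These are precisely the points your induction would have to repair before it runs even in the leaf recurrent case, and they are where the general conjecture resists the skein-theoretic attack.
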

\noindent For examples, see \cref{fig:intro_ex}, \cref{ex:Dynkin}, and \cref{sec:conjectures-examples}.

When the plabic graph $G$ is reduced, \cref{conj:main} becomes~\cite[Theorem~1.11]{GL_qtcat}. We present a different proof in \cref{sec:sub_skein-relation} by giving a plabic graph interpretation of the \FLY skein relation~\eqref{eq:HOMFLY_dfn}. More generally, in \cref{sec:leaf-recurr-plab}, we introduce a class of \emph{leaf recurrent plabic graphs} and prove \cref{conj:main} for them. 

\begin{theorem}\label{thm:FLY=pcnt_leaf_rec}
  \Cref{conj:main} holds for leaf recurrent plabic graphs.
\end{theorem}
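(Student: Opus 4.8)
The plan is to establish \eqref{eq:FLY=pcnt} by induction on the number of interior faces of $G$, which equals the number of vertices of the dual quiver $\QG$, by showing that both sides obey the same recursion with matching base cases. The base case is when $G$ has no interior faces: then $\QG$ is empty, $\RQG(q)=1$, and $\Lplab_G$ is an unlink whose top $a$-degree \FLY data contributes exactly the compensating power of $(q-1)$, so that both sides equal $1$. For the inductive step I would exploit the two recurrences available for the left-hand side, namely the point count recurrences \eqref{eq:RQ_recurrence} and \eqref{eq:RQ_skein}, and match them against the \FLY skein relation \eqref{eq:HOMFLY_dfn} governing the right-hand side.

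The key geometric input is the plabic graph interpretation of the skein relation developed in \cref{sec:sub_skein-relation}. A leaf recurrent $G$ carries a \Louiseleaf, that is, a distinguished local configuration at which the skein relation can be applied to a crossing of $\Lplab_G$. I would verify that the crossing change $L_-$ and the oriented smoothing $L_0$ are again plabic links $\Lplab_{G'}$ and $\Lplab_{G_0}$, where $G'$ and $G_0$ arise from $G$ by deleting or contracting the edge dual to the recurrent leaf. The crucial structural claim, which is exactly the substance of the leaf recurrent hypothesis, is that these graphs have strictly fewer interior faces, remain simple (so that $\QX(G')$ and $\QX(G_0)$ are genuine quivers with no loops or $2$-cycles), and again satisfy leaf recurrence; the inductive hypothesis then applies to each of them.

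To match coefficients, rewrite \eqref{eq:HOMFLY_dfn} as $\HOMP(L_+)=a^{-2}\HOMP(L_-)+a^{-1}z\,\HOMP(L_0)$ and substitute $a=q^{-\frac12}$, $z=q^{\frac12}-q^{-\frac12}$, so that $a^{-2}=q$ and $a^{-1}z=q-1$. After isolating the top $a$-degree terms, this should reproduce term by term the quiver recurrences \eqref{eq:RQ_recurrence} and \eqref{eq:RQ_skein} defining $\RQG(q)$. The change in the number of connected components of the link under the smoothing, recorded on the quiver side by the corresponding local move on $\QG$, is what reconciles the prefactor $(q-1)^{\conn(G)-1}$ across the two sides. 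The reduced case of \eqref{eq:FLY=pcnt}, which equals \cite[Theorem~1.11]{GL_qtcat} and in which reduced plabic graphs are leaf recurrent, serves both as a consistency check and as a terminal case of the recursion.

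The main obstacle will be the bookkeeping of the top $a$-degree under the skein relation. Since $\HOMP(L_+)$, $\HOMP(L_-)$, and $\HOMP(L_0)$ are in general supported in different ranges of $a$-degree, passing from the full skein identity to a clean recursion for the truncation $P^\top$ requires precise control of the maximal $a$-degree of each resolution, and in particular one must rule out any cancellation that would lower the leading term. I expect the leaf recurrent condition to be precisely what forces the relevant $a$-degrees to align, so that the top-degree coefficients satisfy the same linear recursion as the point counts. The remaining technical points are to confirm that the local move never creates a loop or a $2$-cycle in the dual quiver and preserves the \Louise property needed for the point count $\RQ(q)$ to be defined, and to verify the base and terminal cases of the leaf recurrent recursion directly.
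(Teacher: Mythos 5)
Your overall strategy is the same as the paper's: match the \FLY skein relation \eqref{eq:HOMFLY_dfn}, applied at the crossing produced by a boundary leaf face (\cref{lemma:bdry_leaf_skein_FLY}), against the quiver leaf recurrence \eqref{eq:RQ_skein}, with the coefficient identification $a^{-2}=q$, $a^{-1}z=q-1$ exactly as you write it. However, there are two concrete gaps. First, your base case is wrong. The recursion defining leaf recurrence (\cref{dfn:leaf_recurrent_plabic_graph}) does not terminate at graphs with no interior faces; it terminates at graphs whose quiver $\QG$ is \emph{isolated}, which may have arbitrarily many vertices and admits no boundary leaf face to recurse on. An induction on the number of interior faces therefore gets stuck there. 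Handling this terminal case is a genuine piece of work: one must show (\cref{prop:plabic_isolated}) that a connected simple $G$ with isolated $\QG$ has $\Lplab_G$ equal to a connected sum of $n$ Hopf links, reduce the disconnected case via \cref{prop:FLYsum}, and then check $\Ptop_{\Lplab_G}(q)=\bigl(\tfrac{q^2-q+1}{q-1}\bigr)^n=\RQG(q)$ using \eqref{eq:Hopf} and \eqref{eq:A1}. Your unlink computation covers only $n=0$.

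Second, rule (c) of \cref{dfn:leaf_recurrent_plabic_graph} closes the class under move equivalence, so a leaf recurrent graph need not itself carry a boundary leaf face even when its quiver is not isolated; the proof must be a structural induction on the three generating rules, and one must separately observe that both sides of \eqref{eq:FLY=pcnt} are invariant under the moves of \cref{fig:moves} (mutation invariance of $\RQ(q)$ together with \cref{prop:simple_vs_moves} on one side, isotopy invariance of $\Lplab_G$ on the other). You omit this step entirely. Finally, the top $a$-degree bookkeeping you flag as the main obstacle is real but is resolved not by any special feature of the leaf recurrent condition beyond the induction itself: one proves the degree formula \eqref{eq:deg_a} simultaneously by the same recursion, so that $a^{-2}\HOMP(L_-)$ and $a^{-1}z\HOMP(L_0)$ contribute at the \emph{same} top $a$-degree, and no cancellation occurs there because after specialization the two contributions $q\Ptop_{L_-}(q)$ and $(q-1)\Ptop_{L_0}(q)$ have different $q$-degrees ($n-1$ versus $n$), so their sum is nonzero of degree $n$. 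You correctly anticipate the shape of this argument but leave it as an expectation rather than carrying it out.
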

 The fact that reduced plabic graphs are leaf recurrent was shown in~\cite[Remark~4.7]{MSLA}. Another natural subclass of leaf recurrent plabic graphs are the \emph{plabic fences} of~\cite[Section~12]{FPST}.
 A \emph{plabic fence} is a plabic graph obtained by drawing $k$ horizontal strands and inserting an arbitrary number of black-white and white-black bridges between them; see \cref{fig:plabic_fence} for an example and \cref{sec:plabic-fences} for a precise definition.
\begin{figure}
  \includegraphics[width=0.5\textwidth]{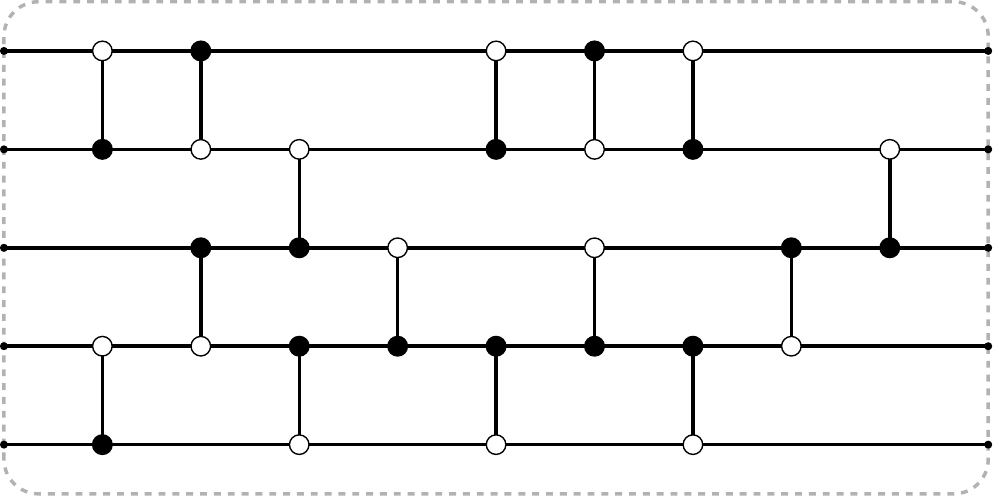}
  \caption{\label{fig:plabic_fence} A plabic fence.}
\end{figure}

\begin{proposition}\label{prop:reduced_and_fence_are_leaf_rec}
Reduced plabic graphs and plabic fences are leaf recurrent. In particular, \cref{conj:main} holds for these classes of plabic graphs.
\end{proposition}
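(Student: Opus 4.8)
The plan is to establish leaf recurrence for the two families separately; the final sentence of the proposition is then immediate from \cref{thm:FLY=pcnt_leaf_rec}. The reduced case is, up to a translation of conventions, \cite[Remark~4.7]{MSLA}: the quiver $\QG$ of a reduced plabic graph is shown there to admit the \Louiseleaf decomposition underlying the Louise property. The work here is to check that the removal procedure of \cref{sec:leaf-recurr-plab}, applied to a reduced plabic graph, reproduces exactly that decomposition — both are recursions that delete a distinguished face/vertex and leave a smaller quiver in the same class — so I would align the two bookkeeping schemes and quote \cite{MSLA}.

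For plabic fences the argument should be a clean induction on the number of bridges. Recall (\cref{sec:plabic-fences}) that a plabic fence consists of $k$ horizontal strands together with black--white and white--black bridges; its bounded faces sit in the horizontal ``rows'' between consecutive strands, and $\QG$ is the resulting fence quiver whose arrows cross the bridges. The base case is a fence with no bridges, where there are no bounded faces, $\QG$ is empty, and leaf recurrence holds vacuously. For the inductive step I would pick an extremal bridge $b$ in its row. If $b$ bounds a corner bounded face $F$, then $F$ touches the rest of the fence through a single bridge and is therefore a \Louiseleaf of $\QG$; if $b$ bounds no bounded face then deleting it leaves $\QG$ unchanged. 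In either case, deleting $b$ produces a plabic fence $G'$ with one fewer bridge, whose quiver $Q_{G'}$ is again a fence quiver; by induction $G'$ is leaf recurrent, and prepending the removal of $F$ (when present) exhibits $G$ as leaf recurrent.

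The main obstacle I anticipate is definitional rather than geometric. The leaf-recurrent move of \cref{sec:leaf-recurr-plab} is phrased directly on plabic graphs, so I must verify that ``delete an extremal bridge of a fence'' is literally an instance of that move, paying attention to the colouring of the two endpoints of $b$ (the black--white and white--black cases flip the orientation of the dual arrow at $F$) and to the requirement that no $1$- or $2$-cycle is created, i.e.\ that the resulting graph stays simple. A short case analysis on the colour of $b$, using that the extremal corner face always borders exactly one other bounded face, should handle every configuration and complete the proof.
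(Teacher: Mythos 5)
Your treatment of reduced plabic graphs matches the paper: both simply invoke \cite[Remark~4.7]{MSLA} (recorded as \cref{thm:MSLA}), and the final sentence of the proposition is indeed immediate from \cref{thm:FLY=pcnt_leaf_rec}. The genuine gap is in the plabic fence argument. Your inductive step asserts that an extremal bridge of a fence bounds a ``corner'' face that meets the rest of the fence through a single bridge and is therefore a leaf. This is false in general: the face between two consecutive bridges of a row is also bordered by wire segments above and below, and every bridge of an adjacent row whose endpoint lies on one of those segments contributes a further quiver arrow at that face. Concretely, for the fence of the word $(1,2,1,2,1)$ on three wires, the quiver $\QG$ is a triangle in which every vertex has degree $2$; no interior face of this fence is a leaf of $\QG$, so no choice of extremal bridge does what you claim. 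A second, related problem is that \cref{dfn:leaf_recurrent_plabic_graph}(d) does not ask for a quiver-level recurrent leaf but for a \emph{boundary leaf face}: a bigon bounded by two parallel edges between vertices of opposite colors, one side of which faces a boundary face. In a fence such a bigon arises only from a repeated letter $(i,i)$ sitting at the boundary of the word, which a generic fence does not visibly contain.

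The paper's proof supplies exactly the missing mechanism: it inducts on the length of the double braid word and uses the double braid moves of \cref{sec:plabic-fences} (braid relations, commutations, sign flips at the ends, and the resulting cyclic conjugation) to normalize the word. If the word is not reduced, these moves produce a factor $(i,i)$ and transport it to the front of the word, where the double bridge becomes a bigon adjacent to a boundary face, i.e.\ a genuine boundary leaf face; deleting one or both of its edges yields the fences of the two shorter words required by \cref{dfn:leaf_recurrent_plabic_graph}(d), to which the induction hypothesis applies. If the word is reduced, the fence is a reduced plabic graph and \cref{thm:MSLA} applies (alternatively, conjugation reduces it to a minimal-length representative of its conjugacy class, whose fence has no interior faces). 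Without this word-level manipulation---in particular without the dichotomy between reduced and non-reduced words---the induction you propose cannot get started on examples such as $(1,2,1,2,1)$.
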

\noindent This result gives a new proof of~\cite[Theorem~1.11]{GL_qtcat}.

A primary difficulty in showing \cref{conj:main} in full generality comes from the examples described in \cref{sec:notleaf,sec:notLA}. \Cref{sec:notleaf} explains that the simple plabic graph $G$ in \figref{fig:simple}(a) is not leaf recurrent. The quiver $\QG$ is \Louise (in fact, it is mutation equivalent to an acyclic quiver), and thus the point count polynomial $\RQG(q)$ can be easily checked to coincide with $\Ptop_{\Lplab_G}(q)$. \cref{sec:notLA} describes a simple plabic graph $G$ such that $\QG$ is not locally acyclic, in which case the task of computing $\RQG(q)$ becomes much harder.

Both sides of~\eqref{eq:FLY=pcnt} are rational functions in $q$ which are specializations of rational functions in $(q,t)$ arising from the cohomology of cluster varieties on the one hand and Khovanov--Rozansky link homology on the other hand. We discuss these more general conjectures in \cref{sec:clust-cohom-vs-link-hom}.

\section{Positroid combinatorics}\label{sec:positr-comb}
In this section, we review some background on the combinatorics of plabic graphs and the associated objects.

\subsection{Bounded affine permutations}\label{sec:bound-affine-perm}
A \emph{$(k,\n)$-bounded affine permutation}~\cite{KLS} is a bijection $f:\Z\to\Z$ such that 
\begin{itemize}
\item $f$ is \emph{$\n$-periodic}: $f(i+\n)=f(i)+\n$ for all $i\in \Z$,
\item  $i\leq f(i)\leq i+\n$ for all $i\in\Z$, and %
\item $\sum_{i=1}^\n(f(i)-i)=k\n$.
\end{itemize}
The set of $(k,\n)$-bounded affine permutations is denoted $\Boundkn$. Each $f\in\Boundkn$ gives rise to a unique permutation $\fbar\in\Sn$ defined by $\fbar(i)\equiv f(i)\pmod \n$ for each $i\in[\n]$. Given $f\in\Boundkn$, an integer $i\in\Z$ is called a \emph{loop} (resp., a \emph{coloop}) of $f$ if $f(i)=i$ (resp., $f(i)=i+\n$). Thus, $\fbar$ has no fixed points if and only if $f$ has no loops and no coloops. In this case, $f$ is uniquely determined by $\fbar$, and the integer $k$ is recovered from $\fbar$ as follows:
\begin{equation*}%
  k=\k(\fbar):=\#\{i\in[\n]\mid \fbar(i)<i\}.
\end{equation*}
We define $\taukn\in\Boundkn$  by
\begin{equation*}%
  \taukn(i)=
  \begin{cases}
    i, &\text{if $1\leq i\leq \n-k$,}\\
    i+\n, &\text{if $\n-k+1\leq i\leq \n$,}
  \end{cases}
\end{equation*}
and extend it to a function $\taukn:\Z\to\Z$ by $\n$-periodicity.

Let $\Qkn:=\{(v,w)\in \Sn\times \Skn\mid v\leq w\}$, where $\leq$ denotes the Bruhat order on $\Sn$. Given a permutation $u\in\Sn$, we can extend it to an $\n$-periodic map $\tilde u:\Z\to\Z$ defined by $\tilde u(i)=u(i)$ for $i\in[\n]$. For $(v,w)\in\Qkn$, let $f_{v,w}:=\tilde w\taukn\tilde v^{-1}$. By~\cite[Proposition~3.15]{KLS}, the map $(v,w)\mapsto f_{v,w}$ gives a bijection $\Qkn\xrasim \Boundkn$. Taking the inverse of this map, we obtain the following result, justifying the definition of a Richardson link in \cref{sec:intro:Rich_link}.
\begin{corollary}
If $\fp\in\Sn$ has no fixed points then it can be uniquely factored as a product $\fp=wv^{-1}$, where $(v,w)\in\Qkn$ and $k=\k(\fp)$.
\end{corollary}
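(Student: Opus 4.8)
The plan is to invert the bijection $\Qkn\xrasim\Boundkn$, $(v,w)\mapsto f_{v,w}$, supplied by~\cite[Proposition~3.15]{KLS}, and to check that reducing modulo $\n$ turns the identity $f_{v,w}=\tilde w\,\taukn\,\tilde v^{-1}$ into the desired factorization $\fp=wv^{-1}$.

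First I would set $k:=\k(\fp)$ and produce the bounded affine permutation lifting $\fp$. As recalled just before the statement, a permutation has no fixed points precisely when its lift $f$ has no loops and no coloops; in that case $f$ is uniquely determined by $\fbar$, and the index $k$ is forced to equal $\k(\fbar)$. Thus there is a unique $f\in\Boundkn$ with $\fbar=\fp$.

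Next I would apply the KLS bijection: since $f\in\Boundkn$, there is a unique pair $(v,w)\in\Qkn$ with $f_{v,w}=f$, i.e.\ $\tilde w\,\taukn\,\tilde v^{-1}=f$. It then remains to identify the underlying permutation $\fbar$ with $wv^{-1}$. Here I would use that $\taukn(m)\in\{m,m+\n\}$ for every $m\in\Z$ (this follows from the definition on $[\n]$ together with $\n$-periodicity), so that $\taukn(m)\equiv m\pmod\n$, and combine it with the periodicity relations $\tilde w(m+\n)=\tilde w(m)+\n$ and $\tilde v^{-1}(m+\n)=\tilde v^{-1}(m)+\n$. For $i\in[\n]$ this gives
\[
  \fp(i)=\fbar(i)\equiv f(i)=\tilde w\bigl(\taukn(\tilde v^{-1}(i))\bigr)\equiv w\bigl(v^{-1}(i)\bigr)\pmod\n,
\]
which is exactly $\fp=wv^{-1}$ in the right-to-left convention of the footnote.

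For uniqueness I would run the same argument backwards: any factorization $\fp=w'(v')^{-1}$ with $(v',w')\in\Qkn$ and $k=\k(\fp)$ yields $f_{v',w'}\in\Boundkn$ with $\overline{f_{v',w'}}=\fp$; since $\fp$ is fixed-point-free this $f_{v',w'}$ has no loops or coloops and hence coincides with the unique lift $f$, so injectivity of the KLS bijection forces $(v',w')=(v,w)$. (The same reasoning rules out a factorization with any other Grassmannian index, since the lift of a fixed-point-free $\fp$ determines $k$.) The only genuinely nontrivial input is~\cite[Proposition~3.15]{KLS}; granting it, the one point to verify carefully is the compatibility of the mod-$\n$ reduction displayed above, namely that $\taukn$ disappears upon reduction and that the $\n$-periodicity of $\tilde w$ and $\tilde v$ lets one replace the affine action by the finite one. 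I expect no further obstacle.
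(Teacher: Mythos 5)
Your proposal is correct and follows exactly the route the paper takes: the paper deduces the corollary immediately by inverting the KLS bijection $(v,w)\mapsto f_{v,w}$, using that a fixed-point-free $\fp$ has a unique lift $f\in\Boundkn$ with $k=\k(\fp)$. Your write-up simply makes explicit the mod-$\n$ reduction step that the paper leaves implicit.
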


The top-dimensional open positroid variety corresponds to the bounded affine permutation $\fkn\in\Boundkn$ sending $i\mapsto i+k$ for all $i\in\Z$. We let $\pikn\in\Sn$ be the permutation sending $i\mapsto i+k$ modulo $\n$ for all $i\in[\n]$.

\subsection{Plabic graphs}\label{sec:plabic-graphs}

\begin{figure}
  \includegraphics[width=0.7\textwidth]{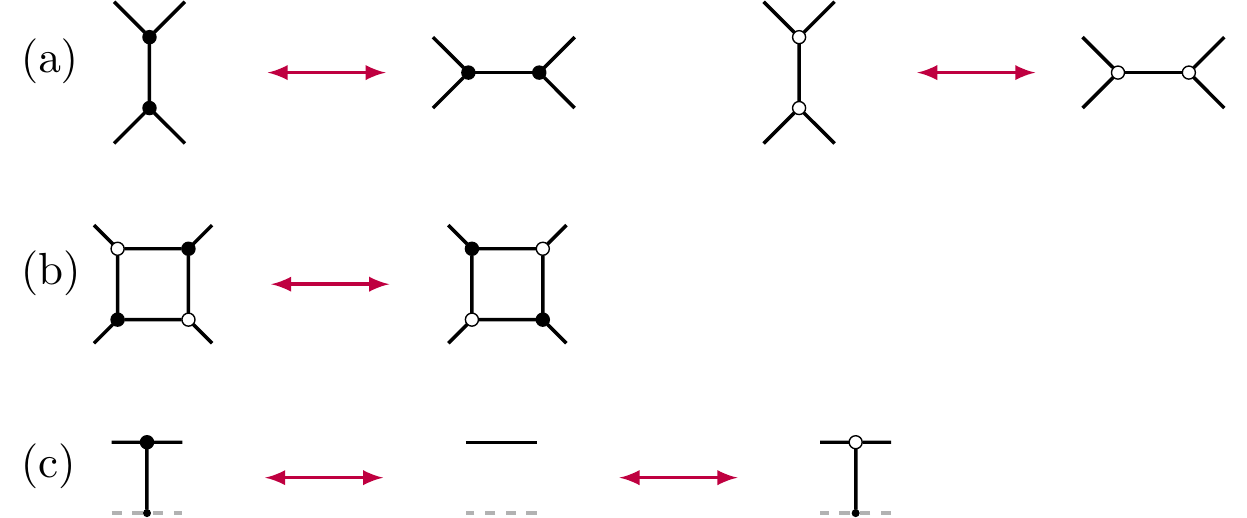}
  \caption{\label{fig:moves} Local moves for plabic graphs: (a) contraction-uncontraction move; (b) square move; (c) tail addition/removal.}
\end{figure}

Let $G$ be a plabic graph. Recall from \cref{sec:intro:plabic_link} that $G$ must be nonempty, that the boundary vertices of $G$ are labeled by $1,2,\dots,\n$ in clockwise order, and that the strand permutation $\pig:[\n]\to[\n]$ sends $i\mapsto j$ whenever the strand starting at $i$ terminates at $j$. We always assume that our plabic graphs $G$ are such that $\pig$ has no fixed points, although most of our constructions can be easily extended to this case.

\begin{definition}[\cite{Pos}]
We say that $G$ is \emph{reduced} if it has the minimal number of faces among all plabic graphs with strand permutation $\pig$.
\end{definition}
Alternatively~\cite[Theorem~13.2]{Pos}, a plabic graph $G$ is reduced if and only if $G$ has no closed strands, no self-intersecting strands, and no pairs of strands forming a \emph{bad double crossing} shown in \figref{fig:double}(right). \emph{Good double crossings} shown in \figref{fig:double}(left) are allowed.

We consider several types of \emph{local moves} on plabic graphs, shown in \cref{fig:moves}. 
 It was shown in~\cite{Pos} that every permutation $\pi\in\Sn$ is the strand permutation of some reduced plabic graph $G$, and that moreover, any two reduced plabic graphs with the same strand permutation are related by a sequence of the moves (a) and (b) in \cref{fig:moves}. Below we review one construction of plabic graphs which will be particularly useful to us.

\subsection{Le-diagrams}
Let $\la=(\la_1\geq\la_2\geq\cdots\geq\la_k\geq0)$ be a partition, which we identify with its Young diagram. We assume that this Young diagram fits inside a $k\times (\n-k)$ rectangle, i.e., satisfies $\la_1\leq \n-k$. We draw Young diagrams using \emph{English} (or \emph{matrix}) \emph{notation}. We use matrix coordinates for the boxes of $\la$, thus, row $i$ contains boxes with coordinates $(i,j)$ for $1\leq j\leq \la_i$, and box $(1,1)$ is the top left box.

A \emph{Le-diagram} $\Gamma$ \emph{of shape $\la$} is a way of placing a dot in some of the boxes of $\la$ so that every box that is below a dot in the same column and to the right of a dot in the same row must also contain a dot.  
 An example of a Le-diagram of shape $\la=(3,3,3,1)$ is shown in \figref{fig:Gamma-to-L'}(left). Le-diagrams whose shape fits inside a $k\times(\n-k)$ rectangle are in bijection with the elements of $\Qkn$ and $\Boundkn$. Given a Le-diagram $\Gamma$ of shape $\la$, one can read off the corresponding pair $(v,w)\in\Qkn$ as follows. First, $w\in\Skn$ is the $k$-Grassmannian permutation corresponding to $\la$. Specifically, let us label the unit steps in the southeast boundary of $\la$ by $1,2,\dots,\n$ in increasing order from the northeast to the southwest corner. Then the labels of the vertical steps form a $k$-element subset of $[\n]$, and this set is precisely $\{w(\n-k+1),\dots,w(\n)\}$. This determines $w$ uniquely. Alternatively, we can label each box $(i,j)$ of $\la$ with the simple transposition $s_{k+j-i}$, and then a reduced word for $w$ is obtained by reading these simple transpositions in the northwest direction. Thus, any reduced word for $w$ ends with $s_k$ which labels the box $(1,1)$. To obtain a reduced word for $v$, we read these simple transpositions but ignore the ones labeled by dots in $\Gamma$. Conversely, given $(v,w)\in\Qkn$, the shape $\la$ of $\Gamma$ is reconstructed from $w$, and the empty boxes of $\Gamma$ correspond to the rightmost subexpression for $v$ inside the (unique up to commutation) reduced word for $w$.
\begin{example}
For the Le-diagram in \figref{fig:Gamma-to-L'}(left), we have 
\begin{equation*}%
w= s_7 s_3 s_4 s_5 s_6 s_2 s_3 s_4 s_5 s_1 s_2 s_3 s_4
\quad\text{and}\quad
v= s_6 s_3 s_2.
\end{equation*}
\end{example}

\begin{figure}
  \includegraphics[width=0.7\textwidth]{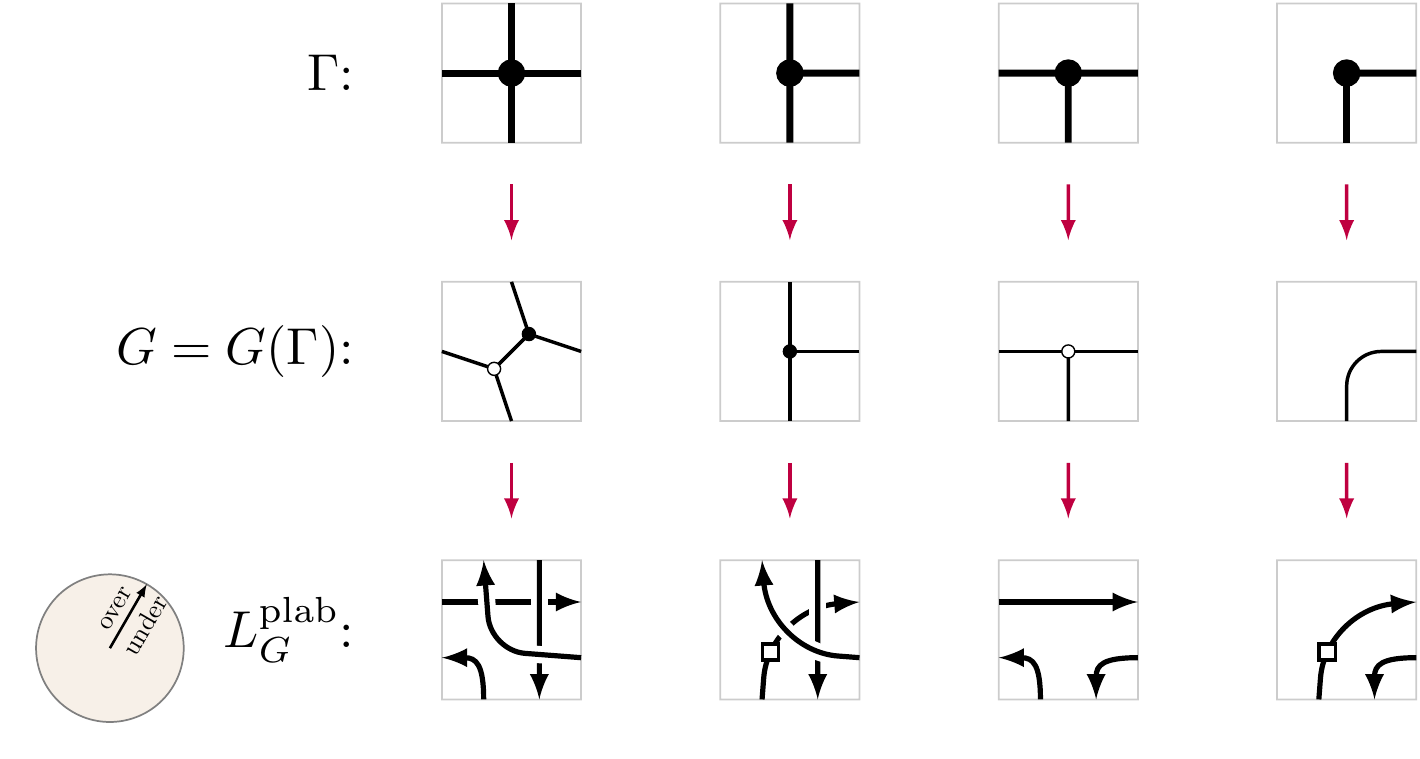}
  \caption{\label{fig:Le_to_G} Converting a Le-diagram (top row) into a plabic graph (middle row) and into a link (bottom row).}
\end{figure}

A Le-diagram $\Gamma$ can be converted into a plabic graph $G(\Gamma)$ using the local rules shown in the first two rows of \cref{fig:Le_to_G}. This plabic graph is always reduced and has strand permutation $\fp=wv^{-1}$, where $(v,w)\in\Qkn$ are recovered from $\Gamma$ using the above procedure.

\subsection{Properties of plabic graph links}\label{sec:plabic_links_properties}
Let $G$ be a reduced plabic graph with strand permutation $\pi=\pig$. Recall the description of the plabic graph link $\Lplab_\pi=\Lplab_G$ from \cref{sec:intro:plabic_link}. It may appear from this description that $\Lplab_G$ depends on the precise way of drawing the strands in $G$ as smooth curves, or that $\Lplab_G$ changes if one rotates the graph $G$ (since the over/under-crossings information depends on the complex arguments of the strands). However, it turns out that the link $\Lplab_G$ is in fact independent of these choices, as explained in~\cite{ACampo,STWZ,FPST}. Let us give a more invariant description of $\Lplab_G$.

Recall that $G$ is embedded in a disk $\disk:=\{x\in\C: |x|\leq 1\}$. Consider the solid torus $\disk\times S^1$, and consider an equivalence relation $\sim$ on it defined as follows: for $(x,y),(x',y')\in \disk\times S^1$, write $(x,y)\sim(x',y')$ if $x,x'\in\partial \disk$ belong to the boundary of the disk and $x=x'$. In other words, we collapse each fiber $x\times S^1$, where $x\in\partial \disk$, to a point. The resulting quotient space $\disk\times S^1/\sim$ is homeomorphic to a $3$-dimensional sphere $S^3$.

An \emph{oriented divide} is a smooth immersion of a union of oriented circles (called \emph{branches}) into $\disk$. 
 Divides are required to satisfy certain genericity conditions, e.g., that any intersections between the branches must be transversal and belong to the interior of $\disk$, and that there have to be no triple intersections; see~\cite[Definitions~2.1 and~8.1]{FPST} for a complete list. 

Given a branch $\gamma$ of an oriented divide $D$, we lift each point of $\gamma$ to $\disk\times S^1$ by letting the $S^1$ coordinate be the complex argument of the tangent vector of $\gamma$ at this point. By the genericity assumptions, we obtain an embedding of a union of oriented circles into $S^3$, i.e., a link, denoted $L_D$.

\begin{figure}
  \includegraphics[width=0.7\textwidth]{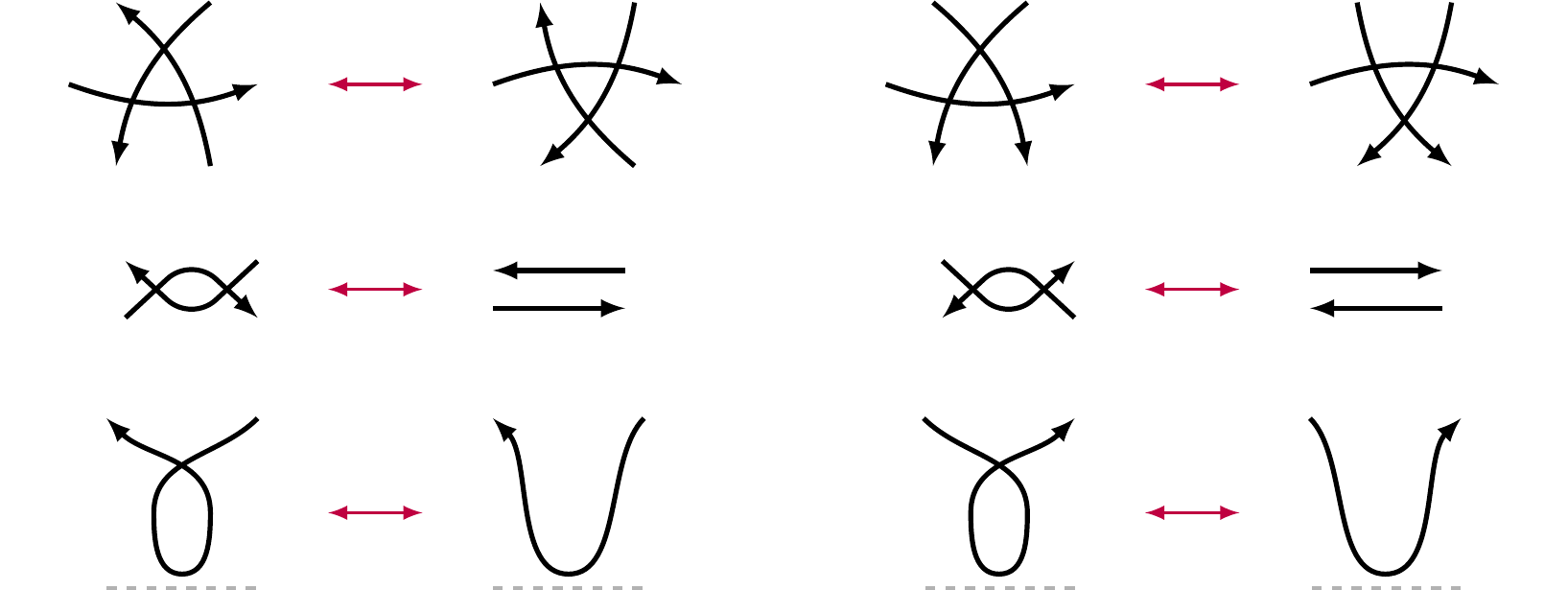}
  \caption{\label{fig:divide_moves} Local moves for divides~\cite[Figures~28--30]{FPST}.}
\end{figure}

The link $L_D$ is invariant under applying the local moves shown in \cref{fig:divide_moves} to $D$; see~\cite[Proposition~8.4]{FPST}.

Take the strands in our reduced plabic graph $G$. Each of the $\n$ boundary vertices of $G$ has one incoming strand and one outgoing strand. Identifying their endpoints, we obtain an oriented divide $D(G)$. It was shown in~\cite{Hirasawa} that the associated link $L_{D(G)}$ is isotopic to the link $\Lplab_G$ described in \cref{sec:intro:plabic_link}. This shows that the link $\Lplab_G$ is indeed invariant under rotation of $G$ and under small perturbations of the branches of $D(G)$, since this is clearly the case for $L_{D(G)}$. 

\begin{remark}\label{rmk:rotation}
In what follows, it will be convenient to us to use rotational invariance of $\Lplab_G$. In our figures, we will fix some angle $\alpha\in[0,2\pi)$, and assume that the link $\Lplab_G$ is obtained by first rotating $G$ by the angle $-\alpha$, then applying the description from \cref{sec:intro:plabic_link}, and then rotating the picture back by $\alpha$. In other words, if two strands $S_1,S_2$ in $G$ intersect at some point $p$, we consider their complex arguments of tangent vectors in a different interval: $\arg(S_1,p),\arg(S_2,p)\in[\alpha,\alpha+2\pi)$. Then we draw $S_1$ above $S_2$ if and only if $\arg(S_2,p)>\arg(S_1,p)$. And then for all points $p$ where some strand $S$ satisfies $\arg(S,p)=\alpha$, we insert line segments going to the boundary and back. In the figures, the direction $\alpha$ is indicated by the ``over/under circle'' \\
\noindent \centerline{\includegraphics[width=0.17\textwidth]{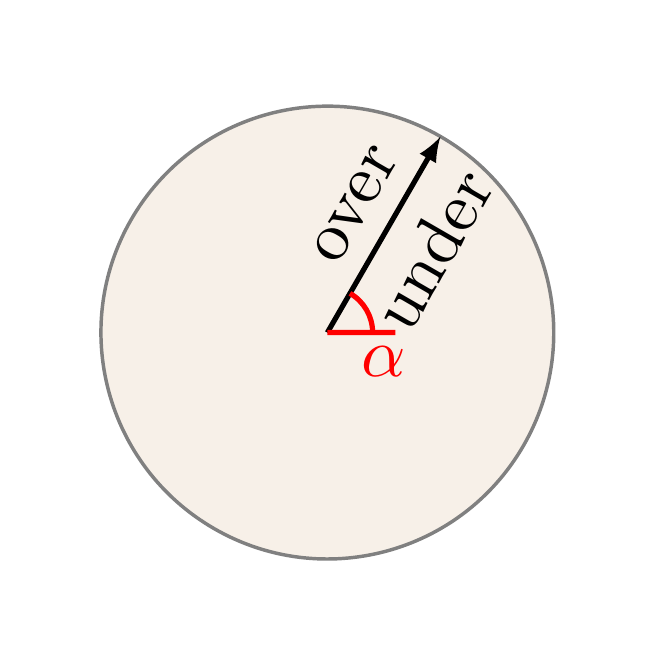}}
\noindent and each point $p$ on a strand $S$ satisfying $\arg(S,p)=\alpha$ is marked by $\mysq$ as in \cref{fig:over_under}.
\end{remark}

\section{Positroid link isotopies}\label{sec:positr-link-isot}
The goal of this section is to prove \cref{thm:isotopy}. We first discuss some properties of plabic graph links, and then we construct a sequence of isotopies, for any permutation $\pi\in\Sn$ without fixed points, of the links
\begin{equation*}%
\Lrich_\pi \cong  \Lplab_\pi \cong \Lperm_\pi \cong \Ltor_\pi,
\end{equation*}
in that order. When $\pi=\pi_C$ for some concave curve $C$, we will also show that
\begin{equation*}%
  \Ltor_\pi\cong\Lcox_\pi,
\end{equation*}
completing the proof.

Throughout this section, we fix a permutation $\pi\in\Sn$ without fixed points, and let $(v,w)\in\Qkn$ be the corresponding pair satisfying $\pi=wv^{-1}$,  $\Gamma$ the corresponding Le-diagram of shape $\la$, and $G$ the corresponding plabic graph, obtained from $\Gamma$ using the recipe in the first two rows of \cref{fig:Le_to_G}.

\subsection{From Richardson links to plabic graph links}\label{sec:from-rich-links}
 Our goal is to construct an isotopy between the Richardson link $\Lrich_{v,w}$ (\figref{fig:Lrich}(right)) and the plabic graph link $\Lplab_G$ (\figref{fig:Lplab}(c)).

Recall from \cref{rmk:Rich_Young} that we are drawing the Richardson braid $\beta_{v,w}=\beta(w)\cdot \beta(v)^{-1}$ in a particular way, so that the crossings of $\beta(w)$ form a shape obtained by rotating $\la$ clockwise by $135^\circ$, and the crossings of $\beta(v)^{-1}$ are drawn in some boxes of a shape obtained from the $135^\circ$-rotation of $\la$ by reflecting it along the vertical axis; see \figref{fig:Lrich}(left). Moreover, the crossings of $\beta(v)^{-1}$ are located precisely in the boxes of $\la$ which do not contain a dot in $\Gamma$. 

\begin{figure}
  \includegraphics[width=1.0\textwidth]{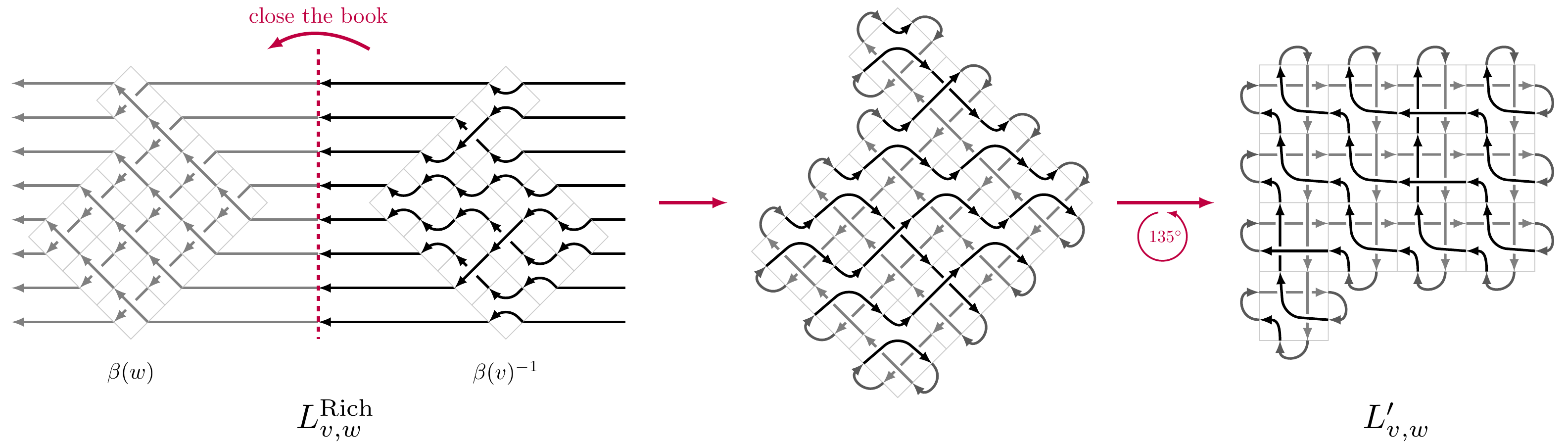}
  \caption{\label{fig:Lrich-to-L'} Converting the link $\Lrich_{v,w}$ into $\Lrichp_{v,w}'$; see \cref{sec:from-rich-links}.}
\end{figure}

Our first goal is to draw the link $\Lrich_{v,w}$ entirely within $\la$. To do that, we take $\beta(v)^{-1}$, reflect it along the vertical axis (flipping the over/under-crossings), and place the resulting braid on top of $\beta(w)$. We rotate the result counterclockwise by $135^\circ$. Every line segment in the boundary of $\la$ contains an endpoint of a strand in $\beta(v)^{-1}$ and an endpoint of a strand in $\beta(w)$. We join these strand endpoints, obtaining a link diagram drawn inside of $\la$. This link $\Lrichp_{v,w}'$, shown in \cref{fig:Lrich-to-L'}, is clearly isotopic to $\Lrichp_{v,w}$. Intuitively, the transformation $\Lrich_{v,w}\to\Lrichp_{v,w}'$ can be described as follows: open a book, and place the left (resp., right) half of \figref{fig:Lrich}(right) onto the left (resp., right) page of the book. Then close the book with your right hand, so that the front cover of the book is at the bottom. The page containing (the reflection of) $\beta(v)^{-1}$ will be on top of the page containing $\beta(w)$. Finally, rotate the closed book $135^\circ$ counterclockwise.

\begin{figure}
  \includegraphics[width=0.9\textwidth]{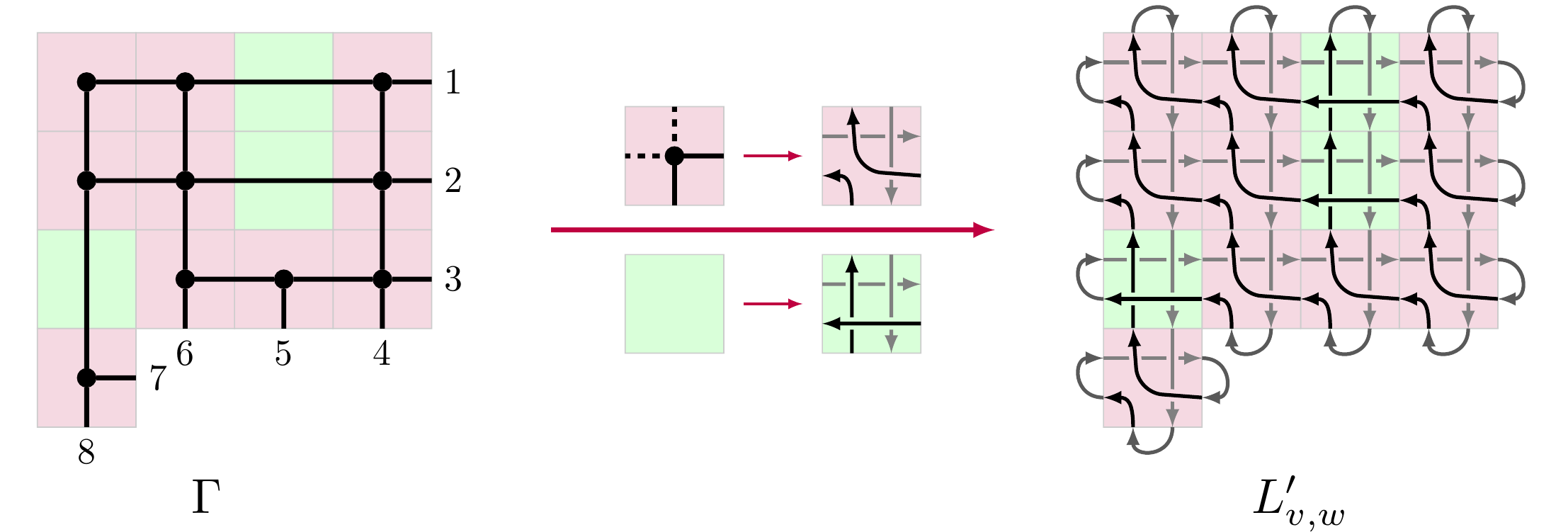}
  \caption{\label{fig:Gamma-to-L'} Converting a Le-diagram $\Gamma$ into a link $L'_{v,w}$.}
\end{figure}

An alternative description of the link $\Lrichp_{v,w}'$ can be obtained directly from the Le-diagram $\Gamma$ by replacing each box containing a dot as shown in \figref{fig:Gamma-to-L'}(middle top) and each box not containing a dot as shown in \figref{fig:Gamma-to-L'}(middle bottom). For example, the Le-diagram in \figref{fig:Gamma-to-L'}(left) gets converted into the link $\Lrichp_{v,w}'$ in \figref{fig:Gamma-to-L'}(right); this is the same link as in \figref{fig:Lrich-to-L'}(right). Since $\pi$ has no fixed points, each row and each column of $\Gamma$ contains at least one dot. Consider some column $j$ of $\Gamma$ and let $b_j$ be the highest box in it which contains a dot. The part of $\Lrichp_{v,w}'$ above $b_j$ contains a strand going vertically to the top boundary and then back, passing between all the horizontal strands that it encounters along the way. We may therefore contract this piece of the strand to be fully contained inside $b_j$, as shown in \figref{fig:pull}(far left). Repeating this procedure for each column $j=1,2,\dots,\la_1$, we obtain a link $\Lrichp_{v,w}''$ isotopic to $\Lrichp_{v,w}'$; see \figref{fig:pull}(middle).

\begin{figure}
  \setlength{\tabcolsep}{0pt}
\begin{tabular}{c|c}
\begin{tikzpicture}[baseline=(Z.base)]
\coordinate(Z) at (0,0);
\node(A) at (0,3.05){
  \includegraphics[width=0.2\textwidth]{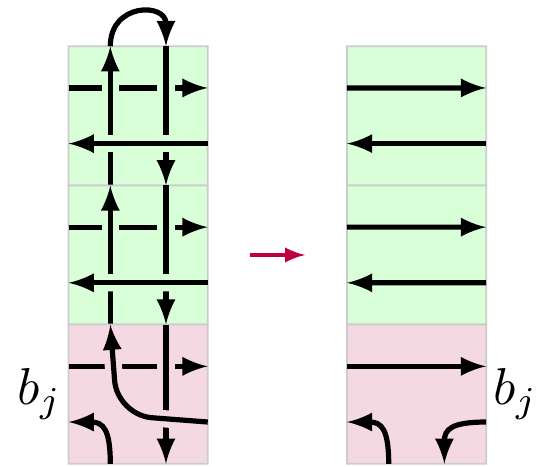}
};
\end{tikzpicture}
&
  \includegraphics[width=0.78\textwidth]{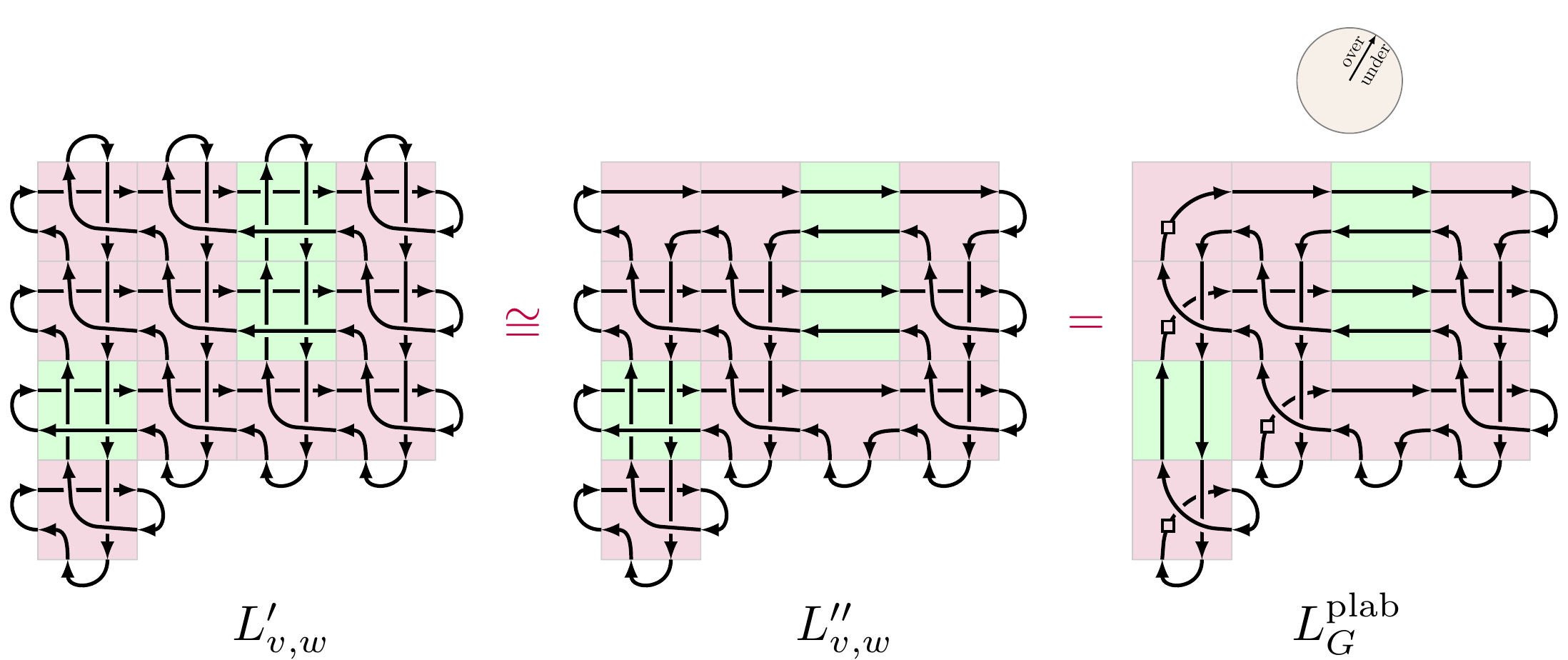}
\end{tabular}
  \caption{\label{fig:pull} An isotopy from $\Lrichp_{v,w}'$ to $\Lplab_G$.}
\end{figure}

Finally, we claim that the link $\Lrichp_{v,w}''$ is isotopic to the plabic graph link $\Lplab_G$. To see this, we choose the angle $\alpha$ from \cref{rmk:rotation} to be $80^\circ$. Then combining the rules from \cref{fig:Le_to_G} for converting the Le-diagram $\Gamma$ into the plabic graph $G$ with the description of $\Lplab_G$ given in \cref{sec:plabic_links_properties}, we see that a planar diagram of $\Lplab_G$ is obtained from $\Gamma$ via the rules shown in the last two rows of \cref{fig:Le_to_G}. Replacing each square (where $\arg(S,p)=\alpha$) by a horizontal segment going to the left boundary above all other strands and back below all other strands, we see that $\Lplab_G= \Lrichp_{v,w}''$; see \figref{fig:pull}(far right). To summarize, we have constructed explicit isotopies
\begin{equation*}%
  \Lrich_\pi=\Lrich_{v,w}\cong\Lrichp_{v,w}'\cong\Lrichp_{v,w}''=\Lplab_G.
\end{equation*}

\begin{figure}
  \includegraphics[width=0.5\textwidth]{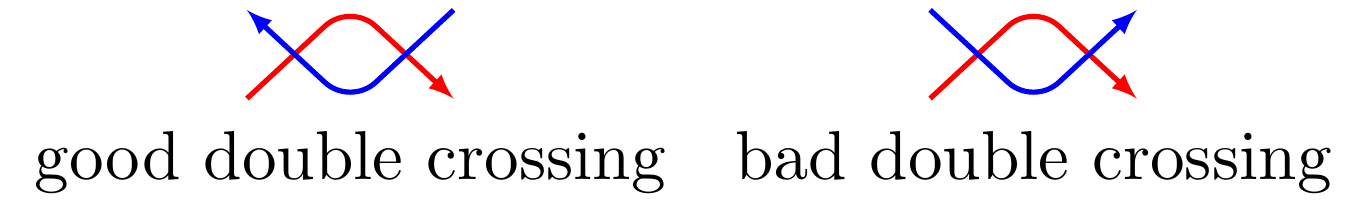}
  \caption{\label{fig:double} Good and bad double crossings of strands in plabic graphs.}
\end{figure}

\subsection{From plabic graph links to permutation links}
Our goal is to find an isotopy $\Lplab_G\cong \Lperm_\pi$. Let $D=D(G)$ be the oriented divide obtained from $G$ as in \cref{sec:plabic_links_properties}. Let $1,2,\dots,\n$ be the boundary vertices of $G$. Since $G$ contains no bad double crossings shown in \figref{fig:double}(right), it is clear that applying the moves from \cref{fig:divide_moves}, one can transform $D$ into an oriented divide $D'$ obtained by drawing a straight arrow $i\to \pi(i)$ for each $i\in[\n]$. The moves from \cref{fig:divide_moves} give rise to link isotopies, so $L_D\cong L_{D'}$. Let us move the boundary vertices $1,2,\dots,\n$ smoothly to the points $1',2',\dots,\n'$ which are located clockwise on the left semicircle of $\partial \disk$, i.e., on the subset of $\partial\disk$ with negative real part. Let $D''$ be the oriented divide obtained by drawing a straight arrow $i'\to \pi(i)'$ for each $i\in[\n]$. It follows that $L_{D'}\cong L_{D''}$.
 Let $i,j\in[\n]$ be such that the corresponding arrows $S_i:=(i'\to \pi(i)')$ and $S_j:=(j'\to \pi(j)')$ of $D''$ cross at some point $p$. Consider the arguments of the tangent vectors $0\leq \arg(S_i,p)\neq\arg(S_j,p)<2\pi$. Then it is easy to check that $\arg(S_i,p)>\arg(S_j,p)$ if and only if $\pi(i)<\pi(j)$. 
 Comparing to the description in \cref{sec:intro:perm_links}, we get that $L_{D''}\cong\Lperm_\pi$.

\subsection{From permutation links to toric permutation links}\label{sec:perm-to-toric}
Our goal is to find an isotopy $\Lperm_\pi\cong\Ltor_\pi$. Recall from \cref{rmk:grid} that $\Ltor_\pi$ is isotopic to the \emph{planar grid link} denoted $\Lgrid_\pi$; see \figref{fig:grid_iso}(a--c). In $\Lgridt_\pi$ and $\Lgrid_\pi$, the vertical arrows are drawn above the horizontal arrows. 

The main diagonal $y=1-x$ of the square $[0,1]^2$ divides $\Lgrid_\pi$ into the \emph{lower-triangular} and the \emph{upper-triangular} parts. In the lower-triangular part, all vertical arrows point down and all horizontal arrows point right, while in the upper-triangular part, all vertical arrows point up and all horizontal arrows point left. 
Let us take the lower-triangular part and reflect it around the main diagonal, placing it below the upper-triangular part; see \figref{fig:grid_iso}(c--d). We obtain a planar diagram of a link which we denote $L_\pi'$. Clearly, $L_\pi'\cong \Lgrid_\pi$. The diagram of $L_\pi'$ contains arrows pointing, up, left, down, and right. It follows that the over/under-crossings rule  in $L_\pi'$ is given by the following total order on the arrow directions: up $>$ left $>$ down $>$ right. In particular, $L_\pi'$ is a divide link for the angle $\alpha=45^\circ$ from \cref{rmk:rotation} (where the main diagonal is treated as part of the boundary of the disk), and we get that $L_\pi'\cong\Lperm_\pi$. 

\subsection{From toric permutation links to Coxeter links}\label{sec:tor_to_Cox}
First, we introduce a more general class of links, associated to arbitrary curves inside the $k\times(\n-k)$ rectangle.

We say that $C_g$ is a \emph{monotone curve} if it is the plot of a strictly monotone increasing function $g(x):[0,\n-k]\to[0,k]$ satisfying $g(0)=0$, $g(\n-k)=k$, and $g(j)\notin\Z$ for all $j=1,2,\dots,\n-k-1$. Thus, the curve $C_g$ passes through the points $(0,0)$ and $(\n-\k,k)$, but through no other lattice points. To any monotone curve $C_g$ we associate a link $L_g$ drawn on the surface of $\T$. The planar diagram of $L_g$ is obtained from the projection $\Cproj_g$ of $C_g$ to $\T$ via the following rule: whenever  two points $(x,g(x))$, $(x',g(x'))$ on $C_g$ (with $x<x'$) project to the same point of $\Cproj_g$, we draw the projection of $(x,g(x))$ above the projection of $(x',g(x'))$. 

The link diagram of $L_g$ is drawn on the torus $\T$, and therefore it is natural to think of $L_g$ itself as a link inside the \emph{thickened torus} $\T\times[0,1]$. Given a point $p=(x,g(x))\in C_g$, we denote by $\projtx p\in \T$ the projection of $p$ to $\T$, and we let $\left(\projtx p,1-\frac{x}{\n-k}\right)$ be the corresponding point in $\T\times[0,1]$. This yields an embedding of $C_g$ into $\T\times[0,1]$. Under this embedding, the points $(0,0)$ and $(\n-k,k)$ map to $((0,0),1)$ and $((0,0),0)$, respectively. Adding a line segment connecting $((0,0),1)$ to $((0,0),0)$, we obtain a representation of the link $L_g$ inside $\T\times[0,1]$. An obvious consequence of this construction is that the link $L_g$ only depends on the set of lattice points below the plot of $g$.
\begin{corollary} \label{cor:monotone_curves}
Let $C_g$, $C_h$ be two monotone curves, and assume that for each $j=1,2,\dots,\n-k-1$, we have $\lfloor g(j)\rfloor=\lfloor h(j)\rfloor$. Then the links $L_g$ and $L_h$ are isotopic. 
\end{corollary}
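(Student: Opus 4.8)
The plan is to realize both $L_g$ and $L_h$ through the thickened-torus model set up just above the statement, and to connect them by the isotopy induced from the straight-line homotopy of the defining functions. I would set $g_t:=(1-t)g+th$ for $t\in[0,1]$, so that $g_0=g$ and $g_1=h$, and then argue that the whole family $\{C_{g_t}\}$ consists of monotone curves with the same floor data, yielding a continuous path of embeddings from $L_g$ to $L_h$ inside $\T\times[0,1]$.

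First I would check that each $g_t$ is again a monotone curve. Strict monotonicity passes to convex combinations: if $x<x'$ then $g(x)<g(x')$ and $h(x)<h(x')$, whence $g_t(x)<g_t(x')$. The endpoint normalizations $g_t(0)=0$ and $g_t(\n-k)=k$ hold because they hold for both $g$ and $h$. Finally, for each interior integer $j\in\{1,\dots,\n-k-1\}$ the hypothesis $\lfloor g(j)\rfloor=\lfloor h(j)\rfloor=:m_j$ places both $g(j)$ and $h(j)$ in the open interval $(m_j,m_j+1)$, so their convex combination $g_t(j)$ lies there as well; in particular $g_t(j)\notin\Z$, and indeed $\lfloor g_t(j)\rfloor=m_j$ for every $t$. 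Thus each $g_t$ satisfies all of the conditions in the definition of a monotone curve.

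Next I would promote this to an isotopy of links. I embed $C_{g_t}$ into $\T\times[0,1]$ by sending $p=(x,g_t(x))$ to $\bigl(\projtx{p},\,1-\tfrac{x}{\n-k}\bigr)$ and closing up with the segment $\{(0,0)\}\times[0,1]$, exactly as above. The height coordinate $1-\tfrac{x}{\n-k}$ does not depend on $t$ and is strictly decreasing in $x$, so it already distinguishes points with different $x$-values; hence for every $t$ this map is injective, i.e.\ a genuine embedding. The closing segment is the same for all $t$ and is disjoint from the open arc, since $(x,g_t(x))$ projects to $(0,0)\in\T$ only when $x\in\Z$ and $g_t(x)\in\Z$, which by the previous paragraph happens only at the endpoints $x=0,\n-k$. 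The family varies continuously with $t$, so $\{L_{g_t}\}_{t\in[0,1]}$ is an isotopy (extending to an ambient isotopy of $\T\times[0,1]$), and therefore $L_g\cong L_h$.

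The argument is essentially forced once the thickened-torus picture is in place—this is the ``obvious consequence'' noted before the statement—so I do not expect a serious obstacle. The only point requiring care is that no self-intersection of the embedded curve is created along the homotopy, which would change the link type; this is exactly what the strictly decreasing height coordinate prevents, since injectivity alone (rather than any transversality or genericity of the divide) is what an isotopy in $\T\times[0,1]$ requires. Consequently the intermediate curves $C_{g_t}$ may be arbitrarily non-generic in the middle without affecting the conclusion.
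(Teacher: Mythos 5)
Your proof is correct and matches the paper's (implicit) argument: the paper simply asserts that the thickened-torus realization of $L_g$ depends only on the lattice points below the plot of $g$ and derives the corollary as an immediate consequence, while you make this precise via the straight-line interpolation $g_t=(1-t)g+th$. The verification that each $g_t$ remains a monotone curve with the same floor data, and that the strictly decreasing height coordinate keeps every intermediate embedding injective, is exactly the content the paper leaves to the reader.
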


Suppose now that $\pi=\pi_C$ for a concave curve $C$. Our goal is to find an isotopy $\Ltor_\pi\cong\Lcox_C$. Comparing the above description to the one in \cref{sec:intro:cox_link}, we see that when $g$ is the concave down function satisfying $C=C_g$, we have $\Ltor_\pi\cong L_g$. Our goal is to choose a monotone curve $C_h$ such that $L_g\cong L_h$ by \cref{cor:monotone_curves}, and such that $L_h\cong \Lcox_C$. 

\begin{figure}
\includegraphics[width=1.0\textwidth]{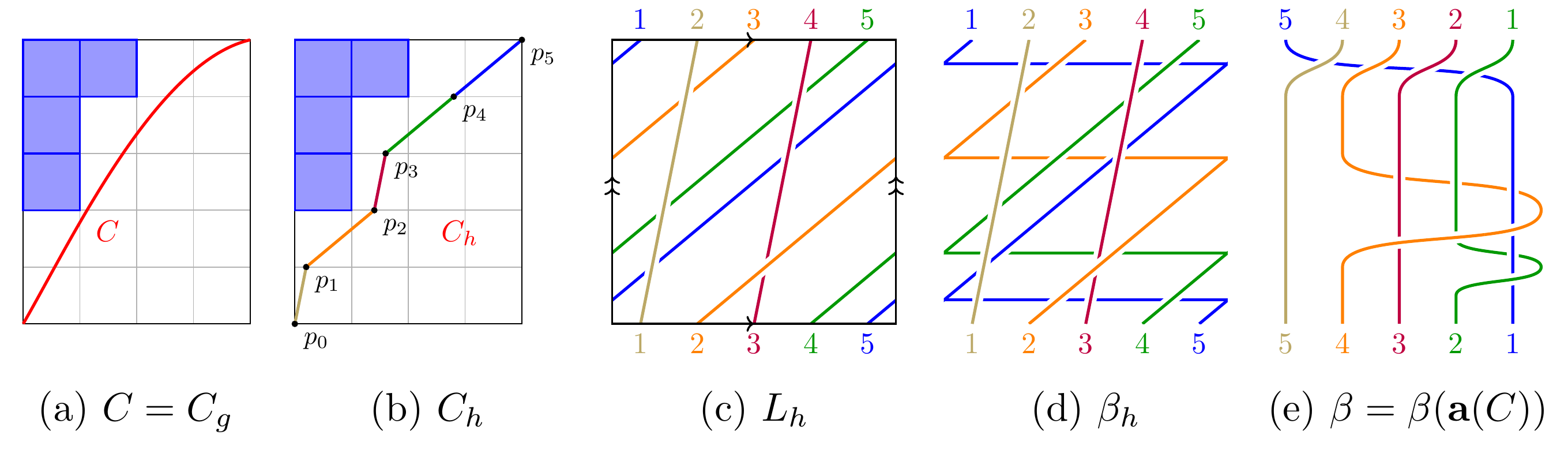}
  \caption{\label{fig:C_to_beta} Converting a concave curve $C$ into the Coxeter link $\beta(\abf(C))$; see \cref{sec:tor_to_Cox}.}
\end{figure}

Let $\la=\la_C=(\la_1\geq \la_2\geq\dots\geq \la_k=0)$ be the Young diagram above $C$. For $r=0,1,\dots,k-1$, let $p_r:= \left(\la_{k-r}+\frac rk,r\right)$. Thus, $p_0=(0,0)$. Let $p_{k}:=(\n-k,k)$. Let $h$ be the piecewise-linear function whose plot passes through the points $p_0,p_1,\dots,p_k$, and let $C_h$ be the corresponding monotone curve; see \figref{fig:C_to_beta}(b). Indeed, by \cref{cor:monotone_curves}, we have $L_g\cong L_h$. It remains to establish the isotopy $L_h\cong \Lcox_C$. 

Let $\beta=\beta(\abf(C))$ be the Coxeter braid associated to $C$, defined in~\eqref{eq:beta_abf_dfn}. It is a braid on $k$ strands. For $i=1,2,\dots,k$, let $S_i(\beta)$ be the strand of $\beta$ whose left endpoint is labeled by $i$. Thus, the right endpoint of $S_i(\beta)$ is labeled by $i+1$ for $i<k$ and by $1$ for $i=k$. On the other hand, the link diagram of $L_h$ in $\T$ intersects the line $y=0$ in exactly $k$ points with coordinates $\frac rk$ for $r=0,1,\dots, k-1$. We would like to view $L_h$ as the closure of a $k$-strand braid $\beta_h$ in the solid torus; see \figref{fig:C_to_beta}(c--d). The braid $\beta_h$ is obtained from $L_h$ via the following procedure: whenever a strand of $L_h$ passes through some point $(1,y)\in[0,1]^2$ and continues from the point $(0,y)\in[0,1]^2$, we connect the points $(0,y)$ and $(1,y)$ by a line segment that is drawn below all strands of $L_h$. The result is a drawing of a braid connecting $k$ points at the bottom of $[0,1]^2$ to $k$ points at the top of $[0,1]^2$. 

For $i=1,2,\dots, k-1$, let $S_i(\beta_h)$ be the piece of $\beta_h$ connecting the point $(\frac{i-1}k,0)$ to the point $(\frac ik,1)$. For $i=k$, let $S_k(\beta_h)$ be the remaining piece of $\beta_h$, connecting $(\frac{k-1}k,0)$ to $(0,1)$. See \figref{fig:C_to_beta}(d--e).

We claim that the braids $\beta_h$ and $\beta$ are isotopic. We compare them strand-by-strand, identifying $S_i(\beta)$ with $S_{k+1-i}(\beta_h)$ for $i=1,2,\dots,k$, in that order. (This correspondence is indicated via colors in \figref{fig:C_to_beta}(d--e).) For convenience, we rotate the drawing of $\beta$ $90^\circ$ counterclockwise. The strand $S_1(\beta)$ moves straight up during the $\ell_2^{a_2}\cdots \ell_k^{a_k}$ part, and then it moves left under all other strands. We apply an isotopy to make $S_k(\beta_h)$ move straight up towards the top boundary of $[0,1]^2$, and then move left under all other strands. Recall that we have a sequence $\abf(C)=(a_2,\dots,a_k)$ given by $a_i=\la_{i-1}-\la_i$ for $i=2,\dots,k$. Thus, the strand $S_2(\beta)$ wraps around $S_1(\beta)$ exactly $a_2$ times. On the other hand, $S_{k-1}(\beta_h)$ is the projection to $\T$ of the line segment of $C_h$ connecting $p_{k-1}$ to $p_k$. This projection intersects the vertical boundary of $\T$ exactly $a_2$ times. Therefore $S_{k-1}(\beta_h)$ wraps around $S_k(\beta_h)$ exactly $a_2$ times. Continuing in this fashion, we see that for $i=3,\dots,k$, the strand $S_i(\beta)$ wraps around the union $S_1(\beta)\cup S_2(\beta)\cup\dots\cup S_{i-1}(\beta)$ exactly $a_i$ times. On the other hand, the strand $S_{k+1-i}(\beta_h)$ wraps around the union $S_k(\beta_h)\cup S_{k-1}(\beta)\cup\dots\cup S_{k+2-i}$ exactly $a_i$ times. This shows that the braids $\beta$ and $\beta_h$ are isotopic, and therefore we get an isotopy $\Lcox_C\cong L_h$ between their closures.

\section{Quivers}
In this section, we introduce the point count polynomial $R(Q;q)$ of a \Louise %
 quiver, and study its basic properties.  We use this polynomial to define the \emph{$Q$-Catalan number} of $Q$, an integer that we conjecture to be nonnegative in \cref{conj:Catalan}.

\subsection{Quiver mutation}\label{sec:quiver}
Recall that a \emph{quiver} is a directed graph without directed cycles of length $1$ and $2$.

Given a quiver $Q$ and a vertex $j\in V(Q)$, one can define another quiver $Q'=\mu_j(Q)$ called the \emph{mutation of $Q$ at $j$}. This operation preserves the set of vertices:  $V(Q')=V(Q)$, and changes the set of arrows as follows:
\begin{itemize}
\item for every length $2$ directed path $i\to j\to k$ in $Q$, add an arrow $i\to k$ to $Q'$;
\item reverse all arrows in $Q$ incident to $j$;
\item remove all directed $2$-cycles in the resulting directed graph, one by one.
\end{itemize}

An \emph{ice quiver} is a quiver $\Qice$ whose vertex set $\Vice=V(\Qice)$ is partitioned into \emph{frozen} and \emph{mutable} vertices: $\Vice=\Vfro\sqcup \Vmut$. We automatically omit all arrows in $\Qice$ both of whose endpoints are frozen. For an ice quiver $\Qice$, its \emph{mutable part} is the induced subquiver of $\Qice$ with vertex set $\Vmut$.  %
 For a set $S\subset \Vmut$, let $\Qicefr[S]$ denote the ice quiver obtained from $\Qice$ by further declaring all vertices in $S$ to be frozen. We write $\Qice-S$ for the ice quiver obtained from $\Qice$ by removing the vertices in $S$.  For a simply-laced Dynkin diagram $D$, we say that $Q$ or $\Qice$ has type $D$, or is a $D$-quiver, if the underlying graph of $Q$ is isomorphic to $D$.

 Let $\Qice$ be an ice quiver with $\Vice=[n+m]$, $\Vmut=[n]$, and $\Vfro=[n+1,n+m]:=\{n+1,n+2,\dots,n+m\}$. We represent it by an $(n+m)\times n$ \emph{exchange matrix} $\BQice=(b_{ij})$, defined by
 \begin{equation*}%
   b_{ij}=\#\{\text{arrows $i\to j$ in $\Qice$}\} - \#\{\text{arrows $j\to i$ in $\Qice$}\}.
 \end{equation*}
The top $n\times n$ submatrix $\BQ$ of $\BQice$ is skew-symmetric, and is called the \emph{principal part} of $\BQice$.  

Let $\Bice$ be an $(n+m)\times n$ matrix. We denote $\cork(\Bice):=n-\rk(\Bice)$. Following~\cite{LS}, we say that an $(n+m)\times n$ matrix $\Bice$ is \emph{really full rank} if the rows of $\Bice$ span $\Z^n$ over $\Z$. We say that $\Qice$ is \emph{really full rank} if $\BQice$ is really full rank, and write $\cork(\Qice):=\cork(\BQice)$.   We say that $\Qice$ is \emph{torsion-free} if the abelian group $\Z^{n+m}/\BQice \Z^n$ is torsion-free, equivalently, if $\Z^n/\BQice^T \Z^{n+m}$ is torsion-free.  Thus, $\Qice$ is really full rank if and only if it is both full rank and torsion-free.

\begin{definition}
Given a quiver $Q$, we say that an ice quiver $\Qice$ with mutable part $Q$ is a \emph{minimal extension} of $Q$ if $\Qice$ has $m=\cork(Q)$ frozen vertices and is really full rank.
\end{definition}

\begin{lemma}\label{lem:ext}
If $Q$ is torsion-free then it admits a minimal extension.
\end{lemma}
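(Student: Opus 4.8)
The plan is to reduce the statement to an elementary fact about integer lattices. Write $\BQ$ for the $n\times n$ skew-symmetric exchange matrix of the quiver $Q$, where $n=|V(Q)|$, and set $m:=\cork(Q)=n-\rk(\BQ)$. An ice quiver $\Qice$ with mutable part $Q$ and $m$ frozen vertices is determined by $Q$ together with an arbitrary integer $m\times n$ matrix $F$ recording the frozen--mutable arrows (its $(i,j)$ entry prescribing the signed number of arrows between the $i$-th frozen vertex and the mutable vertex $j$); any such $F$ yields a legitimate ice quiver, since only net arrow counts matter. Its exchange matrix is then the block matrix $\BQice=\begin{pmatrix}\BQ\\ F\end{pmatrix}$. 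Thus constructing a minimal extension amounts to choosing $F$ with exactly $m$ rows so that the rows of $\BQice$ span $\Z^n$ over $\Z$, i.e.\ so that $\BQice$ is really full rank.

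First I would record the relevant cokernel. Because $\BQ$ is skew-symmetric, its integer row lattice equals $\BQ^{T}\Z^{n}=\BQ\,\Z^{n}$, so it coincides with the column lattice; denote the quotient $C:=\Z^{n}/\BQ\,\Z^{n}$. By the hypothesis that $Q$ is torsion-free (the case $m=0$ of the definition), $C$ is a finitely generated torsion-free abelian group, hence free. Its rank is $n-\rk(\BQ)=\cork(Q)=m$, so $C\cong\Z^{m}$.

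Next I would lift a basis. Choose elements $f_{1},\dots,f_{m}\in\Z^{n}$ whose images form a $\Z$-basis of $C$, and let $F$ be the $m\times n$ matrix with rows $f_{1},\dots,f_{m}$. By construction the integer row lattice of $\BQice=\begin{pmatrix}\BQ\\ F\end{pmatrix}$ equals $\BQ\,\Z^{n}+\sum_{i}\Z f_{i}=\Z^{n}$, so $\BQice$ is really full rank. Since $\Qice$ has mutable part $Q$ and exactly $m=\cork(Q)$ frozen vertices, it is a minimal extension of $Q$, as desired.

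There is essentially no serious obstacle: the entire content is the implication ``finitely generated and torsion-free $\Rightarrow$ free,'' applied to $C$, which is exactly what the torsion-free hypothesis on $Q$ supplies, together with the rank count $\rk(C)=\cork(Q)$. The only points requiring a word of care are the skew-symmetry identification of the row and column lattices of $\BQ$ (so that the torsion-free condition, stated via $\Z^{n}/\BQ^{T}\Z^{n}$, matches the cokernel controlling the span), and the observation that any integer matrix $F$ genuinely encodes an ice quiver with no $2$-cycles, since each $b_{ij}$ records only the net number of arrows.
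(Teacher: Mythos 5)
Your proof is correct and takes essentially the same route as the paper: both lift a $\Z$-basis of the free group $\Z^n/\BQ^T\Z^n$ (free precisely because $Q$ is torsion-free, with rank $\cork(Q)=m$) and use these $m$ vectors as the frozen rows of the exchange matrix. Your additional remarks on the skew-symmetry identification of row and column lattices and on realizing an arbitrary integer matrix $F$ as frozen--mutable arrows are accurate elaborations of steps the paper leaves implicit.
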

\begin{proof}
If $Q$ is torsion-free, we can find vectors $d_1,\ldots,d_m \in \Z^n$ that form a basis of $\Z^n/B(Q)^T \Z^n$.  Define $\Qice$ with $m$ frozen vertices so that the bottom $m$ rows of the exchange matrix $\Bice$ are equal to $d_1,\ldots,d_m$.
\end{proof}

\begin{example}
The exchange matrices of the two quivers $Q,Q'$ in \figref{fig:LA}(c) are given by
$$
\BQ=\begin{pmatrix}
0 & 2 & -2 \\
-2&0&2\\
2&-2&0
\end{pmatrix} \qquad \text{and} \qquad
B(Q')=\begin{pmatrix}
0 & -1& -1 &2 \\
1&0&1 &-1\\
1&-1&0 &-1 \\
-2&1&1&0
\end{pmatrix}.
$$
Thus, $\cork(Q)=1$, $\cork(Q')=0$, and neither quiver is torsion-free.
\end{example}

We say that an ice quiver $\Qice$ is \emph{isolated} if it has no arrows. For example, the quiver in \figref{fig:simple}(c) is isolated.

We call an ice quiver $\Qice$ \emph{acyclic} if it has no directed cycles, and \emph{mutation acyclic} if it is mutation equivalent to an acyclic ice quiver.  For example, any quiver of Dynkin type is acyclic, as is the quiver in \figref{fig:LA}(a).  The quiver in \figref{fig:simple}(a) and the left quiver in \figref{fig:LA}(b) are not acyclic, but they are both mutation acyclic.

An edge $u\to v$ in a quiver $Q$ is called a \emph{separating edge}~\cite{Muller} if it does not belong to a bi-infinite walk in $Q$. Here, a \emph{bi-infinite walk} is a sequence $(w_j)_{j\in\Z}$ of vertices in $Q$ such that for each $j\in\Z$, $Q$ contains an arrow $w_j\to w_{j+1}$.

We define the class of \emph{\Louise quivers}, called ``Louise" in \cite{MSLA,LS}, as follows.
\begin{itemize}
\item Any isolated quiver $Q$ is \Louise.
\item Any quiver that is mutation equivalent to a \Louise quiver is \Louise.
\item Suppose that a quiver $Q$ has a separating edge $u\to v$, and that all three quivers $Q-\{u\}$, $Q-\{v\}$, $Q-\{u,v\}$ are \Louise. Then $Q$ is \Louise.
\end{itemize}
We say that an ice quiver $\Qice$ is \emph{\Louise} if its mutable part $Q$ is \Louise.    See \cref{fig:LA}.
\begin{figure}
  \includegraphics[width=\textwidth]{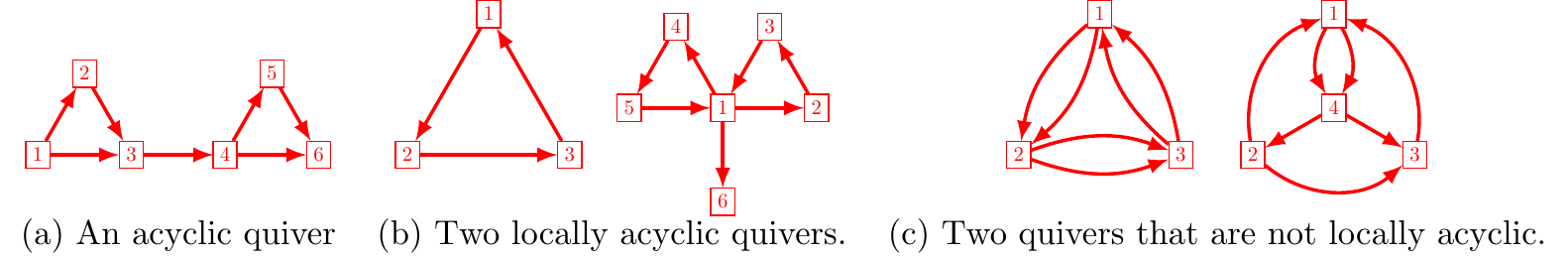}
  \caption{\label{fig:LA} Acyclic, locally acyclic and non-\Louise quivers.}
\end{figure}

\subsection{Cluster algebras}
Let $\Qice$ be an ice quiver with $\Vice=[n+m]$, $\Vmut=[n]$, and $\Vfro=[n+1,n+m]$.  We associate to $\Qice$ the initial seed $(x_1,x_2,\ldots,x_{n+m})$ of \emph{cluster variables}, considered as elements in the function field $\Fcal = \Q(x_1,x_2,\dots,x_{n+m})$.  The variables $x_{n+1},\ldots,x_{n+m}$ are called \emph{frozen variables}.  For a mutation $Q' = \mu_j(Q)$, we associate the seed $(x'_1,x'_2,\ldots,x'_{n+m})$, where $x'_i = x_i$ if $i \neq j$ and one has the new cluster variable
$$
x'_j = \frac{\prod_{r \to j} x_r + \prod_{j \to s} x_s}{x_j} \in \Fcal.
$$
By repeatedly mutating, we generate (possibly infinitely) many seeds and cluster variables. 
 We denote by $\AQice=\ABQice$ the cluster algebra associated to $\Qice$. This is the $\Z$-subalgebra of $\Q(x_1,x_2,\dots,x_{n+m})$ generated by all cluster variables and the inverses of frozen variables. 
 
The \emph{cluster variety} $\XQice=\XBQice$ is defined to be the scheme
\begin{equation*}%
\XQice:=\Spec(\AQice).
\end{equation*}
We say that $\AQice$ (resp., $\XQice$) is a cluster algebra (resp., cluster variety) \emph{of type $Q$}, where $Q$ is the mutable part of $\Qice$.  We say that $\AQice$ is isolated, acyclic, (really) full rank, if $\Qice$ is isolated, mutation acyclic, (really) full rank, respectively.  If $\Qice$ is a \Louise quiver, then $\AQice$ is locally acyclic in the sense of Muller~\cite{Muller}.

\begin{proposition}[{\cite[Theorem 7.7]{Muller}, \cite[Theorem 10.1]{LS}}]\label{prop:Mulsmooth}
Suppose that $\Qice$ is \Louise and really full rank.  Then $\XQice_\C$ is a smooth complex algebraic variety and $\XQice_{\bar \F_q}$ is smooth for any prime power $q$.
\end{proposition}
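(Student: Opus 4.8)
The plan is to prove smoothness by induction on the recursive definition of \Louise quivers, exploiting that smoothness is a Zariski-local property: if $\XQice$ is covered by open subschemes each of which is smooth, then $\XQice$ is smooth. Both assertions---over $\C$ and over $\bar\F_q$---will follow from the same argument, since the presentations and Jacobian computations below are characteristic-free; I therefore treat an arbitrary base field uniformly and only record at the end that nothing depends on the characteristic.

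The inductive step is Muller's local acyclicity machinery. Recall from \cref{sec:quiver} that a \Louise quiver is locally acyclic in the sense of \cite{Muller}, and that the recursion rests on a separating edge $u\to v$ together with the three deletions $Q-\{u\}$, $Q-\{v\}$, $Q-\{u,v\}$. Muller's key observation is that such a separating edge forces $x_u$ and $x_v$ to generate the unit ideal in $\AQice$, so that the distinguished open subschemes $\Spec\AQice[x_u^{-1}]$ and $\Spec\AQice[x_v^{-1}]$ cover $\XQice$, with overlap $\Spec\AQice[(x_ux_v)^{-1}]$. Moreover each localized algebra is again a cluster algebra---of the ice quiver obtained by freezing the inverted vertex, which (using the separating-edge hypothesis) is identified with the cluster algebra of the corresponding deletion and is \Louise by the inductive hypothesis. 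Iterating, one obtains a finite open cover of $\XQice$ by cluster varieties of acyclic ice quivers, reducing the problem to the acyclic (and isolated) case.

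For the base of the induction I would treat isolated and acyclic ice quivers directly. In the isolated case the mutable vertices decouple, each contributing a single binomial exchange relation $x_jx_j'=M_{j,+}+M_{j,-}$ in the frozen variables; the really full rank hypothesis (full rank together with torsion-freeness, so that the rows of the exchange matrix span $\Z^n$) permits one to eliminate frozen variables and realize $\XQice$ as an open subvariety of a product of affine spaces and algebraic tori, which is smooth over any field. For acyclic $\Qice$ one uses the Berenstein--Fomin--Zelevinsky presentation, in which the acyclic cluster algebra coincides with its lower bound, together with the Jacobian criterion; here the really full rank condition is precisely what makes the relevant Jacobian attain full rank at every point, and this rank computation is insensitive to the characteristic, giving smoothness simultaneously over $\C$ and over $\bar\F_q$.

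The main obstacle is the covering theorem itself. Two points require genuine work: first, verifying that the separating edge $u\to v$ really does make $x_u,x_v$ generate the unit ideal, so that $\Spec\AQice[x_u^{-1}]$ and $\Spec\AQice[x_v^{-1}]$ form an honest open cover rather than merely open pieces; and second, identifying each cluster localization with the cluster algebra of the appropriate deleted quiver while tracking the really full rank condition through the recursion---one must check that freezing a vertex and localizing preserves the spanning property of the rows of the exchange matrix, so that the inductive hypothesis continues to apply. This bookkeeping is exactly the content of \cite[Theorem~7.7]{Muller} and its really-full-rank refinement \cite[Theorem~10.1]{LS}; in the write-up I would invoke these two results rather than reproduce their proofs.
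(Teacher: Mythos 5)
The paper offers no proof of this proposition---it is quoted directly from \cite[Theorem 7.7]{Muller} and \cite[Theorem 10.1]{LS}---and your proposal, after sketching the Louise induction, the separating-edge open cover, and the acyclic/isolated base cases, ultimately defers to exactly those two references. Your outline of what those theorems do is accurate (in particular, correctly locating the role of the really full rank hypothesis in making the argument characteristic-free), so it is consistent with the paper's treatment.
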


\begin{proposition}[{\cite[Corollary 5.4]{Muller}}]\label{prop:Mulcover}
Let $i \to j$ be a separating edge in $\Qice$.  Then the open sets $\{x_i \neq 0\}$ and $\{x_j \neq 0\}$ cover $\XQice$.
\end{proposition}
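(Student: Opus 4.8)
The plan is to prove the contrapositive at the level of points: I will show that no point of $\XQice$ lies in both vanishing loci, i.e. that no prime ideal $\mathfrak{p}\subset\AQice$ contains both $x_i$ and $x_j$ (equivalently, $(x_i,x_j)=\AQice$). So suppose toward a contradiction that such a $\mathfrak{p}$ exists, and pass to the integral domain $A:=\AQice/\mathfrak{p}$, writing $\bar x$ for the image of a cluster variable $x$. By assumption $\bar x_i=\bar x_j=0$. The one structural fact I will use repeatedly is that every frozen variable is a unit in $\AQice$, so $\bar x_\ell\neq 0$ for each $\ell\in\Vfro$; consequently any relation $\bar x_v=0$ forces $v$ to be mutable.

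The engine is the exchange relation. For a mutable vertex $v$, mutating the initial seed at $v$ gives, inside $\AQice$, the identity $x_v x_v' = \prod_{r\to v}x_r + \prod_{v\to s}x_s$, with the products running over the arrows of $\Qice$. Reducing modulo $\mathfrak{p}$, whenever $\bar x_v=0$ we obtain $\overline{\prod_{r\to v}x_r} = -\,\overline{\prod_{v\to s}x_s}$ in $A$. Hence if one of the two products is already known to vanish (because it contains a vanishing factor), then the other vanishes as well; and since $A$ is a domain, a vanishing product must have a vanishing factor. By the unit-ness of frozen variables, that factor necessarily comes from a \emph{mutable} neighbor of $v$.

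I then propagate forward from $j$ and backward from $i$. Forward: since $i\to j$ is an arrow, the factor $\bar x_i=0$ appears in the incoming product at $j$, so the outgoing product at $j$ vanishes, yielding a mutable vertex $j_1$ with $j\to j_1$ and $\bar x_{j_1}=0$. Applying the same step at $j_1$ (whose incoming product now contains the vanishing factor $\bar x_j$) produces $j_2$ with $j_1\to j_2$, and iterating gives an infinite forward walk $j\to j_1\to j_2\to\cdots$ in the mutable part $Q$. Backward: since $i\to j$, the factor $\bar x_j=0$ appears in the outgoing product at $i$, so the incoming product at $i$ vanishes, yielding a mutable $i_1$ with $i_1\to i$ and $\bar x_{i_1}=0$; iterating gives an infinite backward walk $\cdots\to i_2\to i_1\to i$. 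Concatenating these through the edge $i\to j$ produces a bi-infinite walk $\cdots\to i_1\to i\to j\to j_1\to\cdots$ in $Q$ containing the edge $i\to j$, contradicting the hypothesis that $i\to j$ is separating.

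Once the right propagation invariant is isolated, the construction is essentially forced, so I do not anticipate a serious obstacle. The single point requiring care is the bookkeeping that keeps every newly produced vertex mutable, so that the exchange relation remains available at the next step and the resulting walk lives in $Q$ rather than in $\Qice$; this is exactly what the unit-ness of the frozen variables delivers.
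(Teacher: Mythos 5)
The paper offers no proof of this statement --- it is imported verbatim as Muller's Corollary 5.4 --- so there is nothing internal to compare against; your argument is correct and is essentially a reconstruction of Muller's own proof: pass to the quotient domain by a prime containing $x_i$ and $x_j$, use the one-step exchange relations together with invertibility of the frozen variables to propagate vanishing forward from $j$ and backward from $i$, and assemble a bi-infinite walk through $i\to j$ in the mutable part, contradicting the separating hypothesis. One small point worth making explicit: at a propagation step the relevant product may be empty (the current vertex has no outgoing, resp.\ incoming, arrows), in which case it equals $1$ rather than having a vanishing factor; the exchange relation then reads $0=1$ in the quotient domain, which is an immediate contradiction, so the argument terminates early rather than failing. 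With that remark added, the proof is complete.
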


\begin{proposition}[{\cite[Proposition 3.1, Lemma 3.4, Theorem 4.1]{Muller}}]\label{prop:Mulloc}
Suppose that $i \in V(\Qice)$ is a mutable vertex such that $Q- \{i\}$ is \Louise.  Then $\AQice[x_i^{-1}] \simeq \AX(\Qice[i])$.
\end{proposition}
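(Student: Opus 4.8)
The plan is to realize both $\Acal(\Qice)[x_i^{-1}]$ and $\Acal(\Qice[i])$ as subrings of the ambient field $\Fcal=\Q(x_1,\dots,x_{n+m})$ and to sandwich them between a single pair of equal rings, using \emph{upper} cluster algebras as an intermediary. Write $\overline{\Acal}(\Qice)$ for the upper cluster algebra of $\Qice$, i.e.\ the intersection of the Laurent rings $\Z[x_1(t)^{\pm1},\dots,x_{n+m}(t)^{\pm1}]$ over all seeds $t$ in the mutation class, and $\overline{\Acal}(\Qice[i])$ for the analogous intersection taken only over seeds reachable without mutating at $i$ (as is forced once $i$ is frozen). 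I would then prove the chain
\begin{equation*}
\Acal(\Qice[i]) \subseteq \Acal(\Qice)[x_i^{-1}] \subseteq \overline{\Acal}(\Qice)[x_i^{-1}] = \overline{\Acal}(\Qice[i]) = \Acal(\Qice[i]),
\end{equation*}
which forces every containment to be an equality and yields the claim.

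First I would dispatch the leftmost inclusion. Since $i$ is frozen in $\Qice[i]$, every cluster variable of $\Acal(\Qice[i])$ is produced by a mutation sequence avoiding $i$; such a sequence is equally valid for $\Qice$, and because a mutation at a vertex $j\neq i$ uses the same exchange relation whether or not $i$ is frozen, it produces the identical element of $\Fcal$. Hence each such cluster variable is already a cluster variable of $\Acal(\Qice)$, and together with $x_i^{-1}$ and the original frozen inverses (all lying in $\Acal(\Qice)[x_i^{-1}]$) this gives the inclusion. The second inclusion is immediate from the Laurent phenomenon $\Acal(\Qice)\subseteq\overline{\Acal}(\Qice)$.

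The two equalities are the substance. The identity $\overline{\Acal}(\Qice)[x_i^{-1}]=\overline{\Acal}(\Qice[i])$ is a general localization property of upper cluster algebras, \emph{independent} of the Louise hypothesis: at any fixed seed the defining Laurent ring is the same for $\Qice$ and $\Qice[i]$ (all variables, frozen or mutable, are inverted), so the only difference between the two upper algebras is that $\overline{\Acal}(\Qice)$ intersects over the larger family of seeds that includes those obtained by mutating at $i$. Intersecting over more seeds can only shrink the ring, giving $\overline{\Acal}(\Qice)\subseteq\overline{\Acal}(\Qice[i])$, and since $x_i^{-1}\in\overline{\Acal}(\Qice[i])$ (as $i$ is frozen there) this yields $\overline{\Acal}(\Qice)[x_i^{-1}]\subseteq\overline{\Acal}(\Qice[i])$. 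The reverse inclusion is the heart of the matter and is the technical content of \cite[Proposition~3.1, Lemma~3.4]{Muller}. Finally, the Louise hypothesis enters \emph{only} through the last equality: freezing $i$ leaves the mutable part $Q-\{i\}$, which is Louise by assumption, so $\Acal(\Qice[i])$ is locally acyclic and therefore coincides with its upper cluster algebra by Muller's local acyclicity theorem \cite[Theorem~4.1]{Muller}.

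The hard part will be the reverse direction of the upper localization identity, namely: given a rational function $z$ that is a Laurent polynomial in every seed reachable without mutating at $i$, show that $x_i^N z$ remains a Laurent polynomial in the clusters produced \emph{after} a mutation at $i$. The difficulty is that in the seed $\mu_i(\Qice)$ the variable $x_i$ is replaced by $x_i'$ with $x_i x_i'=\prod_{r\to i}x_r+\prod_{i\to s}x_s$, so $x_i$ becomes Laurent in the new cluster, but the regularity of $z$ there is precisely what must be proved; this is where the exchange relation at $i$ and the Laurent phenomenon must be combined carefully. The remaining input, $\Acal(\Qice[i])=\overline{\Acal}(\Qice[i])$, may be treated as a black box, or, if a self-contained argument is preferred, unwound by induction on the Louise certificate of $Q-\{i\}$: the isolated base case is the acyclic equality of Berenstein--Fomin--Zelevinsky, and the separating-edge step uses the cover of \cref{prop:Mulcover} to reduce to the strictly smaller Louise quivers $Q-\{i,u\}$, $Q-\{i,v\}$, and $Q-\{i,u,v\}$.
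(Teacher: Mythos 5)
Your sandwich has the right skeleton---it is in fact exactly how the cited results of Muller combine, and since the paper offers no argument beyond the citation, your reconstruction is the intended one---but you have misplaced the difficulty, and the step you single out as ``the heart of the matter'' is both unnecessary and, as sketched, would not go through. The chain closes using only the easy containment $\Ucal(\Qice)[x_i^{-1}]\subseteq \Ucal(\Qicefr[i])$ that you prove in two lines (the frozen pattern has fewer seeds, the Laurent rings at a common seed coincide, and $x_i^{-1}$ lies in the frozen upper algebra): combined with $\AX(\Qicefr[i])\subseteq\AQice[x_i^{-1}]\subseteq\Ucal(\Qice)[x_i^{-1}]$ and with the equality $\AX(\Qicefr[i])=\Ucal(\Qicefr[i])$, which holds because the mutable part $Q-\{i\}$ is \Louise, hence $\AX(\Qicefr[i])$ is locally acyclic and Muller's Theorem~4.1 applies, you get a cycle of inclusions, hence all equalities. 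In particular the reverse containment $\Ucal(\Qicefr[i])\subseteq\Ucal(\Qice)[x_i^{-1}]$ is not needed: under the standing hypothesis it follows a posteriori from the collapse of the chain, and in the generality you assert (``independent of the Louise hypothesis'') it is not what Muller's Proposition~3.1 and Lemma~3.4 establish---those results run the sandwich in the direction just described, with the freezing-equals-upper hypothesis doing the work.

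Moreover, your sketch of this ``hard part'' has a genuine hole, so it is good that you do not need it. Clearing denominators, $x_i^N z$ is a polynomial in $x_i$ with coefficients Laurent in the remaining variables, and substituting the exchange relation $x_i x_i'=\prod_{r\to i}x_r+\prod_{i\to s}x_s$ does show Laurentness in the seeds $\mu_i(\Sigma)$ immediately adjacent across $i$ to a seed $\Sigma$ of the frozen pattern. But membership in $\Ucal(\Qice)$ requires Laurentness at \emph{all} seeds, e.g.\ at $\mu_j\mu_i(\Sigma)$ for $j\neq i$, and such seeds are in general not of the form $\mu_i(\Sigma')$ for $\Sigma'$ reachable without mutating at $i$ from the initial seed; your hypothesis on $z$ gives no information in these new $i$-avoiding components, and the one-step substitution argument does not propagate to them. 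The repair is trivial: weaken your middle ``$=$'' to ``$\subseteq$'' and the proof is complete and correct. With that change, your argument coincides with the one the paper delegates to Muller: two trivial inclusions, the easy upper-algebra containment under freezing, and the locally acyclic $\mathcal{A}=\mathcal{U}$ theorem, which is the sole point where the hypothesis on $Q-\{i\}$ enters (as you correctly identify); your proposed induction on the \Louise certificate via the cover of \cref{prop:Mulcover}, with the acyclic Berenstein--Fomin--Zelevinsky case as the base, is likewise consistent with Muller's covering argument for that black box.
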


\subsection{Quiver point count}
For a mutable quiver $Q$, we define the function $\RQ(q)$ as follows. Choose a really full rank ice quiver $\Qice$ with mutable part $Q$ and $m$ frozen vertices. Then for $q$ a prime power, we set
\begin{equation}\label{eq:RQ_dfn}
  \RQ(q):=\frac{\#\XQice(\F_q)}{(q-1)^m}.
\end{equation}
For a rational function $R(q)=P(q)/Q(q)$, we let the \emph{degree} $\deg(R)$ be the difference $\deg(P)-\deg(Q)$. 
\begin{proposition}[{\cite[Proposition 5.11 and Theorem 10.5]{LS}}]\label{prop:Q_pcnt}
  Let $Q$ be a quiver with $n$ vertices.
  \begin{enumerate}
  \item The function $\RQ(q)$ defined in~\eqref{eq:RQ_dfn} does not depend on the choice of $\Qice$.
  \item Suppose $Q$ is \Louise. Then $\RQ(q)$ is a rational function in $q$ of degree $n$. %
  \end{enumerate}
\end{proposition}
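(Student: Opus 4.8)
The plan is to handle the two parts by different mechanisms: part (1) rests on the torus symmetry of $\XQice$ coming from the frozen variables, while part (2) is an induction along the recursive definition of \Louise quivers, driven by the covering and localization results quoted above.

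For part (1), the starting observation is that $\AQice$ is graded by the cokernel $G:=\Z^{n+m}/\BQice\Z^n$ with $\deg(x_i)$ the image of the $i$-th standard basis vector: this is well defined precisely because each column of $\BQice$ maps to $0$ in $G$, which is exactly the homogeneity of the exchange relations $x_jx_j'=\prod_{r\to j}x_r+\prod_{j\to s}x_s$. When $\Qice$ is really full rank, $\BQice$ is a split injection, so $G\cong\Z^m$ and the dual torus $T:=\Hom(G,\Gm)\cong(\Gm)^m$ acts on $\XQice$. First I would show this action is free, using that the frozen variables are invertible and their degrees generate $G$; freeness is where the really-full-rank hypothesis does the real work. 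Then $\XQice\to\XQice/T=:Y_Q$ is a $T$-torsor, and since $H^1(\Spec\F_q,\Gm)=0$ by Hilbert's Theorem 90, every fibre over an $\F_q$-point carries exactly $(q-1)^m$ points, so $\#\XQice(\F_q)=(q-1)^m\cdot\#Y_Q(\F_q)$. It then remains to see that $Y_Q$, built from the degree-$0$ subalgebra $\AQice^{G=0}$, records only the mutable combinatorics and hence depends on $Q$ but not on the extension, giving $\RQ(q)=\#Y_Q(\F_q)$. The main obstacle is making the identification of $Y_Q$ precise; a more elementary alternative is to connect any two really full rank extensions by (i) an integral change of the frozen rows, a monomial isomorphism of cluster varieties, and (ii) addition or deletion of a single frozen row, which multiplies $\#\XQice(\F_q)$ by $(q-1)$ because the extra frozen direction splits off a free $\Gm$.

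For part (2), I would induct on the recursive definition of \Louise quivers, strengthening the claim to: $\RQ(q)$ is a rational function of degree $n$ whose top-degree coefficient is $1$ (i.e.\ $\RQ(q)/q^n\to1$ as $q\to\infty$). This strengthening is essential, since degree $n$ alone does not survive the inductive step because of possible top-degree cancellation. For the base case of an isolated quiver on $n$ vertices, part (1) lets me compute $\RQ(q)$ using the product extension, giving $\RQ(q)=\bigl((q^2-q+1)/(q-1)\bigr)^n$, of degree $n$ with leading coefficient $1$. The mutation clause is immediate, as mutation-equivalent ice quivers have isomorphic cluster varieties, so $\RQ(q)$ is literally unchanged. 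The heart of the argument is the separating-edge step: suppose $u\to v$ is a separating edge with $Q-\{u\}$, $Q-\{v\}$, $Q-\{u,v\}$ all \Louise. By \cref{prop:Mulcover} the opens $\{x_u\neq0\}$ and $\{x_v\neq0\}$ cover $\XQice$, and by \cref{prop:Mulloc} these opens and their intersection are $\XX(\Qice[u])$, $\XX(\Qice[v])$, $\XX(\Qice[u,v])$, with mutable parts $Q-\{u\}$, $Q-\{v\}$, $Q-\{u,v\}$ and with $m+1$, $m+1$, $m+2$ frozen vertices. Freezing a mutable vertex deletes a column of $\BQice$, and a coordinate projection sends a spanning set to a spanning set, so each piece is again really full rank and part (1) applies to it. Inclusion–exclusion for $\F_q$-point counts gives
\begin{equation*}
\#\XQice(\F_q)=(q-1)^{m+1}\bigl(\RQX{Q-\{u\}}{q}+\RQX{Q-\{v\}}{q}\bigr)-(q-1)^{m+2}\,\RQX{Q-\{u,v\}}{q},
\end{equation*}
and dividing by $(q-1)^m$ yields the recurrence
\begin{equation*}
\RQ(q)=(q-1)\Bigl(\RQX{Q-\{u\}}{q}+\RQX{Q-\{v\}}{q}-(q-1)\,\RQX{Q-\{u,v\}}{q}\Bigr).
\end{equation*}
By the inductive hypothesis the three functions have degrees $n-1$, $n-1$, $n-2$ and leading coefficient $1$, so the bracketed expression has degree $n-1$ with leading coefficient $1+1-1=1$; multiplying by $(q-1)$ gives degree $n$ and leading coefficient $1$, closing the induction. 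The one delicate point is exactly this non-cancellation of the top-degree terms, which is why the leading-coefficient refinement must be carried throughout the induction; everything else is bookkeeping with the covering and localization inputs.
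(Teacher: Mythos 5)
The paper does not prove this proposition: it is imported wholesale from \cite{LS} (Proposition 5.11 and Theorem 10.5), so there is no in-paper argument to compare against. Your part (2) is essentially the argument the paper itself gestures at: the separating-edge recurrence you derive is exactly~\eqref{eq:RQ_recurrence}, which the paper records as a consequence of Propositions~\ref{prop:Mulcover} and~\ref{prop:Mulloc}, and your point that the induction must carry the leading coefficient (not merely the degree) to rule out top-degree cancellation is correct and necessary. The base case via~\eqref{eq:A1} and multiplicativity over disjoint unions, the mutation-invariance step, and the check that freezing a vertex preserves the really-full-rank condition are all fine. (The paper instead obtains the leading coefficient $1$ from irreducibility of $\XQice$ together with its dimension; either route works once degree $n$ is known.)

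Part (1) is where the real content of the citation lies, and both of your proposed routes stop short of it. The torus route needs (a) freeness of the $T$-action and (b) an identification of the quotient $Y_Q$ with something manifestly independent of the extension; (b) \emph{is} \cite[Proposition 5.11]{LS}, and you have not supplied it. Your ``elementary alternative'' has a concrete gap: the operations known to preserve the cluster variety (\cite[Section 5]{LS}, as used in the paper's proof of \cref{prop:Louise_leaf}) are negating a frozen row or adding one frozen row to another, so an added frozen row $d$ can be reduced to zero only if it lies in the span of the \emph{other frozen} rows. Really full rank only guarantees $d$ lies in the span of all rows, and the leftover component in the row span of $B(Q)$ cannot be removed by frozen-row operations. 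Similarly, the claim that an extra frozen direction ``splits off a free $\Gm$'' amounts to asserting that all fibers of the new frozen coordinate have equal point counts, which is again the torus-action statement you are trying to establish. So part (1) should either be cited from \cite{LS} as the paper does, or the torus argument completed in detail.
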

If $u \to v$ is a separating edge in $Q$ and $Q-\{u\},Q-\{v\},Q-\{u,v\}$ are \Louise, then we have the recurrence
\begin{equation}\label{eq:RQ_recurrence}
  \RQ(q) = (q-1)\RQX{Q-\{u\}}{q}+ (q-1)\RQX{Q-\{v\}}{q}- (q-1)^2\RQX{Q-\{u,v\}}{q},
\end{equation}
which follows from Propositions~\ref{prop:Mulcover} and~\ref{prop:Mulloc}.

\begin{proposition}[{\cite[Proposition 3.9]{LS2}}] \label{prop:acycliccount}
Let $Q$ be an acyclic quiver with $n$ vertices.  Then 
$$
\RQ(q) = \sum_{k \geq 0} a_k (q-1)^{n-2k} q^k,
$$
where $a_k$ is the number of independent sets of size $k$ in the underlying undirected graph of $Q$.
\end{proposition}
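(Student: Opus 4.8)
The plan is to match the point count $\RQ(q)$ against the combinatorial generating function $P(Q;q):=\sum_{k\ge0}a_k(q-1)^{n-2k}q^k$, where $n=|V(Q)|$, by showing that both sides satisfy the same recurrence~\eqref{eq:RQ_recurrence} with the same base case, and then inducting on the number of edges of $Q$.

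First I would settle the base case of edgeless quivers. By \cref{prop:Q_pcnt}(1) the point count is multiplicative under disjoint unions: if $Q=Q_1\sqcup Q_2$ and $\Qice_1,\Qice_2$ are minimal extensions, then the block-diagonal ice quiver $\Qice_1\sqcup\Qice_2$ is again really full rank and minimal (corank is additive over block-diagonal matrices), and $\Xcal(\Qice_1\sqcup\Qice_2)=\Xcal(\Qice_1)\times\Xcal(\Qice_2)$, whence $\RQX{Q_1\sqcup Q_2}{q}=\RQX{Q_1}{q}\,\RQX{Q_2}{q}$. For a single vertex, the minimal extension has one frozen vertex with exchange matrix $\smat{0\\1}$; mutating once yields the relation $x_1x_1'=x_2+1$, so $\Xcal(\Qice)=\{x_1x_1'=x_2+1,\ x_2\ne0\}$, which has $q^2-q+1$ points over $\F_q$, giving $\RQX{\mathrm{pt}}{q}=(q^2-q+1)/(q-1)$. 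Since an edgeless quiver on $n$ vertices has $a_k=\binom{n}{k}$, a binomial expansion gives $P(Q;q)=\bigl((q^2-q+1)/(q-1)\bigr)^n$, matching the multiplicative point count.

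For the inductive step I would pick an edge $u\to v$ of $Q$. As $Q$ is finite and acyclic it has no bi-infinite walk (an infinite forward walk must repeat a vertex, producing a directed cycle), so every edge of $Q$ is separating. The induced subquivers $Q-\{u\}$, $Q-\{v\}$, $Q-\{u,v\}$ are acyclic, hence \Louise (by the same edge induction, starting from isolated quivers), so~\eqref{eq:RQ_recurrence} applies and expresses $\RQ(q)$ through their point counts, each on strictly fewer edges; by induction these equal the corresponding values of $P$. It then remains to check that $P$ obeys the same recurrence: after cancelling the powers of $(q-1)$, comparing the coefficient of $(q-1)^{n-2k}q^k$ reduces to $a_k(Q)=a_k(Q-\{u\})+a_k(Q-\{v\})-a_k(Q-\{u,v\})$, which is inclusion--exclusion over ``avoiding $u$'' and ``avoiding $v$'': since $u,v$ are adjacent, no independent set contains both, so every size-$k$ independent set of $Q$ avoids $u$ or avoids $v$. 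This yields $\RQ(q)=P(Q;q)$.

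I expect the main obstacle to lie in the base case rather than the inductive step. The inductive step is a purely formal comparison of two recurrences once the independent-set identity is in hand, whereas the base case requires the explicit single-vertex point count $q^2-q+1$ and, to pass from one vertex to arbitrary edgeless quivers, the multiplicativity of $\RQ(q)$ under disjoint unions (equivalently, that the cluster variety of a disjoint union is a product and that a block-diagonal minimal extension remains really full rank).
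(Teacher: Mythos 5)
Your proposal is correct. The paper does not prove this statement itself (it is cited from [LS2, Proposition 3.9]), but your argument --- induction on the number of edges using the recurrence \eqref{eq:RQ_recurrence} (every edge of a finite acyclic quiver is separating, and acyclicity of the induced subquivers gives the required \Louise hypotheses), the inclusion--exclusion identity $a_k(Q)=a_k(Q-\{u\})+a_k(Q-\{v\})-a_k(Q-\{u,v\})$ for independent sets, and the base case of multiplicativity over disjoint unions together with the single-vertex count $(q^2-q+1)/(q-1)$ consistent with \eqref{eq:A1} --- is exactly the standard derivation and fits the machinery the paper sets up.
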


When $Q$ itself is really full rank, $\RQ(q)$ is a polynomial in $q$. However, $\RQ(q)$ is in general a genuine rational function whose denominator is a power of $(q-1)$. E.g., for $Q$ a single isolated vertex (we denote this quiver by $A_1$; it has corank $1$), we have
\begin{equation}\label{eq:A1}
  \RQX{A_1}{q}=\frac{q^2-q+1}{q-1}.
\end{equation}
For an ice quiver $\Qice$ with mutable part $Q$ and $m$ frozen vertices, we set
\begin{equation*}%
  \RQice(q):=(q-1)^m\RQ(q).
\end{equation*}
Thus, when $\Qice$ is really full rank and \Louise, $\RQice(q)$ is a polynomial in $q$ of degree $n+m$ satisfying $\#\XQice(\F_q)=\RQice(q)$ for all prime powers $q$.
\begin{definition}\label{def:QCat}
Let $Q$ be a torsion-free quiver, and $\Qice$ be a minimal extension as in \cref{lem:ext}.  If $\RQice(q)$ is a polynomial in $q$, define %
  \begin{equation*}%
    \chiQ:=\RQice(1).
  \end{equation*}
  By \cref{prop:Q_pcnt}, $\chiQ$ does not depend on the choice of $\Qice$.
\end{definition}
We call $\chiQ$ the \emph{Catalan number} of the quiver $Q$, or simply the \emph{$Q$-Catalan number}.

\begin{conjecture}\label{conj:Catalan}
Suppose that $Q$ is a \Louise quiver. %
Then $\chiQ \geq0$. %
\end{conjecture}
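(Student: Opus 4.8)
The plan is to prove \cref{conj:Catalan} by structural induction on the recursive definition of \Louise quivers, reducing the statement to a single monotonicity-type inequality for the Catalan numbers of smaller quivers. Since $\RQ(q)$, and hence $\chiQ$, is invariant under mutation by \cref{prop:Q_pcnt}(1), the ``mutation'' clause of the definition is automatic, and it suffices to treat the isolated quivers (base case) and the quivers built from a separating edge. For an isolated quiver on $n$ vertices, point counts are multiplicative over disjoint unions (the cluster variety is a product of the single-vertex varieties), so $\RQ(q)=\left(\tfrac{q^2-q+1}{q-1}\right)^n$ by \eqref{eq:A1}; here $\cork(Q)=n$ and $\chiQ=\lim_{q\to1}(q-1)^n\RQ(q)=1\ge0$.

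For the separating-edge clause I would pass from the algebraic recurrence \eqref{eq:RQ_recurrence} to a geometric one. Let $\Qice$ be a minimal extension with $m=\cork(Q)$ frozen vertices, so $\#\XQice(\F_q)=(q-1)^m\RQ(q)$, and set $U=\{x_u\neq0\}$, $V=\{x_v\neq0\}$. By \cref{prop:Mulcover} these cover $\XQice$, and by \cref{prop:Mulloc} one has $U\cong\X(\Qice[u])$, $V\cong\X(\Qice[v])$, and $U\cap V\cong\X(\Qice[u,v])$. Each of these ice quivers is again really full rank (deleting a column from a really-full-rank matrix leaves the rows spanning) and \Louise, so its point count is the polynomial $(q-1)^{m+1}\RQX{Q-\{u\}}{q}$, etc. Since $\XQice$ is smooth (\cref{prop:Mulsmooth}) and has polynomial point count, its compactly supported Euler characteristic equals $\#\XQice(\F_q)|_{q=1}=\chiQ$ (and agrees with the ordinary Euler characteristic by smoothness), and likewise for $U,V,U\cap V$. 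Inclusion--exclusion for $\chi_c$ then gives
\[
\chiQ=\chi_c(U)+\chi_c(V)-\chi_c(U\cap V).
\]

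The key step is to evaluate the three terms. Because the principal part $\BQ$ is skew-symmetric, a single vertex deletion changes the corank by $\pm1$, so $\cork(Q-\{u\})\in\{\cork(Q)\pm1\}$ and $\cork(Q-\{u,v\})\in\{\cork(Q),\cork(Q)\pm2\}$. Comparing the power $(q-1)^{m+1}$ against the pole order $\cork(Q-\{u\})$ of $\RQX{Q-\{u\}}{q}$ shows $\chi_c(U)=\chi_{Q-\{u\}}$ when $\cork(Q-\{u\})=\cork(Q)+1$ and $\chi_c(U)=0$ otherwise, and similarly for $V$ and $U\cap V$ (with $(q-1)^{m+2}$). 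A short bookkeeping argument shows the negative term survives only when $\cork(Q-\{u,v\})=\cork(Q)+2$, and that in this case both positive terms survive as well. In every other case $\chiQ$ is a nonnegative combination of the inductively nonnegative $\chi_{Q-\{u\}},\chi_{Q-\{v\}}$, so the induction reduces to the single inequality
\[
\chi_{Q-\{u\}}+\chi_{Q-\{v\}}\ \ge\ \chi_{Q-\{u,v\}}\qquad\text{when }\cork(Q-\{u,v\})=\cork(Q)+2.
\]

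This last inequality is the main obstacle, and is where I expect the genuine difficulty to lie. It is \emph{vacuous} for mutation-acyclic quivers: by \cref{prop:acycliccount} an acyclic $Q$ has $\cork(Q)=2\alpha(Q)-n$, where $\alpha$ is the maximum independent set size, and $\chiQ$ equals the (strictly positive) number of maximum independent sets; since $\alpha(Q-\{u,v\})\le\alpha(Q)$, the corank-jump case cannot occur. Thus the difficulty is concentrated precisely on the \emph{exotic} \Louise quivers that are not mutation acyclic (as in \cref{sec:notLA}), where no monotonicity of $\chiQ$ under vertex deletion is available a priori. I would attack it either by extracting extra structure forced by a maximal corank jump (e.g.\ $e_u,e_v\notin\rowspan(\BQ)$, which strongly constrains the location of the separating edge), or, more ambitiously, by proving the stronger cohomological statement that the smooth variety $\XQice_\C$ carries no odd cohomology (equivalently, that its cohomology is of Tate type), whence $\chiQ=\sum_i\dim H^{2i}(\XQice_\C)\ge0$ directly. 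This evenness is closely tied to the curious-Lefschetz and $(q,t)$-refined conjectures of \cref{sec:clust-cohom-vs-link-hom}, and establishing it in the \Louise really-full-rank generality would be the crux.
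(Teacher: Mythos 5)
The statement you are addressing is \cref{conj:Catalan}, which the paper leaves as an open conjecture; there is no proof in the paper to compare against, and your proposal does not close the gap either. Your reduction is sensible and, as far as it goes, essentially correct: mutation invariance of $\chi_Q$ disposes of the mutation clause of the \Louise recursion, the isolated base case gives $\chi_Q=1$, and multiplying the recurrence \eqref{eq:RQ_recurrence} by $(q-1)^{\cork(Q)}$ and evaluating at $q=1$, together with the observation that deleting a vertex changes the corank of a skew-symmetric matrix by exactly $\pm1$, correctly isolates the only problematic case, namely $\cork(Q-\{u,v\})=\cork(Q)+2$, where one needs $\chi_{Q-\{u\}}+\chi_{Q-\{v\}}\ge\chi_{Q-\{u,v\}}$. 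But you explicitly do not prove this inequality; you only list strategies you ``would'' pursue. Since this inequality is the entire content of the conjecture beyond bookkeeping, the proposal is a reduction, not a proof.

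One of your supporting claims is also false. You assert that for an acyclic quiver $\cork(Q)=2\alpha(Q)-n$ and that $\chi_Q$ equals the (strictly positive) number of maximum independent sets, and you use this to argue that the corank-jump case is vacuous for mutation-acyclic quivers. Take $Q$ to be an acyclic orientation of the triangle (the example given right after \cref{conj:Catalan}): there $n=3$, $\alpha(Q)=1$, so $2\alpha(Q)-n=-1$, while $\cork(Q)=1$; and $\chi_Q=0$, not the number ($3$) of maximum independent sets. What is true, by \cref{prop:acycliccount}, is that $\chi_Q=a_{(n+m)/2}\ge0$ for acyclic $Q$ with $m=\cork(Q)$, which verifies the conjecture directly for mutation-acyclic quivers without any need to rule out the corank jump. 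The genuinely open part is exactly the class of \Louise quivers that are not mutation acyclic, and for those neither your key inequality nor the evenness/Tate-type cohomological statement you invoke is established here or in the paper.
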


We note that $\chiQ$ can be zero.  For example, let $Q$ be an acyclic orientation of the three-cycle, and let $\Qice$ be a minimal extension. Thus, $\Qice$ is a really full rank quiver with $\cork(Q)=1$ frozen vertices.
 Then by \cref{prop:acycliccount}, we have $R(Q;q)=(q-1)^3+3(q-1)q$, and thus $R(\Qice;q) = (q-1)^4+3(q-1)^2q$ and  $\chiQ =0$.  For another example, the quiver $Q$ in \figref{fig:LA}(a) satisfies $\cork(Q) = 0$ and $\chiQ = 0$.  %
  
\begin{example}\label{ex:Dynkin}
In \cref{tab:Dynkin}, which can be computed using \cref{prop:acycliccount} or \cref{cor:RQ_skein}, we take $Q$ to be any orientation of a Dynkin diagram and $\Qice$ to be any minimal extension of $Q$. Note that $R(Q;q) = R(\Qice;q)/(q-1)^m$, where $m=\cork(Q)$ is given in \cref{tab:Dynkin}.

The corresponding \FLY polynomials are computed via the following observation, which amounts to a single application of~\eqref{eq:HOMFLY_dfn}. Let $G$ be a connected simple plabic graph such that $Q:=\QG$ is an orientation of a tree, and let $Q'$ (resp., $Q''$) be obtained from $Q$ by adjoining a path of length $1$ (resp., of length $2$) to some vertex of $Q$. Each of $Q'$ and $Q''$ is the planar dual of some simple plabic graph, and by \cref{lemma:bdry_leaf_skein_FLY}, the \FLY polynomials of the associated links are related by
\begin{equation}\label{eq:HOMP_recurrence_Dynkin}
  \HOMP(L'')=\frac za \HOMP(L')+\frac1{a^2} \HOMP(L).
\end{equation}
For example, we can take $(Q'',Q',Q)$ to be one of $(A_n,A_{n-1},A_{n-2})$, $(D_n,D_{n-1},D_{n-2})$, $(E_6,D_5,A_4)$, $(E_7,E_6,D_5)$, or $(E_8,E_7,E_6)$. Writing $\HOMP(Q)$ in place of $\HOMP(L)$ where $L$ is the link corresponding to $Q$, we find:
\begin{equation*}
  \HOMP(A_0)=1,\quad \HOMP(A_1)=\frac{z + z^{-1}}{a} - \frac{z^{-1}}{a^{3}},\quad \HOMP(A_n)=\frac za \HOMP(A_{n-1})+\frac1{a^2} \HOMP(A_{n-2}) \quad\text{for $n\geq2$}.
\end{equation*}
Here $A_0$ denotes the empty quiver whose associated link is the unknot; the links corresponding to $A_1$ and $A_2$ are the Hopf link and the trefoil knot, respectively; see \cref{fig:intro_ex}. The above recurrence can be solved explicitly: for each $n\geq 1$, we have
\begin{equation*}%
  \HOMP(A_n)=\frac{T_{n+1}(z)}{a^{n-1}}-\frac{T_{n-1}(z)}{a^{n+1}}, \quad\text{where}\quad T_n(z):=\sum_{k=0}^{\lfloor n/2\rfloor} {n-k\choose k}z^{n-2k-1}.
\end{equation*}
Similarly, we find
\begin{align*}
  \HOMP(D_2)&=\left(\frac{z + z^{-1}}{a} - \frac{z^{-1}}{a^{3}} \right)^2, \quad 
  \HOMP(D_3)=\HOMP(A_3)=\frac{z^{3} + 3 z + z^{-1}}{a^{3}} - \frac{z + z^{-1}}{a^{5}},\\
 \HOMP(D_n)&=\frac za \HOMP(D_{n-1})+\frac1{a^2} \HOMP(D_{n-2})\quad\text{for $n\geq4$}.
\end{align*}
Here $D_2$ denotes the quiver consisting of two isolated vertices\footnote{One can check that the values for $D_2$ and $D_3$ are correct by computing the \FLY polynomial directly from the plabic graph links for $D_4$ and $D_5$ and then running the recurrence~\eqref{eq:HOMP_recurrence_Dynkin} backwards.} and the associated link is by definition the connected sum of two Hopf links; cf. \cref{prop:FLYsum} and~\eqref{eq:Hopf}. We also set $D_3:=A_3$. Finally, having computed $\HOMP(A_4)$ and $\HOMP(D_5)$, we find
\begin{align*}
  \HOMP(E_6)&=\frac{z^{6} + 6  z^{4} + 10  z^{2} + 5}{a^{6}} - \frac{z^{4} + 5  z^{2} + 5}{a^{8}} + \frac{1}{a^{10}}; \\
  \HOMP(E_7)&=\frac{z^{7} + 7  z^{5} + 15  z^{3} + 11  z + 2z^{-1}}{a^{7}} - \frac{z^{5} + 6  z^{3} + 9  z + 3z^{-1}}{a^{9}} + \frac{z + z^{-1}}{a^{11}}; \\
  \HOMP(E_8)&=\frac{z^{8} + 8  z^{6} + 21  z^{4} + 21  z^{2} + 7}{a^{8}} - \frac{z^{6} + 7  z^{4} + 14  z^{2} + 8}{a^{10}} + \frac{z^{2} + 2}{a^{12}}.
\end{align*}
Substituting $a:=\qqi$ and $z:=\qq-\qqi$ into the top $a$-degree term in the above formulas, one recovers the point count formulas in \cref{tab:Dynkin}, in agreement with \cref{thm:FLY=pcnt_leaf_rec}.
\end{example}

\begin{table}
\begin{tabular}{|c|c|c|c|} \hline
&&& \\[-12pt]
Dynkin type& $m$ & $R(\Qice;q)$ & $\chiQ$ 

\\
\hline\hline
$A_n$, $n$ even & 0 & $1+q^2+ q^4 + \cdots +q^{n}$ & $n/2$   
\\
\hline
$A_n$, $n$ odd & 1 & $1-q + q^2 - q^3 + q^4 - \cdots +q^{n+1}$ & $1$   
\\
\hline
$D_n$, $n \geq 4$ even & 2 & $(1-q + q^2 - q^3 + q^4 - \cdots +q^{n+2}) + (-q+q^2+q^n-q^{n+1})$ & $1$   
\\
\hline
$D_n$, $n \geq 5$ odd & 1 & $(1+q^2+ q^4 + \cdots +q^{n+1}) - (q+q^n)$ & $(n-1)/2$   
\\
\hline
$E_6$ & 0 & $1+q^2+ q^3+ q^4 +q^{6}$ & $5$   
\\
\hline
$E_7$ & 1 & $1-q +q^2+ q^6 -q^7+q^8$ & $2$   
\\
\hline
$E_8$ & 0 & $1+q^2+ q^3+ q^4 + q^5+q^6+q^8$ & $7$   
\\
\hline
\end{tabular}
\caption{\label{tab:Dynkin} Point counts of Dynkin quivers.}
\end{table}

\begin{remark}
Let $G$ be a reduced plabic graph with $N$ boundary vertices such that $\pi_G = \pi_{k,N}$ is the ``top cell'' permutation defined in \cref{sec:bound-affine-perm}.  For $(k,N) = (2,N),$ $(3,6),$ $(3,7),$ $(3,8)$, we get the quivers $\QG = A_{N-3}, D_4, E_6, E_8$ respectively.  If $\gcd(k,N) = 1$, then it is shown in \cite{GL_qtcat} that $m= 0$ and $\chi_{\QG} = \frac{1}{N}\binom{N}{k}$ is the $(k,N-k)$ rational Catalan number.  Compare with \cref{tab:Dynkin}.
\end{remark}

\subsection{Leaf recurrence}
Let $i\in V(Q)$ be a leaf vertex of a quiver $Q$, connected to some other vertex $j\in V(Q)$. We call $i$ a \emph{\Louiseleaf} if both quivers $Q-\{i\}$ and $Q-\{i,j\}$ are \Louise.  This automatically implies that $Q$ itself is \Louise.

\begin{proposition}\label{prop:Louise_leaf}
Fix a field $\Ga$, and consider cluster varieties over $\Ga$.
Let $\Qice$ be a really full rank ice quiver with $m$ frozen vertices and mutable part $Q$. Let $i\in V(Q)$ be a \Louiseleaf in $Q$ connected to $j\in V(Q)$. Then the subvariety of $\XQice$ given by $x_i=0$ is isomorphic to $\Ga\times \XX(\Qice')$, where $\Qice'$ is a really full rank quiver with $m$ frozen vertices and mutable part $Q':=Q-\{i,j\}$.
\end{proposition}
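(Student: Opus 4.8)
The plan is to work in the chart where $x_j$ is invertible, exploit that $i$ becomes an isolated mutable vertex there, and read off $\{x_i=0\}$ from the single exchange relation at $i$.

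\emph{Reduction to the chart $\{x_j\neq0\}$.} Since $i$ is a leaf, the unique edge joining $i$ and $j$ lies on no bi-infinite walk in $Q$ and is therefore a separating edge. By \cref{prop:Mulcover} the open sets $\{x_i\neq0\}$ and $\{x_j\neq0\}$ cover $\XQice$, so $\{x_i=0\}$ is contained in $U_j:=\{x_j\neq0\}$. To view $U_j$ as a cluster variety I need $Q-\{j\}$ to be \Louise; but $Q-\{j\}$ is the disjoint union of the isolated vertex $\{i\}$ (whose only neighbour was $j$) with $Q'=Q-\{i,j\}$, and a short induction on the recursive definition of \cref{sec:quiver} shows that a disjoint union of \Louise quivers is \Louise. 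Hence \cref{prop:Mulloc}, applied at $j$, gives $\AQice[x_j^{-1}]\cong\AX(\Qice[j])$, i.e.\ $U_j\cong\XX(\Qice[j])$.

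\emph{Splitting off the cluster variables at $i$.} In $\Qice[j]$ the vertex $i$ has no mutable neighbours, so it is isolated in the mutable part $\{i\}\sqcup Q'$. As mutations in disjoint parts of a quiver do not interact, the cluster algebra factors as
\begin{equation*}
\AX(\Qice[j])\cong\AX(\Qice'')[x_i,x_i']/(x_ix_i'-M),\qquad M=\prod_{r\to i}x_r+\prod_{i\to s}x_s,
\end{equation*}
where $\Qice'':=\Qice[j]-\{i\}$ has mutable part $Q'$ and $m+1$ frozen vertices (the old $m$ together with $j$), and $M$ is a binomial in the frozen variables because every neighbour of $i$ is frozen. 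Since $\{x_i=0\}\subseteq U_j$, the element $x_j$ is already a unit modulo $x_i$, so $\mathcal{O}(\{x_i=0\})=\AQice/(x_i)=\AX(\Qice[j])/(x_i)$. Setting $x_i=0$ collapses the relation $x_ix_i'=M$ to $M=0$ and leaves $x_i'$ free, whence
\begin{equation*}
\mathcal{O}(\{x_i=0\})\cong\big(\AX(\Qice'')/(M)\big)[x_i'],\qquad\text{so}\qquad\{x_i=0\}\cong\Ga\times\Spec\big(\AX(\Qice'')/(M)\big),
\end{equation*}
the free variable $x_i'$ producing the factor $\Ga$.

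\emph{Identifying the specialization.} Writing $M=x_jP+P'$ with $P,P'$ invertible monomials in the remaining frozen variables, the condition $M=0$ reads $x_j=-P'/P$, so $\Spec(\AX(\Qice'')/(M))$ is the fibre of $\XX(\Qice'')$ over a unit value of the frozen coordinate $x_j$. I would let $\Qice'$ be the ice quiver obtained from $\Qice''$ by deleting the frozen vertex $j$ and folding its exchange-matrix row into the remaining frozen rows according to the exponents of $P'/P$; then $\Qice'$ has mutable part $Q'$ and $m$ frozen vertices, and $\AX(\Qice'')/(M)$ agrees with $\AX(\Qice')$ up to the signs $(-1)^{b_{jk}}$ that substituting $x_j=-P'/P$ introduces into the exchange relations. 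That $\Qice'$ is really full rank follows from a short computation: expressing $e_i$ as an integer combination $\sum_r c_r\,\mathrm{row}_r$ of the rows of $B(\Qice)$ (possible since $\Qice$ is really full rank) and using $\mathrm{row}_i=\pm e_j$ (as $i$ is a leaf), one finds that $c_j-\sum_f c_fa_f=\pm1$, which exhibits the $j$-row as an integral combination of the folded rows and hence shows that the rows of $B(\Qice')$ still span $\Z^{|Q'|}$. Granting the identification $\AX(\Qice'')/(M)\cong\AX(\Qice')$, the two displays combine to give $\{x_i=0\}\cong\Ga\times\XX(\Qice')$. The delicate point, and the one I expect to be the main obstacle, is removing the signs $(-1)^{b_{jk}}$: this requires producing an automorphism rescaling the initial cluster variables, i.e.\ a sign-coherence statement for the specialization $x_j=-P'/P$, and it is here that the argument needs the most care.
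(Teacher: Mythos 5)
Your reduction to the chart $\{x_j\neq0\}$ via \cref{prop:Mulcover} and \cref{prop:Mulloc}, the observation that $Q-\{j\}$ is Louise because it is the disjoint union of $Q'$ with an isolated vertex, the splitting off of $x_i,x_i'$ via the single exchange relation at the now-isolated mutable vertex $i$, and the extraction of the free variable $x_i'$ giving the factor $\Ga$ all match the paper's proof step for step. The genuine gap is in your final step. Having identified $\{x_i=0\}$ with $\Ga\times\Spec\bigl(\AX(\Qice'')/(M)\bigr)$, you try to realize the hypersurface $M=0$, i.e.\ $x_j=-P'/P$, as a cluster variety by substituting this value of $x_j$ and ``folding'' the $j$-th frozen row into the remaining frozen rows. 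As you yourself flag, this substitution injects signs $(-1)^{b_{jk}}$ into every exchange relation involving $x_j$, and you do not produce the rescaling of cluster variables needed to remove them; such a rescaling must be made consistently across all seeds, which is a nontrivial sign-coherence assertion, so the identification $\AX(\Qice'')/(M)\cong\AX(\Qice')$ is not established. Your full-rank verification for the folded matrix is likewise only sketched.

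The paper resolves this last step differently, in a way that makes the particular value $-1$ --- and hence all signs --- irrelevant. Since $\Qice$ is really full rank, one writes $e_i=\sum_k\alpha_k\bice_k$; because the only rows with a nonzero entry in column $i$ are indexed by $\Vfro\sqcup\{j\}$, one gets $\gcd(\alpha_k\mid k\in\Vfro\sqcup\{j\})=1$, and after integral row operations on the frozen rows (which do not change the cluster variety) one may assume some frozen coefficient $\alpha_\ell$ equals $1$. After deleting the appropriate row and columns, the relation $\sum_{k\neq i}\alpha_k\bice_k=\bar e_i$ exhibits a one-parameter subgroup $\Gm$ of the cluster automorphism torus that acts on the monomial $x_jM'/M$ with weight $\pm1$; hence the locus $\{x_jM'/M=-1\}$ is a transversal to the $\Gm$-orbits, so it is isomorphic to the quotient by $\Gm$, which is realized simply by setting $x_\ell=1$ and deleting the $\ell$-th frozen row. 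This produces a really full rank $\Qice'$ with $m$ frozen vertices and mutable part $Q-\{i,j\}$ with no sign bookkeeping. To salvage your substitution argument you would essentially have to reprove this torus statement in coordinates, so I would recommend adopting the quotient argument instead.
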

\begin{proof}%
Our goal is to construct a quiver $\Qice'$ satisfying the above assumptions and an isomorphism
\begin{equation}\label{eq:leaf1}
  \{\bx\in\XX(\Qice)\mid x_i=0\} \cong \F\times \XX(Q').
\end{equation}

  By Proposition~\ref{prop:Mulcover}, the subvariety of $\XX(\Qice)$ given by $x_i=0$ is contained inside the subvariety of $\XQice$ given by $x_j\neq0$. The quiver $Q-\{j\}$ is the disjoint union of $Q'$ and a single isolated vertex. By assumption, $Q'$ is \Louise and thus so is $Q-\{j\}$. By Proposition~\ref{prop:Mulloc}, we have $\AQice[x_j^{-1}]\cong \AX(\Qicefr[j])$. We therefore find
\begin{equation}\label{eq:leaf2}
    \{\bx\in\XX(\Qice)\mid x_i=0\} \cong \{\bx\in\XX(\Qicefr[j])\mid x_i=0\}.
\end{equation}

Let $A= \AX(\Qicefr[j] - \{i\})[x_i^{\pm 1}]$ be obtained from the cluster algebra $ \AX(\Qicefr[j] - \{i\})$ by adjoining $x_i$ and its inverse.  The cluster algebra $\AX(\Qicefr[j])$ is isomorphic 
 to the subalgebra of $A$ generated by $\AX(\Qicefr[j] - \{i\})$, the cluster variable $x_i$ and the mutation $x'_i$, which is of the form $f/x_i$ where $f \in \AX(\Qicefr[j] - \{i\})$.  

Let $\X':=\XX(\Qicefr[j] - \{i\})$.  It follows that 
\begin{equation}\label{eq:leaf2.5}
  \XX(\Qicefr[j])\cong \{\left((x_i,x_i'),\bx'\right)\in\Ga^2\times\X'\mid x_ix_i'=M+x_jM'\},
\end{equation}
where $M,M'$ are monomials in the frozen variables of $\Qicefr[j] - \{i\}$. We therefore see that the right-hand side of~\eqref{eq:leaf2} is given by 
\begin{equation}\label{eq:leaf3}
  \{\bx\in\XX(\Qicefr[j])\mid x_i=0\}
\cong \{\left((x_i,x_i'),\bx'\right)\in\Ga^2\times\X'\mid x_i=0 \text{ and } x_ix_i'=M+x_jM'\}.
\end{equation}
We see that the variable $x_i'$ is free, and thus the right-hand side of~\eqref{eq:leaf3} is isomorphic the direct product of $\Ga$ and 
\begin{equation} \label{eq:yM'/M=1} 
\left\{\bx'\in \X'\ \middle|\  \frac{x_jM'}{M}=-1\right\}.
\end{equation}
  It remains to show that the locus~\eqref{eq:yM'/M=1} is isomorphic to $\XX(\Qice')$ for some quiver $\Qice'$ satisfying the conditions in \cref{prop:Louise_leaf}. By assumption, $\Bice:=\BQice$ is really full rank. Thus, there exist integers $(\alpha_k)_{k\in[n+m]}$ such that 
  \begin{equation*}%
    \sum_{k} \alpha_k \bice_k=e_i,
  \end{equation*}
where $\bice_k$ denotes the $k$-th row of $\Bice$ and $e_i=(0,\dots,0,1,0,\dots,0)$ is the $i$-th standard basis vector in $\Z^n$. 

Since $i$ is a leaf, the row $\bice_i$ has a single nonzero entry in the $j$-th column. Let $\Bice\pA$ be obtained from $\Bice$ by removing the $i$-th row and the $j$-th column.\footnote{When removing rows and columns from matrices, we preserve the labels of the remaining rows and columns. For instance, removing row $1$ from $\Bice$ produces a matrix with rows labeled $2,\dots,n+m$.} Then we have
  \begin{equation*}%
    \sum_{k\neq i} \alpha_k \bice\pA_k=\bar e_i,
  \end{equation*}
where $\bar e_i$ is the $i$-th standard basis vector in $\Z^{n-1}$. Moreover, the matrix $\Bice\pA$ is still really full rank.

Let $\Vfro:=[n+1,n+m]$ be the set of frozen vertices of $\Qice$. Then the set of frozen vertices of $\Qicefr[j]-\{i\}$ is $\Vfro\sqcup\{j\}$.  We have $\gcd(\alpha_k\mid k\in \Vfro\sqcup\{j\})=1$
 since all the rows with a nonzero entry in column $i$ of $\Bice$ belong to $\Vfro\sqcup\{j\}$.  By \cite[Section 5]{LS}, replacing a frozen row by its negative, or adding a frozen row to another frozen row produces an isomorphic cluster variety.  After applying a series of such transformations, we obtain a really full rank $(n+m-1)\times(n-1)$ exchange matrix $\Bice\pB$ and a family of integers $(\alpha\pB_k)_{k\neq i}$ satisfying 
   \begin{equation}\label{eq:alpha_kBice''_k=e_i}
    \sum_{k\neq i} \alpha\pB_k \bice\pB_k=\bar e_i,
  \end{equation}
such that moreover for some $\ell\in\Vfro\sqcup\{j\}$ we have $\alpha\pB_\ell=1$. 

Now, let $\Bice\pC$ be the $(n+m-1)\times(n-2)$ matrix obtained from $\Bice\pB$ by further removing the $i$-th column. Thus, $\AX(\Bice\pC)\cong \AX(\Qicefr[j] - \{i\})$, and by construction, $\sum_{k\neq i} \alpha\pB_k\bice\pC_k=0$ is the zero vector in $\Z^{n-2}$.  Thus, by~\cite[Proposition 5.1]{LS}, the numbers $(\alpha\pB_k)_{k\neq i}$ describe a one-parameter subgroup $\Gm$ of the cluster automorphism torus $T=\Aut(\AX(\Bice\pB))$ (see \cite[Section 5]{LS}), with $z\in\Gm$ acting by $x_k\mapsto z^{\alpha\pB_k}x_k$. Equation~\eqref{eq:alpha_kBice''_k=e_i} implies that $z$ acts on the monomial $\frac{x_jM'}{M}$ by $z^{\pm 1}$. Thus, every $\Gm$-orbit contains a unique point in the locus~\eqref{eq:yM'/M=1}. In other words, the locus~\eqref{eq:yM'/M=1} is isomorphic to the quotient $\X(\Bice\pB)/\Gm$.  Since we assumed that $\alpha\pB_\ell = 1$, the quotient $\X(\Bice\pB)/\Gm$ can be obtained by setting $x_\ell=1$.  Let $\Bice'$ be the $(n+m-2)\times(n-2)$ obtained from $\Bice\pC$ by removing the $\ell$-th row. Let $\Qice'$ be the associated quiver, with $m$ frozen vertices corresponding to the rows $(\Vfro\sqcup\{j\})\setminus\{\ell\}$ and mutable part $Q-\{i,j\}$. Clearly, $\Qice'$ is still really full rank. We find that indeed the locus~\eqref{eq:yM'/M=1} is isomorphic to $\XX(\Qice')$.
\end{proof}

\begin{remark}
\Cref{prop:Louise_leaf} generalizes to the case of $i$ being either a source or a sink in $Q$.
\end{remark}
The following result will be later compared to the \FLY skein relation~\eqref{eq:HOMFLY_dfn}.
\begin{corollary}\label{cor:RQ_skein}
  Let $Q$ be really full rank, and let $i\in V(Q)$ be a \Louiseleaf in $Q$ connected to $j\in V(Q)$. Let $Q':=Q-\{i\}$ and $Q'':=Q-\{i,j\}$. Then
  \begin{equation}\label{eq:RQ_skein}
    \RQ(q)=(q-1)\RQX{Q'}{q} + q\RQX{Q''}{q}.
  \end{equation}
\end{corollary}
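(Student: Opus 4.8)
The plan is to stratify the cluster variety $\XQice$ according to whether the cluster variable $x_i$ vanishes, count the $\F_q$-points of each stratum using \cref{prop:Louise_leaf} and \cref{prop:Mulloc}, and then divide by $(q-1)^m$. Fix a really full rank ice quiver $\Qice$ with mutable part $Q$ and $m$ frozen vertices, so that $\RQ(q)=\#\XQice(\F_q)/(q-1)^m$ by~\eqref{eq:RQ_dfn}. Every $\F_q$-point of $\XQice$ has $x_i$ either equal to $0$ or invertible, so
\begin{equation*}
  \#\XQice(\F_q)=\#\{x_i=0\}(\F_q)+\#\{x_i\neq0\}(\F_q).
\end{equation*}

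For the vanishing locus I invoke \cref{prop:Louise_leaf} directly: since $i$ is a \Louiseleaf connected to $j$, the subvariety $\{x_i=0\}$ is isomorphic to $\Ga\times\XX(\Qice')$, where $\Qice'$ is really full rank with $m$ frozen vertices and mutable part $Q''=Q-\{i,j\}$. As $\Ga$ is an affine line, the point count of this stratum over $\F_q$ is $q\cdot(q-1)^m\RQX{Q''}{q}$.

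For the nonvanishing locus I use \cref{prop:Mulloc}: because $Q-\{i\}=Q'$ is \Louise (which is part of the definition of a \Louiseleaf), one has $\AQice[x_i^{-1}]\cong\AX(\Qicefr[i])$, and hence the principal open subscheme $\{x_i\neq0\}=\Spec\AQice[x_i^{-1}]$ is isomorphic to $\XX(\Qicefr[i])$. The ice quiver $\Qicefr[i]$ has mutable part $Q'$ and $m+1$ frozen vertices, and its exchange matrix is obtained from $\BQice$ by deleting the $i$-th column; since the rows of $\BQice$ span $\Z^n$, their images span $\Z^{n-1}$, so $\Qicefr[i]$ is again really full rank. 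Therefore~\eqref{eq:RQ_dfn} applies and gives the point count $(q-1)^{m+1}\RQX{Q'}{q}$.

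Combining the two strata, $\#\XQice(\F_q)=q(q-1)^m\RQX{Q''}{q}+(q-1)^{m+1}\RQX{Q'}{q}$, and dividing by $(q-1)^m$ yields~\eqref{eq:RQ_skein}. The one point requiring care is precisely the verification that $\Qicefr[i]$ remains really full rank, which is needed before~\eqref{eq:RQ_dfn} can be used to count the nonvanishing stratum; this is immediate from the fact that deleting a column preserves the spanning property of the rows, so it is not a serious obstacle. As a sanity check, one can instead observe that the single edge at the leaf $i$ is automatically a separating edge (no bi-infinite walk can pass through a degree-one vertex), apply the recurrence~\eqref{eq:RQ_recurrence} with $\{u,v\}=\{i,j\}$ together with the multiplicativity of $R$ under disjoint union and the value $\RQX{A_1}{q}=\frac{q^2-q+1}{q-1}$ from~\eqref{eq:A1}; the identity $(q^2-q+1)-(q-1)^2=q$ then reproduces the same formula.
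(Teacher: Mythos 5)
Your proof is correct and takes essentially the same route as the paper's: stratify $\XQice$ by the vanishing of $x_i$, identify the closed stratum via \cref{prop:Louise_leaf} and the open stratum via \cref{prop:Mulloc}, and divide by $(q-1)^m$. The extra details you supply beyond the paper's terse version (that $\Qicefr[i]$ remains really full rank, and the separating-edge cross-check via~\eqref{eq:RQ_recurrence}) are both correct.
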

\begin{proof}
  Let $\Qice$ be really full rank with mutable quiver $Q$, and $\X:=\XQice$. Let $\Ucal:=\{x_i\neq0\}$ and $\Zcal:=\{x_i=0\}$ be two subvarieties of $\X$. Then $\Ucal$ is a really full rank cluster variety of type $Q'$ and $\Zcal$ is described in \cref{prop:Louise_leaf}. The result follows since
  \begin{equation*}%
    \#\Xcal(\F_q)=\#\Ucal(\F_q) + \#\Zcal(\F_q).    \qedhere
  \end{equation*}
\end{proof}

\section{Quivers, links, and plabic graphs}
In this section, we study some basic relations between the properties of plabic graphs and the associated quivers and links.

\subsection{Plabic graph quivers}\label{sec:plabquiver}
Recall the background on plabic graphs from \cref{sec:intro:plabic_link,sec:plabic-graphs}. We continue to assume that each plabic graph $G$ is trivalent, and that each interior face of $G$ is simply connected.

We say that two plabic graphs are \emph{move equivalent} if they are connected by a sequence of moves in \cref{fig:moves}. For the square move in \figref{fig:moves}(b), we require that the four faces $A,B,C,D$ adjacent to the square face in clockwise order satisfy $A\neq B\neq C\neq D\neq A$. (Having $A=C$ or $B=D$ is allowed.) Cf.~\cite[Restriction~6.3]{FPST}; this restriction is needed in order for natural statements such as \cref{prop:simple_vs_moves} to hold.

\begin{proposition}\label{prop:trivred} 
Let $G$ be a connected plabic graph. Then $G$ can be transformed using only tail removal (\figref{fig:moves}(c)) into:
\begin{itemize}
\item the graph in \figref{fig:trivred}(a) if $G$ has no interior faces;
\item the graph in \figref{fig:trivred}(b) or \figref{fig:trivred}(c) if $G$ has one interior face;
\item a plabic graph with no boundary vertices, if $G$ has two or more interior faces.
\end{itemize}
We call this graph the \emph{tail reduction} of $G$. %
\end{proposition}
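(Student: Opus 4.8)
The plan is to treat tail reduction as an iterative process and to track a single invariant—the number of interior faces—through each step. First I would make the tail-removal move explicit: given a boundary vertex $b$ of degree $1$ joined to an interior (hence trivalent) vertex $v$, delete $b$ together with the edge $bv$; the vertex $v$ then has degree $2$, and I smooth it, replacing the path through $v$ by a single edge joining its two remaining neighbors. I would check that this operation keeps $G$ connected and keeps all surviving interior vertices trivalent, and—most importantly—does not change the number of interior faces. This last point is the geometric heart of the argument: since $b$ lies on the boundary circle $\partial\disk$, both faces incident to the pendant edge $bv$ necessarily touch $\partial\disk$ and are therefore boundary faces, so deleting $bv$ merely merges two boundary faces; and smoothing a degree-$2$ vertex is a homeomorphism on the complement of the graph in $\disk$ and so preserves the face structure entirely. (The hypothesis that interior faces are simply connected is what guarantees this count behaves as expected, with no nested components producing annular faces.) Hence the number $F_{\mathrm{int}}$ of interior faces is a genuine invariant of tail removal.

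Next I would set up the termination and classification. Each tail removal strictly decreases the number of boundary vertices, so the process stops after finitely many steps at a \emph{terminal} graph admitting no further tail removal. I would then classify terminal connected graphs. A boundary vertex $b$ fails to be removable only when the move is obstructed, and I expect exactly two obstructions: either $b$ is joined directly to another boundary vertex—by connectedness this forces $G$ to be the single arc of \figref{fig:trivred}(a), which has no interior faces—or the component of $b$ is a ``lollipop,'' in which $v$ carries a loop and no further connection, so that smoothing would destroy its only vertex; these give the graphs of \figref{fig:trivred}(b,c), distinguished by the color of $v$, each with exactly one interior face. Any terminal graph not of these two types must have no boundary vertices at all.

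The proof then concludes by case analysis on the invariant $F_{\mathrm{int}}$, using the Euler count that a connected trivalent graph with no boundary vertices and $I$ interior vertices satisfies $F_{\mathrm{int}} = 1 + I/2$, hence always encloses at least one interior face. If $G$ has no interior faces, the terminal graph has none either; since no boundary-free terminal graph has $F_{\mathrm{int}}=0$, it must retain boundary vertices and is therefore the single arc (a). If $G$ has exactly one interior face, the same count rules out any boundary-free terminal graph other than a degenerate loop, so the terminal graph is one of the two lollipops (b) or (c). If $G$ has two or more interior faces, then neither the arc nor the lollipops are available (they have too few interior faces), so the reduction can always proceed until no boundary vertices remain; the key sub-point here is that in this regime every boundary vertex is genuinely removable, since any obstruction would confine $G$ to an arc or a lollipop and contradict $F_{\mathrm{int}}\geq 2$.

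The step I expect to be the main obstacle is the precise classification of the obstructions to tail removal—that is, proving the single arc and the two-colored lollipop are the \emph{only} terminal graphs retaining boundary vertices. This requires carefully ruling out degenerate local configurations (a loop at $v$, a doubled edge with $v$'s two neighbors coinciding, or the smoothing step interacting with $G$ being nonempty and disk-embedded) and confirming in each case that either the move still applies or the component is exactly one of the listed graphs. Once that local analysis is pinned down, the interior-face invariant together with the Euler count makes the three-way split immediate.
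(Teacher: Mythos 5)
Your proof is correct and follows essentially the same route as the paper's: apply tail removal until no longer possible (observing that it preserves connectedness and the number of interior faces), then classify the terminal graphs as the single arc, a lollipop, or a graph with no boundary vertices, with your Euler count merely making explicit the paper's unstated observation that a boundary-free terminal graph has at least two interior faces. One harmless imprecision: the pendant edge $bv$ is a bridge and hence has the same face on both sides, so deleting it does not ``merge two boundary faces''---but the conclusion that the interior face count is unchanged still holds.
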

\begin{proof}
Let $G$ be a connected plabic graph.  By repeatedly applying tail removal  to $G$ (without changing connectedness or the number of interior faces), we may assume no more tail removals are possible.  If $G$ has no boundary vertices then $G$ must have two or more interior faces.  If $G$ has a boundary vertex $b$ connected to an interior vertex $v$, then $v$ must be incident to a loop edge, for otherwise we may apply a tail removal to $b$.  In this case, $G$ has one interior face.  Finally, if $G$ has no interior vertices, then $G$ consists of a single edge connecting two boundary vertices, and has no interior faces.
\end{proof}

\begin{figure}
  \includegraphics[width=0.6\textwidth]{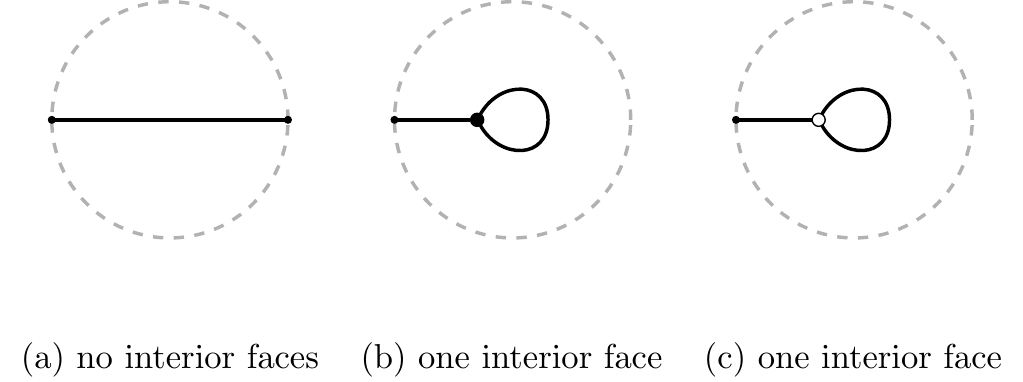}
  \caption{\label{fig:trivred} Tail reductions that have boundary vertices.}
\end{figure}

\begin{proposition}\ \label{prop:simple_vs_moves}
\begin{itemize}
\item Suppose that two simple plabic graphs $G$ and $G'$ are move equivalent. Then $\QG$ and $\QX(G')$ are mutation equivalent.  
\item Any plabic graph $G'$ move equivalent to a simple plabic graph $G$ is also simple.
\end{itemize}
\end{proposition}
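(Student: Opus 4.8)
The plan is to reduce both statements to a single local analysis, by establishing the following \emph{key lemma}: if $G$ is a simple plabic graph and $G\to G'$ is a single move from \cref{fig:moves}, then $G'$ is again simple, and moreover $\QX(G')\cong\QG$ when the move is the contraction--uncontraction move (\figref{fig:moves}(a)) or the tail addition/removal (\figref{fig:moves}(c)), while $\QX(G')=\mu_j(\QG)$ when the move is the square move (\figref{fig:moves}(b)) at the square face corresponding to the vertex $j\in V(\QG)$. In all three cases the vertex set of the planar dual is naturally identified before and after the move, since the interior faces are in canonical bijection.

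Moves (a) and (c) are the routine cases. A tail involves only faces adjacent to the boundary, so it contributes no vertices and no arrows to $\QG$, whence $\QX(G')\cong\QG$. The contraction--uncontraction move only rearranges edges and vertices of a single color; since $\QG$ records arrows exclusively from edges whose two endpoints have \emph{different} colors, I would check directly that the move preserves the set of interior faces together with the way the bicolored edges separate them and their white-on-the-left orientations, so that $\QX(G')\cong\QG$. In both cases $\QX(G')$ is a quiver because $\QG$ is, hence $G'$ is simple.

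The crux, and where I expect the main difficulty, is the square move (b). The square move applies to a square face whose four vertices alternate in color, so all four edges of the square are bicolored and yield the four arrows joining $j$ to its four neighbors $A,B,C,D$; because the colors alternate around the square, these arrows alternate in orientation, giving $j$ two incoming and two outgoing arrows. The square move recolors the four square vertices (black$\leftrightarrow$white), which simultaneously reverses all four arrows incident to $j$ and alters the four ``corner'' arrows between consecutive surrounding faces arising from the third edge at each square vertex. I would verify, by careful bookkeeping of the white-on-the-left convention at each of the four corners (including the degenerate cases $A=C$ or $B=D$), that the combined effect is exactly the mutation rule $\mu_j$: adding an arrow $X\to Y$ for each path $X\to j\to Y$, reversing the arrows at $j$, and cancelling the resulting two-cycles against the old corner arrows. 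The restriction $A\neq B\neq C\neq D\neq A$ is precisely what guarantees that no spurious loops or uncancelled two-cycles appear, so that the geometrically computed $\QX(G')$ literally equals the reduced quiver $\mu_j(\QG)$; in particular $\QX(G')$ is again a quiver and $G'$ is simple. Matching the orientation conventions and the two-cycle cancellation on the nose is the delicate point.

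With the key lemma in hand, both statements follow by induction on the length of a move sequence $G=G_0\to G_1\to\cdots\to G_\ell=G'$. The simpleness-preservation part of the lemma shows inductively that every $G_i$ is simple, which is the second bullet. Since every intermediate graph is then simple, the quiver-transformation part of the lemma applies at each step, so $\QX(G')$ is obtained from $\QG$ by a finite sequence of isomorphisms and mutations; hence $\QG$ and $\QX(G')$ are mutation equivalent, which is the first bullet.
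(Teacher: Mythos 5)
Your proposal is correct and matches the paper's approach: the paper's entire proof is the phrase ``Direct check,'' and your single-move lemma (tail and contraction--uncontraction moves leave $\QG$ unchanged; the square move at face $j$ realizes $\mu_j$ geometrically, with the restriction $A\neq B\neq C\neq D\neq A$ ensuring the two-cycle cancellations happen on the nose) together with induction on the move sequence is exactly the check the authors leave to the reader. No discrepancy to report.
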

\begin{proof}
Direct check.
\end{proof}

\begin{figure}
  \includegraphics[width=0.65\textwidth]{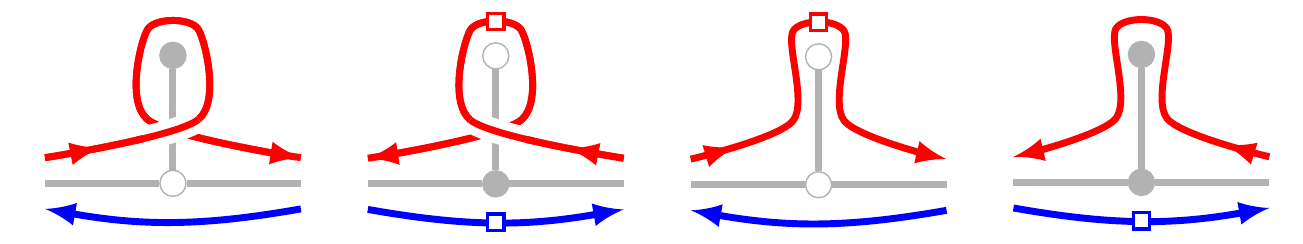}
  \caption{\label{fig:interior_leaves} The behavior of the link $\Lplab_G$ at interior leaves.}
\end{figure}

Recall from \cref{sec:intro:quivers-point-count} that each plabic graph gives rise to a link $\Lplab_G$ and that our main conjecture (\cref{conj:main}) yields a relation between the polynomials $\RQG(q)$ and $P(\Lplab_G;a,z)$.  For a rational function $R(q) = P(q)/Q(q)$, the \emph{leading coefficient} of $R(q)$ is defined as the ratio of the coefficient of $q^{\deg P}$ in $P(q)$ and the coefficient of $q^{\deg Q}$ in $Q(q)$. While \cref{conj:main} applies only to simple plabic graphs (cf. \cref{sec:counterex}), the following more basic statement appears to hold for arbitrary plabic graphs.  In particular, we allow plabic graphs with interior leaves, though we still insist that interior faces are simply connected.  The link $\Lplab_G$ is defined as before, and the behavior of the strands at an interior leaf is shown in \cref{fig:interior_leaves}. 
\begin{conjecture}\label{conj:top_a_deg}
Let $G$ be a plabic graph with $\conn(G)$ connected components and $n$ interior faces. Then the top $a$-degree of $P(\Lplab_G;a,z)$ equals
 \begin{equation}\label{eq:deg_a}
  \degtop_a(P(\Lplab_G))=\conn(G)-n-1.
 \end{equation}
The degree of $\Ptop_L(q)$ (as a rational function in $q$) is given by
 \begin{equation}\label{eq:a_top}
\deg(\Ptop_L(q)) = n,
 \end{equation}
 and the leading coefficient of $\Ptop_L(q)$ is equal to $1$.
\end{conjecture}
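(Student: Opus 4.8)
The three parts are governed by a single quantity, the top $a$-degree coefficient of $\HOMP(\Lplab_G)$. Writing this top term as $a^{d}\,c(z)$ for a Laurent polynomial $c(z)=\sum_k c_k z^k$, the substitution $a:=\qqi$, $z:=\qq-\qqi$ gives $\Ptop_{\Lplab_G}(q)=q^{-d/2}\,c(\qq-\qqi)$. Since $\qq-\qqi\sim\qq$ as $q\to\infty$, the degree of $\Ptop$ equals $\tfrac12(\deg_z c-d)$ and its leading coefficient equals the top $z$-coefficient of $c$ (the substitution introduces no cancellation at the top, as that coefficient will be seen to be $1$). Thus the plan is to first establish \eqref{eq:deg_a}, i.e.\ $d=\conn(G)-n-1$, and then to show that the extreme monomial of $c(z)$ is $z^{\,n+\conn(G)-1}$ with coefficient $1$; together these yield $\deg(\Ptop_{\Lplab_G})=\tfrac12\big((n+\conn(G)-1)-(\conn(G)-n-1)\big)=n$ and leading coefficient $1$, which are \eqref{eq:a_top} and the monicity claim.

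To prove \eqref{eq:deg_a} I would first reduce to $G$ connected, where $\conn(G)=1$ and the claim reads $d=-n$. Here I would pass to a Seifert surface $F$ for $\Lplab_G$ obtained from the oriented resolution of the plabic diagram, and bound $d$ using the Morton--Franks--Williams inequality $\degtop_a\HOMP(L)\le s-1-e$, where $s$ is the number of Seifert circles and $e$ the writhe of the diagram. The heart of this step is the Euler-characteristic identity $s-e=1-n$ (equivalently $\chi(F)=1-n$, i.e.\ $b_1(F)=n$), which I would prove by a direct count relating the Seifert circles and crossings of the plabic diagram to the interior faces of $G$, checking the base cases $n\in\{0,1\}$ against the normal forms of \cref{prop:trivred} and \cref{fig:trivred} (the unknot and the Hopf link). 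This gives the upper bound $d\le-n$; the matching lower bound requires the sharpness of Morton--Franks--Williams, namely the non-vanishing of the coefficient of $a^{-n}$.

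The degree and leading-coefficient assertions then follow once the extreme monomial of $c(z)$ is identified. Its top $z$-degree part is the contribution of the oriented (Seifert) state of the diagram, whose $z$-degree is $1-\chi(F)=n$; for the fibered links arising from reduced $G$ the corresponding coefficient is $\pm1$, and a positivity argument fixes the sign to $+1$. For disconnected $G$ I would reduce to the connected components, the additional branches of the divide $D(G)$ accounting for the constant $\conn(G)$ in \eqref{eq:deg_a}; tracking how the two degrees combine under the resulting split/connected-sum operation reduces everything to the connected case, which is the essential one.

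The main obstacle is the sharpness of Morton--Franks--Williams --- the non-vanishing of the coefficient of $a^{\conn(G)-n-1}$ and the monicity of its top $z$-term --- in the full generality of \cref{conj:top_a_deg}. For reduced plabic graphs this can be extracted from the fiberedness of divide links and from positroid geometry, and for leaf recurrent graphs from the skein recursion matching \cref{cor:RQ_skein}; but for arbitrary (non-reduced, non-simple, possibly with interior leaves as in \cref{fig:interior_leaves}) plabic graphs neither input is available, and there is no quiver to appeal to. A promising but delicate route is a purely skein-theoretic induction using the plabic interpretation of \eqref{eq:HOMFLY_dfn}, removing one crossing at a time and tracking the triple $(d,\deg_z c,\text{leading coefficient})$ through $L_+,L_-,L_0$; the difficulty is that a general plabic graph need not admit such a reduction through diagrams that remain plabic, so controlling the top coefficient uniformly is where the real work lies.
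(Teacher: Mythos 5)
The statement you are addressing is labelled a \emph{conjecture} in the paper, and the paper does not prove it in general: it is established only for leaf recurrent plabic graphs, as a byproduct of the skein-recursion proof of \cref{conj:main} (the boundary-leaf-face recursion of \cref{lemma:bdry_leaf_skein_FLY} matched against \cref{cor:RQ_skein}, with the base case of isolated quivers handled via \cref{prop:plabic_isolated}). Your proposal does not prove it either, and to your credit you say so explicitly. The preliminary bookkeeping is sound: writing the top $a$-term as $a^{d}c(z)$, your reduction of \eqref{eq:a_top} and the monicity claim to \eqref{eq:deg_a} together with the identification of the extreme monomial of $c(z)$ as $z^{\,n+\conn(G)-1}$ is consistent (it checks out against the Hopf link and the trefoil), and the Morton--Franks--Williams inequality combined with the Euler-characteristic count $s-e=1-n$ would give the upper bound $\degtop_a(P(\Lplab_G))\le \conn(G)-n-1$ --- a genuine, if one-sided, piece of the conjecture, provided the count $s-e=1-n$ is actually carried out for the plabic diagram with its boundary-and-back segments, which you only assert.

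The gap is exactly where you locate it, and it is the entire content of the statement: the sharpness of Morton--Franks--Williams, i.e.\ the non-vanishing of the coefficient of $a^{\conn(G)-n-1}$ and the monicity of its top $z$-part, for arbitrary plabic graphs --- non-simple, non-reduced, possibly with interior leaves. MFW is not sharp for general link diagrams, so some input specific to plabic or divide links is unavoidable. The two inputs you name (fiberedness and positroid geometry for reduced $G$; the skein recursion for leaf recurrent $G$) cover precisely the cases the paper already handles, and your proposed ``purely skein-theoretic induction'' is in substance the paper's own argument, which, as you correctly observe, cannot be pushed past the leaf recurrent class because a general plabic graph need not admit a crossing-reduction through diagrams that remain plabic. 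So your proposal should be read as a plausible derivation of the upper bound plus a plan whose essential step remains open --- in the same place the paper leaves it open.
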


Cluster varieties $\XQice$ are irreducible, with dimension equal to the number of vertices in $\Qice$.  Thus, when $R(Q;q)$ is a rational function, it has leading coefficient $1$ and degree equal to the number of vertices of $Q$.  So \cref{conj:main} implies \eqref{eq:a_top} for simple plabic graphs. %

\subsection{Empty quivers}
\begin{proposition}\label{prop:empty}
  Let $G$ be a connected plabic graph. The following are equivalent:
  \begin{enumerate}
  \item $\QG$ is empty.
  \item $G$ is a tree.
  \item\label{item:empty3} The tail reduction of $G$ is a graph with no interior vertices.
  \end{enumerate}
\end{proposition}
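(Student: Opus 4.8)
The plan is to show that each of the three conditions is equivalent to the single assertion that \emph{$G$ has no interior faces}, and then to chain the equivalences through this common reformulation.

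First I would record the essentially definitional equivalence of (1) with ``$G$ has no interior faces.'' By the construction of $\QG$ in \cref{sec:intro:quivers-point-count}, the vertices of $\QG$ are in bijection with the interior faces of $G$; hence $\QG$ is empty (has no vertices) precisely when $G$ has no interior faces. No geometric input is needed here.

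Next I would establish (2) $\iff$ ``$G$ has no interior faces.'' The key geometric observation is that any cycle in $G$ must consist entirely of interior vertices and edges, because every boundary vertex has degree $1$ and therefore cannot lie on a cycle. Consequently any cycle $\gamma$ bounds a closed region $\bar D_\gamma\subset\disk$ whose interior is disjoint from $\partial\disk$, and $G$ subdivides $D_\gamma$ into faces none of which touch $\partial\disk$; each such face is then an interior face, so the presence of a cycle forces at least one interior face. Conversely, if $G$ is a tree it has no cycles, and a tree embedded in $\disk$ has a single complementary region, which is adjacent to $\partial\disk$, so there are no interior faces. Since $G$ is connected, ``$G$ is a tree'' is the same as ``$G$ has no cycles,'' which yields (2) $\iff$ ``no interior faces.''

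Finally, for (3) $\iff$ ``$G$ has no interior faces,'' I would invoke \cref{prop:trivred}, using that tail removal changes neither connectedness nor the number of interior faces. That proposition sorts the tail reduction into exactly three cases according to the number of interior faces of $G$: with no interior faces it is the single edge of \figref{fig:trivred}(a), which has no interior vertices; with one interior face it is \figref{fig:trivred}(b) or (c), each of which contains an interior vertex carrying a loop; and with two or more interior faces it has no boundary vertices, so, being a nonempty plabic graph, all of its (at least one) vertices are interior. Hence the tail reduction has no interior vertices exactly when $G$ has no interior faces. Assembling the three equivalences through the common condition gives (1) $\iff$ (2) $\iff$ (3). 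The only step demanding genuine care is the cycle/interior-face correspondence in (2), where one must combine planarity with the degree-one condition on boundary vertices to guarantee that every cycle lives in the interior of $\disk$; the remaining steps are bookkeeping on top of \cref{prop:trivred}.
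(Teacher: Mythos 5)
Your proof is correct and follows essentially the same route as the paper's: both reduce (1) and (2) to the common condition that $G$ has no interior faces and then appeal to \cref{prop:trivred} for (3). You simply spell out in more detail the geometric step (cycles force interior faces, degree-one boundary vertices cannot lie on cycles) that the paper treats as immediate.
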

\begin{proof}
The mutable quiver $\QG$ is empty if and only if $G$ has no interior faces, which is equivalent to $G$ being a tree, since $G$ is assumed to be connected.  By Proposition~\ref{prop:trivred}, this is also equivalent to~\eqref{item:empty3}.
\end{proof}

\subsection{Disconnected graphs}
For two plabic graphs $G',G''$, we let $G'\disjun G''$ denote the disconnected plabic graph obtained by taking the disjoint union of $G'$ and $G''$. We denote by the same symbol the disjoint union of two quivers and of two links.
\begin{proposition}\label{prop:disc}
Suppose $G=G'\disjun G''$ is a disconnected plabic graph. Then $\QG=\QX(G')\disjun\QX(G'')$ and $\Lplab_G=\Lplab_{G'}\disjun \Lplab_{G''}$.
\end{proposition}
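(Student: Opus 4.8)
The plan is to prove the two assertions separately: the identity $\QG=\QX(G')\disjun\QX(G'')$ by a bookkeeping of faces, and the identity $\Lplab_G=\Lplab_{G'}\disjun\Lplab_{G''}$ by producing an explicit separating sphere.

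For the quivers, I would first check that the interior faces of $G=G'\disjun G''$ are precisely the interior faces of $G'$ together with those of $G''$. Since $G'$ and $G''$ are distinct connected components and, by our standing assumption, no interior face of $G$ contains another connected component, the component $G''$ lies in the outer (boundary-adjacent) face of $G'$, and vice versa. Consequently $G''$ does not subdivide any interior face of $G'$, so the interior faces of $G'$ survive unchanged as interior faces of $G$, and symmetrically for $G''$; every complementary region meeting $\partial\disk$ remains non-interior. This produces a bijection between the vertices of $\QG$ and the vertices of $\QX(G')\disjun\QX(G'')$. For the arrows, I would use that each edge $e$ of $G$ lies in exactly one of $G'$, $G''$, that the two faces adjacent to $e$ are the same whether computed in $G$ or in the component containing $e$, and that both the condition ``$e$ has endpoints of different colors with both adjacent faces interior'' and the white-vertex-on-the-left orientation rule are local to $e$. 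Hence every arrow of $\QG$ is an arrow of the corresponding component's quiver, with the same orientation, and no arrow can join a face of $G'$ to a face of $G''$. This gives the first identity.

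For the link, I would first isotope the embedding so that $G'$ and $G''$ lie in disjoint sub-disks of $\disk$ separated by a properly embedded arc $\gamma\subset\disk$ with both endpoints on $\partial\disk$ and disjoint from both graphs. The strands of $G$ are the strands of $G'$ together with the strands of $G''$, and with this arrangement no strand of $G'$ meets a strand of $G''$; thus the two sublinks of $\Lplab_G$ are disjoint in the diagram. To see that they are in fact \emph{split}, I would pass to the invariant model of \cref{sec:plabic_links_properties}, in which $\Lplab_G$ is realized inside $S^3=\disk\times S^1/\!\sim$ by lifting each strand via its tangent-argument coordinate. Under this lift the contribution of $G'$ lies in $\disk_L\times S^1$ and that of $G''$ in $\disk_R\times S^1$, where $\disk_L,\disk_R$ are the two sides of $\gamma$. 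Now $\gamma\times S^1$ is an annulus whose two boundary circles lie over $\partial\disk$ and are therefore collapsed by $\sim$; the quotient $\Sigma:=\gamma\times S^1/\!\sim$ is thus an embedded $2$-sphere in $S^3$ separating the lift of $G'$ from the lift of $G''$. Hence $\Lplab_G$ is the split union $\Lplab_{G'}\disjun\Lplab_{G''}$, as claimed.

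The face and arrow bookkeeping is routine once the interior-face bijection is set up. The only point genuinely requiring care is the last step of the link argument: disjointness of the two sublinks in the planar diagram is not by itself enough, because the auxiliary segments inserted at the points with $\arg(S,p)=\alpha$ (\cref{rmk:rotation}) run out to $\partial\disk$ and could a priori entangle the two pieces. This is exactly why I would argue in the $S^3$ model rather than in the plane: the collapsing of the fibers over $\partial\disk$ is what upgrades the separating annulus $\gamma\times S^1$ to a genuine separating sphere and makes the splitting rigorous.
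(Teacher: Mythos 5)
Your proof is correct; the paper itself dispatches this proposition with ``Follows directly from the definitions,'' so your argument is simply a careful unfolding of the same (and only) approach. The one substantive point you add --- that disjointness of the two sublinks in the planar diagram is not automatic because of the auxiliary segments running to $\partial\disk$, and that the quotient $\gamma\times S^1/\!\sim$ gives a genuine separating sphere in the $S^3$ model of \cref{sec:plabic_links_properties} --- is exactly the right way to make the split-union claim rigorous.
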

\begin{proof}
Follows directly from the definitions.
\end{proof}

\subsection{Isolated plabic graph quivers}
\def\LHopf{L_{{\rm Hopf}}}
Recall that the \emph{connected sum} of two oriented knots $K, K'$ is defined as follows.  Find planar projections of the two knots that are disjoint.  Then find a (topological) rectangle $R$ in the plane, with oriented boundary consisting of sides $S_1,S_2,S_3,S_4$ in order, such that $S_1$ (resp. $S_3$) is an oriented arc along $K$ (resp. $K'$), and $S_2,S_4$ are disjoint from $K$ and $K'$.  The oriented connected sum knot $K \#K'$ is obtained by deleting $S_1,S_3$ and adding $S_2,S_4$.  To construct the connected sum of two links $L, L'$, we choose components $K \subset L$ and $K' \subset L'$ and perform the above surgery.  We call any link produced in this way the \emph{connected sum} of $L$ and $L'$, denoted $L \#L'$. The following result is well known; see e.g.~\cite[Proposition~16.2]{Lickorish}.

\begin{proposition}\label{prop:FLYsum}
Let $L$ and $L'$ be two oriented links.  Then $\HOMP(L \disjun L') = \left(\frac{a-a^{-1}}{z}\right)\HOMP(L)\HOMP(L')$ and $\HOMP(L \#L') = \HOMP(L) \HOMP(L')$. 
\end{proposition}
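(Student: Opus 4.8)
The plan is to reduce the statement to two independent facts and then combine them. Write $\delta := \frac{a-a^{-1}}{z}$. The first fact is that adjoining a disjoint unknot multiplies the \FLY polynomial by $\delta$, i.e.\ $\HOMP(M \disjun \unkn)=\delta\,\HOMP(M)$ for every link $M$; the second is multiplicativity under connected sum, $\HOMP(L\# L')=\HOMP(L)\HOMP(L')$. Granting both, the disjoint-union formula follows formally: choosing a component $K\subset L$, the link $L\disjun L'$ is the connected sum of $L$ with $\unkn\disjun L'$ taken along $K$ and along the unknot component, so $\HOMP(L\disjun L')=\HOMP(L)\,\HOMP(\unkn\disjun L')=\delta\,\HOMP(L)\HOMP(L')$ by the first fact applied to $M=L'$.

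For the first fact, I would run the skein relation~\eqref{eq:HOMFLY_dfn} on a kink. Fix a diagram of $M$, pick a consistently oriented arc of it, and insert a small curl there; call the resulting crossing $X$. The two crossing choices $L_+$ and $L_-$ at $X$ are the positive and negative curls, both of which are removed by a Reidemeister~I move and hence represent $M$. The oriented resolution $L_0$ at $X$ detaches the loop of the curl as a split unknotted circle, so $L_0=M\disjun\unkn$. Substituting into $a\HOMP(L_+)-a^{-1}\HOMP(L_-)=z\HOMP(L_0)$ gives $(a-a^{-1})\HOMP(M)=z\,\HOMP(M\disjun\unkn)$, which is the claim. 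In particular the $n$-component unlink has $\HOMP=\delta^{n-1}$.

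The main work is the connected-sum formula, and this is where I expect the only real obstacle. The cleanest route is to view $L'$, with its chosen component $K'$ and a marked arc on it, as the closure of a $(1,1)$-tangle $T'$ (a tangle with one incoming and one outgoing endpoint, containing all of $L'$ in its box). The key input is the standard fact that, modulo the \FLY skein relations, the skein module of $(1,1)$-tangles is free of rank one, generated by the trivial strand; hence there is a scalar $\lambda(T')\in\Z[a^{\pm1},z^{\pm1}]$ with $\HOMP(M[T'])=\lambda(T')\,\HOMP(M)$ for every link $M$ and every insertion of $T'$ into an arc of $M$. Taking $M=\unkn$ gives $M[T']=L'$, so $\lambda(T')=\HOMP(L')$; taking $M=L$ and inserting $T'$ into an arc of $K\subset L$ produces exactly $L\# L'$, whence $\HOMP(L\# L')=\lambda(T')\HOMP(L)=\HOMP(L)\HOMP(L')$.

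Establishing the rank-one statement is the technical heart: it amounts to showing that repeated application of the skein relation reduces any $(1,1)$-tangle to a multiple of the identity tangle with a well-defined coefficient, which is precisely the content of the proof that the \FLY skein relation determines a unique invariant (e.g.\ \cite[Chapter~16]{Lickorish}). Alternatively, one can prove multiplicativity directly by induction on the number of crossings of a diagram of $L'$: resolving a crossing of $L'$ away from the connect-sum region commutes with the connected-sum operation, so the skein relation for $L\# L'$ is obtained from that for $L'$ by applying $\HOMP(L)\cdot(-)$ termwise, and the induction bottoms out at the unknot where $L\# \unkn=L$. Either way the disjoint-union and connected-sum identities follow, consistent with the sanity check $\HOMP(\unkn\disjun\unkn)=\delta=\delta\,\HOMP(\unkn\#\unkn)$.
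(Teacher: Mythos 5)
The paper does not actually prove this proposition; it records it as a well-known fact and points to \cite[Proposition~16.2]{Lickorish}. Your write-up is, in essence, the standard textbook argument behind that citation, and it is correct: the curl computation giving $\HOMP(M\disjun\unkn)=\frac{a-a^{-1}}{z}\HOMP(M)$, the reduction of the disjoint-union formula to the connected-sum formula via $L\disjun L'=L\#(\unkn\disjun L')$, and the derivation of multiplicativity from the fact that the \FLY skein module of $(1,1)$-tangles is free of rank one are all exactly how this is done in Lickorish. One small caution about your alternative route: a naive ``induction on the number of crossings of $L'$'' does not terminate as stated, since in the skein triple $(L_+,L_-,L_0)$ only the smoothing $L_0$ has fewer crossings while the switched crossing $L_-$ has the same number; the correct induction is the usual one on the pair (crossing number, number of switches needed to reach a descending diagram of an unlink). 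Since your primary argument via the rank-one skein module does not rely on this sketch, there is no gap, but if you wanted a self-contained proof you would need to spell out that bookkeeping (or simply retain the citation, as the authors do).
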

\noindent In particular, the $r$-component unlink has HOMFLY polynomial $\left(\frac{a-a^{-1}}{z}\right)^{r-1}$.

Let $\LHopf:=\hopflink$ denote the positively oriented Hopf link.  Then we have 
\begin{equation}\label{eq:Hopf}
\HOMP({\LHopf};a,z) = \frac{z + z^{-1}}{a} - \frac{z^{-1}}{a^{3}}.
\end{equation}

Recall that a quiver $Q$ is \emph{isolated} if it has no arrows.
\begin{proposition}\label{prop:plabic_isolated}
Let $G$ be a simple plabic graph with $n$ interior faces and $\conn(G)$ connected components. Suppose that $\QG$ is isolated. Then $\Lplab_G$ is a disjoint union of $\conn(G)$ links, each of which is a connected sum of some number (possibly zero) of positively oriented Hopf links. 
\end{proposition}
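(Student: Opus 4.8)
The plan is to reduce to the connected case via \cref{prop:disc} and then induct on the number of interior faces, splitting off one Hopf link at each step.

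First I would decompose $G=G_1\disjun\cdots\disjun G_{\conn(G)}$ into connected components. By \cref{prop:disc} we have $\Lplab_G=\Lplab_{G_1}\disjun\cdots\disjun\Lplab_{G_{\conn(G)}}$ and $\QG=Q_{G_1}\disjun\cdots\disjun Q_{G_{\conn(G)}}$, and a disjoint union of quivers is isolated if and only if each summand is. Hence it suffices to treat a single \emph{connected} simple plabic graph $G$ with $\QG$ isolated, and to show that $\Lplab_G$ is a connected sum of $n$ positively oriented Hopf links, where $n$ is the number of interior faces of $G$ (equivalently, the number of vertices of $\QG$). I would argue by induction on $n$. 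In the base case $n=0$, \cref{prop:empty} shows $G$ is a tree and \cref{prop:trivred} reduces it by tail removal to a single edge; since tail removal (\cref{fig:moves}(c)) is realized by the divide moves of \cref{fig:divide_moves} and hence is a link isotopy, $\Lplab_G$ is the unknot, the connected sum of zero Hopf links.

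For the inductive step I would first translate the isolated hypothesis: by the definition of $\QG$, the absence of arrows means that no \emph{bichromatic} edge of $G$ separates two interior faces, and no bichromatic edge has a single interior face on both of its sides. Thus every adjacency between interior faces occurs across a \emph{monochromatic} edge. Contracting all monochromatic edges by \cref{fig:moves}(a) produces a move equivalent simple plabic graph $\hat G$; by \cref{prop:simple_vs_moves} its quiver stays isolated, and since the contraction is a divide move we still have $\Lplab_{\hat G}\cong\Lplab_G$. After contraction each interior face of $\hat G$ is bounded only by edges adjacent to the outer region, so it is a small lens (a monogon or a bigon) joined to the rest of $\hat G$ along a single cut-edge, the faces being organized in a tree through the outer region.

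I would then peel off an outermost lens face $F$. In the oriented divide $D(\hat G)$ the boundary of $F$ is a small loop clasping the single strand that runs along its attaching cut-edge; locally this loop is exactly a positive Hopf clasp, and because $F$ is attached by a single cut-edge it meets the rest of the diagram only through a trivial band. This exhibits a connected-sum decomposition $\Lplab_{\hat G}\cong \LHopf\#\Lplab_{G'}$, where $G'$ (obtained by deleting $F$ and smoothing) is again connected and simple with isolated quiver and has $n-1$ interior faces. By induction $\Lplab_{G'}$ is a connected sum of $n-1$ Hopf links, which completes the step; the HOMFLY identity then follows from \cref{prop:FLYsum} and \eqref{eq:Hopf}, though what we really establish is the topological connected-sum structure.

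The step I expect to be the main obstacle is the structural claim in the third paragraph: showing that, after contracting monochromatic edges, each interior face of an isolated-quiver graph is a lens attached by a single cut-edge, and that its boundary loop in $D(\hat G)$ is literally a positive Hopf clasp meeting the remainder of the link only through a connected-sum band. This demands a careful simultaneous use of trivalence, simple-connectivity of interior faces, and the no-interior-bichromatic-edge condition, while tracking that the contraction and tail-removal moves preserve both the link type (via \cref{fig:divide_moves}) and isolatedness of the quiver (via \cref{prop:simple_vs_moves}).
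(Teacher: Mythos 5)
Your overall strategy (reduce to the connected case via \cref{prop:disc}, pass to the bipartite reduction, and induct on the number of interior faces by splitting off one Hopf summand at a time) is the same as the paper's, but the structural claim you yourself flag as the main obstacle has a genuine gap, in two respects. First, you overlook loop edges. A loop edge has a single endpoint, so it contributes no arrow to $\QG$; consequently the hypothesis that $\QG$ is isolated places no constraint on loop edges, and an interior face bounded by a loop can perfectly well have another \emph{interior} face on the other side of that loop. Your assertion that ``each interior face of $\hat G$ is bounded only by edges adjacent to the outer region'' therefore does not follow from isolatedness. In the paper's proof this is precisely the delicate case: when an interior face is bounded by a loop $e$ at a vertex $v$, one must \emph{prove} (using simplicity, isolatedness, and the bipartite assumption) that $v$ is adjacent to a boundary face, and only then does \cref{fig:loop_removal} exhibit the Hopf link as a connected summand. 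The Hopf summands in this proposition come from loop edges, not from ``lens faces.''

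Second, the claim that after contracting monochromatic edges every interior face is a monogon or bigon attached by a single cut-edge is false. Isolatedness only forces each non-loop edge on the boundary of an interior face to have a boundary face on its \emph{other} side; the face itself can be a polygon with arbitrarily many sides, with its vertices' remaining edges running through the outer region to further interior faces. The correct picture is the paper's \cref{fig:isolated_tree}: a tree of polygonal interior faces joined through the boundary region. The paper does not peel these faces off directly; instead it shows that this loop-free picture is \emph{contradictory} (the leaf faces of the tree would have to be bounded by loop edges), which establishes that a loop edge must exist and returns one to the first case. Your induction would need to be restructured along these lines, and you would also need the full tail reduction of \cref{prop:trivred} in the inductive step --- not just in the base case --- to dispose of boundary vertices before carrying out the face analysis.
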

\begin{proof}
By Proposition~\ref{prop:disc}, we may assume that $G$ is connected.  Replace $G$ by its tail reduction (Proposition~\ref{prop:trivred}).
Then the result holds when $n = 0$: the link $\Lplab_G$ is an unknot.  The result also holds when $n =1$: the link $\Lplab_G$ is the Hopf link.  So suppose that $n > 1$.  In particular, we are assuming that $G$ has no boundary vertices.

\begin{figure}
\includegraphics[width=1.0\textwidth]{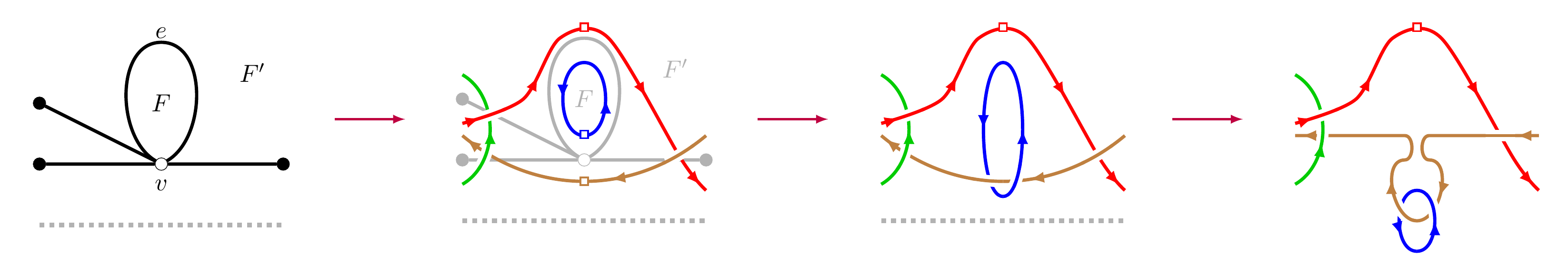}
  \caption{\label{fig:loop_removal} A loop edge in $G$ whose sole vertex is adjacent to a boundary face results in having the Hopf link as a connected summand of $\Lplab_G$; see the proof of \cref{prop:plabic_isolated}.}
\end{figure}

For the remainder of the proof it is convenient to replace $G$ by its \emph{bipartite reduction}, obtained by contracting all interior edges whose endpoints have the same color. In particular, interior vertices are allowed to have degree higher than three.  Suppose first that $G$ contains an interior face $F$ which is bounded by a loop $e$ whose sole endpoint is $v\in V(G)$.  We show that $v$ must be adjacent to a boundary face of $G$. Without loss of generality, we may assume that the edge $e$ is not fully contained inside any other loop edge incident to $v$. Let $F'$ be the face on the other side of $e$ to $F$.  If $F'$ is a boundary face then we are done.  Otherwise, we assume that $F'$ is an interior face.  Let $e'$ be any non-loop edge incident to $v$ that is on the boundary of $F'$.  By our bipartite assumption, $e'$ connects $v$ to a vertex of opposite color.  The assumptions that $G$ is simple and $\QG$ is isolated then imply that $e'$ separates $F'$ from a boundary face.  It follows that $v$ is adjacent to a boundary face of $G$.  Letting $L':=\Lplab_{G-e}$, we see from \cref{fig:loop_removal} that $G$ is a connected sum of $L'$ and $\LHopf$. 
  Since $G-e$ has fewer interior faces, the result follows by induction.%

Suppose now that $G$, still assumed to be bipartite, has no interior faces that are bounded by loop edges. Let $F$ be an interior face of $G$. Since $G$ is simple, the perimeter of $F$ does not contain both sides of any edge. Moreover, since $\QG$ is isolated, every edge adjacent to $F$ is adjacent to a boundary face on the other side. Thus, $G$ consists of a number of interior faces connected in a tree-like manner as in \cref{fig:isolated_tree}.  The interior faces at the leaves of this tree must be bounded by loop edges, contradicting our assumption.
\end{proof}

\begin{figure}
  \includegraphics[width=0.4\textwidth]{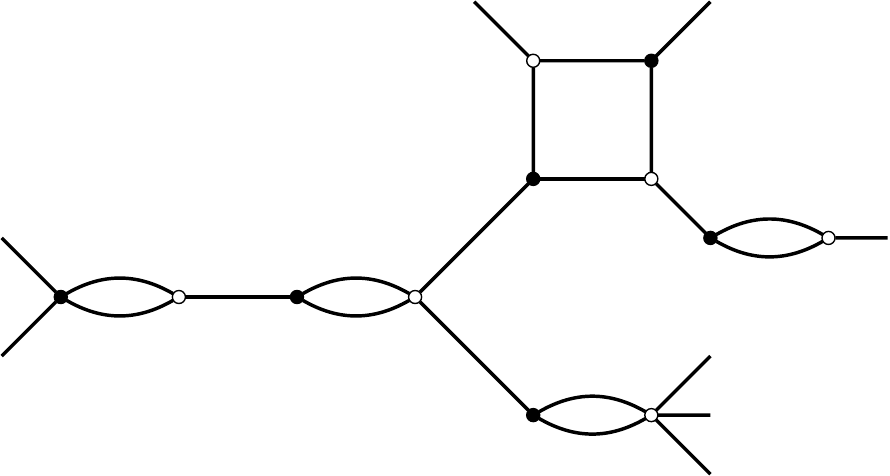}
  \caption{\label{fig:isolated_tree} A ``tree'' of interior faces (without boundary vertices) must have a loop.}
\end{figure}

\subsection{Torsion}
\def\tF{\tilde F}
By definition, for $G$ a (possibly not simple) plabic graph, the quiver $\Qred_G$ is obtained from the directed graph $\QG$ (\cref{sec:intro:quivers-point-count}) by removing all $1$-cycles, and canceling out directed $2$-cycles one by one.  This agrees with the procedure in \cite{FPST}, and the mutation class of $\Qred_G$ is again invariant under the moves in \cref{fig:moves}. Note that $G$ is simple if and only if $\QG=\Qred_G$.

\begin{proposition}\label{prop:torsionfree}
Let $G$ be a plabic graph. Then $\QredG$ is torsion-free.
\end{proposition}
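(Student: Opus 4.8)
The plan is to reduce the statement to a single linear-algebra fact about the signed face-adjacency matrix and then exploit total unimodularity. First I would make two harmless reductions. Since the cokernel is additive over direct sums, I may assume $G$ is connected (cf.\ \cref{prop:disc}). Next, contracting every interior edge whose endpoints have the same color — the bipartite reduction used in the proof of \cref{prop:plabic_isolated} — changes neither the faces of $G$ nor the bicolored edges between them, hence changes neither $\QG$ nor its exchange matrix; so I may assume $G$ is bipartite. Finally, passing from $\QG$ to $\QredG$ only deletes loops and cancels opposite arrows, which does not alter the signed count $\#\{f\to f'\}-\#\{f'\to f\}$. Thus $B:=B(\QredG)=\BQG$ is literally the skew-symmetric matrix of signed face adjacencies on the set $F^\circ$ of interior faces, and it suffices to prove that $\operatorname{coker}(B\colon\Z^{F^\circ}\to\Z^{F^\circ})$ is torsion-free.

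The key step is a factorization of $B$ through the edge lattice. Orient every edge of $G$ from its white to its black endpoint, and let $A=(A_{e,f})$ be the signed incidence matrix of interior faces, where $A_{e,f}=\pm1$ according to whether the boundary orientation of the interior face $f$ agrees or disagrees with the orientation of $e\in\partial f$, and $A_{e,f}=0$ otherwise; let $|A|$ be its entrywise absolute value. A direct check — using that around a bipartite face the white-to-black orientations alternate, so the incidence signs sum to zero — shows that
\begin{equation*}
  B=A^{T}|A|,
\end{equation*}
with zero diagonal and $B^{T}=|A|^{T}A=-B$. The point of this identity is that $A^{T}$ is a reduced incidence matrix: its rows are indexed by interior faces, and every column (edge) has either a single $\pm1$ (an edge separating an interior face from a boundary face) or one $+1$ and one $-1$ (an edge separating two interior faces). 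Hence $A^{T}$ is exactly the incidence matrix of the dual face graph of $G$ with all boundary faces collapsed to a single ground vertex. Reduced incidence matrices are totally unimodular; since $G$ is connected this graph connects to the ground vertex, so $A^{T}$ is surjective over $\Z$ and its kernel $Z:=\ker A^{T}$ (the cycle lattice of the dual graph) is saturated. Equivalently, $A$ is injective with saturated image.

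From here I would finish using the criterion that $\operatorname{coker}(B)$ is torsion-free if and only if $\rk_{\F_p}(B)=\rk_{\Q}(B)$ for every prime $p$, i.e.\ that reducing $B=A^{T}|A|$ modulo $p$ never drops the rank. Total unimodularity of $A$ already settles ``half'' of the product: $A^{T}$ has full row rank over every field, so it stays surjective modulo every $p$, and over $\Z$ it induces a canonical isomorphism $\operatorname{coker}(B)\cong\Z^{E}/(\im|A|+Z)$. Thus the whole claim is equivalent to the saturation of $\im|A|+Z$ in $\Z^{E}$, and the only remaining unknown is the unsigned incidence $|A|$.

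The main obstacle is precisely this last point. Unlike $A$, the matrix $|A|$ is \emph{not} totally unimodular — its mod-$2$ rank drops exactly when the dual face graph has odd cycles — so one must show that any such rank loss is always compensated after intersecting with $Z$ and composing with $A^{T}$, so that $B$ itself loses no rank. Concretely, this is the statement that although a bare double arrow between two faces would produce $2$-torsion, the global planar bipartite structure never realizes it as a direct summand. I expect the cleanest way to make this precise is one of: (i) a direct saturation argument using the cellular chain complex of the disk, in which the boundary faces supply, by an Euler-characteristic count, exactly $\cork(B)$ independent relations, thereby upgrading the boundary-face ice extension $\smat{B\\ C}=\partial_2^{T}|A|$ (where $\partial_2$ is the full face–edge incidence, of which $A$ is the interior part) to a \emph{minimal} really-full-rank extension and invoking the converse of \cref{lem:ext}, namely that existence of a minimal really-full-rank extension forces $\operatorname{coker}(B)\cong\Z^{\cork(B)}\oplus T$ to be generated by $\cork(B)$ elements and hence $T=0$; or (ii) exhibiting, for each prime $p$, a maximal non-vanishing minor of $B$ arising from a spanning structure of the dual graph. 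Establishing this compensation is the crux of the argument.
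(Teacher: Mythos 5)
Your proposal is a genuinely different strategy from the paper's (which proceeds by induction on the number of interior faces, using Postnikov's characterization of non-reduced plabic graphs to find an interior leaf, a loop, or a double edge, and then explicitly lifting a basis of the quotient lattice through the corresponding reduction). The reductions you make and the factorization $B=A^{T}|A|$ through the edge lattice, with $A^{T}$ the reduced incidence matrix of the dual face graph, are a sound and attractive setup, and the identification $\operatorname{coker}(B)\cong\Z^{E}/(\im|A|+Z)$ is correct given surjectivity of $A^{T}$ over $\Z$. But the proof is not complete: everything you actually establish (total unimodularity of $A^{T}$, saturation of $Z$) concerns only the signed half of the product, and you stop precisely at the statement that carries all the content, namely that $\im|A|+Z$ is saturated in $\Z^{E}$. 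That is where the torsion would live --- as you note, $|A|$ is not totally unimodular, and a bare double edge does produce $2$-torsion --- so the proposition is exactly equivalent to the claim you leave open. Neither of your two suggested ways to close it is carried out, and sketch (i) has a concrete problem: the number of boundary faces of $G$ is in general strictly larger than $\cork(B)$ (e.g.\ the top-cell reduced graphs have $\cork(B)=0$ but $\n$ boundary faces, and the sum of all face rows vanishes), so freezing the boundary faces does not give a \emph{minimal} really full rank extension, and a group of free rank $r$ generated by $b>r$ elements can still have torsion. One would need to extract exactly $\cork(B)$ boundary rows that together with $B$ span $\Z^{n}$ over $\Z$, which is essentially the original problem in disguise.

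If you want to salvage this route, the missing step must use planarity and bipartiteness in an essential way (the statement is false for arbitrary symmetrizable skew matrices of the form $A^{T}|A|$), and the most plausible mechanism is the one the paper exploits locally: whenever the mod-$p$ rank of $|A|$ drops along an odd cycle of the dual graph, the corresponding cycle vector already lies in $Z+\im|A|$ because of the alternating colors around each face. Making that precise is, in effect, redoing the paper's inductive case analysis in homological language. So the approach is salvageable but, as written, the crux is asserted rather than proved.
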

\begin{proof}
We prove the result by induction on the number of vertices of $\QredG$.  In the following, we index rows of the $\BQredG$ by interior faces $F$ of $G$.

We may suppose that $G$ is connected.  If $G$ has fewer than two interior faces, then checking the graphs in \cref{fig:trivred}, we see that the result holds.  So suppose that $G$ has at least two interior faces.  Applying tail removal, we may suppose that $G$ is not reduced.  Then by \cite{Pos}, one of the following holds: (a) $G$ has an interior leaf, or (b) the bipartite reduction (see the proof of \cref{prop:plabic_isolated}) has a loop, or (c) $G$ is move equivalent to a plabic graph with a two parallel edges between vertices of opposite color.

In case (a), removing the leaf vertex does not change $\QredG$.  We may thus assume that all interior leaves have been removed.  In case (b), deleting the loop edge removes a zero row and a zero column from $\BQredG$, so the result holds by induction.   We may thus assume that $G$ is leafless and loopless and we are in situation (c), that is, $G$ has a double edge: a pair $e',e''$ of edges between vertices $u,v$ of opposite colors.  Let $F$ be the interior face between $e'$ and $e''$, and let $F', F''$ be the faces on the other side of $e'$ and $e''$ to $F$; see \cref{fig:leafface}.   Let $G'$ be obtained from $G$ by removing both edges $e',e''$.  Let $B := \BQredG$, $C := B(\Qred_{G'})$, and $n := |V(\QredG)|$.  If $F',F''$ are both boundary faces, then $\Qred_{G'}$ is obtained from $\Qred_G$ by removing an isolated vertex, so the result holds by induction.  

Suppose that one of the faces $F',F''$ is boundary and one, say $F'$, is interior.    Then $\Qred_{G'}$ is obtained from $\Qred_G$ by removing the vertices $F$ and $F'$.  Let $\Z^V \cong \Z^n$ denote the free $\Z$-module with basis vectors $\{e_\E\}_{\E\in V}$ indexed by the vertex set $V = V(\Qred_G) = \{\mbox{interior faces $\E$ of $G$}\}$ and $\Z^{n-2} \cong \Z^{V'} \subset \Z^V$ the submodule spanned by basis elements other than $e_F, e_{F'}$.  The rows of $B$ belong to $\Z^V$ and the rows of $C$ belong to $\Z^{V'} \subset \Z^V$.  By induction, $C$ is torsion-free, so $\Z^{V'}/C^T \Z^{V'}$ has a basis given by some vectors $d_1,d_2,\ldots,d_r \in \Z^{V'}$.   Furthermore, we have
$$
b_F = \pm e_{F'}, \qquad b_{F'} \pm e_F \in \Z^{V'}, \qquad b_\E - c_\E \in  \Z e_{F'},  
$$
where $\E$ is any interior face other than $F,F'$, and $b_\E$ and $c_\E$ denote the rows of $B$ and $C$, respectively.  It is clear that $\Z^V/B^T \Z^V$ has a basis given by $\{d_1,\ldots,d_r\}$.  So $B$ is torsion-free.

Now suppose that $F',F''$ are both interior faces.  If $F' = F''$, then one of $u,v$, say $v$, must be a cut vertex of $G$.  Thus, $G$ contains a non-trivial induced subgraph $H$ containing $v$ and connected to $G-H$ only via $e',e''$.  The subgraph $H$ is contained ``inside" $F'=F''$; we may consider $H$ to be a plabic graph, necessarily not reduced, with a single boundary vertex incident to $v$.  Replacing $G$ by $H$ and repeating the argument, we are reduced the situation where $F' \neq F''$.

With $F' \neq F''$ both interior, $\Qred_{G'}$ is obtained from $\Qred_G$ by removing the vertex $F$ and identifying the vertices $F'$ and $F''$ to give a new vertex $\tF$ of $\Qred_{G'}$.  The rows of $B$ belong to $\Z^V \cong \Z^n$ and the rows of $C$ belong to $\Z^{V'} \cong \Z^{n-2}$, where $V' = V(\Qred_{G'})$.  Define the inclusion $\iota: \Z^{V'} \hookrightarrow \Z^V$ by sending $e_{\tF}$ to $e_{F'}$, and mapping the other basis vectors in the obvious way.  By induction, $C$ is torsion-free, so $\Z^{V'}/C^T\Z^{V'}$ has a basis given by some vectors $d_1,d_2,\ldots,d_r \in \Z^{V'}$.  We have that 
$$
b_F = \pm(e_{F'} - e_{F''}), \qquad b_{F'} +b_{F''} = \iota(c_{\tF}), \qquad b_\E - \iota(c_\E) \in \Z(e_{F'} - e_{F''}),
$$
where $\E$ is an interior face other than $F,F',F''$.  Let $W \subset \Z^{V - \{F\}}$ be the span of the vectors $b_F, b_{F'} +b_{F''}$ and $b_\E, \E \notin \{F,F',F''\}$.  Then $\{\iota(d_1),\ldots,\iota(d_r)\}$ form a basis of $\Z^{V- \{F\}}/W$.
But $b_{F'} \pm e_F \in\Z^{V - \{F\}}$, so $\Z^V/B^T\Z^V$ has basis $\{\iota(d_1),\ldots,\iota(d_r)\}$.  Thus, $B$ is torsion-free.
\end{proof}
Combining \cref{prop:torsionfree} with \cref{lem:ext}, we obtain the following result.
\begin{corollary}\label{cor:ext_simple}
Let $G$ be a simple plabic graph. Then $\QG$ admits a minimal extension.
\end{corollary}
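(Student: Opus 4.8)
The plan is to derive this immediately from \cref{prop:torsionfree} and \cref{lem:ext}, with the only genuinely new ingredient being the characterization of simplicity recorded just before \cref{prop:torsionfree}. Recall from there that a plabic graph $G$ is simple if and only if the directed graph $\QG$ coincides with its reduction $\QredG$ (i.e.\ $\QG$ already contains no $1$-cycles or $2$-cycles, so the cancellation procedure does nothing).

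First I would invoke this characterization: since $G$ is simple, we have $\QG=\QredG$. Next, \cref{prop:torsionfree} asserts that $\QredG$ is torsion-free for \emph{any} plabic graph $G$, so in particular $\QredG$ is torsion-free here; combining this with the identity $\QG=\QredG$ shows that $\QG$ itself is a torsion-free quiver. Finally, \cref{lem:ext} says that every torsion-free quiver admits a minimal extension, so applying it to $\QG$ produces the desired really full rank ice quiver with $m=\cork(\QG)$ frozen vertices and mutable part $\QG$.

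There is essentially no obstacle at this level: the statement is a formal consequence of the two quoted results, and all the real work has already been carried out in the (inductive) proof of \cref{prop:torsionfree}. If one wanted to be fully explicit, the only point worth spelling out is that the reduction procedure defining $\QredG$ from the planar dual $\QG$ leaves $\QG$ unchanged exactly when $G$ is simple, which is precisely the equivalence ``$G$ simple $\iff \QG=\QredG$'' already noted in the text; everything else is a direct citation.
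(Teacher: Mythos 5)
Your proposal is correct and is exactly the paper's argument: the paper derives this corollary by combining \cref{prop:torsionfree} with \cref{lem:ext}, using the observation (stated just before \cref{prop:torsionfree}) that $G$ is simple if and only if $\QG=\QredG$. Nothing is missing.
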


\section{Skein relations}
In this section, we introduce leaf recurrent plabic graphs.  This class of plabic graphs includes the well-studied reduced plabic graphs and plabic fences.  We show that the leaf recurrence for plabic graphs corresponds to the skein relation for the associated links, thus establishing \cref{conj:main} for leaf recurrent plabic graphs.

\subsection{Leaf recurrent plabic graphs}\label{sec:leaf-recurr-plab}
In this section, we assume that all plabic graphs satisfy the assumptions of Section~\ref{sec:plabquiver}.

\begin{definition}
Let $G$ be a plabic graph. An interior face $F$ of $G$ is a \emph{boundary leaf face} if, in the tail reduction of $G$, the face $F$ has the form shown in \cref{fig:leafface}, i.e., $F$ is bounded by two edges $e,e'$ with endpoints $u,v$ of distinct color, with $e$ separating $F$ from a boundary face and $e'$ separating $F$ from an interior face $F'$.
\end{definition}

\begin{figure}
  \includegraphics[width=0.7\textwidth]{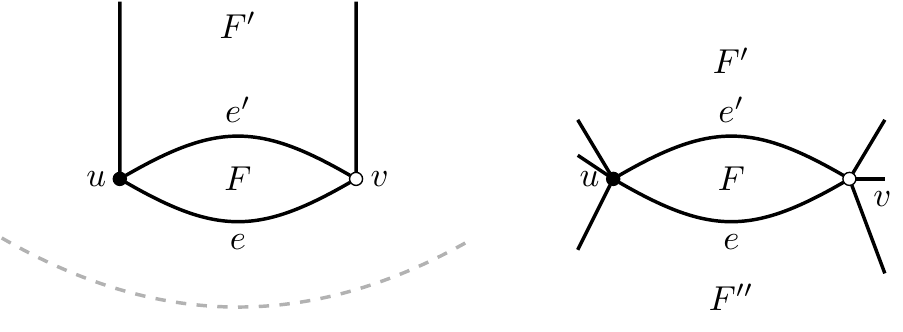}
  \caption{\label{fig:leafface} A boundary leaf face $F$ (left) and an interior double edge (right).}
\end{figure}

\begin{lemma}\label{lemma:bdry_leaf_face}
Let $G$ be a simple plabic graph and $F$ a boundary leaf face. Let $u,v\in V(G)$ be the vertices adjacent to $F$. Then the plabic graphs $G':=G-e$ and $G'':=G-\{u,v\}$ are both simple.
\end{lemma}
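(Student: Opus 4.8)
The plan is to reduce the statement to a purely combinatorial fact about quivers: an induced subquiver of a quiver is again a quiver, since deleting vertices can neither create new arrows nor merge existing ones, and hence cannot produce directed $1$- or $2$-cycles. Thus it suffices to show that $Q_{G'}$ and $Q_{G''}$ are the induced subquivers of $\QG$ obtained by deleting the vertex $F$, respectively the vertices $F$ and $F'$.

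First I would fix the local picture around $F$. Let $B$ be the boundary face separated from $F$ by $e$; then $F$ is the bigon bounded by the two edges $e,e'$ joining $u$ and $v$, and since $G$ is trivalent, $u$ and $v$ each carry one additional edge $e_u,e_v$. Reading the three sectors around $u$ in cyclic order --- the $F$-sector lies between $e$ and $e'$, forcing the $B$-sector between $e$ and $e_u$ and the $F'$-sector between $e_u$ and $e'$ --- shows that $e_u$ separates $B$ from $F'$, and symmetrically $e_v$ separates $B$ from $F'$. Consequently, in $\QG$ the edges $e,e_u,e_v$ contribute no arrows (each is adjacent to the boundary face $B$), while $e'$ contributes the single arrow between $F$ and $F'$; in particular $F$ is a leaf of $\QG$ attached to $F'$, as in \cref{fig:leafface}.

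For $G'=G-e$, removing $e$ merges $F$ into $B$, so the interior faces of $G'$ are precisely those of $G$ other than $F$. The only edge whose pair of adjacent faces changes is $e'$, which now separates $F'$ from the enlarged boundary face and so loses its arrow; every other edge keeps its adjacencies. Hence $Q_{G'}=\QG-\{F\}$, and $G'$ is simple (any degree-$2$ vertices created along the way may be suppressed without affecting the dual quiver). For $G''=G-\{u,v\}$, deleting $u$ and $v$ removes all four edges $e,e',e_u,e_v$, and the three faces $B$, $F$, $F'$ collapse into a single boundary face; thus the interior faces of $G''$ are those of $G$ other than $F$ and $F'$. Every edge that separated $F'$ from another interior face now borders the boundary and loses its arrow, while edges disjoint from $\{F,F'\}$ are unaffected, so $Q_{G''}=\QG-\{F,F'\}$ and $G''$ is simple.

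The only place an error could hide is the face-merging bookkeeping, so the step I would be most careful about is verifying that $e_u$ and $e_v$ both separate $B$ from $F'$; this is what guarantees that deleting $\{u,v\}$ collapses exactly $B$, $F$, $F'$ into one boundary face and removes precisely the quiver vertices $F$ and $F'$, rather than splitting or merging some other interior faces. Once this local analysis is in place, both conclusions follow immediately from the induced-subquiver observation.
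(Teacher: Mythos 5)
Your proposal is correct and follows essentially the same route as the paper: the paper's proof simply asserts that $\QX(G')=\QG-\{F\}$ and $\QX(G'')=\QG-\{F,F'\}$ and notes that induced subgraphs of quivers are quivers, which is exactly your reduction. Your additional local analysis of the sectors around $u$ and $v$ just makes explicit the face-merging bookkeeping that the paper leaves to the reader.
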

\begin{proof}
The directed graph $\QX(G')$ is obtained from $\QG$ by removing the vertex corresponding to $F$.  Similarly, $\QX(G'')$ is obtained from $\QG$ by removing the vertices corresponding to $F$ and $F'$.  Both directed graphs are clearly quivers, and thus $G',G''$ are simple.
\end{proof}

\begin{definition}\label{dfn:leaf_recurrent_plabic_graph}
  The class of \emph{leaf recurrent} plabic graphs is defined as follows.
  \begin{enumerate}[(a)]%
  \item Every leaf recurrent plabic graph is simple.
  \item If $\QG$ is isolated then $G$ is leaf recurrent.
  \item If $G$ is move equivalent to a leaf recurrent plabic graph then $G$ is leaf recurrent.
  \item Suppose $G$ has a boundary leaf face $F$ as in \cref{lemma:bdry_leaf_face}. If the plabic graphs $G'$ and $G''$ are leaf recurrent then $G$ is leaf recurrent.
  \end{enumerate}
\end{definition}

\begin{remark}\label{rmk:leaf_rec=>Louise}
It is immediate that if $G$ is leaf recurrent then $\QG$ is \Louise.
\end{remark}

\begin{theorem}[{\cite[Remark~4.7]{MSLA}}]\label{thm:MSLA}
Any reduced plabic graph $G$ is leaf recurrent.
\end{theorem}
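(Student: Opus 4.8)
The plan is to induct on the number $n$ of interior faces of $G$, using clauses~(a)--(d) of \cref{dfn:leaf_recurrent_plabic_graph}. First I would record that reduced plabic graphs are simple, so that clause~(a) is available throughout: for reduced $G$ the planar dual $\QG$ is the standard positroid cluster quiver, which has no $2$-cycles (these are exactly the bad double crossings of \figref{fig:double}(right), which reduced graphs lack) and no $1$-cycles. Hence $\QG=\Qred_G$ and $G$ is simple. Since reducedness and the strand permutation are preserved by the moves of \cref{fig:moves}, every graph move-equivalent to $G$ is again reduced and simple. For the base of the induction, if $n=0$ then $G$ is a tree, so $\QG$ is empty by \cref{prop:empty} and clause~(b) applies; more generally, whenever $\QG$ is isolated we conclude directly by clause~(b).

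For the inductive step, suppose $\QG$ has at least one arrow. The crux is the following claim: $G$ is move-equivalent to a reduced plabic graph $\tilde G$ possessing a boundary leaf face $F$ in the sense of \cref{fig:leafface}. I would establish this by passing to a Le-diagram representative: by~\cite{Pos}, $G$ is move-equivalent to the reduced graph $G(\Gamma)$ of some Le-diagram $\Gamma$ of shape $\lambda$, and the local rules of \cref{fig:Le_to_G} produce, near an outer corner of $\lambda$, an interior face bounded by exactly two bicolored edges $e,e'$, with $e$ separating it from a boundary region of the disk and $e'$ from the next interior face $F'$. Since $\QG$ is nonempty, $\lambda$ has a box contributing an interior face and such a corner exists. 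Checking that the Le-diagram combinatorics really yields a face matching the precise local picture of \cref{fig:leafface}, with $F$ a leaf of $\QG$ attached to the single neighbor $F'$, is the step I expect to be the main obstacle.

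Granting a boundary leaf face $F$ with neighbor $F'$ and adjacent vertices $u,v$, I would form $\tilde G'=\tilde G-e$ and $\tilde G''=\tilde G-\{u,v\}$. By \cref{lemma:bdry_leaf_face} both are simple, and as shown in its proof $\QX(\tilde G')=\QG-\{F\}$ while $\QX(\tilde G'')=\QG-\{F,F'\}$; in particular each has strictly fewer interior faces than $\tilde G$. The remaining point is that $\tilde G'$ and $\tilde G''$ are again reduced, after the evident smoothing of the degree-two vertices created by the deletion and after any tail removal: in the Le-diagram picture, passing to $\tilde G'$ (resp.\ $\tilde G''$) amounts to removing the corner box of $\lambda$ (resp.\ an adjacent pair of boxes), yielding another valid Le-diagram and hence a reduced graph with fewer faces. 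Applying the inductive hypothesis to the reduced graphs $\tilde G'$ and $\tilde G''$ shows they are leaf recurrent, so $\tilde G$ is leaf recurrent by clause~(d), and therefore $G$ is leaf recurrent by clause~(c). This completes the induction.
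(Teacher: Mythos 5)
First, a point of comparison: the paper does not actually prove this statement --- it is imported wholesale from \cite[Remark~4.7]{MSLA}, so there is no internal proof to measure your argument against. The closest analogue in the paper is the proof that plabic fences are leaf recurrent, and your overall architecture --- induct on the number of interior faces, dispose of isolated quivers by clause~(b), locate a boundary leaf face and apply clause~(d), then transport the conclusion along move equivalence by clause~(c) --- matches that template and is the right shape. The difficulties are concentrated in exactly the two steps your sketch leaves open.

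The first gap is the existence claim, and it is not merely unverified: as you have located it, it is false. A boundary leaf face is bounded by exactly two bicolored edges, one facing a boundary face and one facing an interior face $F'$, so its vertex in $\QG$ must be a leaf of the quiver. For the all-dotted $3\times 4$ Le-diagram (the top cell of $\Gr(3,7)$), the quiver of $G(\Gamma)$ is the standard $2\times 3$ grid quiver, in which every vertex has degree at least $2$; hence $G(\Gamma)$ has no boundary leaf face anywhere, let alone near an outer corner of $\lambda$. One must first perform square moves to manufacture a quiver leaf in a suitable position, and that is precisely the nontrivial content of \cite[Remark~4.7]{MSLA} that your sketch defers. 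The second gap is the claim that $\tilde G'=\tilde G-e$ and $\tilde G''=\tilde G-\{u,v\}$ are again reduced. Deleting an edge, or a pair of vertices, of the plabic graph is not the same operation as deleting a box of the Le-diagram: it changes the strand permutation, and minimality of the face count for the \emph{new} strand permutation is exactly what would need to be checked. Without it you cannot invoke your inductive hypothesis, which quantifies only over reduced graphs --- and it cannot be relaxed to all simple graphs, since \cref{sec:notleaf} exhibits a simple plabic graph that is not leaf recurrent. Note that in the paper's plabic-fence argument the analogous smaller graphs $G(\bw')$, $G(\bw'')$ are never claimed to be reduced; the induction is run inside the class of fences, on the length of the braid word, which is the kind of workaround your proof would also need.
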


\subsection{The skein relation}\label{sec:sub_skein-relation}

\begin{figure}
  \includegraphics[width=1.0\textwidth]{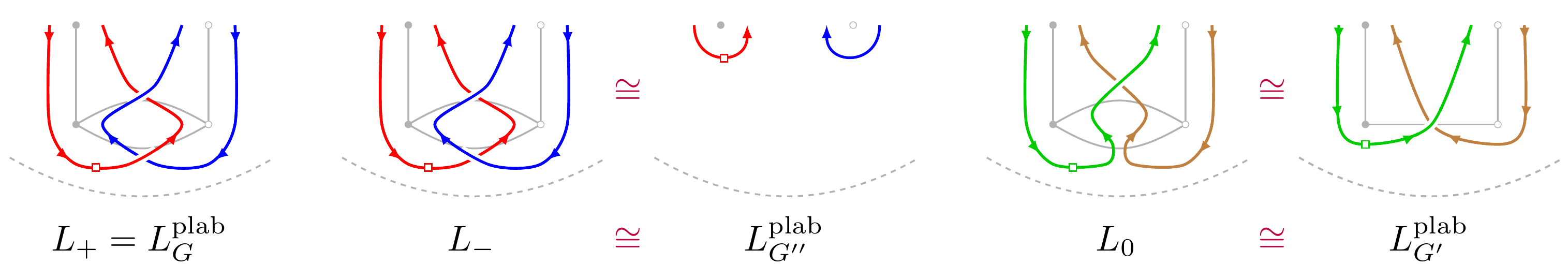}
  \caption{\label{fig:skein} Leaf recurrence for plabic graphs corresponds to the \FLY skein relation; see \cref{lemma:bdry_leaf_skein_FLY}.}
\end{figure}

\begin{lemma}\label{lemma:bdry_leaf_skein_FLY}
Suppose that $G$ is a plabic graph with a boundary leaf face $F$. Let $L_+:=\Lplab_G$, $L_0:=\Lplab_{G'}$, and $L_-:=\Lplab_{G''}$, where $G,G',G''$ are the three graphs in \cref{lemma:bdry_leaf_face}. Then the \FLY polynomials of $(L_+,L_0,L_-)$ are related by~\eqref{eq:HOMFLY_dfn}.
\end{lemma}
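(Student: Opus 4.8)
The plan is to prove the identity purely diagrammatically. I will arrange the three diagrams $\Lplab_G$, $\Lplab_{G'}$, $\Lplab_{G''}$ so that they are \emph{identical outside a small disk} $\disk_0$ around the boundary leaf face $F$, while inside $\disk_0$ they realize exactly the three local tangles $L_+$, $L_0$, $L_-$ of the skein relation~\eqref{eq:HOMFLY_dfn}. Given this, the lemma is immediate from the defining skein relation of the \FLY polynomial, since that relation holds for any three oriented diagrams that agree away from a disk and differ inside it by a positive crossing, a smoothing, and a negative crossing.

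First I would normalize the picture. Using the rotational invariance from \cref{rmk:rotation}, I would choose the angle $\alpha$ generic relative to the tangent directions near $F$, so that none of the boundary-return segments (points $p$ with $\arg(S,p)=\alpha$, marked $\mysq$) occur in a neighborhood of $F$; this removes the technicality of \cref{fig:over_under} from the local analysis. I would also pass to the tail reduction (\cref{prop:trivred}), after which $F$ has the standard shape of \cref{fig:leafface}: a bigon bounded by $e$ (separating $F$ from a boundary face) and $e'$ (separating $F$ from the interior face $F'$), with endpoints $u,v$ of opposite color. Tail removal and move equivalence preserve the link up to isotopy, so none of these normalizations changes $\Lplab_G$, $\Lplab_{G'}$, or $\Lplab_{G''}$.

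The heart of the argument is to read off the three local tangles; this is precisely the content of \cref{fig:skein}. Near $F$, two strands of $G$ cross, and I would use the $\arg$-rule of \cref{sec:intro:plabic_link} together with the orientations with which the strands traverse $G$ to check that this crossing, with its over/under data, is the positive crossing $L_+$. Deleting the edge $e$ resolves this crossing into the oriented smoothing $L_0=\Lplab_{G'}$, while removing the two vertices $u,v$ reverses it into the negative crossing $L_-=\Lplab_{G''}$; I would verify both matchings, orientations included, directly from the strand rules. Away from $\disk_0$ none of these three operations alters the graph, hence the strands and all their crossings agree, so the three diagrams coincide outside $\disk_0$.

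The step I expect to be the main obstacle is the orientation and over/under bookkeeping inside $\disk_0$: one must confirm that $\Lplab_G$ gives the \emph{positive} crossing $L_+$ (not $L_-$), and that the two strand orientations are such that deleting $e$ produces the \emph{oriented} smoothing $L_0$ rather than the disoriented one. Here the colors of $u,v$ are decisive, since a strand turns sharply right at a black vertex and sharply left at a white vertex; combined with the tangent-argument convention this pins down both the crossing sign and the compatible smoothing. The two possible color assignments of $(u,v)$ give mirror-image local pictures and can be treated symmetrically. Once the local tangles are matched to the skein triple, \eqref{eq:HOMFLY_dfn} yields $a\HOMP(L_+)-a^{-1}\HOMP(L_-)=z\HOMP(L_0)$, as desired.
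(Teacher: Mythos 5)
Your proposal is correct and is essentially the paper's own argument: the paper proves this lemma simply by exhibiting \cref{fig:skein}, which shows that the three diagrams agree outside a neighborhood of the boundary leaf face and realize the local tangles $L_+$, $L_0$, $L_-$ inside it. Your writeup is a careful verbal expansion of that figure check, including the orientation and crossing-sign bookkeeping that the figure encodes pictorially.
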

\begin{proof}
See \cref{fig:skein}.
\end{proof}

\begin{theorem}
  Suppose $G$ is a leaf recurrent plabic graph. Then Conjectures~\ref{conj:main} and~\ref{conj:top_a_deg} hold for $G$.
\end{theorem}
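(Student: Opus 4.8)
The plan is to induct on the recursive construction of leaf recurrent plabic graphs in \cref{dfn:leaf_recurrent_plabic_graph}, establishing \cref{conj:main} and \cref{conj:top_a_deg} together (recall that leaf recurrent graphs are simple, so \cref{conj:main} applies). First I would reduce to the case that $G$ is connected: by \cref{prop:disc} the quiver $\QG$ and the link $\Lplab_G$ split over the connected components of $G$, point counts of quivers multiply over disjoint unions (the cluster varieties being products), and $\HOMP$ of a disjoint union of links acquires the factor $\frac{a-a^{-1}}{z}$ by \cref{prop:FLYsum}; a short computation shows these factors are exactly compatible with the powers of $(q-1)$ in \eqref{eq:FLY=pcnt}, so the connected case implies the general one. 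The move-equivalence case (c) is then immediate: \cref{prop:simple_vs_moves} makes the quivers of move-equivalent simple plabic graphs mutation equivalent, so $\RQG(q)$ is unchanged by \cref{prop:Q_pcnt}; and $\Lplab_G$ is invariant up to isotopy under the moves of \cref{fig:moves}, being the divide link of \cref{sec:plabic_links_properties}, so $\Ptop_{\Lplab_G}(q)$ is unchanged as well.

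For the base case (b), where $\QG$ is isolated, I would pass to the tail reduction and use \cref{prop:plabic_isolated} to write $\Lplab_G$ as a connected sum of $n$ positively oriented Hopf links ($n$ being the number of interior faces). Combining multiplicativity of $\HOMP$ under connected sum (\cref{prop:FLYsum}) with \eqref{eq:Hopf}, extracting the top $a$-degree term, and substituting $a:=\qqi$, $z:=\qq-\qqi$, I expect to obtain $\Ptop_{\Lplab_G}(q)=\left(\tfrac{q^2-q+1}{q-1}\right)^n$, which matches $\RQG(q)$ computed from \eqref{eq:A1} and multiplicativity of point counts over the isolated vertices. The same computation yields the top $a$-degree $\conn(G)-n-1$ of \eqref{eq:deg_a} and the leading-coefficient normalization.

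The crux is step (d). Here $\QG$ is torsion-free by \cref{prop:torsionfree}, so by \cref{cor:ext_simple} it admits a really full rank extension, and the boundary leaf face $F$ is a \Louiseleaf of $\QG$ connected to the vertex of the adjacent interior face $F'$; hence \cref{cor:RQ_skein} gives the point-count recurrence $\RQG(q)=(q-1)R(Q_{G'};q)+q\,R(Q_{G''};q)$, where $G'=G-e$ and $G''=G-\{u,v\}$ are the simple graphs of \cref{lemma:bdry_leaf_face}, with $n-1$ and $n-2$ interior faces and the same number of connected components as $G$. On the link side, \cref{lemma:bdry_leaf_skein_FLY} gives the \FLY skein relation $a\HOMP(\Lplab_G)-a^{-1}\HOMP(\Lplab_{G''})=z\HOMP(\Lplab_{G'})$. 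Using \eqref{eq:deg_a} for $G'$ and $G''$ (known by induction), all three terms of this relation have the same top $a$-degree $\conn(G)-n$, and comparing the top $a$-degree coefficients (Laurent polynomials in $z$) gives an identity of the form $P^{\top}_{\Lplab_G}=P^{\top}_{\Lplab_{G''}}+z\,P^{\top}_{\Lplab_{G'}}$; this in particular confirms \eqref{eq:deg_a} for $G$. Substituting $a:=\qqi$, $z:=\qq-\qqi$ and using $(\qq-\qqi)\qq=q-1$ converts this into $\Ptop_{\Lplab_G}(q)=(q-1)\Ptop_{\Lplab_{G'}}(q)+q\,\Ptop_{\Lplab_{G''}}(q)$. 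Multiplying by $(q-1)^{\conn(G)-1}$ and invoking \cref{conj:main} for $G'$ and $G''$ turns the right-hand side into $(q-1)R(Q_{G'};q)+q\,R(Q_{G''};q)=\RQG(q)$, proving \eqref{eq:FLY=pcnt} for $G$; the degree and leading-coefficient parts of \cref{conj:top_a_deg} propagate through the same recurrence, and \eqref{eq:a_top} follows from \cref{conj:main} as in the remark after \cref{conj:top_a_deg}.

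I expect the main obstacle to lie in the component bookkeeping of step (d): one must check that removing the bigon vertices $u,v$ does not disconnect $G$ (so that $\conn(G)=\conn(G')=\conn(G'')$ and the powers of $(q-1)$ align), which rests on the fact that the boundary of the neighboring interior face $F'$ supplies an alternate path between the two sides, and one must rule out an unexpected cancellation that would drop the top $a$-degree below $\conn(G)-n-1$ --- equivalently, verify that $P^{\top}_{\Lplab_G}=P^{\top}_{\Lplab_{G''}}+z\,P^{\top}_{\Lplab_{G'}}$ is a nonzero Laurent polynomial, which follows from the inductively known leading $z$-behavior of $P^{\top}_{\Lplab_{G'}}$. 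The remaining care is in confirming the hypotheses of \cref{cor:RQ_skein} and \cref{lemma:bdry_leaf_skein_FLY} and in tracking the interior-face counts $n,n-1,n-2$ throughout.
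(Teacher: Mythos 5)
Your proposal is correct and follows essentially the same route as the paper: induct over the recursive definition of leaf recurrence, handle the isolated base case via \cref{prop:plabic_isolated}, \cref{prop:FLYsum}, \eqref{eq:Hopf} and \eqref{eq:A1}, use move-invariance of both sides for case (c), and in case (d) match the skein relation of \cref{lemma:bdry_leaf_skein_FLY} against the point-count recurrence \eqref{eq:RQ_skein}; your explicit degree bookkeeping and the substitution yielding $\Ptop_{L_+}=(q-1)\Ptop_{L_0}+q\Ptop_{L_-}$ is exactly the computation the paper leaves implicit. The only superfluous (and slightly unjustified) step is the upfront reduction to connected $G$ --- the recursive definition does not immediately give that components of a leaf recurrent graph are leaf recurrent --- but nothing in your argument actually depends on it, since all you use is compatibility of both sides of \eqref{eq:FLY=pcnt} with disjoint union, which is what the paper checks in the isolated case.
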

\begin{proof}
  Suppose $G$ has a boundary leaf face $F$ such that the graphs $G'$, $G''$ from \cref{lemma:bdry_leaf_face} are both leaf recurrent. Denote $Q:=\QG$, $Q':=\QX(G')$, $Q'':=\QX(G'')$, $L_+:=\Lplab_G$, $L_0:=\Lplab_{G'}$, and $L_-:=\Lplab_{G''}$, and note that $G,G',G''$ all have the same number of connected components.

By \cref{lemma:bdry_leaf_skein_FLY}, the \FLY polynomials of $(L_+,L_0,L_-)$ are related by~\eqref{eq:HOMFLY_dfn}. By \cref{rmk:leaf_rec=>Louise}, the quivers $Q,Q',Q''$ are \Louise. By \cref{cor:RQ_skein}, their point counts are related by~\eqref{eq:RQ_skein}. Comparing~\eqref{eq:HOMFLY_dfn} and~\eqref{eq:RQ_skein}, we see that~\eqref{eq:FLY=pcnt} holds for $Q$ if it holds for both $Q'$ and $Q''$.

Now suppose that $\QG$ is isolated with $n$ vertices.  If $G$ is the disjoint union of $G_1$ and $G_2$ then by Proposition~\ref{prop:FLYsum}, we have $\HOMP(\Lplab_{G})= ((a-a^{-1})/z)\HOMP(\Lplab_{G_1})\HOMP(\Lplab_{G_2})$, so $\Ptop_{\Lplab_G}(q) = (q-1)^{-1} \Ptop_{\Lplab_{G_1}}(q)\Ptop_{\Lplab_{G_2}}(q)$.  Since $\RQG(q) = \RQX{\QX(G_1)}{q}\RQX{\QX(G_2)}{q}$, the result follows. We may now assume that $G$ is connected.  By \cref{prop:plabic_isolated}, $\Lplab_G$ must be the connected sum of $n$ Hopf links.  By \cref{prop:FLYsum}, \eqref{eq:Hopf} and~\eqref{eq:A1} we have 
$$\Ptop_{\Lplab_G}(q) = \left(\frac{q^2-q+1}{q-1}\right)^n = \RQG(q).
$$
Thus, the result holds when $\QG$ is isolated.

Since the validity of~\eqref{eq:FLY=pcnt} is unchanged under moves on $G$, it follows that~\eqref{eq:FLY=pcnt} holds for all leaf recurrent plabic graphs $G$.  The equality~\eqref{eq:deg_a} also follows from the same argument.
\end{proof}

\subsection{Plabic fences}\label{sec:plabic-fences}
Following~\cite[Section~12]{FPST}, we consider the following class of plabic graphs. Let $\n\geq 2$ and let $I:=[\n-1]=\{1,2,\dots,\n-1\}$. A \emph{double braid word} is a word $\bw=(i_1,i_2,\dots,i_m)$ in the alphabet
\begin{equation*}%
  \pmn:=\{-1,-2,\dots,-(\n-1)\}\sqcup\{1,2,\dots,\n-1\}.
\end{equation*}
We denote the set of double braid words of length $m$ by $\DRW$. 

To a double braid word $\bw\in\DRW$ we associate a plabic graph $\Gbw$ with $2\n$ boundary vertices. First, draw $\n$ horizontal strands, with both endpoints of each strand on the boundary of the disk. Then for each $j=1,2,\dots,m$, let $h:=|i_j|$ and consider the strands at height $h$ and $h+1$. If $i_j>0$, add a black at $h$, white at $h+1$ bridge between these two strands, and if $i_j<0$, add a white at $h$, black at $h+1$ bridge between these two strands. The bridges are added one by one proceeding from left to right. See \cref{fig:PF2} for an example.

\begin{figure}
  \includegraphics[width=0.7\textwidth]{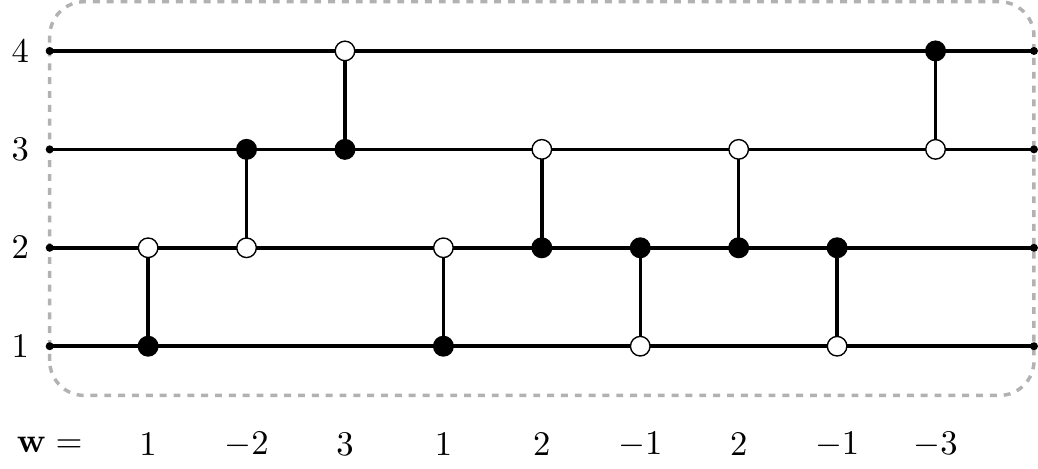}
  \caption{\label{fig:PF2} The plabic fence $\Gbw$ for a double braid word $\w$.}
\end{figure}

\begin{proposition}
For any $\bw\in\DRW$, the plabic graph $\Gbw$ is simple.
\end{proposition}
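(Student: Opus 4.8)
The plan is to read off the planar dual directed graph $\QG$ directly from the grid-like shape of a plabic fence and to check that it has neither loops nor $2$-cycles. First I would describe the face structure of $\Gbw$. The $\n$ horizontal strands cut the disk into horizontal strips, one strip $S_h$ lying between strands $h$ and $h+1$ for each $h\in[\n-1]$; the bridges with $|i_j|=h$ lie inside $S_h$ and subdivide it, so the interior faces contained in $S_h$ are exactly the regions lying strictly between two consecutive bridges of level $h$ (the two extreme regions of each strip meet the left/right boundary of the disk and are boundary faces). The only edges of $\Gbw$ are the horizontal strand-edges and the bridges; every bridge joins a black vertex to a white vertex and is therefore bicolored, and—since $\Gbw$ is trivalent, as required in Section~\ref{sec:plabquiver}—each interior vertex on a strand is incident to exactly one bridge and two horizontal edges.

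Second, I would rule out loops. A bridge of level $h$ separates the region of $S_h$ immediately to its left from the one immediately to its right, and these are distinct. A horizontal edge on a strand separates the region just below it from the region just above it; these lie on opposite sides of the strand, hence in different strips (or one of them is a boundary region), so again they are distinct. Thus no edge of $\Gbw$ has the same face on both of its sides, and so $\QG$ contains no loop arrow.

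Third—and this is the crux—I would show that any two interior faces $F,F'$ are joined by at most one edge of $\Gbw$, which immediately forbids directed $2$-cycles in $\QG$. If $F,F'$ lie in the same strip $S_h$, then since the interior faces of $S_h$ are linearly ordered from left to right and consecutive ones share exactly the single bridge between them, $F$ and $F'$ share one bridge when adjacent and no edge otherwise. If $F\subseteq S_h$ and $F'\subseteq S_{h+1}$ lie in adjacent strips, the only edges they can share are horizontal edges on the common strand $h+1$, and I claim there is at most one. Suppose $F,F'$ shared two such edges $f_1,f_2$ with $f_1$ to the left of $f_2$. Walking along strand $h+1$ from $f_1$ to $f_2$ we cross at least one interior vertex $w$, which by trivalence is the endpoint of a unique bridge: if that bridge descends into $S_h$, the region of $S_h$ bordering the strand strictly changes as we pass $w$ and, by the left-to-right linear order of faces within $S_h$, never returns to $F$; if the bridge ascends into $S_{h+1}$, the region $F'$ above changes and never returns. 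Either way $f_2$ cannot border the pair $(F,F')$, a contradiction. Finally, faces in non-adjacent strips share no edge at all. Hence every pair of interior faces is joined by at most one bicolored edge, so $\QG$ has no $2$-cycle.

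Combining the last two steps, $\QG$ has no directed cycle of length $1$ or $2$, i.e.\ it is a quiver, and so $\Gbw$ is simple by definition. The main obstacle is the adjacent-strip argument of the third step: the conclusion that two faces share at most one horizontal edge rests on the monotonicity of faces within a strip together with the trivalence of interior vertices, and this is the one place where a careful local analysis, rather than a formal manipulation, is needed.
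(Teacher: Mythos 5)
Your proof is correct, and it matches the paper's approach: the paper's entire proof is ``This is clear from construction,'' so your argument simply supplies the face-by-face verification (no edge bounds the same face twice; any two interior faces share at most one edge, via the left-to-right monotonicity of regions within a strip) that the paper leaves implicit.
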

\begin{proof}
This is clear from construction. %
\end{proof}

Let us say that two braid words $\bw,\bw'$ are \emph{double braid move equivalent} if they are related by a sequence of the following moves:
\begin{enumerate}[(B1)]%
\item\label{bm1} $(\dots,i,j,i,\dots)\leftrightarrow (\dots,j,i,j,\dots)$ \quad if $i,j\in\pmn$ have the same sign and $|i-j|=1$;
\item\label{bm2} $(\dots,i,j,\dots)\leftrightarrow (\dots,j,i,\dots)$ \quad if $i,j\in \pmn$ have the same sign and $|i-j|>1$;
\item\label{bm5} $(\dots,i,j,\dots)\leftrightarrow (\dots,j,i,\dots)$  \quad if $i, j\in\pmn$ have different signs;
\item\label{bm6} $(-i,\dots)\leftrightarrow (i,\dots)$ \quad for $i\in \pmn$;
\item\label{bm7} $(\dots,-i)\leftrightarrow (\dots,i)$ \quad for $i\in \pmn$.
\end{enumerate}

It is clear that if $\bw,\bw'\in\DRW$ are double braid move equivalent then the plabic graphs $\Gbw,\GX(\bw')$ are  move equivalent.

\begin{proposition}
Plabic fences are leaf recurrent. That is, for any double braid word $\bw\in\DRW$, $\Gbw$ is a leaf recurrent plabic graph.
\end{proposition}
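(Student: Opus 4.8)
The plan is to argue by induction on the number $m$ of bridges (equivalently, on the length of $\bw\in\DRW$), exploiting the recursive structure of \cref{dfn:leaf_recurrent_plabic_graph}. We already know that $\Gbw$ is simple, so clause (a) holds throughout. Recall from the remark preceding this proposition that double braid move equivalent words give move equivalent plabic graphs; hence, by clause (c), we are always free to replace $\bw$ by any word obtained from it by the moves (B1)--(B7). If the quiver $\QX(\Gbw)$ is isolated --- in particular, whenever $\Gbw$ has no interior faces at all --- then $\Gbw$ is leaf recurrent by clause (b), and this serves as the base case of the induction.

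For the inductive step I would assume $\QX(\Gbw)$ is not isolated and produce a boundary leaf face. Geometrically, $\Gbw$ is a ``brick wall'': its interior faces are the regions lying between two consecutive bridges in a single channel (between the strands at heights $h$ and $h+1$), and such a face acquires an interior neighbour in the dual quiver precisely when it shares a bicoloured edge with another interior face. Since $\QX(\Gbw)$ is nonisolated, some channel carries at least two bridges and hosts an interior face. I would then single out an \emph{extremal} such face, for instance the rightmost interior face $F$ in the highest channel $h$ that contains an interior face. The region to the right of $F$ in channel $h$ is a boundary face, so one of the two edges bounding $F$ separates it from the boundary. Using the commutation moves (B2) and (B5) to slide bridges of the wrong height or sign out of the way, together with the braid move (B1) for adjacent heights (and, within the move-equivalence class, contraction and square moves), one arranges the two bridges bounding $F$ into the double-edge configuration of \cref{fig:leafface}. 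This exhibits $F$ as a boundary leaf face with adjacent vertices $u,v$ of distinct colour, an edge $e$ separating $F$ from a boundary face, and an edge $e'$ separating $F$ from an interior face $F'$.

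With $F$ in hand, I would form $G':=\Gbw-e$ and $G'':=\Gbw-\{u,v\}$ as in \cref{lemma:bdry_leaf_face}, so that both are simple. The crux of the inductive step is that each of $G'$ and $G''$ is again (move equivalent to) a plabic fence, whose double braid word is obtained from $\bw$ by deleting one, respectively two, letters: deleting $e$ removes the rightmost bridge of channel $h$, while deleting $u,v$ removes the pair of bridges bounding $F$. Each resulting fence has strictly fewer bridges and the same number $\conn(G)$ of connected components, so by the inductive hypothesis $G'$ and $G''$ are leaf recurrent; clause (d) of \cref{dfn:leaf_recurrent_plabic_graph} then gives that $\Gbw$ is leaf recurrent, completing the induction.

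The main obstacle is the combinatorial bookkeeping of the middle paragraph. One must verify that the double braid moves can always be arranged so that an extremal interior face really becomes a boundary leaf face as in \cref{fig:leafface} --- in particular, checking that no bridges from the neighbouring channels $h\pm1$ obstruct the required double-edge configuration --- and that the two reductions $\Gbw-e$ and $\Gbw-\{u,v\}$ genuinely return to the class of plabic fences rather than producing some arbitrary simple plabic graph. Matching these deletions precisely with deletions of letters from $\bw$ is where the care lies; once that is done, the conclusion follows formally from \cref{lemma:bdry_leaf_face} and the recursive definition.
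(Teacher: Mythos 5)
Your skeleton---induction on the number of letters of $\bw$, isolated quivers as the base case, and clause (d) of \cref{dfn:leaf_recurrent_plabic_graph} driving the inductive step---is the same as the paper's. The gap is in your middle paragraph: it is not true that an extremal interior face can be turned into a boundary leaf face by commutation and braid moves. A boundary leaf face is in particular a \emph{leaf of the quiver} $\QX(\Gbw)$ (adjacent across a bicoloured edge to exactly one interior face), and commutation moves do not change the quiver at all. When the positive word $\bw$ is reduced in the Coxeter sense---e.g.\ $\bw=(1,2,1,3,2,1)$ for $\n=4$---the three interior faces of $\Gbw$ form a triangle in $\QX(\Gbw)$, so every interior face, including your "rightmost face in the highest channel," is adjacent to two interior faces and no boundary leaf face exists; moreover, braid and commutation moves preserve reducedness, so no double bridge $(\dots,i,i,\dots)$ can ever be produced. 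Your argument therefore cannot get started in exactly the hardest case. A secondary issue: even when a double letter occurs in the \emph{middle} of the word, the face between the two bridges need not be a boundary leaf face, since the faces on the far sides of both bridges may be interior.

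The paper's proof fixes both points. After using \bmref{bm5}--\bmref{bm6} to make $\bw$ positive, it splits into cases. If $\bw$ is \emph{not} reduced, braid moves yield $(\bw_1,i,i,\bw_2)$, and---this is the trick you are missing---the sign-flip and cross-sign commutation moves \bmref{bm5}--\bmref{bm7} allow a \emph{cyclic rotation} of the word to $(i,i,\bw_2,\bw_1)$. With the double letter at the front, the face between the two leftmost bridges genuinely is a boundary leaf face (its outer side is a boundary face of the disk), and one applies the inductive hypothesis to $(i,\bw_2,\bw_1)$ and $(\bw_2,\bw_1)$, exactly as you intended. If $\bw$ \emph{is} reduced, then $\Gbw$ is a reduced plabic graph and one invokes \cref{thm:MSLA} (i.e., the nontrivial external input that reduced plabic graphs are leaf recurrent); alternatively, conjugation moves built from \bmref{bm5}--\bmref{bm7} together with the Geck--Pfeiffer theorem reduce to a minimal-length representative of a conjugacy class, whose fence has no interior faces. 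One of these two ingredients for the reduced case is genuinely needed and is absent from your proposal.
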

\begin{proof}
Proceed by induction on $m$. The base case $m=0$ is clear.

Applying moves~\bmref{bm5}--\bmref{bm6}, we may assume that $\bw$ has no negative indices. We say that a word $\bw=(i_1,i_2,\dots,i_m)\in I^m$ is \emph{reduced} if the associated permutation $\pi(\bw):=s_{i_1}s_{i_2}\cdots s_{i_m}$ has precisely $m$ inversions. If $\bw$ is not reduced then we may apply moves~\bmref{bm1}--\bmref{bm2} to transform $\bw$ into a word of the form $(\bw_1,i,i,\bw_2)$ for some $i\in I$ and some words $\bw_1\in I^{m_1}$, $\bw_2\in I^{m_2}$. Let $-\rev(\bw_1)\in (-I)^{m_1}$ denote the word obtained from $\bw_1$ by negating all indices and reversing their order. One can transform $\bw_1$ into $-\rev(\bw_1)$ by applying moves~\bmref{bm5}--\bmref{bm6}. Thus, applying moves~\bmref{bm5}--\bmref{bm6}, we transform
\begin{equation*}%
  (\bw_1,i,i,\bw_2)\to (-\rev(\bw_1),i,i,\bw_2)\to (i,i,\bw_2,-\rev(\bw_1)) \to (i,i,\bw_2,\bw_1).
\end{equation*}  The double bridge in $\GX(i,i,\bw_2,\bw_1)$ corresponding to $(i,i,\dots)$ gives rise to a boundary leaf face. Applying the induction hypothesis to the words $\bw':=(i,\bw_2,\bw_1)$ and $\bw'':=(\bw_2,\bw_1)$, we see that the plabic graphs $\GX(\bw')$ and $\GX(\bw'')$ are leaf recurrent. Thus, by \cref{dfn:leaf_recurrent_plabic_graph}, $\GX(\bw)$ is also a leaf recurrent plabic graph.

Finally, suppose that the word $\bw=(i_1,i_2,\dots,i_m)\in I^m$ is reduced. Then $\Gbw$ is a reduced plabic graph and therefore we are done by \cref{thm:MSLA}. Alternatively, applying moves~\bmref{bm5}--\bmref{bm6}, we may transform $\bw$ as follows:
\begin{equation*}%
  (i_1,i_2,\dots,i_m) \to (-i_1,i_2,\dots,i_m) \to (i_2,\dots,i_m,-i_1)\to (i_2,\dots,i_m,i_1).
\end{equation*}
We refer to this transformation as a \emph{conjugation move}. Applying conjugation and braid moves repeatedly, we either arrive at a non-reduced word and proceed by induction, or we find that $\pi(\bw)\in\Sn$ is a minimal length representative in its conjugacy class; see~\cite[Theorem~1.1]{GePf}. Recall that conjugacy classes in the symmetric group $\Sn$ correspond to cycle types, and $\pi(\bw)$ has minimal length  if and only if it is a product of cycles of the form $(a,a+1,\dots,b)$ on disjoint intervals $[a,b]\subset[\n]$. The resulting plabic fence $\Gbw$ has no interior faces and therefore is leaf recurrent.
\end{proof}

\section{Invariants of quivers and links}
In this section, we investigate the relation between some other invariants of the quiver $Q_G$ and invariants of the link $\Lplab_G$.

\subsection{Corank} Recall that we set $\cork(Q):=\cork(\BQ)=n-\rk(\BQ)$ for a quiver $Q$ with $n$ vertices.

\begin{proposition}\label{prop:conn=cork}
Let $G$ be a plabic graph. Then
\begin{equation*}%
  \conn(G)+\cork(\QG)=\conn(\Lplab_G).
\end{equation*}
\end{proposition}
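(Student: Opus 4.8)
The plan is to show that all three quantities are additive under disjoint union and invariant under the operations that preserve link type, reduce to the connected case, and then compute $\cork(\QG)$ topologically. First I would record additivity under disjoint union: $\conn(G'\disjun G'')=\conn(G')+\conn(G'')$, while $\QG=\QX(G')\disjun\QX(G'')$ makes $\BQG$ block diagonal, so $\cork(\QG)=\cork(\QX(G'))+\cork(\QX(G''))$, and $\Lplab_G=\Lplab_{G'}\disjun\Lplab_{G''}$ gives $\conn(\Lplab_G)=\conn(\Lplab_{G'})+\conn(\Lplab_{G''})$ (\cref{prop:disc}). Next I would note that all three terms are invariant under the moves of \cref{fig:moves}: these induce mutations, which preserve $\rk(\BQG)$ and hence $\cork$; they induce isotopies of $\Lplab_G$, which preserve the number of components; and they visibly preserve $\conn(G)$. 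The same holds under tail removal, which leaves $\QG$ and $\conn(G)$ unchanged and alters $\Lplab_G$ only by an isotopy. Thus it suffices to treat connected $G$, and by \cref{prop:trivred} I may assume either that $G$ has at most one interior face (the base cases) or that $G$ has no boundary vertices.

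The base cases are immediate: if $G$ is a tree then $\QG$ is empty, $\cork(\QG)=0$, and $\Lplab_G$ is an unknot, so both sides equal $1$; if $G$ has a single interior face then $\QG$ is one isolated vertex, $\cork(\QG)=1$, and $\Lplab_G$ is a Hopf link, so both sides equal $2$ (\cref{prop:plabic_isolated}). For connected $G$ with no boundary vertices every strand is a closed loop, so $\conn(\Lplab_G)$ equals the number $s$ of strands and the goal becomes the combinatorial identity $\cork(\QG)=s-1$. Passing to the bipartite reduction of $G$ (which changes neither $\BQG$ nor the strands) and capping off the disk, I view $G$ as a connected bipartite graph on $S^2$ with $|F|$ faces, exactly one of which (the outer face) is not interior, so that $\QG$ has $n=|F|-1$ vertices. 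I would then form the conjugate ribbon surface $\Sigma_G$, gluing a disk at each vertex and a band at each edge using opposite cyclic orders at black and white vertices; its boundary circles are precisely the strands, so $\Sigma_G$ has $s$ boundary components, and its genus $g$ is recorded by $\chi(\Sigma_G)=|V|-|E|=2-2g-s$. Together with $|V|-|E|+|F|=2$ this gives $s=|F|-2g$, hence $n=s+2g-1$, so $\cork(\QG)=s-1$ becomes equivalent to $\rk(\BQG)=2g$.

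The heart of the argument is then to identify $\BQG$ with the intersection form on $H_1(\Sigma_G)$: the boundary cycle of an interior face $F$, read on $\Sigma_G$, meets that of $F'$ only along their shared edges, and the opposite ribbon orientations at the two colors translate the rule ``the white endpoint is on the left'' into the geometric sign of each such intersection, so the signed count is exactly $b_{F,F'}$. Since the intersection form of a genus-$g$ surface with boundary is nondegenerate of rank $2g$ modulo its radical — spanned by the classes of the boundary circles, i.e.\ the strands — this yields $\rk(\BQG)=2g$ and finishes the proof. The step I expect to be the main obstacle is precisely this sign-matching: verifying that the white-on-the-left convention for arrows agrees with the orientation conventions of $\Sigma_G$, so that the combinatorial exchange matrix genuinely equals the homological intersection form, and that its radical is exactly the span of the strand classes. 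As a cross-check and an alternative that uses only the paper's own recursions, the identity can be propagated by induction: removing an isolated quiver vertex (a monogon face / bipartite loop) drops both $\cork(\QG)$ and $\conn(\Lplab_G)$ by one through a Hopf summand (\cref{prop:plabic_isolated,prop:FLYsum}); a boundary leaf face produces a symplectic pair giving $\cork(\QG)=\cork(\QX(G''))$ together with the skein relation $\conn(\Lplab_G)=\conn(\Lplab_{G''})$ (\cref{cor:RQ_skein,lemma:bdry_leaf_skein_FLY}); and the remaining double-edge configurations are treated exactly as in the proof of \cref{prop:torsionfree}.
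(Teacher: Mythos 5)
Your proposal is correct, but its main line of argument is genuinely different from the paper's. The paper proves \cref{prop:conn=cork} by re-running the local induction from the proof of \cref{prop:torsionfree}: check the tail reductions of \cref{fig:trivred} directly, then track how $\conn(G)$, $\cork(\QG)$ and $\conn(\Lplab_G)$ change under removal of an interior leaf, of a loop edge, or of a double edge (with the three sub-cases according to whether the faces $F',F''$ are boundary or interior). Your fallback paragraph is essentially this proof, so it is a valid route on its own. Your primary argument instead reduces to a connected $G$ with no boundary vertices and identifies $\BQG$ with the intersection form on $H_1(\Sigma_G)$ of the conjugate ribbon surface in the basis of interior face cycles; the Euler-characteristic bookkeeping $n=|F|-1=2g+s-1$ together with the standard fact that the intersection form of a genus-$g$ surface with $s$ boundary circles has corank $s-1$ then gives $\cork(\QG)=s-1=\conn(\Lplab_G)-1$. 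The arithmetic checks out, the boundary circles of $\Sigma_G$ are indeed the strands, and monogon faces cause no trouble since their classes land in the radical. What this buys is a conceptual, case-free explanation of why the corank of the exchange matrix counts strands; what it costs is the one lemma you rightly flag as the crux, namely that each shared bicolored edge of two faces contributes exactly one transverse crossing on $\Sigma_G$ (because the band is attached with reversed cyclic order at its white end) with sign matching the white-on-the-left arrow convention. That local computation is true and is essentially known from the dimer-model literature, but as written it is asserted rather than proved, so your argument is complete only modulo that verification (or by falling back on the inductive cross-check). One minor over-claim: you invoke invariance of $\cork(\QG)$ under all moves of \cref{fig:moves} via mutation, which the paper only establishes for simple graphs (\cref{prop:simple_vs_moves}); fortunately your reduction only actually uses tail removal and contraction of unicolored edges, neither of which changes $\BQG$.
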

\begin{proof}
We prove the result using the same induction as in the proof of \cref{prop:torsionfree}.  The result can be checked directly for the graphs in \cref{fig:trivred}.  We now suppose that $G$ has one of the following: (a) an interior leaf, or (b) a loop edge, or (c) two parallel edges between vertices of opposite color.

In case (a), removing the leaf vertex preserves $\conn(G)$, $\conn(\Lplab_G)$, and $\cork(\QG)$.  In case (b), removing the loop edge reduces $\conn(\Lplab_G)$ by $1$ and $\cork(\QG)$ by $1$.   In case (c), we use the same notation as in the proof of \cref{prop:torsionfree}; see \cref{fig:leafface}.   Let $G'$ be obtained from $G$ by removing both edges $e',e''$.  This operation preserves $\conn(\Lplab_G)$. If $F',F''$ are boundary faces, then $\conn(G') = \conn(G)+1$ and $\cork(\QG) = \cork(\QG)-1$. If one of the faces $F',F''$ is boundary and one is interior, then $\conn(G') = \conn(G)$ and $\cork(Q_{G'})=\cork(\QG)$, as shown in the proof of \cref{prop:torsionfree}.   If $F',F''$ are both interior, we may assume they are distinct.  Thus, $\conn(G') =\conn(G)$ and as shown in the proof of \cref{prop:torsionfree}, we have $\cork(Q_{G'}) = \cork(\QG)$.  Thus, $\conn(\QG)+\cork(\QG)=\conn(\Lplab_G)$ follows from the same equation for $G'$.
\end{proof}

\begin{remark}
For algebraic links, \cref{prop:conn=cork} should be compared to \cite[Proposition 4.7]{FPST}.
\end{remark}

\subsection{Catalan numbers}
Recall that for a quiver $Q$, we defined the $Q$-Catalan number in \cref{def:QCat}.

\def\tP_#1(#2){{\tilde P}(#1;#2)}
\def\chiL{\chi_L}
\begin{definition}
Let $L$ be a link and %
let $\tP_L(q):=(q-1)^{\conn(L)-1} \Ptop_L(q)$.  If $\tP_L(q)$ is a polynomial in $q$, define
  \begin{equation*}%
    \chiL:=\tP_L(1).
  \end{equation*}
We call $\chiL$ the \emph{Catalan number} of the link $L$, or simply the \emph{$L$-Catalan number}.
\end{definition}
\noindent See \cref{tab:small_knots} for some examples.

\begin{proposition}
Suppose that \cref{conj:main} holds for a simple plabic graph $G$.  Then $\chi_{\QG} = \chi_{\Lplab_G}$.
\end{proposition}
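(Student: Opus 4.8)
The plan is to show that, under the hypothesis of the proposition, the rational function $\RQice(q)$ whose value at $q=1$ defines $\chi_{\QG}$ coincides with the rational function $\tP_{\Lplab_G}(q)$ whose value at $q=1$ defines $\chi_{\Lplab_G}$. Once this identity of rational functions is established, the equality of Catalan numbers follows immediately by evaluating at $q=1$, and the polynomiality hypotheses in the two definitions automatically match up.

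First I would fix notation: set $Q := \QG$ and $L := \Lplab_G$, and use \cref{cor:ext_simple} to choose a minimal extension $\Qice$ of $Q$, which has $m := \cork(Q)$ frozen vertices. By the definition of the point count of an ice quiver we then have $\RQice(q) = (q-1)^m \RQG(q)$, and $\chi_{\QG} = \RQice(1)$ whenever $\RQice(q)$ is a polynomial in $q$.

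The key structural input is \cref{prop:conn=cork}, which gives $\cork(\QG) = \conn(\Lplab_G) - \conn(G)$, so that $m = \conn(L) - \conn(G)$. This is exactly the bookkeeping identity needed to reconcile the $(q-1)$-power $m$ built into the definition of $\chi_{\QG}$ with the $(q-1)$-power $\conn(L)-1$ built into the definition of $\chi_{\Lplab_G}$. Substituting the relation $\RQG(q) = (q-1)^{\conn(G)-1}\Ptop_L(q)$ furnished by \cref{conj:main} into $\RQice(q) = (q-1)^m \RQG(q)$, I would obtain $\RQice(q) = (q-1)^{m+\conn(G)-1}\Ptop_L(q) = (q-1)^{\conn(L)-1}\Ptop_L(q) = \tP_L(q)$, i.e.\ $\RQice(q) = \tP_{\Lplab_G}(q)$ as rational functions in $q$.

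I do not anticipate a genuine obstacle: the argument is essentially exponent-counting, and its only nontrivial ingredient, \cref{prop:conn=cork}, is already available. The one point deserving care is that $\chi_{\QG}$ and $\chi_{\Lplab_G}$ are each defined only when the relevant rational function is a polynomial; establishing the identity $\RQice(q) = \tP_{\Lplab_G}(q)$ first is precisely what makes these two polynomiality conditions equivalent, so that the concluding chain $\chi_{\QG} = \RQice(1) = \tP_{\Lplab_G}(1) = \chi_{\Lplab_G}$ is legitimate on both sides simultaneously.
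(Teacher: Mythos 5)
Your proposal is correct and follows essentially the same route as the paper's proof: choose a minimal extension $\Qice$ via \cref{cor:ext_simple}, combine $\RQice(q)=(q-1)^m\RQG(q)$ with \cref{conj:main} and the identity $m=\conn(\Lplab_G)-\conn(G)$ from \cref{prop:conn=cork} to conclude $\RQice(q)=\tP_{\Lplab_G}(q)$, and evaluate at $q=1$. Your extra remark that this identity also reconciles the two polynomiality hypotheses is a point the paper leaves implicit, but the argument is the same.
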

\begin{proof}
Let $\Qice$ be a minimal extension of $\QG$ (cf. \cref{cor:ext_simple}), and set $m=\cork(\QG)$.  Then 
\begin{align*}
R(\Qice;q) = (q-1)^m R(Q_G;q)=(q-1)^{m+\conn(G)-1}\Ptop_{\Lplab_G}(q) =(q-1)^{\conn(\Lplab_G)-1}\Ptop_{\Lplab_G}(q) 
\end{align*}
by \cref{conj:main} and \cref{prop:conn=cork}. %
 Thus, $\chiQ = \chi_{\Lplab_G}$.
\end{proof}

\begin{problem}\ 
\begin{itemize}
\item For which links is $\chiL$ defined?
\item For which links is $\chiL$ nonnegative, or positive?
\end{itemize}
\end{problem}
\begin{example}
  Even when $\chiL$ is defined, it may be negative. The smallest knot $K$ for which this happens is listed as \href{https://knotinfo.math.indiana.edu/results.php?searchmode=singleknot&desktopmode=0&mobilemode=0&singleknotprev=&submittype=singleknot&singleknot=12n_199}{$12n_{199}$} in~\cite{knotinfo}. It has \FLY polynomial 
\begin{equation*}
 \HOMP(K;q)= -\frac{1}{a^{4}} + \frac{z^{6} + 6 \, z^{4} + 11 \, z^{2} + 6}{a^{6}} - \frac{z^{4} + 4 \, z^{2} + 4}{a^{8}},
\end{equation*}
and therefore $\tP_K(q)=\Ptop_K(q)=-q^2$, with $\chi_K=-1$.
\end{example}

\subsection{Cluster cohomology versus link homology}\label{sec:clust-cohom-vs-link-hom}
Let $Q$ be a \Louise and torsion-free quiver.  Let $\Qice$ be a minimal extension of $Q$ and set $m = \cork(B(Q))$.  Let $T=T(\Qice)$ be the cluster automorphism torus of $\Qice$ defined in \cite[Section 5]{LS}.  The torus $T$ acts on $\AQice$ sending every cluster variable to a nonzero multiple of itself, and the character lattice of $T$ is naturally identified with $\Z^{n+m}/\BQice\Z^n$.

We may associate to $Q$ the cohomology and the compactly-supported $T$-equivariant cohomology of the cluster variety $\XQice$.
Both cohomologies are equipped with mixed Hodge structures
$$
H^*(\XQice) = \bigoplus_{k,p,q} H^{k,(p,q)}(\XQice; \C) \qquad \text{and} \qquad H^*_{T,c}(\XQice) = \bigoplus_{k,p,q} H^{k,(p,q)}_\Tc(\XQice; \C).
$$

\begin{proposition}
The cohomology $H^*(Q):= H^*(\XQice) $ is of \emph{mixed Tate type}, that is,
$$
H^*(\XQice) = \bigoplus_{k,p} H^{k,(p,p)}(\XQice; \C).
$$
Moreover, it does not depend on the choice of $\Qice$.
\end{proposition}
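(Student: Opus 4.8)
The plan is to prove, by structural induction following the recursive definition of the Louise class (isolated quivers; mutation equivalence; separating-edge gluing), the slightly stronger statement that for \emph{every} really full rank Louise ice quiver $\Qice$ the mixed Hodge structure on $H^*(\XQice;\C)$ is mixed Tate; the proposition is then the special case of a minimal extension. By \cref{prop:Mulsmooth} each such $\XQice$ is a smooth complex variety. The mutation clause is immediate: mutating at a mutable vertex is an isomorphism of cluster algebras, so $\XQice$ together with its mixed Hodge structure is literally unchanged when $\Qice$ is replaced by a mutation-equivalent ice quiver, and really full rank is preserved under mutation by~\cite{LS}. The base case is the isolated quiver, handled below, so the content lies in the separating-edge step.

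The separating-edge step is the heart of the argument. Suppose $Q$ has a separating edge $u\to v$ with $Q-\{u\}$, $Q-\{v\}$, $Q-\{u,v\}$ all Louise, and let $\Qice$ be really full rank with mutable part $Q$. By \cref{prop:Mulcover} the open sets $U:=\{x_u\neq 0\}$ and $V:=\{x_v\neq 0\}$ cover $\XQice$. Applying \cref{prop:Mulloc} once and then again within the first localization gives $U\cong\XX(\Qicefr[u])$, $V\cong\XX(\Qicefr[v])$, and $U\cap V\cong\XX(\Qicefr[u,v])$; these ice quivers have mutable parts $Q-\{u\}$, $Q-\{v\}$, $Q-\{u,v\}$ and remain really full rank, since deleting a column from a matrix whose rows span $\Z^n$ leaves the rows spanning the smaller lattice. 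By the inductive hypothesis all three are mixed Tate. The Mayer--Vietoris sequence for the open cover $\XQice=U\cup V$ (with $\C$-coefficients),
\[
\cdots\to H^{k-1}(U\cap V)\xrightarrow{\ \delta\ } H^k(\XQice)\to H^k(U)\oplus H^k(V)\to H^k(U\cap V)\to\cdots,
\]
is a long exact sequence of mixed Hodge structures, with all maps strict. Since the full subcategory of mixed Tate Hodge structures is closed under subquotients and extensions, and $H^k(\XQice;\C)$ is an extension of $\ker\big(H^k(U)\oplus H^k(V)\to H^k(U\cap V)\big)$ by the cokernel of $H^{k-1}(U)\oplus H^{k-1}(V)\to H^{k-1}(U\cap V)$, each $H^k(\XQice;\C)$ is again mixed Tate, closing the induction.

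For the base case, $\Qice$ is isolated, so there are no arrows between mutable vertices and the exchange relation at each mutable $i$ reads $x_ix_i'=M_i+N_i$ with $M_i,N_i$ Laurent monomials in the invertible frozen variables. Thus $\XQice$ is the subvariety of $\mathbb{A}^{2n}\times(\Cast)^m$ cut out by these $n$ binomial equations. I would establish mixed Tate-ness by a secondary induction on $n$: localizing at $x_n\neq 0$ writes $\XQice$ as the union of an open piece isomorphic to $\Cast$ times a smaller isolated cluster variety intersected with the torus hypersurface $\{M_n+N_n\neq 0\}$, and a closed piece equal to an affine line times a smaller isolated cluster variety intersected with $\{M_n+N_n=0\}$; both strata are complements of subtori in products of affine spaces and tori, hence mixed Tate, and the Gysin sequence (again a morphism of mixed Hodge structures) propagates the property to $\XQice$. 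The smallest instance is $A_1$, where $\XQice\cong\mathbb{A}^2\setminus\Cast$ with $\#\XQice(\F_q)=q^2-q+1$ as in~\eqref{eq:A1}.

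Finally, for independence of the minimal extension: if $\Qice_1,\Qice_2$ are two minimal extensions of $Q$, their frozen rows are two lifts of bases of the free abelian group $\Z^n/\BQ^T\Z^n$, which is free of rank $m=\cork(\BQ)$ because $Q$ is torsion-free (cf. \cref{lem:ext}). Any two such data differ by a $\GL_m(\Z)$ change of basis together with the addition of mutable rows to frozen rows, and by~\cite[Section~5]{LS} each of these operations induces an isomorphism of cluster varieties; hence $\XX(\Qice_1)\cong\XX(\Qice_2)$ and the mixed Hodge structures on their cohomologies agree, justifying the notation $H^*(Q)$. I expect the main obstacle to be the bookkeeping in the isolated base case, together with confirming throughout that the localized quivers stay really full rank and Louise so that the induction genuinely terminates; the Mayer--Vietoris step is formal once one invokes the strictness of morphisms of mixed Hodge structures.
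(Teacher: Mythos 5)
The paper's own proof of this proposition is a two-line citation: the mixed Tate property is \cite[Theorem~8.3]{LS}, and the independence of the choice of $\Qice$ is \cite[Proposition~5.11]{LS}. What you have written is essentially a from-scratch reconstruction of the proof of the first of these cited results, and your main engine --- the Mayer--Vietoris sequence for the cover $\{x_u\neq0\}\cup\{x_v\neq0\}$ furnished by \cref{prop:Mulcover} and \cref{prop:Mulloc}, combined with strictness of morphisms of mixed Hodge structures and closure of the mixed Tate subcategory under subquotients and extensions --- is exactly the mechanism underlying the Louise induction in \cite{LS}. That part of your argument is sound, including the observation that freezing a vertex preserves the really full rank condition. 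In the isolated base case you should, however, justify the claim that $\XQice$ is cut out by the exchange relations: the paper itself warns (in \cref{sec:notLA}) that the ideal of relations between cluster variables can be strictly larger than the ideal generated by exchange relations, so you need the fact that acyclic (in particular isolated) really full rank cluster algebras are the complete intersections presented by their exchange relations.

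The genuine gap is in your final step. You claim that any two minimal extensions $\Qice_1,\Qice_2$ give \emph{isomorphic varieties} because their frozen rows differ by a $\GL_m(\Z)$ change of basis ``together with the addition of mutable rows to frozen rows,'' and that each such operation induces an isomorphism by \cite[Section~5]{LS}. The operations recorded there (and quoted in the proof of \cref{prop:Louise_leaf}) are negating a frozen row and adding one frozen row to another; adding a \emph{mutable} row to a frozen row is not among them, and it is precisely this extra operation that is needed to pass between two arbitrary lifts of a basis of $\Z^n/\BQ^T\Z^n$. Changing frozen rows by mutable rows is a change of coefficients and does not obviously (and in general is not claimed to) preserve the variety up to isomorphism; \cite[Proposition~5.11]{LS} asserts only an isomorphism of mixed Hodge structures on cohomology, which is exactly what the proposition needs and what the paper invokes. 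So either weaken your claim to an isomorphism of cohomology and cite that proposition, or supply a genuinely new argument for the variety-level statement.
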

\begin{proof}
The first statement is part of \cite[Theorem 8.3]{LS}.  Let $\Qice$ and $\Qice'$ be two really full rank ice quivers with mutable part $Q$ and the same number $m$ of frozen variables.  In \cite[Proposition 5.11]{LS}, it is shown that we have an isomorphism of mixed Hodge structures $H^*(\XQice) = H^*(\X(\Qice'))$.  \end{proof}

The results of \cite{LS} are only established in the case of ordinary cohomology.  In the following, we assume that the results there extend to equivariant cohomology.   That is, we have 
$$
H^*_\Tc(Q):=H^*_{T,c}(\XQice) = \bigoplus_{k,p} H^{k,(p,p)}_{T,c}(\XQice; \C)
$$
and it does not depend on the choice of $\Qice$.  We expect to return to equivariant cohomology of cluster varieties in future work.

Let $L$ be an oriented link.  Let $\HHH(L)$ denote the Khovanov--Rozansky triply-graded homology~\cite{KR1,KR2,KhoSoe} of $L$, and $\HHH^0(L)$ the Hochschild degree $0$ part of $\HHH(L)$.  Similarly, let $\HHHC$ denoted the variant of Khovanov--Rozansky homology defined in~\cite[Section~3]{GL_qtcat}, and let $\HHHC^0$ denote the Hochschild degree 0 part.  We assume these homology groups have been normalized so they become link invariants; cf. \cite[Equations~(3.11) and~(3.15)]{GL_qtcat}.  %

\begin{conjecture}\label{conj:cohom}
Let $G$ be a connected simple plabic graph.  Then we have bigraded isomorphisms
$$
H^*(\QG) \cong \HHH^0_\C(\Lplab_G) \qquad \text{and} \qquad
H^*_\Tc(\QG) \cong \HHH^0(\Lplab_G).
$$
\end{conjecture}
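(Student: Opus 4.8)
The plan is to establish \cref{conj:cohom} for the subclass of \emph{leaf recurrent} plabic graphs---the same class for which \cref{thm:FLY=pcnt_leaf_rec} settles the decategorified \cref{conj:main}---by lifting that skein argument from graded dimensions to the level of mixed Hodge cohomology. The base case of reduced plabic graphs is already \cite{GL_qtcat}, so the content is to propagate the isomorphisms through the leaf recurrence. Throughout I take a connected simple plabic graph $G$ with a boundary leaf face $F$, let $i$ be the mutable vertex of $\QG$ indexing $F$ and $j$ the vertex indexing the adjacent interior face $F'$, and set $Q=\QG$, $Q'=Q-\{i\}$, $Q''=Q-\{i,j\}$ and $(L_+,L_0,L_-)=(\Lplab_G,\Lplab_{G'},\Lplab_{G''})$ as in \cref{lemma:bdry_leaf_skein_FLY}. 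The two halves of the conjecture (ordinary $H^*$ versus $\HHH^0_\C$, and equivariant $H^*_\Tc$ versus $\HHH^0$) run in parallel, so I describe the equivariant one.

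First I would produce the geometric long exact sequence. Let $\Qice$ be a minimal extension of $Q$ (\cref{cor:ext_simple}), put $\X:=\XQice$, and decompose $\X=\Ucal\sqcup\Zcal$ into the open locus $\Ucal=\{x_i\neq0\}$ and the closed locus $\Zcal=\{x_i=0\}$, exactly as in the proof of \cref{cor:RQ_skein}. By \cref{prop:Mulloc} the open piece $\Ucal$ is a really full rank cluster variety of type $Q'$, and by \cref{prop:Louise_leaf} the closed piece is $\Zcal\cong\Ga\times\XX(\Qice')$ with $\Qice'$ of type $Q''$. The $T$-equivariant compactly supported localization sequence then reads
\[
\cdots\to H^{k}_\Tc(\Ucal)\to H^{k}_\Tc(\X)\to H^{k}_\Tc(\Zcal)\to H^{k+1}_\Tc(\Ucal)\to\cdots,
\]
in which the extra $\Ga$ factor of $\Zcal$ contributes a Tate twist (a weight shift by $(1,1)$ together with a cohomological shift), reproducing the factor of $q$ multiplying $\RQX{Q''}{q}$ in \eqref{eq:RQ_skein}, while the passage from $\Ucal$ to a minimal extension of $Q'$ contributes a $\Cast$-twist reproducing the factor $(q-1)$ multiplying $\RQX{Q'}{q}$. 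The ordinary-cohomology statement is set up identically.

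Second, I would invoke the categorification of the \FLY skein relation \eqref{eq:HOMFLY_dfn}: the triple $(L_+,L_0,L_-)$ realized by \cref{lemma:bdry_leaf_skein_FLY} fits into a skein exact triangle in triply-graded Khovanov--Rozansky homology. Restricting this triangle to the top Hochschild degree---consistent because the top $a$-degrees $\degtop_a(\HOMP(\Lplab_G))=\conn(G)-n-1$, $\degtop_a(\HOMP(\Lplab_{G'}))=\conn(G)-n$, and $\degtop_a(\HOMP(\Lplab_{G''}))=\conn(G)-n+1$ (with $n$ the number of interior faces, \cref{conj:top_a_deg}) align under the multiplications by $a$, $z$, $a^{-1}$ in \eqref{eq:HOMFLY_dfn}---yields a long exact sequence relating $\HHH^0(L_+)$, $\HHH^0(L_0)$, $\HHH^0(L_-)$ with precisely the grading shifts recorded on the geometric side. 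Feeding in the inductive hypothesis $H^*_\Tc(Q')\cong\HHH^0(L_0)$ and $H^*_\Tc(Q'')\cong\HHH^0(L_-)$ (valid since $G'$, $G''$ have fewer interior faces, with any disconnection handled by \cref{prop:disc} and \cref{prop:FLYsum} via a Künneth isomorphism), and using that both sides are of mixed Tate type so that purity splits each long exact sequence into short exact sequences of the same shape, one identifies the remaining terms and obtains $H^*_\Tc(Q)\cong\HHH^0(L_+)$. The base case is the isolated quiver, where \cref{prop:plabic_isolated} presents $\Lplab_G$ as a connected sum of Hopf links; here one checks directly that the $A_1$ cluster variety (point count $q^2-q+1$, cf.~\eqref{eq:A1}) has cohomology matching $\HHH^0$ of the Hopf link.

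The hard part is genuinely twofold. First, as the paper itself flags, the mixed Hodge input of \cite{LS}---purity, mixed Tate type, and independence of the chosen minimal extension---is proved only for ordinary cohomology; the $T$-equivariant compactly supported analogues used above must first be established, which is a prerequisite even to make the equivariant half of \cref{conj:cohom} well-posed. Second, and more seriously, matching the \emph{terms} of the two long exact sequences (all that graded dimensions can see, and all that \cref{thm:FLY=pcnt_leaf_rec} requires) does not by itself identify them as isomorphic exact sequences: one must show that the geometric localization triangle and the Khovanov--Rozansky skein triangle agree as distinguished triangles, connecting maps included, so that the inductive isomorphisms are natural with respect to the two skein structures. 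At present such a functorial comparison between cluster-variety cohomology and link homology exists only for reduced plabic graphs, through the geometric construction of \cite{GL_qtcat}; producing it for all leaf recurrent graphs is the crux of the difficulty.
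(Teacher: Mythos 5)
This statement is \cref{conj:cohom}, which the paper explicitly leaves as an open conjecture: there is no proof of it in the paper to compare against. The only evidence the authors record is that the conjecture is proven for \emph{reduced} plabic graphs in \cite{GL_qtcat}, and that for plabic fences a version can be deduced by combining the cluster structure of \cite{SW} with the braid-variety cohomology computations of \cite{Trinh}. Moreover, the authors themselves caution that the equivariant half of the statement is only conditionally well-posed, since the mixed Hodge results of \cite{LS} (mixed Tate type, independence of the minimal extension) are established only for ordinary cohomology. So what you have written cannot be graded as a proof of the paper's statement; at best it is a proposed strategy for a special case (leaf recurrent graphs), and you are candid that it does not close.

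That said, your strategy is the natural categorification of the paper's actual proof of \cref{thm:FLY=pcnt_leaf_rec}: the decomposition $\X=\{x_i\neq0\}\sqcup\{x_i=0\}$ with \cref{prop:Mulloc} and \cref{prop:Louise_leaf} on one side, and the skein triangle refining \cref{lemma:bdry_leaf_skein_FLY} on the other. The two gaps you flag are exactly the genuine ones, and I would add a third. Even granting the equivariant mixed Hodge formalism, and even granting a Khovanov--Rozansky skein exact triangle for the relevant crossing restricted to Hochschild degree $0$, having two long exact sequences with abstractly isomorphic outer terms does not determine the middle terms as bigraded vector spaces unless both sequences degenerate into short exact sequences in each bidegree; your appeal to ``mixed Tate type so that purity splits each long exact sequence'' is not justified, since these cluster varieties are mixed Tate but not pure, and no corresponding purity statement is known for $\HHH^0$ of these links. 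The degeneration of the localization sequence is a nontrivial vanishing-of-connecting-maps statement (related to the parity/curious Lefschetz phenomena that \cite{GL_qtcat} establishes geometrically for positroid varieties), and without it the induction does not even produce the correct Betti numbers, let alone an isomorphism. Your final paragraph correctly identifies that the real crux is a functorial comparison of the two triangles, which currently exists only through the geometric construction for reduced plabic graphs.
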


We refer the reader to \cite{GL_qtcat} for the precise correspondence between the bigradings.  Here are some comments on Conjecture~\ref{conj:cohom}:

\noindent 
(1) The torus $T$ has dimension $m = \cork(B(Q))$, which by~\cref{prop:conn=cork} equals to $\conn(\Lplab_G)-1$, where $\conn(\Lplab_G)$ is the number of connected components of $\Lplab_G$.

\noindent
(2) The Khovanov--Rozansky homology $\HHH(\Lplab_G)$ has an action of a polynomial ring with $\conn(\Lplab_G)-1$ generators.  This action should correspond to the action of $H^*_T(\pt)$ on $H^*_\Tc(\QG)$.

\noindent
(3) The Euler characteristic of $\HHH(L)$ recovers the \FLY polynomial $P(L)$; see \cite[(3.13)]{GL_qtcat}.  Thus, taking the Euler characteristic of \cref{conj:cohom} produces \cref{conj:main}.

\noindent
(4) When $m = \cork(B(G)) = 0$, the torus $T$ is the trivial group.  In this case $H^*_{T,c}(\XQice) = H^*_c(\XQice)$, so $H^*(\QG)$ and $H^*_\Tc(\QG)$ are related by Poincare duality.  Thus, \cref{conj:cohom} predicts that the ordinary cohomology $H^*(\QG)$ is identified with the degree $0$ part $\HHH^0(\Lplab_G)$ of the Khovanov--Rozansky homology.

\noindent 
(5) When $G$ is a reduced plabic graph with $N$ boundary vertices, \cref{conj:cohom} is proven in \cite{GL_qtcat}.  Let $\Qice_G$ be the ice quiver associated to $G$, with frozen vertices corresponding all but one of the boundary faces; see \cite{GL_cluster}.  Then $\Qice_G$ is a really full rank quiver, and $\X(\Qice_G)$ is isomorphic to an open positroid variety $\Pio_f$ sitting inside a Grassmannian $\Gr(k,n)$.   

In \cite{GL_qtcat}, we considered the action of the $(N-1)$-dimensional torus $T' \subset \PGL_N$ acting on $\Pio_f$.  When $G$ is connected, the positroid ${\mathcal M}$ associated to $\Pio_f$ is connected as a matroid and the action of $T'$ on $\Pio_f$ is faithful, for example by \cite[Lemma 3.1]{ALS}.  It is straightforward to verify that $T'$ acts on $\Pio_f$ by cluster automorphisms.  In this case, $\Qice_G$ has $(N-1)$ frozen vertices and $T(\Qice_G)$ is also $(N-1)$-dimensional.  It follows that we have an inclusion $T' \hookrightarrow T(\Qice_G)$ where $T(\Qice_G)/T'$ is a finite group.  This finite group can be shown to be trivial, for example with the same argument as the proof of \cite[Lemma 4.6]{ALS}.  If $G$ is not connected, the torus $T'$ does not act faithfully on $\Pio_f$.  Instead, we have a decomposition $T' \cong T'' \times T(\Qice_G)$ where $T''$ acts trivially.

Finally, we remark that the number of frozen vertices in $\Qice_G$ may not equal to $\cork(\QG)$; nevertheless, the computation of $H^*_{T,c}(\X(\Qice_G))$ and $H^*_\Tc(Q_G)$ are essentially equivalent.

\noindent 
(6) When $G$ is a plabic fence, the cluster variety is, up to a torus factor, a \emph{double Bott--Samelson variety} in the sense of Shen--Weng~\cite{SW}.  By combining the cluster structure results of \cite{SW} with results on the cohomology of braid varieties \cite{Trinh} (see~\cite[Remark 4.12]{CGGS}), one can deduce a version of \cref{conj:cohom} for plabic fences.  The point count of these cluster varieties is also discussed in \cite[Corollary 6.7]{SW}.

\section{Conjectures and examples}\label{sec:conjectures-examples}
\subsection{Example: a simple plabic graph that is not leaf recurrent}\label{sec:notleaf}
Consider the plabic graph $G$ in \figref{fig:simple}(a). The associated quiver $\QG$ is mutation equivalent to an acyclic triangle quiver, with a single edge between each vertex of the triangle.  In particular, $\QG$ is \Louise.  However, it is easy to check that $G$ is not a leaf recurrent plabic graph. Nevertheless, \cref{conj:main} still holds for $G$.  By \cref{prop:acycliccount}, we have 
\begin{equation*}%
  \RQG(q) = (q-1)^3+3q(q-1) = q^3-1.
\end{equation*}
On the other hand, the plabic graph link $\Lplab_G$ is listed as \linkurl{L9n19} in~\cite{KAT}. Its \FLY polynomial is given by
\begin{equation*}%
  \HOMP(\Lplab_G;a,z)=\frac{z^{3} + 3 \comma z}{a^{3}} + \frac{z + z^{-1}}{a^{7}} - \frac{z^{-1}}{a^{9}}.
\end{equation*}
This agrees with Conjectures~\ref{conj:main} and~\ref{conj:top_a_deg}.

\subsection{Example: a simple plabic graph quiver that is not \Louise}\label{sec:notLA}
Consider the plabic graph $G$ and the associated quiver $\QG$ in \figref{fig:4puncture}(b). This quiver comes from a cluster algebra associated with a $4$-punctured sphere; see~\cite[Section~11.5]{Muller}.  
\begin{figure}
  \includegraphics[width=1.0\textwidth]{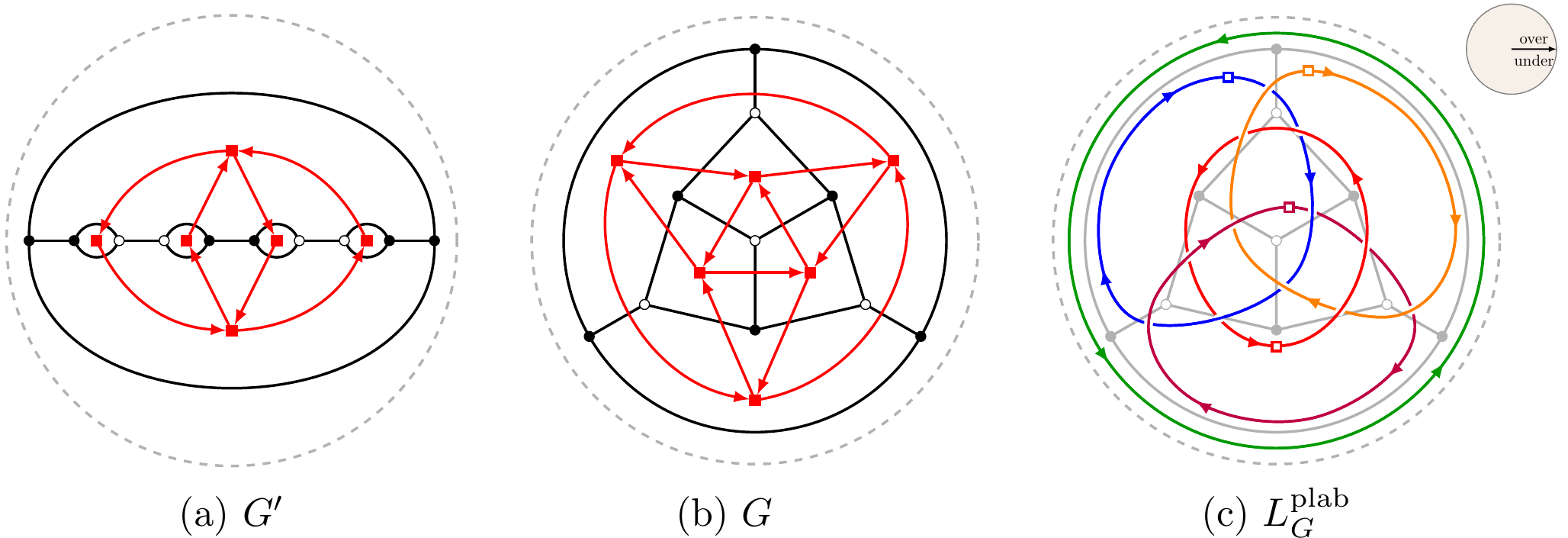}
  \caption{\label{fig:4puncture} Two connected simple plabic graphs $G,G'$ which are move equivalent. The associated quivers $\QG,Q_{G'}$ are not \Louise.} %
\end{figure}
  
Muller defines a cluster variety to be \emph{locally acyclic} if it can be covered by cluster localizations that are acyclic.

\begin{proposition}[{\cite[Section 11.5]{Muller}}]
The quiver $\QG$ is not \Louise. The associated cluster algebra $\AQG$ is not locally acyclic.
\end{proposition}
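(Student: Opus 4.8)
The plan is to deduce both assertions from a single input --- the failure of local acyclicity, in the sense of \cite{Muller}, for the four-punctured sphere cluster algebra. First I would record that the two claims are logically linked, so that they need not be proved independently. By the implication stated immediately before \cref{prop:Mulsmooth}, namely that a \Louise mutable quiver always produces a cluster algebra that is locally acyclic in Muller's sense, it suffices to establish the second assertion alone: if $\AQG$ is not locally acyclic, then $\QG$ cannot be \Louise, since otherwise $\AQG$ would be locally acyclic. Thus the entire content of the proposition collapses to showing that $\AQG$ is not locally acyclic.

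To prove that, the first step is to identify $\QG$ with the quiver treated in \cite[Section~11.5]{Muller}. Concretely, one checks that the planar dual quiver $\QG$ of the plabic graph in \figref{fig:4puncture}(b) is mutation equivalent to the quiver arising from a triangulation of the four-punctured sphere; this is a direct comparison of the two diagrams. Having made this identification, I would invoke Muller's analysis, which shows that the associated cluster algebra is not locally acyclic. The mechanism behind that conclusion is that local acyclicity forces a cluster algebra to coincide with its upper cluster algebra $\mathcal{U}$, whereas for the four-punctured sphere the inclusion $\AQG \subsetneq \mathcal{U}$ is strict; equivalently, the cyclic structure of $\QG$ persists under every mutation, so no cover of $\XQice$ by acyclic cluster localizations can exist.

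The only genuine verification left on our side is the identification in the previous step, i.e.\ matching $\QG$ with Muller's four-punctured sphere quiver up to mutation. I expect this to be the main (though modest) obstacle, and I would discharge it either by writing down an explicit isomorphism of the underlying quivers or by exhibiting a short mutation sequence between them; everything else is the citation to \cite[Section~11.5]{Muller} together with the contrapositive above. Finally, I would note that, by \cref{prop:simple_vs_moves}, the move-equivalent graph $G'$ in \figref{fig:4puncture}(a) has $Q_{G'}$ mutation equivalent to $\QG$, so the same conclusion holds verbatim for $Q_{G'}$ without additional argument.
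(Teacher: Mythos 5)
Your reduction of the first claim to the second (via ``\Louise{} implies locally acyclic'') is exactly what the paper does, and the identification of $\QG$ with the four-punctured sphere quiver is also the right starting point. The problem is your core step: ``invoke Muller's analysis, which shows that the associated cluster algebra is not locally acyclic.'' The paper explicitly points out that the published argument in \cite[Section 11.5]{Muller} has a gap: \cite[Theorem 8.3]{Muller} assumes that checking the cluster exchange relations suffices to produce a point of the cluster variety, but the ideal of relations among cluster variables can be strictly larger than the ideal generated by the exchange relations. So the citation you are leaning on cannot be taken at face value, and the whole content of the paper's proof is a replacement argument (supplied by G.~Muller): embed $\AQG$ into the tagged skein algebra $S_{q=1}$ of the $4$-punctured sphere, define an explicit point $p_0$ of $\Spec(S_{q=1})$ by assigning $0$ to every arc and $2(-1)^i$ to every loop ($i$ the number of enclosed punctures), observe that every cluster variable vanishes at the induced point of $\Spec(\AQG)$, and conclude that $p_0$ lies in no proper cluster localization, hence in no locally acyclic cover. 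Your proposal contains none of this.

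The two fallback ``mechanisms'' you sketch do not repair the gap. The claim that $\AQG\subsetneq\mathcal{U}$ for the four-punctured sphere is not what \cite[Section 11.5]{Muller} proves and is not established in the sources at hand (the standard $A\neq U$ example is the Markov quiver of the once-punctured torus), so you cannot route the proof through ``locally acyclic implies $A=U$'' without proving the strict inclusion, which is itself nontrivial. And ``the cyclic structure of $\QG$ persists under every mutation, so no cover by acyclic cluster localizations can exist'' is a non sequitur: cluster localizations freeze and delete vertices, and the induced quivers of a mutation-cyclic quiver can perfectly well be acyclic, which is precisely why one must instead exhibit a point not contained in any proper cluster localization.
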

\begin{proof}
If $\QG$ is \Louise then $\AQG$ would be locally acyclic, so it is enough to establish the second statement.  The proof in~\cite{Muller} that $\AQG$ is not locally acyclic has the following gap: in \cite[Theorem 8.3]{Muller}, it is assumed that it is enough to check the cluster exchange relations to produce a point of a cluster variety.  However, the ideal of relations between cluster variables is in general larger than the ideal generated by the exchange relations.  G.~Muller has supplied us with the following alternative argument. 

The cluster algebra $\AQG$ includes into the tagged skein algebra $S_{q=1}$ at $q=1$ of the 4-punctured sphere.  This is the algebra generated by immersed curves in the surface with taggings at each puncture, modulo certain relations~\cite{Muller_skein}.  The two algebras $\AQG$ and $S_{q=1}$ may or may not be equal.  One can define a point $p_0$ in $\Spec(S_{q=1})$ by giving every arc the value 0, and giving every loop the value $2(-1)^i$, where $i$ is the number of punctures in the interior of the loop.  The point $p_0$ defines a point in $\Spec(\AQG)$, and since every arc takes value $0$ on $p_0$, so does every cluster variable.  Since every cluster variable vanishes on $p_0$, it cannot be contained in any proper cluster localization (see \cite[Definition 3.3]{Muller}). Therefore, $p_0$ cannot be contained in any locally acyclic cover of $\Spec(\AQG)$.  
\end{proof}

We do not know the point count function $\RQ(q)$ of this quiver, or even whether it is a polynomial. On the other hand, the \FLY polynomial of the link $\Lplab_G$ (see \figref{fig:4puncture}(c)) is given by
\begin{align*}%
 \HOMP(\Lplab_G;a,z)=& \frac{z^{6} + 6 \comma z^{4} + 11 \comma z^{2} + 12 + 6z^{-2} + z^{-4}}{a^{6}} - \frac{2 \comma z^{4} + 14 \comma z^{2} + 27 + 19z^{-2} + 4z^{-4}}{a^{8}}  \\
&+ \frac{3 \comma {\left(z^{2} + 6 + 7z^{-2} + 2z^{-4}\right)}}{a^{10}}- \frac{3+ 9z^{-2} + 4z^{-4}}{a^{12}} + \frac{z^{-2} + z^{-4}}{a^{14}}.
\end{align*}
Thus, according to \cref{conj:main}, the prediction for the point count $\RQ(q)$ is
\begin{equation*}%
  \frac{q^{10} - 4 \comma q^{9} + 8 \comma q^{8} - 6 \comma q^{7} - 3 \comma q^{6} + 9 \comma q^{5} - 3 \comma q^{4} - 6 \comma q^{3} + 8 \comma q^{2} - 4 \comma q + 1}{{\left(q - 1\right)}^{4}}.
\end{equation*}

\subsection{Example: conjectures fail when the plabic graph is not simple}\label{sec:counterex}
The plabic graph $G$ in \figref{fig:simple}(d) is not simple as the directed graph $\QG$ has a loop arrow.  If we remove this loop, we obtain the quiver $Q:=\QredG=A_1 \sqcup A_1$ consisting of two isolated vertices.  The quiver point count is 
\begin{equation}\label{eq:RQ_simple1}
  \RQ(q) = \left(\frac{q^2-q+1}{q-1}\right)^2.
\end{equation}
On the other hand, the link $\Lplab_G$ is listed as \linkurl{L10n94} in~\cite{KAT}. Its \FLY polynomial is 
\begin{equation*}%
 \HOMP(\Lplab_G;a,z)=\frac{z^{2} + 2}{a^{2}} - \frac{z^{2} + 2z^{-2} + 3}{a^{6}} + \frac{z^{-2}}{a^{4}} + \frac{1+z^{-2} }{a^{8}}.
\end{equation*}
Thus, \cref{conj:main} predicts that the point count $\RQ(q)$ should be equal to $q^2+1$, which does not match~\eqref{eq:RQ_simple1}.

The plabic graph $G$ in \figref{fig:simple}(b) is not simple as the directed graph $\QG$ has a directed 2-cycle.  If we cancel out the arrows in this $2$-cycle, we obtain a quiver $Q:=\QredG$ that is a directed $4$-cycle, which is mutation equivalent to the quiver of type $D_4$.  Using \cref{prop:acycliccount}, we obtain that the quiver point count is $R(D_4;q) = (q^6 - 2q^5+2q^4-q^3+2q^2-2q+1)/(q-1)^2$; see \cref{tab:Dynkin}.
On the other hand, the \FLY polynomial of $\Lplab_G$ is
\begin{equation*}%
\HOMP(\Lplab_G;a,z)=\frac{z^{4} + 4  z^{2} + 2}{a^{4}} - \frac{z^{2} + 3 + 2z^{-2}}{a^{10}} + \frac{z^{-2}}{a^{8}} + \frac{1+z^{-2}}{a^{12}}.
\end{equation*}
Thus, \cref{conj:main} predicts that the point count $\RQ(q)$ should be given by $q^{4}+1$, which does not agree with $R(D_4;q)$.

\subsection{Connected sum and disjoint union}
\begin{figure}
  \includegraphics[width=0.5\textwidth]{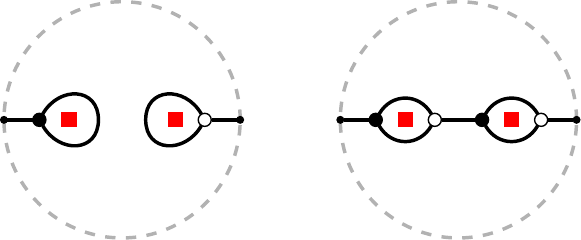}
  \caption{\label{fig:Hopfsum} Two plabic graphs $G$ (left) and $G'$ (right) with the same quiver $\QG = \QX(G')$.}
\end{figure}
Consider the graphs $G$ and $G'$ in \cref{fig:Hopfsum}. The quiver $Q=\QG=\QX(G')$ satisfies 
\begin{equation*}%
  \RQ(q)=\left(\frac{q^2-q+1}{q-1}\right)^2.
\end{equation*}
However, the links $\Lplab_G$ and $\Lplab_{G'}$ are not isotopic: $\Lplab_G$ is a disjoint union of two Hopf links while $\Lplab_{G'}$ is a connected sum of two Hopf links. The \FLY polynomials are given by 
\begin{align*}
P(\Lplab_G;a,z)&=\left(\frac{a-a^{-1}}{z}\right)\left(\frac{z + z^{-1}}{a} - \frac{z^{-1}}{a^3}\right)^2;\\
P(\Lplab_{G'};a,z)&=\left(\frac{z + z^{-1}}{a} - \frac{z^{-1}}{a^3}\right)^2.
\end{align*}
We see that the \FLY polynomials of these two links are different, however, their top $a$-degree terms satisfy 
\begin{equation*}%
  (q-1)\Ptop_{\Lplab_G}(q)=\Ptop_{\Lplab_{G'}}(q)=\RQ(q),
\end{equation*}
 in agreement with \cref{conj:main}. We conjecture that when one restricts to the class of \emph{connected} plabic graphs, the full \FLY polynomial becomes an invariant of the quiver.
\begin{conjecture}\label{conj:fullFLY}
Let $G,G'$ be two connected simple plabic graphs. Assume that the quivers $\QG$ and $\QX(G')$ are mutation equivalent. Then 
\begin{equation*}%
  \HOMP(\Lplab_G;a,z)=\HOMP(\Lplab_{G'};a,z).
\end{equation*}
\end{conjecture}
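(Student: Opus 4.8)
The plan is to reduce \cref{conj:fullFLY} to the invariance of $\Lplab_G$ under plabic graph moves together with a lifting of quiver mutation to such moves. The first ingredient is essentially in hand: using the invariant divide description of $\Lplab_G$ from \cref{sec:plabic_links_properties}, each local move of \cref{fig:moves} translates into a sequence of the divide moves of \cref{fig:divide_moves}, which preserve $L_{D(G)}$ up to isotopy by~\cite[Proposition~8.4]{FPST}. Thus move-equivalent plabic graphs have isotopic links, and by \cref{prop:simple_vs_moves} move-equivalence also preserves simplicity and the mutation class of $\QG$. Consequently, if one could show that any two \emph{connected} simple plabic graphs with mutation-equivalent quivers are move-equivalent, then \cref{conj:fullFLY} would follow immediately, since isotopic links have equal \FLY polynomials. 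Connectedness must be maintained throughout: as \cref{fig:Hopfsum} shows, a disjoint union and a connected sum can share the same quiver while having different \FLY polynomials.

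By transitivity of mutation equivalence I would argue by induction on mutation distance, reducing to the case $\QX(G')=\mu_j(\QG)$ for a single vertex $j$ of $\QG$, corresponding to an interior face $F_j$ of $G$. When $F_j$ is a quadrilateral (bounded by four edges whose endpoints alternate in color), the square move of \figref{fig:moves}(b) applies at $F_j$ and yields a connected simple plabic graph $G'$ with $\QX(G')=\mu_j(\QG)$; the first paragraph then gives $\Lplab_{G'}\cong\Lplab_G$, completing this case. Along the way one may freely apply the contraction/uncontraction and tail moves, which change neither $\QG$ nor $\Lplab_G$, in order to normalize $G$ before attempting a square move.

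The main obstacle --- and the reason the statement is only conjectural --- is the case where $F_j$ is \emph{not} a quadrilateral, so that no square move is available at $j$. Here mutation equivalence of quivers is genuinely weaker than move equivalence of plabic graphs: I know of no local surgery on a simple plabic graph that realizes $\mu_j$ at a non-quadrilateral face while keeping the graph simple and preserving the isotopy type of the divide $D(G)$, and I expect finding or ruling out such a mechanism to be the crux of the problem.

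A possible way around the combinatorics is homological. One would like to prove that the entire Khovanov--Rozansky homology $\HHH(\Lplab_G)$ of a connected simple plabic graph depends only on the mutation class of $\QG$; taking Euler characteristics would then yield \cref{conj:fullFLY}, since the Euler characteristic of $\HHH$ recovers the \FLY polynomial. The relevant cohomology of the cluster variety $\XQice$ is automatically a mutation invariant, the cluster variety being intrinsic to the cluster algebra and, by~\cite[Proposition~5.11]{LS}, independent of the minimal extension. However, the expected dictionary recorded in \cref{conj:cohom} identifies only the Hochschild degree $0$ part of $\HHH(\Lplab_G)$ --- equivalently the top $a$-degree --- and hence on Euler characteristics yields only \cref{conj:main}. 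Proving the full \cref{conj:fullFLY} by this route would require extending the identification of \cref{conj:cohom} to all Hochschild degrees, matching $\HHH(\Lplab_G)$ with a suitable mutation-invariant cohomology theory attached to $\XQice$.
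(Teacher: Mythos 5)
This statement is \cref{conj:fullFLY}, which the paper leaves as an open conjecture with no proof; your proposal rightly does not claim to prove it, so there is no argument of the paper's to compare against, only the surrounding remarks in which the authors sketch what a proof would require.

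Measured against those remarks, your first reduction is too strong and is expected to fail. You propose to deduce the conjecture from the claim that any two connected simple plabic graphs with mutation-equivalent quivers are move equivalent, which would make the links $\Lplab_G$ and $\Lplab_{G'}$ isotopic. But the remark immediately following \cref{conj:fullFLY} cites M.~Shapiro's conjecture \cite[Conjecture~6.17]{FPST}, which only predicts \emph{move-and-switch} equivalence, and the paper explicitly notes that two plabic graphs differing by a switch can have non-isotopic links. So the links are not expected to be isotopic in general, and the missing ingredient is not (only) a local surgery realizing mutation at a non-quadrilateral face, but the separate statement that the switch operation preserves the \FLY polynomial --- this is exactly the paper's \cref{conj:switch}. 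The expected (still conjectural) route is therefore Shapiro's conjecture combined with \cref{conj:switch}, rather than pure move equivalence. Your identification of the non-quadrilateral-face case as the crux is accurate as far as it goes, and your homological fallback matches the paper's own remark that a stronger version of \cref{conj:fullFLY} would assert $\HHH(\Lplab_G)\cong\HHH(\Lplab_{G'})$; but as you observe, \cref{conj:cohom} only concerns the Hochschild degree $0$ part, so that route is likewise open. In short: your analysis is honest and largely consistent with the paper, but it omits the switch operation, which the paper singles out as the essential additional obstruction.
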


\begin{remark}
A stronger cohomological version of~\cref{conj:fullFLY} is that $\HHHC(\Lplab_G) \cong \HHHC(\Lplab_{G'})$ and $\HHH(\Lplab_G) \cong \HHH(\Lplab_{G'})$; see \cref{sec:clust-cohom-vs-link-hom}.
\end{remark}
\begin{remark}
Let $G$ be a reduced plabic graph with strand permutation $\pi=\pig$. The top $a$-degree coefficient $\Ptop_{\Lplab_G}(q)$ of $\HOMP(\Lplab_G;a,z)$ can be extracted from the open positroid variety $\Pio_\pi$ by computing the point count. However, observe that the first two rows in \cref{fig:intro_ex} correspond to isomorphic open positroid varieties. Thus, neither the link $\Lplab_G$ nor the full \FLY polynomial $\HOMP(\Lplab_G;a,z)$ are determined by $\Pio_\pi$. \Cref{conj:fullFLY} implies that when $G$ is connected, $\HOMP(\Lplab_G;a,z)$ is fully determined by $\Pio_\pi$. It would be interesting to find a geometric interpretation of the lower $a$-degree coefficients in this case.
\end{remark}

\begin{remark}
According to M.~Shapiro's conjecture~\cite[Conjecture 6.17]{FPST}, if the quivers $\QG$ and $\QX(G')$ are mutation equivalent then the graphs $G$ and $G'$ are \emph{move-and-switch equivalent}. When two plabic graphs $G,G'$ differ by a switch (see~\cite[Figure~25]{FPST}), the corresponding links $\Lplab_G,\Lplab_{G'}$ need not be isotopic. Nevertheless, by~\cite[Proposition~6.16]{FPST}, \cref{conj:fullFLY} implies $\HOMP(\Lplab_G;a,z)=\HOMP(\Lplab_{G'};a,z)$. The switch operation can be defined more generally for oriented links, and we expect that this operation still preserves the \FLY polynomial.
\end{remark}
\begin{figure}
\begin{tabular}{ccc}
  \includegraphics[width=0.4\textwidth]{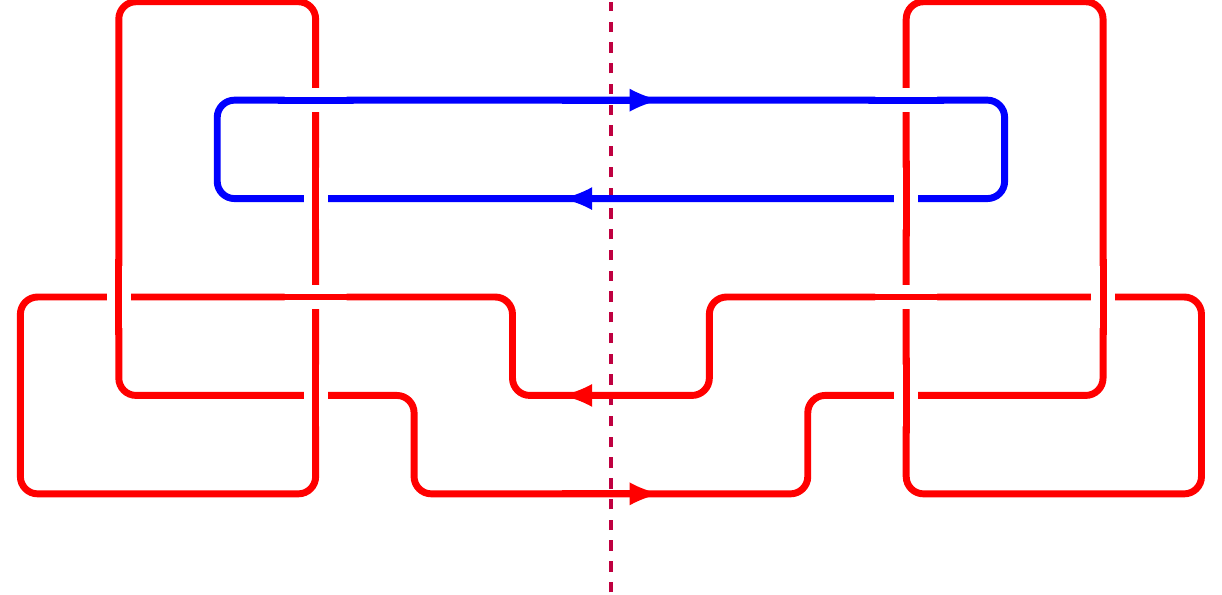}
& 
\begin{tikzpicture}[baseline=(Z.base),scale=1.0]
\coordinate(Z) at (0,-1.5);
\draw[line width=1pt,->,>={latex},purple] (0,0)--(1,0);
\end{tikzpicture}
&
  \includegraphics[width=0.4\textwidth]{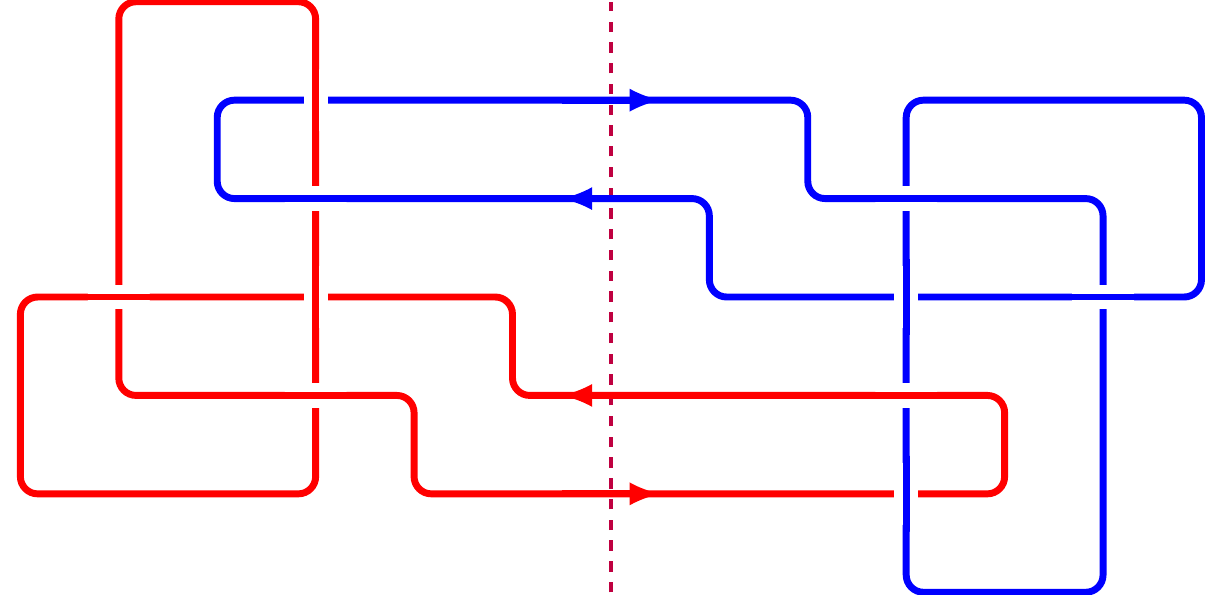}
\end{tabular}
  \caption{\label{fig:switch} The switch operation for links; see \cref{conj:switch}.}
\end{figure}
\begin{conjecture}[Switch preserves \FLY]\label{conj:switch}
Suppose that the planar diagram of a link $L$ intersects the vertical line $x=0$ at four points $(0,-b)$, $(0,-a)$, $(0,a)$, $(0,b)$ for $0<a<b$. Suppose that $L$ is directed to the right at $(0,\pm b)$ and to the left at $(0,\pm a)$. The \emph{switch} of $L$ is a new link $L'$ obtained by taking the $x>0$ part of $L$ and reflecting it along the $x$ axis, flipping all crossings. Then 
\begin{equation*}%
    \HOMP(L;a,z)=\HOMP(L';a,z).
\end{equation*}
\end{conjecture}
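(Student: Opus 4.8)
My plan is to recognize the switch operation as a \emph{Conway mutation} and then invoke (and re-derive) the classical invariance of the \FLY polynomial under mutation. The key observation is a three-dimensional reinterpretation of the planar recipe. Write a point of the diagram as $(x,y)$ and record over/under information by a third coordinate $z$, so that the link lives in $(x,y,z)$-space with the diagram being its projection to the $(x,y)$-plane. Let $T:=L\cap\{x\ge 0\}$ be the tangle cut out by the line $x=0$; it is a disk meeting $L$ in the four boundary points $(0,\pm a),(0,\pm b)$. Consider the rotation by angle $\pi$ about the horizontal axis $\{y=0,\,z=0\}$, namely $(x,y,z)\mapsto(x,-y,-z)$. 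Its effect on the projection is $(x,y)\mapsto(x,-y)$, i.e.\ reflection in the $x$-axis, together with $z\mapsto -z$, i.e.\ reversal of every crossing. This is \emph{exactly} the switch of \cref{conj:switch}. Thus $L'$ is obtained from $L$ by rotating the tangle $T$ by $\pi$ about the horizontal axis, which is one of the three Conway mutations.

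\textbf{Orientation compatibility.} The first thing I would check is that the hypothesis makes $L'$ a well-defined \emph{oriented} link and the mutation orientation-preserving. The rotation swaps the endpoints $b\leftrightarrow -b$ and $a\leftrightarrow -a$, and since it preserves the $x$-component of every tangent vector, it sends inward-pointing strands to inward-pointing strands and outward to outward. Reading the endpoints from bottom to top, the prescribed pattern is $(\text{in},\text{out},\text{out},\text{in})$ at $(-b,-a,a,b)$; this pattern is invariant under the top--bottom swap induced by the rotation. Consequently the boundary data of $T$ is preserved, the rotated tangle glues back to the unchanged part $L\cap\{x\le 0\}$ with consistent orientations, and no strand needs to be re-oriented. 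This is precisely the orientation-compatibility condition under which mutation is expected to preserve the (oriented) \FLY polynomial, and it explains the role of the ``right at $\pm b$, left at $\pm a$'' assumption.

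\textbf{Invariance of $\HOMP$.} I would then prove $\HOMP(L)=\HOMP(L')$ using the oriented \FLY skein module of the tangle. For a disk meeting the link in two incoming and two outgoing points, this module (over the coefficient ring of \eqref{eq:HOMFLY_dfn}) is free of rank two: a convenient basis is the turnback tangle (two caps joining $(0,b)$ to $(0,a)$ and $(0,-a)$ to $(0,-b)$) together with a single-crossing tangle. Gluing in the complementary tangle $L\cap\{x\le 0\}$ realizes $\HOMP$ of the closure as a fixed linear functional of the skein class $[T]$, so it suffices to show the rotation $\rho$ acts as the identity on the module, whence $[\rho\,T]=[T]$ and $\HOMP(L)=\HOMP(L')$. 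A direct check handles the two generators: $\rho$ merely interchanges the two caps of the turnback tangle, leaving it unchanged; and on the single-crossing generator the two effects of $\rho$ cancel on signs, since reflection in the $x$-axis reverses a crossing sign and the simultaneous over/under flip reverses it again, so the crossing is returned to itself. Hence $\rho$ fixes a basis and acts as the identity. (Equivalently, one may simply cite the theorem that mutation preserves the \FLY polynomial, due to Lickorish--Millett and Morton; see also \cite{Lickorish}.) This is consistent with \cref{conj:fullFLY}, which asserts that the \emph{full} \FLY polynomial is a mutation invariant.

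\textbf{Main obstacle.} Once switch is identified with mutation, the content is classical, so the real work is the bookkeeping in the second and third steps: verifying that the prescribed orientation pattern is exactly the symmetry needed for $\rho$ to preserve all boundary orientations, and confirming that $\rho$ fixes the skein basis \emph{including the crossing signs}. The genuinely delicate point — and the reason to keep this as a conjecture rather than a theorem in the present generality — is the extension the authors indicate after \cref{conj:switch}, where the switch is applied to arbitrary oriented links: there the two switched strands may belong to the same component, and one must rule out that the rotation reverses the orientation of a sub-arc of a component in a way that leaves the global orientation inconsistent. Checking that the orientation hypothesis continues to force compatibility in that broader setting is where I expect the argument to require the most care.
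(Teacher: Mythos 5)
The paper offers no proof of this statement: it appears only as \cref{conj:switch} and is left open, so there is nothing to compare your argument against. That said, your proposal looks correct to me and would in fact settle the conjecture. The key identification is right: lifting the diagram to $(x,y,z)$-space, the switch is exactly the effect on diagrams of the $\pi$-rotation $(x,y,z)\mapsto(x,-y,-z)$ applied to the tangle $L\cap\{x\ge 0\}$, i.e., a Conway mutation along the sphere $\{x=0\}\cup\{\infty\}$. Your orientation check is also the right one and is precisely where the hypothesis enters: the rotation swaps $(0,b)\leftrightarrow(0,-b)$ and $(0,a)\leftrightarrow(0,-a)$ while preserving the $x$-component of every tangent vector, and the prescribed boundary pattern (inward at $\pm b$, outward at $\pm a$) is invariant under this swap, so the rotated tangle reglues coherently with no reorientation of strands. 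The skein step is standard and correct: the oriented skein module of the ball with this boundary data is spanned by the turnback tangle and a single-crossing tangle; the rotation, being an orientation-preserving homeomorphism of $\R^3$ that respects the boundary data, descends to a linear endomorphism of this module and fixes both spanning elements (it preserves crossing signs, since the planar reflection and the over/under flip each reverse the sign once); hence it fixes the class of every tangle, and gluing in the $x\le 0$ part is a linear functional on the module. Alternatively one can simply invoke the Lickorish--Millett theorem that the \FLY polynomial is invariant under mutation. One remark: your closing caveat is unnecessary for the statement as given. Whether the two strands of the tangle lie on the same component of $L$ is irrelevant, since the skein argument is local to the tangle and orientation compatibility is determined entirely by the boundary data, which the hypothesis pins down. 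So the argument applies in the full generality of \cref{conj:switch}, and if anything you are under-claiming.
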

\noindent See \cref{fig:switch} for an example of a switch.

\subsection{Small knots}
It is natural to ask which knots and links can potentially be associated to a quiver. Assuming \cref{conj:main}, a necessary condition for a link $L$ to come from a locally acyclic quiver would be that up to a power of $(q-1)$, $\Ptop_L(q)$ is a polynomial in $q$. 

 First, let $K$ be the trefoil knot (\cref{fig:intro_ex}), with $\HOMP(K;q)=\frac{z^2+2}{a^2}-\frac1{a^4}$. Let $\mirr K$ be the mirror image of $K$. Taking the mirror image results in replacing $a$ with $-a^{-1}$; cf.~\eqref{eq:HOMFLY_dfn}. Thus, $\HOMP(\mirr K;q)=-a^{4} + {\left(z^{2} + 2\right)} a^{2}$ and $\Ptop_{\mirr K}(q)=-q^{-2}$, which is not a polynomial. Next, let $E$ be the figure-eight knot. It coincides with its mirror image: $E\cong \mirr E$. We have $\HOMP(E;q)=a^{2} - (z^{2}+1) + \frac{1}{a^{2}}$ and $\Ptop_E(q)=-q^{-1}$, which is again not a polynomial. We therefore do not expect $\mirr K$ and $E$ to be associated to \Louise quivers. (By~\cite[Theorem~1.11]{GL_qtcat}, it follows that neither $\mirr K$ nor $E$ is a Richardson knot.)

In \cref{tab:small_knots}, we give a list of all knots $K$ with up to $9$ crossings such that $\Ptop_K(q)$ is a polynomial in $q$. The knot numbering is taken from~\cite{Rolfsen,KAT_knot}, and the knots are considered up to taking the mirror image. We see that for $K=\href{http://katlas.org/wiki/8_21}{8_{21}}$, the leading coefficient of $\Ptop_K(q)$ is $2$, which violates \cref{conj:top_a_deg}; thus, we do not expect $8_{21}$ to be associated to a quiver.

\begin{table}
\begin{tabular}[t]{cc}
  \begin{tabular}[t]{|c|c|c|}\hline
$K$ & $\chi_K$ & $\Ptop_K(q)$\\\hline\hline
\href{http://katlas.org/wiki/3_1}{$3_{1}$} &  $2$ & $q^{2} + 1$ \\\hline
\href{http://katlas.org/wiki/5_1}{$5_{1}$} &  $3$ & $q^{4} + q^{2} + 1$ \\\hline
\href{http://katlas.org/wiki/5_2}{$5_{2}$} &  $1$ & $q^{2} - q + 1$ \\\hline
\href{http://katlas.org/wiki/7_1}{$7_{1}$} &  $4$ & $q^{6} + q^{4} + q^{2} + 1$ \\\hline
\href{http://katlas.org/wiki/7_2}{$7_{2}$} &  $1$ & $q^{2} - q + 1$ \\\hline
\href{http://katlas.org/wiki/7_3}{$7_{3}$} &  $1$ & $q^{4} - q^{3} + q^{2} - q + 1$ \\\hline
\href{http://katlas.org/wiki/7_4}{$7_{4}$} &  $0$ & $q^{2} - 2 \, q + 1$ \\\hline
\href{http://katlas.org/wiki/7_5}{$7_{5}$} &  $2$ & $q^{4} - q^{3} + 2 \, q^{2} - q + 1$ \\\hline
\href{http://katlas.org/wiki/8_15}{$8_{15}$} &  $1$ & $q^{4} - 2 \, q^{3} + 3 \, q^{2} - 2 \, q + 1$ \\\hline
\href{http://katlas.org/wiki/8_19}{$8_{19}$} &  $5$ & $q^{6} + q^{4} + q^{3} + q^{2} + 1$ \\\hline
\href{http://katlas.org/wiki/8_21}{$8_{21}$} &  $3$ & $2 \, q^{2} - q + 2$ \\\hline
\href{http://katlas.org/wiki/9_1}{$9_{1}$} &  $5$ & $q^{8} + q^{6} + q^{4} + q^{2} + 1$ \\\hline
\href{http://katlas.org/wiki/9_2}{$9_{2}$} &  $1$ & $q^{2} - q + 1$ \\\hline
\href{http://katlas.org/wiki/9_3}{$9_{3}$} &  $1$ & $q^{6} - q^{5} + q^{4} - q^{3} + q^{2} - q + 1$ \\\hline
\href{http://katlas.org/wiki/9_4}{$9_{4}$} &  $1$ & $q^{4} - q^{3} + q^{2} - q + 1$ \\\hline
  \end{tabular}

&

\begin{tabular}[t]{|c|c|c|}\hline
$K$ & $\chi_K$ & $\Ptop_K(q)$\\\hline\hline
\href{http://katlas.org/wiki/9_5}{$9_{5}$} &  $0$ & $q^{2} - 2 \, q + 1$ \\\hline
\href{http://katlas.org/wiki/9_6}{$9_{6}$} &  $3$ & $q^{6} - q^{5} + 2 \, q^{4} - q^{3} + 2 \, q^{2} - q + 1$ \\\hline
\href{http://katlas.org/wiki/9_7}{$9_{7}$} &  $2$ & $q^{4} - q^{3} + 2 \, q^{2} - q + 1$ \\\hline
\href{http://katlas.org/wiki/9_9}{$9_{9}$} &  $2$ & $q^{6} - q^{5} + 2 \, q^{4} - 2 \, q^{3} + 2 \, q^{2} - q + 1$ \\\hline
\href{http://katlas.org/wiki/9_10}{$9_{10}$} &  $0$ & $q^{4} - 2 \, q^{3} + 2 \, q^{2} - 2 \, q + 1$ \\\hline
\href{http://katlas.org/wiki/9_13}{$9_{13}$} &  $0$ & $q^{4} - 2 \, q^{3} + 2 \, q^{2} - 2 \, q + 1$ \\\hline
\href{http://katlas.org/wiki/9_16}{$9_{16}$} &  $4$ & $q^{6} - q^{5} + 3 \, q^{4} - 2 \, q^{3} + 3 \, q^{2} - q + 1$ \\\hline
\href{http://katlas.org/wiki/9_18}{$9_{18}$} &  $1$ & $q^{4} - 2 \, q^{3} + 3 \, q^{2} - 2 \, q + 1$ \\\hline
\href{http://katlas.org/wiki/9_23}{$9_{23}$} &  $1$ & $q^{4} - 2 \, q^{3} + 3 \, q^{2} - 2 \, q + 1$ \\\hline
\href{http://katlas.org/wiki/9_35}{$9_{35}$} &  $0$ & $q^{2} - 2 \, q + 1$ \\\hline
\href{http://katlas.org/wiki/9_38}{$9_{38}$} &  $0$ & $q^{4} - 3 \, q^{3} + 4 \, q^{2} - 3 \, q + 1$ \\\hline
\href{http://katlas.org/wiki/9_45}{$9_{45}$} &  $2$ & $2 \, q^{2} - 2 \, q + 2$ \\\hline
\href{http://katlas.org/wiki/9_46}{$9_{46}$} &  $2$ & $2$ \\\hline
\href{http://katlas.org/wiki/9_49}{$9_{49}$} &  $0$ & $q^{4} - 2 \, q^{3} + 2 \, q^{2} - 2 \, q + 1$ \\\hline
\end{tabular}
\end{tabular}
\caption{\label{tab:small_knots} ``Point counts'' of small knots and the associated Catalan numbers.}
\end{table}

\subsection{Affine plabic fences}
\def\tI{{\tilde I}}
While reduced plabic graphs were classified by Postnikov \cite{Pos}, the problem of classifying simple plabic graphs appears much harder.  We consider an affine version of plabic fences, which contains a large subclass of simple plabic graphs that can be parametrized; see \cref{prop:aff_fence_simple}.

Fix $\n \geq 2$. 
Let $\tI:=I\sqcup\{\n\}=\{1,2,\dots,\n-1,\n\}$. A \emph{double affine braid word} is a word $\bw=(i_1,i_2,\dots,i_m)$ in the alphabet
\begin{equation*}%
  \pm \tI:=\{-1,-2,\dots,-\n\}\sqcup\{1,2,\dots,\n\}.
\end{equation*}

\def\fpf{\mathring{S}_\n}
Recall from~\cite{Pos} that for any $(k,\n)$-bounded affine permutation $f \in \Boundkn$, we have a reduced plabic graph $G(f)$, satisfying the conditions of \cref{sec:intro:quivers-point-count}. For each fixed point $i\in[\n]$ of $\pi:=\fbar\in\Sn$, the boundary vertex $i$ of $G(f)$ is adjacent to an interior leaf, called a \emph{lollipop}. This lollipop is white if $f(i)=i+\n$ and black if $f(i)=i$. In particular, if $\pi$ has no fixed points then $G(f)$ has no interior leaves. Recall from \cref{sec:bound-affine-perm} that in this case, $f$ can be recovered from $\pi$, and we denote $G(\pi):=G(f)$.

 Given a double affine braid word $\bw =(i_1,i_2,\dots,i_m)$, and $f \in \Boundkn$, we define the \emph{affine plabic fence} $G(\w,f)$ as follows.  Begin with the plabic graph $G(f)$.  For each $j=m, m-1, \ldots, 1$, let $h:=|i_j|$ and consider the two boundary vertices $h$ and $h+1$, considered modulo $\n$.  If $i_j>0$, add a black at $h$, white at $h+1$ bridge between these two boundary vertices, and if $i_j<0$, add a white at $h$, black at $h+1$ bridge between these two boundary vertices.  When $\pi:=\fbar$ has no fixed points, we denote $G(\w,\pi):=G(\w,f)$. See \cref{fig:APF} for an example. 

\begin{figure}
  \includegraphics[width=\textwidth]{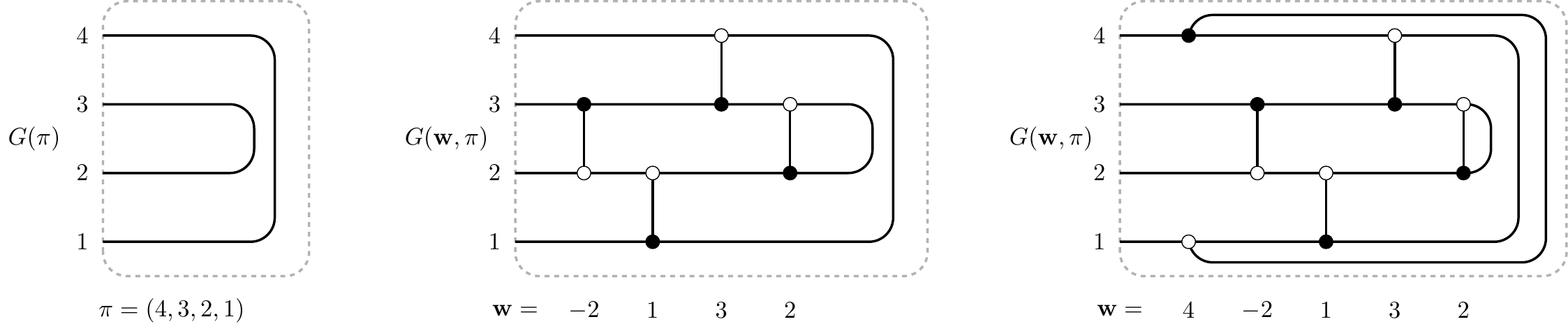}
  \caption{\label{fig:APF} A reduced plabic graph $G(\pi)$ and two affine plabic fences.}
\end{figure}

\begin{proposition}\label{prop:aff_fence_simple}
For any $\bw\in (\pm \tI)^m$ and any permutation $\pi\in\Sn$ without fixed points, the plabic graph $G(\bw,\pi)$ is simple.
\end{proposition}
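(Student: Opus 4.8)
The plan is to show that the planar dual directed graph $\QG$ of $G:=G(\bw,\pi)$ contains no directed $1$-cycle (loop) and no directed $2$-cycle, which is exactly the definition of simplicity. These two obstructions have a local meaning that I will exploit: a loop arises precisely when some edge of $G$ with bicolored endpoints has the same interior face on both of its sides, and a $2$-cycle arises precisely when two interior faces $F,F'$ are joined by a pair of bicolored edges inducing arrows in opposite directions. I would argue by induction on the number $m$ of bridges. When $m=0$ we have $G(\bw,\pi)=G(\pi)=G(f)$, a reduced plabic graph; since $\pi$ has no fixed points it has no lollipops, and reduced plabic graphs are leaf recurrent by \cref{thm:MSLA}, hence simple by \cref{dfn:leaf_recurrent_plabic_graph}(a). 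This settles the base case.

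For the inductive step, delete the first letter $i_1$ of $\bw$, which corresponds to the bridge attached last and therefore lying outermost, closest to the boundary; let $G_0:=G((i_2,\dots,i_m),\pi)$ be the resulting affine plabic fence, which is simple by the inductive hypothesis. Adding the outermost bridge between cyclically consecutive boundary vertices $h$ and $h+1$ inserts a black and a white vertex on the two edges emanating from $h$ and $h+1$ and joins them by a new bicolored edge $e$. One side of $e$ is the small new face cut off against the boundary arc from $h$ to $h+1$, which is a \emph{boundary} face and hence contributes no vertex to $\QG$; this is the crucial feature of attaching bridges at the boundary. Consequently the new bicolored edges of $G$ — the edge $e$ together with the short strand segments created between the new vertices and their neighboring bridge vertices — each either have a boundary face on at least one side, or separate two faces that are manifestly distinct bricks of the fence. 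I would then check, across the finitely many local color patterns governed by the signs of the adjacent letters of $\bw$, that $e$ never has the same interior face on both sides (no new loop) and that any two interior faces still share at most one bicolored edge, carrying a consistent orientation (no new $2$-cycle).

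The heart of the matter, and the step I expect to be the main obstacle, is controlling the two global features that are absent from an ordinary plabic fence: the wrap-around pair $(\n,1)$, and the interface between the newly attached bridges and the interior faces inherited from the reduced graph $G(f)$. Concretely, I must rule out a bridge edge that, after wrapping around the disk, bounds a single interior face on both sides, and rule out a newly created brick face sharing two bicolored edges with an old face of $G(f)$ with opposite orientations. The cleanest way to organize this is to use that simplicity is a \emph{local} condition: whether an edge gives a loop depends only on a neighborhood of that edge, and whether two edges form a $2$-cycle can be tested by tracing the two faces they bound. Near every pair of consecutive boundary vertices the picture is locally identical to a non-affine plabic fence grafted onto the fixed reduced piece $G(f)$, so the required verifications reduce to the simplicity of non-affine plabic fences (\cref{prop:reduced_and_fence_are_leaf_rec}) together with the simplicity of $G(f)$, while the planarity of $G$ guarantees that distinct bricks remain distinct faces meeting along single edges. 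Assembling these local checks over all bridge positions, including the wrap-around, completes the induction.
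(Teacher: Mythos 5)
Your inductive organization (peel off the outermost bridge, which is the one attached last, i.e., the one corresponding to the first letter of $\bw$) is sound, and several of your observations are correct: the base case follows because $G(\pi)$ is reduced, hence simple; the new bridge edge $e$ has a boundary face on one side and therefore contributes neither a loop nor an arrow; and the only new quiver vertex is the new brick face $F$, so it suffices to control the arrows incident to $F$. This is a legitimate alternative packaging of the paper's argument, which instead proves directly that any two faces of $G(\bw,\pi)$ share at most one edge.

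However, the step that carries all the content --- ruling out a directed $2$-cycle at $F$ --- is not actually established. A $2$-cycle arises when $F$ and some other interior face $F'$ share two bicolored edges inducing opposite arrows, and this is a \emph{global} configuration: $F'$ could be reached from $F$ both across the inner boundary of the brick (the previous bridge, or a portion of an edge of $G(f)$) and across one of the vertical segments at $h$ or $h+1$, or across two distinct segments of a single subdivided edge of $G(f)$. Your two proposed resolutions do not close this. The claim that ``planarity of $G$ guarantees that distinct bricks remain distinct faces meeting along single edges'' is false as a general principle --- the plabic graph in \figref{fig:simple}(b) is planar, yet two of its faces meet along two edges and its dual has a $2$-cycle --- so planarity alone proves nothing here. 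And the reduction to ``local color patterns'' plus the simplicity of non-affine plabic fences and of $G(f)$ does not address exactly the two interactions you yourself flag as the heart of the matter: the wrap-around and the interface between the bridge columns and the faces of $G(f)$. What is needed, and what the paper supplies, is a case analysis on the shared boundary of two faces: if they share a bridge, that bridge was the unique edge separating the two faces at the moment it was added and remains so; if not, their common boundary lies inside a single edge $e'$ of $G(f)$ (here one uses that any two faces of the reduced graph $G(f)$ share at most one edge), and one checks that two distinct segments of the subdivided $e'$ cannot bound the same pair of faces. Without an argument of this kind, the inductive step is an assertion rather than a proof.
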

\begin{proof}
It is clear from the construction that every time we add a bridge to an affine plabic fence, the faces $F$ and $F'$ on the two sides of the bridge are distinct.  Thus, the quiver of $G(\bw,\pi)$ cannot have $1$-cycles.  Let $F, F'$ be two adjacent (interior or boundary) faces of $G(\bw,\pi)$.  If $F$ and $F'$ are both adjacent to the bridge $e$, then $e$ must be the only edge separating $F$ and $F'$ since this is the case if $e$ is the bridge added to $G(f)$.  

In $G(f)$, any two faces have at most one edge in common.  So if $F$ and $F'$ do not have a bridge edge in common in $G(\bw,\pi)$, then their common boundary must belong to a single edge $e'$ of $G(f)$.  The edge $e'$ is divided into a number of segments by the vertices of $G(\bw,\pi)$ that lie on $e'$.  By considering what happens when a single bridge is added, we see that two distinct such segments cannot belong to the boundary of the same two faces of $G(\bw,\pi)$.  It follows that the quiver of $G(\bw,\pi)$ has no directed $2$-cycles.
 \end{proof}
 
Not all simple affine plabic fences are leaf recurrent: for instance, the non-leaf recurrent example in \cref{sec:notleaf} is move equivalent to the example in \figref{fig:APF}(middle).  It would be interesting to find a criterion for an affine plabic fence to be leaf recurrent.

We say that two indices $i,j\in \tI$ are \emph{adjacent} if either $|i-j|=1$ or $\{i,j\}=\{1,\n\}$. Similarly, $i,j\in-\tI$ are \emph{adjacent} if $|i|$ and $|j|$ are adjacent. Consider the following moves on double affine braid words:
\begin{enumerate}[($\tilde{\text{B}}$1)]%
\item\label{abm1} $(\dots,i,j,i,\dots)\leftrightarrow (\dots,j,i,j,\dots)$ \quad if $i,j\in\pmtI$ have the same sign and are adjacent;
\item\label{abm2} $(\dots,i,j,\dots)\leftrightarrow (\dots,j,i,\dots)$ \quad if $i,j\in \pmtI$ have the same sign and are non-adjacent;
\item\label{abm5} $(\dots,i,j,\dots)\leftrightarrow (\dots,j,i,\dots)$  \quad if $i, j\in\pmtI$ have different signs;
\item\label{abm6} $(-i,\dots)\leftrightarrow (i,\dots)$ \quad for $i\in \pmtI$;
\end{enumerate}
  If $\bw,\bw'$ are equivalent using \abmref{abm1}--\abmref{abm6}, then $G(\bw,f)$ and $G(\bw',f)$ are move equivalent. If $\bw,\bw'$ are equivalent using \abmref{abm1}--\abmref{abm5}, then $G(\bw,f)$ and $G(\bw',f)$ are move equivalent without using tail addition/removal, in which case we say that they are \emph{restricted move equivalent}. In particular, the number of boundary vertices is fixed under restricted move equivalence. 
 We now consider the question of the classification of affine plabic fences, up to restricted move equivalence. This is analogous to the classification of equivalence classes of reduced plabic graphs by Postnikov~\cite{Pos}.

\begin{remark}\label{rmk:extra_moves}
For the rest of this subsection, we consider arbitrary bounded affine permutations $f\in\Boundkn$, with no restrictions on fixed points. Consequently, we allow our plabic graphs to have interior vertices of degree $1$, $2$, or $3$. Vertices of degree $2$ can be freely placed on the edges and removed from them. If an interior leaf  is connected to an interior vertex of the same color (as in the right two pictures in \cref{fig:interior_leaves}), the edge connecting them may be contracted. Conversely, we can place a degree two vertex on any edge and attach a leaf of the same color to it. These extra moves are included in the notion of restricted move equivalence.
\end{remark}

We say that $f\in\Boundkn$ \emph{has a left descent at $i\in\Z$} if $i+1\leq f(i)<f(i+1)\leq i+n$. In this case, we let $s_if:\Z\to\Z$ be the $\n$-periodic bijection sending $i\mapsto f(i+1)$, $i+1\mapsto f(i)$, and $j\mapsto f(j)$ for $j\not\equiv i,i+1$ modulo $\n$. Then we have $s_if\in\Boundkn$, and the plabic graph $G(f)$ is obtained from $G(s_if)$ by adding a bridge that is white at $i$ and black at $i+1$. Similarly, $f\in\Boundkn$ \emph{has a right descent at $i\in\Z$} if $i-n\leq f^{-1}(i)\leq f^{-1}(i+1)\leq i$. In this case, we define $fs_i:\Z\to\Z$ to be the $\n$-periodic bijection sending $f^{-1}(i)\mapsto i+1$, $f^{-1}(i+1)\mapsto i$, and $f^{-1}(j)\mapsto j$ for $j\not\equiv i,i+1$ modulo $\n$. We have $fs_i\in\Boundkn$ and the plabic graph $G(f)$ is obtained from $G(fs_i)$ by adding a bridge that is black at $i$ and white at $i+1$. See~\cite[Section~7.4]{LamCDM}. Note that the graphs $G(s_if)$ and $G(fs_i)$ may contain lollipops even when $G(f)$ does not.

We introduce two new moves for affine plabic fences:
\begin{enumerate}[($\tilde{\text{B}}$1)]
  \setcounter{enumi}{4}
\item \label{abm7} $G(\bw,f)\leftrightarrow G((\bw,-i),s_if)$ if $f$ has a left descent at $i$;
\item \label{abm8} $G(\bw,f)\leftrightarrow G((\bw,i),fs_i)$ if $f$ has a right descent at $i$.
\end{enumerate}
Note that these moves respect restricted move equivalence as they can be obtained from the extra plabic graph moves listed in \cref{rmk:extra_moves}.

\begin{conjecture}\label{conj:APF}
Two affine plabic fences $G(\bw,f)$ and $G(\bw',f')$ are restricted move equivalent if and only if they can be related by moves \abmref{abm1}--\abmref{abm5} and \abmref{abm7}--\abmref{abm8}.
\end{conjecture}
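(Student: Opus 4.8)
The plan is to establish \cref{conj:APF} by producing a complete invariant of affine plabic fences valued in a signed affine braid monoid. The easy direction is essentially already recorded: each of the moves \abmref{abm1}--\abmref{abm5} and \abmref{abm7}--\abmref{abm8} is a restricted move equivalence, so words related by these moves yield restricted move equivalent affine plabic fences. The content lies in the converse. The proposed mechanism is to attach to each affine plabic fence $G(\bw,f)$ a single element $\beta(G(\bw,f))$ of the monoid $\Bcal_\n$ of signed affine braid words on $\n$ strands modulo \abmref{abm1}, \abmref{abm2}, \abmref{abw}=\abmref{abm5}, and to prove that (i) $\beta$ is invariant under all restricted moves, and (ii) $\beta$ determines the pair $(\bw,f)$ up to exactly the moves \abmref{abm1}--\abmref{abm5} and \abmref{abm7}--\abmref{abm8}.

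First I would construct $\beta$ by \emph{fully extracting} the reduced part. Since a reduced plabic graph $G(f)$ is built from a reduced decomposition of $f$ in $\widetilde{S}_\n$ by successive bridge additions at descents (see \cite{Pos} and \cite[Section~7.4]{LamCDM}), and since the descent moves \abmref{abm7}--\abmref{abm8} precisely peel a boundary bridge off $G(f)$ while decreasing the length of $f$ and appending the corresponding signed letter to $\bw$, I can iterate these moves until the permutation is driven down to the fixed minimal representative $f_0$ whose graph carries only lollipops. This records all bridge data---coming both from $\bw$ and from the peeled reduced decomposition of $f$---in a single signed word $\widehat{\bw}$, and I set $\beta(G(\bw,f))$ to be its image in $\Bcal_\n$. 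Matsumoto's theorem, applied to the reduced words for $f$, together with the well-definedness of positive lifts in the braid monoid, guarantees that $\beta$ is independent of the order in which bridges are peeled, so $\beta$ is well defined precisely modulo the combinatorial moves of \cref{conj:APF}. Completeness of $\beta$ in the sense of (ii) is then formal: if $\beta(G(\bw,f))=\beta(G(\bw',f'))$ then both extract to words equal in $\Bcal_\n$, hence related by \abmref{abm1}--\abmref{abm5}, while the extractions themselves are sequences of \abmref{abm7}--\abmref{abm8}; chaining these produces the desired equivalence.

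The crux is step (i). Square moves correspond to the affine braid relation and contraction moves to insertion or deletion of degree-two vertices, so when such a move is applied to an affine plabic fence and returns one, it visibly preserves $\widehat{\bw}$ modulo \abmref{abm1}--\abmref{abm5}; the extra moves of \cref{rmk:extra_moves} likewise leave $\beta$ unchanged. The genuine difficulty, which I expect to be the main obstacle, is that a sequence of restricted moves realizing a restricted move equivalence between two affine plabic fences need not remain inside the class of affine plabic fences: an intermediate plabic graph may admit no fence presentation $(\bw,f)$, so the formula defining $\beta$ does not literally apply to it. Two routes suggest themselves. One is to extend $\beta$ to all plabic graphs in the ambient move class---most naturally as a regular-homotopy invariant of the associated oriented divide, since the restricted moves are engineered to mirror the divide moves of \cref{fig:divide_moves}---and to prove invariance at that level, then check that on affine plabic fences this topological invariant recovers the braid monoid element. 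The other is to prove a retraction statement, namely that any restricted-move path joining two affine plabic fences can be replaced by one staying in the fence locus, reducing everything to square and contraction moves that respect the fence structure. I consider the first route more promising, as it trades the combinatorial obstacle for a cleaner topological one; pinning down the divide invariant sharply enough to read off the element of $\Bcal_\n$ is the step I would expect to demand genuinely new work.
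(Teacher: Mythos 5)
First, a point of calibration: \cref{conj:APF} is stated in the paper as a conjecture and the paper contains no proof of it, so there is no argument of the authors' to compare yours against. Judged on its own terms, your proposal is a strategy rather than a proof, and it has two genuine gaps. The first you identify yourself, and it is the entire content of the conjecture: a restricted move equivalence between $G(\bw,f)$ and $G(\bw',f')$ is an arbitrary sequence of square moves, contraction/uncontraction moves, and the extra moves of \cref{rmk:extra_moves}, passing through plabic graphs that admit no presentation as affine plabic fences. Your invariant $\beta$ is defined only on fences, so its invariance cannot be checked move-by-move; both remedies you name (a divide-level invariant, or a retraction of move paths into the fence locus) are left entirely unexecuted, and each is of comparable depth to Postnikov's classification of reduced plabic graphs, which is the theorem this conjecture is modeled on. Reducing the conjecture to ``$\beta$ is a complete invariant'' without establishing invariance is a reformulation, not progress.

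The second gap is that $\beta$, as you define it, is not well defined even on a single fence. Take $\n=2$ and $f(i)=i+1$. Then $f$ has a left descent at $1$ and a right descent at $1$, so by \abmref{abm7} and \abmref{abm8} the graph $G((\,),f)$ equals both $G((-1),s_1f)$ and $G((1),fs_1)$, where $s_1f$ and $fs_1$ are (distinct) lollipop permutations with no further descents. Full extraction therefore produces the word $(-1)$ along one peeling and $(1)$ along the other. Moves \abmref{abm1}, \abmref{abm2}, \abmref{abm5} preserve the sign of every letter, so $(-1)$ and $(1)$ represent different elements of your monoid $\Bcal_\n$; the sign flip would require \abmref{abm6}, which is excluded from restricted move equivalence. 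Two structural problems are visible here: the terminal lollipop permutation $f_0$ is not independent of the peeling order (left-peeling and right-peeling land on different minimal elements), and your quotient forgets $f_0$ and imposes no relation mediating between a letter peeled from the left and the same letter peeled from the right. The paper's own remark following the conjecture indicates what the correct receptacle should look like in the top-cell case: an affine braid $\bw_1\,\fkn\,\bw_2$ in which the positively and negatively signed letters sit on opposite sides of a central shift element, so that the interaction between the two kinds of peeling is governed by conjugation by that shift. Any repair of your construction would need to build this in, at which point the well-definedness and completeness of the resulting invariant is again essentially equivalent to the conjecture itself.
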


\begin{remark}
A strengthening of \cref{conj:APF} can be given when $f=\fkn$ is the ``top cell'' bounded affine permutation (\cref{sec:bound-affine-perm}). Consider the affine braid group with generators $\beta_i^{\pm 1}$ for $i \in I$, lifting the Coxeter generators $s_i$, together with the shift braids that lift the affine permutations $i \mapsto i +k$.  For a word $\bw\in\DRW$, consider an affine braid $\bw_1 \, \fkn \, \bw_2$, where $\bw_1$ (resp., $\bw_2$) is the positive braid lift of the subword of $\bw$ consisting of the letters from $\tI$ (resp., positive braid lift of the reverse of the subword of $\bw$ consisting of letters from $-\tI$). We conjecture that for $\bw,\bw'\in\DRW$, the affine plabic fences $G(\bw,\fkn)$ and $G(\bw',\fkn)$ are restricted move equivalent if and only if the affine braids $\bw_1 \, \fkn \, \bw_2$ and $\bw'_1 \, \fkn \, \bw'_2$ are braid equivalent, or equivalently, the affine braids $\bw_1 (\bw_2+k)$ and $\bw'_1 (\bw'_2+k)$ are braid equivalent, where $(\bw_2+k)$ is the positive braid word obtained by adding $k$ (modulo $\n$) to each letter in $\bw_2$. 
\end{remark}

\subsection{Trees and acyclic quivers}

\begin{definition}
A link $L$ is called \emph{acyclic} (resp. \emph{tree-like}) if there is a simple plabic graph $G$ such that $L =\Lplab_G$, and $\QG$ is mutation acyclic (resp. mutation equivalent to a tree).
\end{definition}

\begin{problem}\
\begin{itemize}
\item
Which acyclic quivers $Q$ can occur as $\QG$ for a simple plabic graph $G$?
\item
Can we characterize acyclic, or tree-like links?
\end{itemize}
\end{problem}

It is clear that any tree $T$ can occur as $\QG$ for some simple plabic graph $G$.  In fact, Lam and Speyer (unpublished) have shown that one can choose such a plabic graph to be reduced.

We expect that acyclic links have particularly simple link invariants, including link homology groups.  Proposition~\ref{prop:acycliccount} gives a closed formula for the top $a$-degree part of the \FLY polynomial of an acyclic link.  It would be interesting to obtain a closed formula for the entire \FLY polynomial.  

In \cite[Theorem 1.3]{LS2}, it is shown that the mixed Hodge numbers of an acyclic really full rank cluster variety satisfy a strong vanishing condition.  It would be interesting to interpret this vanishing via the link homology groups of acyclic links.

\bibliographystyle{alpha_tweaked}
\bibliography{plabic_links}
\end{document}